\newtheorem{theorem}{Theorem}
\newtheorem{proposition}[theorem]{Proposition}
\newtheorem{lemma}[theorem]{Lemma}
\newtheorem{corollary}[theorem]{Corollary}
\newtheorem{construction}{Construction}
\newtheorem*{theorem*}{Theorem}
\theoremstyle{definition}
\newtheorem{definition}{Definition}
\newtheorem{remark}[definition]{Remark}
\def\Xint#1{\mathchoice
{\XXint\displaystyle\textstyle{#1}}%
{\XXint\textstyle\scriptstyle{#1}}%
{\XXint\scriptstyle\scriptscriptstyle{#1}}%
{\XXint\scriptscriptstyle\scriptscriptstyle{#1}}%
\!\int}
\def\XXint#1#2#3{{\setbox0=\hbox{$#1{#2#3}{\int}$ }
\vcenter{\hbox{$#2#3$ }}\kern-.6\wd0}}
\def\dashint{\Xint-}
\definecolor{Yellow}{rgb}{0.95,0.9,0.0} 
\definecolor{Red}{rgb}{0.8,0.1,0.1}
\definecolor{Green}{rgb}{0.1,0.65,0.2}
\definecolor{Blue}{rgb}{0.1,0.1,0.8}
\definecolor{Purple}{rgb}{0.7,0.1,0.7}
\definecolor{Grey}{rgb}{0.6,0.6,0.6}
\definecolor{YELLOW}{rgb}{0.95,0.9,0.0} 
\definecolor{RED}{rgb}{0.8,0.1,0.1}
\definecolor{GREEN}{rgb}{0.25,0.65,0.1}
\definecolor{BLUE}{rgb}{0.1,0.1,0.8}
\definecolor{PURPLE}{rgb}{0.7,0.1,0.7}
\newcommand{\BV}{\operatorname{BV}} 
\newcommand{\supp}{\operatorname{supp}} 
\newcommand{\sign}{\operatorname{sign}}
\newcommand{\dist}{\operatorname{dist}} 
\newcommand{\sdist}{\operatorname{sdist}} 
\DeclareMathOperator*{\esssup}{ess\,sup}
\newcommand{\Id}{\operatorname{Id}}
\newcommand{\Rd}[1][d]{{\mathbb{R}^{#1}}}
\newcommand{\dH}{d\mathcal{H}^{d-1}}
\newcommand{\dx}{dx}
\newcommand{\dt}{dt}
\newcommand{\TBV}{T_{\BV}}
\newcommand{\Tan}{\mathrm{Tan}}
\renewcommand{\vec}[1]{{\operatorname{#1}}}
\newcommand{\n}{\vec{n}}
\newcommand{\tchi }{t_{\chi}}
\begin{document}

\title[Stability of multiphase MCF beyond circular topology changes]
{Stability of multiphase mean curvature flow beyond circular topology changes}

\author{Julian Fischer}
\address{Institute of Science and Technology Austria (ISTA), Am~Campus~1, 
3400 Klosterneuburg, Austria}
\email{julian.fischer@ist.ac.at}

\author{Sebastian Hensel}
\address{Hausdorff Center for Mathematics, Universit{\"a}t Bonn, Endenicher Allee 62, 53115 Bonn, Germany}
\email{sebastian.hensel@hcm.uni-bonn.de}

\author{Alice Marveggio}
\address{Institute of Science and Technology Austria (ISTA), Am~Campus~1, 
	3400 Klosterneuburg, Austria
{\bf \it Current address:} Hausdorff Center for Mathematics, Universit{\"a}t Bonn, Endenicher Allee 60, 53115 Bonn, Germany}
\email{alice.marveggio@hcm.uni-bonn.de}

\author{Maximilian Moser}
\address{Institute of Science and Technology Austria (ISTA), Am~Campus~1, 
	3400 Klosterneuburg, Austria}
\email{maximilian.moser@ist.ac.at}

\begin{abstract}
We prove a weak-strong uniqueness principle for varifold-BV solutions to planar multiphase mean curvature flow beyond a circular topology change: Assuming that there exists a classical solution with an interface that becomes increasingly circular and shrinks to a point, any varifold-BV solution with the same initial interface must coincide with it, and any varifold-BV solution with similar initial data must undergo the same type of topology change.
Our result illustrates the robustness of the relative energy method for establishing weak-strong uniqueness principles for interface evolution equations, showing that it may also be applied beyond certain topological changes.
\end{abstract}


\maketitle
\tableofcontents

\section{Introduction}
For two-phase mean curvature flow, a weak solution concept with highly satisfactory properties is available in form of viscosity solutions \cite{EvansSpruck,ChenGigaGoto} to the level set formulation \cite{OsherSethian,OhtaJasnowKawasaki}: Not only can global-in-time existence of weak solutions be shown for general initial data, but failure of uniqueness may be characterized in quite detail \cite{BarlesSonerSouganidis}, and uniqueness of weak solutions is guaranteed as long as a classical solution exists. However, the concept of viscosity solutions crucially relies on the availability of the comparison principle, restricting it to (mostly) mean curvature flow in the context of interface evolution problems.

For interface evolution problems without comparison principle such as multiphase mean curvature flow or higher-order curvature driven flows, the question of uniqueness of weak solutions -- even in the absence of topology changes -- had remained open for a long time. In fact, solution concepts such as Brakke solutions for mean curvature flow \cite{Brakke} have even been known to admit artificial (unphysical) solutions, and attempts to develop solution concepts without these shortcomings have been made \cite{LauxOtto,KimTonegawa,StuvardTonegawa}. Recently, an approach based on relative energies has proven successful in establishing weak-strong uniqueness prior to singularities \cite{FischerHensel,FischerHenselLauxSimon,FischerHenselLauxSimonMS,HenselLaux}, as well as in deriving sharp-interface limits of phase-field models \cite{FischerLauxSimon,AbelsFischerMoser,FischerMarveggio,HenselMoser}.

So far, these weak-strong uniqueness results for interface evolution problems have been limited to situations without geometric singularities and in particular without topology changes. In the present work we show that the approach of relative energies is robust and capable of handling certain controlled topology changes in the (piecewise-in-time) strong solution: We show that in the case of multiphase mean curvature flow, a weak-strong uniqueness and stability principle holds also beyond shrinking circle type singularities. More precisely, for any classical solution to planar curvature flow whose interface consists of a smooth simple curve that shrinks, becomes increasingly circular, and disappears, any weak solution with similar initial data must stay close to it and disappear in the same kind of singularity. As in \cite{FischerHenselLauxSimon}, the weak solutions we consider are varifold-BV solutions in the sense of Stuvard-Tonegawa \cite{StuvardTonegawa}.

Recall that the classical result of Gage--Hamilton \cite{GageHamilton} and Grayson \cite{Grayson1987} asserts that any smooth, closed, and simple curve in the plane evolving by mean curvature flow (MCF) shrinks to a point in finite time, becoming increasingly circular in the process.
Combining this classical result with our main result, we recover a perturbative but genuinely multiphase version of the Gage--Hamilton--Grayson theorem: If the initial interface of a varifold-BV solution to mean curvature flow is sufficiently close to a smooth, closed, simple curve (in the sense of the relative energy distance, that is, in a tilt-excess-type distance), it will over time become increasingly circular and eventually disappear in a shrinking circle type singularity. In particular, this conclusion remains valid even if initially a small amount of other phases are present in the varifold-BV solution.

\section{Main result}
To state our main result, we first recall the notion of relative energy of a varifold-BV solution; note that the latter consist of a time-indexed family of varifolds $\mathcal{V}_t$ and an indicator function $\chi_i(\cdot,t)$ for each phase. In \cite{FischerHenselLauxSimon} the relative energy of a varifold-BV solution with respect to a strong solution $(\bar \chi_i)_{1\leq i\leq P}$ was defined as
\begin{align}
\label{RelativeEnergy}
E_{rel}(t)
:=& \frac{1}{2} \sum_{i=1}^P \sum_{j=1}^P \int_{I_{i,j}(t)} 1-\vec{n}_{i,j}(\cdot,t) \cdot \xi_{i,j}(\cdot,t) \,d\mathcal{H}^1
\\&
\nonumber
+\int_{\mathbb{R}^2 \times \mathbb{S}^1} 1 - \omega(x,t) \,d\mathcal{V}_t(x,s)
\end{align}
where $\xi_{i,j}(\cdot,t)$ denotes a suitable extension of the unit normal vector field of the interface between phases $i$ and $j$ in the strong solution, $I_{i,j}(t):=\partial^*\{\chi_i(\cdot,t) = 1\} \cap \partial^*\{\chi_j(\cdot,t)=1\}$ denotes the interface between phases $i$ and $j$ in the varifold-BV solution, $\vec{n}_{i,j}$ is its corresponding unit normal vector, and $\omega(\cdot,t)\in [0,1]$ is the local ratio between the surface measure $\frac{1}{2}\sum_{i=1}^P |\nabla \chi_i|(\cdot,t)$ and the weight measure $\mu_t(\cdot):=\int_{\mathbb{S}^1} d\mathcal{V}_t(\cdot,s)$. For our present results the relative energy will share the same structure as in \eqref{RelativeEnergy}, except that as in \cite{FischerHenselLauxSimon} we also add a lower-order term for coercivity; it is merely the vector fields $\xi_{i,j}$ that will be modified suitably (in a way that corresponds to simply shifting the strong solution $\bar \chi$ in space and time).

Observe that the relative energy \eqref{RelativeEnergy} measures the mismatch between the classical solution and the varifold-BV solution in a tilt-excess-type way; furthermore, the second term on the right-hand side of \eqref{RelativeEnergy} measures the mismatch in multiplicity between the varifold $\mathcal{V}_t$ and the surface measure $\frac{1}{2}\sum_{i=1}^P |\nabla \chi_i|(\cdot,t)$.

Recall that the general goal of weak-strong uniqueness proofs via relative energy methods is to establish a Gronwall-type estimate $\frac{d}{dt} E_{rel} \leq C E_{rel}$, which enables one to conclude.
However, previous weak-strong stability results of this form
(e.g., \cite{FischerHenselLauxSimon} and~\cite{HenselLaux}) have been limited
to time horizons before the first topology change of the strong solution:
The reason is that for typical topology changes such as the circular topology change considered in the present work, a naive estimation of the terms on the right-hand side of the relative energy inequality would lead to a Gronwall estimate of the form $\frac{d}{dt} E_{rel} \leq C(t) E_{rel}$ with $C(t)\sim \frac{1}{|T-t|}$. Note that the time-dependent constant $C(t)$ is borderline non-integrable, leading to a loss of any assertion on stability past the topology change. This suggests that in order to deal with topology changes, a more refined estimation is needed to control the right-hand side of the relative energy estimate, e.\,g., by combining the relative entropy approach with a linearized stability analysis.

A linearized stability analysis for the relative energy \eqref{RelativeEnergy} beyond a circular topology change however reveals the presence of two unstable modes and one borderline stable mode. It turns out that the unstable modes correspond to translational degrees of freedom, while the borderline stable mode corresponds to a shift in time (see Figure~\ref{Fig:spacetime} and Figure~\ref{Fig:spacetime2} for a more detailed explanation).
We overcome this issue of unstable modes by developing a weak-strong stability theory for circular topology change up to dynamic shift, which amounts to dynamically
adapting the strong solution to the weak solution to a degree which
takes care of the leading-order non-integrable contributions in 
the Gronwall estimate.

The precise statement of our main result reads as follows.

\begin{theorem}[Weak-strong stability up to shift for circular topology change]
	\label{theo:mainResultC} 
	Let $d = 2$ and $P\geq 2$. Consider a global-in-time varifold-BV solution $(\mathcal{V}, \chi)$ with $\chi=(\chi_1,\ldots,\chi_P)$  (or a BV solution $\chi=(\chi_1,\ldots,\chi_P)$ )
	to multiphase MCF in the sense of Definition~\ref{DefinitionVarSolution} (or Definition~\ref{DefinitionWeakSolution}).
	Consider also a smoothly evolving two-phase strong solution
	to MCF $\bar \chi = (\bar \chi_1,\ldots,\bar\chi_P\equiv 1 {-} \bar\chi_1)$
	with extinction time $T_{ext} =: \smash{\frac{1}{2}}r_0^2 > 0$.
	Fix $\alpha \in (1,5)$.
	
	There exists $\delta_{\mathrm{asymp}} \ll_\alpha \frac{1}{2}$ such that if
	for all $t \in (0,T_{ext})$ the interior of the phase $\{\bar\chi_1(\cdot,t) {=} 1\} \subset \mathbb{R}^2$ is 
	$\delta_{\mathrm{asymp}}$-close to a circle with radius $r(t):=\sqrt{2(T_{ext}{-}t)}$
	in the sense of Definition~\ref{DefinitionShrinkingCircle},
	the evolution of~$\bar\chi$ is \emph{unique and stable until the extinction time $T_{ext}$ modulo shift} 
	in the following sense:
	
	There exists $\delta \ll 1$ as well as an error functional 
	$E[\mathcal{V}_0,\chi_0|\bar\chi_0] \in [0,\infty)$ for the initial 
	data $(\mathcal{V}_0,\chi_0)$ and $\bar\chi_0$
	of $(\mathcal{V},\chi)$ and $\bar\chi$, respectively, 
	such that if
	\begin{align} \label{ineq:initialhp}
		E[\mathcal{V}_0,\chi_0|\bar\chi_0] < \delta r_0,
	\end{align}
	one may then choose 
	\begin{align*}
		&\text{a time horizon } \tchi > 0, \\
		&\text{a path of translations } z \in W^{1,\infty}((0,\tchi);\mathbb{R}^2), \text{ and}\\
		&\text{a strictly increasing bijection }
		T \in W^{1,\infty}((0,t_\chi);(0,T_{ext}))
	\end{align*}
	with the properties $(z(0), T(0))=(0,0)$, 
	\begin{align}
		\label{eq:upperBoundTranslation}
		\frac{1}{r_0}\| z \|_{L^\infty_t(0,\tchi)} &\leq 
		\sqrt{\frac{1}{r_0}E[\mathcal{V}_0,\chi_0|\bar\chi_0]}, \\
		\label{eq:upperBoundTimeDilation}
		\frac{1}{T_{ext}}	\| T - \mathrm{id}\|_{L^\infty_t(0,\tchi)} &\leq 
		\sqrt{\frac{1}{r_0}E[\mathcal{V}_0, \chi_0|\bar\chi_0]},
	\end{align}
	such that for a.e.\ $t \in (0,\tchi)$ it holds
	\begin{align} \label{ineq:decayE}
		E[\mathcal{V}, \chi|\bar\chi^{z,T}](t) \leq 
		E[\mathcal{V}_0,\chi_0|\bar\chi_0] \Big(\frac{r_T(t)}{r_0}\Big)^\alpha
	\end{align}
	where $\bar \chi^{z,T}(x,t) := \bar \chi(x {-} z(t), T(t))$, $(x,t) \in \mathbb{R}^2{\times}[0,\tchi)$,
	denotes the \emph{shifted strong solution}, $r_T(t) := r(T(t))$ for $t \in [0,\tchi)$,
	and $E[\mathcal{V}, \chi|\bar\chi^{z,T}](t) $ is an error functional
	satisfying
	\begin{align}
		\label{eq:basicCoercivity}
		E[\mathcal{V},\chi|\bar\chi^{z,T}](t)  = 0
		\quad\Longleftrightarrow\quad
			\begin{cases}
			\chi(\cdot,t) = \bar\chi^{z,T}(\cdot,t) \quad\mathcal{H}^2 \text{-a.e.\ in } \mathbb{R}^2,
			\, \,\\ \mu_t = \frac12 \sum_{i=1}^P |\nabla \bar \chi   (\cdot,t)|
			\quad\mathcal{H}^1 \text{-a.e.\ in } \mathbb{R}^2.
		\end{cases}
	\end{align}
	In particular, 
	under the assumption of~\eqref{ineq:initialhp},
	the varifold-BV~solution $(\mathcal{V},\chi)$ goes extinct 
	and the associated time horizon~$\tchi$ provides an upper bound
	for the extinction time.
\end{theorem}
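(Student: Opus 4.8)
The plan is to run the relative-energy (relative-entropy) method of~\cite{FischerHenselLauxSimon}, but with the strong solution replaced throughout by a \emph{dynamically modulated} copy $\bar\chi^{z,T}$ of itself, the path of translations $z(t)$ and the time reparametrization $T(t)$ being chosen so as to cancel precisely those contributions to the Gronwall estimate which, near the extinction time, fail to be integrable. First I would take $E[\mathcal{V},\chi|\bar\chi^{z,T}]$ to be a relative energy of the type~\eqref{RelativeEnergy} built from $\bar\chi^{z,T}$: a tilt-excess term $\frac12\sum_{i,j}\int_{I_{i,j}(t)}1-\vec{n}_{i,j}\cdot\xi^{z,T}_{i,j}\,d\mathcal{H}^1$, the multiplicity-defect term $\int 1-\omega\,d\mathcal{V}_t$, and a lower-order bulk term controlling $\int|\chi-\bar\chi^{z,T}|$ against a truncated signed distance to the shifted interface, normalized so that the equivalence~\eqref{eq:basicCoercivity} holds; here $\xi^{z,T}_{i,j}$ is the calibration of the shifted and time-reparametrized interface. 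It is convenient to view everything after the parabolic blow-up $y=x/r(t)$, $\tau=-\log(r(t)/r_0)$, under which the shrinking circle of radius $r(t)$ becomes the fixed unit circle, planar MCF becomes the rescaled curve-shortening flow, and --- by the $\delta_{\mathrm{asymp}}$-closeness hypothesis of Definition~\ref{DefinitionShrinkingCircle} --- the interface of $\bar\chi$ remains a small, uniformly smooth perturbation of the unit circle for all $\tau>0$.

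Next I would differentiate $E[\mathcal{V},\chi|\bar\chi^{z,T}]$ in time. As usual this produces (i) the intrinsic relative-energy error terms, which after the rescaling are responsible for a Gronwall constant of order $1/(T_{ext}-t)$, together with (ii) new terms, linear in $\dot z$ and $\dot T$, coming from differentiating $\bar\chi^{z,T}$ and $\xi^{z,T}$ in time. Linearizing the intrinsic terms around the unit circle in the rescaled frame exhibits their leading, scale-invariant part as a quadratic form on curve perturbations whose only non-negative directions are the Fourier mode~$1$ perturbations (the two translational modes, neutral for the self-similar flow once the frame is allowed to move) and the Fourier mode~$0$ perturbation (the dilation mode, equivalent to shifting the extinction time); all higher Fourier modes are strictly stable, with spectral gap given by the eigenvalue of mode~$2$. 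This is exactly the linearized Gage--Hamilton--Grayson picture, now read off from the relative energy rather than from the flow itself.

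I would then \emph{define} $\dot z(t)$ and $\dot T(t)$ through a gauge/orthogonality condition: at each time the discrepancy between $\chi$ and $\bar\chi^{z,T}$ should carry no component along the two translational modes or the dilation mode. This is an ODE system for $(z,T)$ coupled to $(\mathcal{V},\chi)$, to be solved simultaneously with the relative-energy inequality by a continuity/bootstrap argument on a maximal interval $[0,\tchi)$. On this interval the projected-out modes contribute only controllable quantities, the surviving part of the error is governed by the spectral gap, and tracing this decay back through the rescaling yields~\eqref{ineq:decayE}: the decay exponent arises from the mode-$2$ eigenvalue of the rescaled flow together with the intrinsic length scale $r(t)$ carried by the relative energy, and the argument delivers it for any prescribed $\alpha\in(1,5)$, the loss absorbing the $\dot z,\dot T$-error terms of~(ii) and the merely borderline dilation mode. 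Integrating the gauge conditions against the decaying energy then gives the quantitative bounds~\eqref{eq:upperBoundTranslation}--\eqref{eq:upperBoundTimeDilation} and, in particular, keeps $T$ a strictly increasing bijection onto $(0,T_{ext})$, so that $\tchi$ is well defined.

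The last assertion follows by combining~\eqref{ineq:decayE} with coercivity~\eqref{eq:basicCoercivity}: as $t\uparrow\tchi$ one has $T(t)\uparrow T_{ext}$, hence $r_T(t)\downarrow 0$ and $E[\mathcal{V},\chi|\bar\chi^{z,T}](t)\to 0$, so $\mu_t$ and the bulk mismatch vanish in the limit and $(\mathcal{V},\chi)$ goes extinct with extinction time at most $\tchi$. I expect the principal obstacle to be the coupled fixed point of the previous step: one must close the circular dependence between the modulation parameters $(z,T)$ and the error functional while staying inside the regime where the rescaled linearized spectral picture is valid --- this is what forces $\delta_{\mathrm{asymp}}$ to be small, quantitatively in terms of $\alpha$ --- and where the relative energy remains coercive. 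The sharp technical point inside it is to prove that, after subtracting the modulation contributions, the intrinsic quadratic error form is bounded \emph{above} by $-c\,E/(T_{ext}-t)$ plus integrable remainders; that is, that the would-be non-integrable Gronwall growth becomes, after modulation, genuine decay at the rescaled spectral rate.
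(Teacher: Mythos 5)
Your proposal reproduces the paper's guiding heuristic --- modulate the strong solution by a space--time shift $(z,T)$ so that the Fourier modes $0,\pm1$ (dilation and translations), which are responsible for the borderline non-integrable $1/(T_{ext}-t)$ Gronwall constant, are removed, and then use the spectral gap at mode~$2$ to obtain decay at any rate $\alpha\in(1,5)$ --- and your treatment of the final extinction assertion and of the shift bounds is in the right spirit. However, there is a genuine gap at the heart of the argument: everything in your plan after the differentiation of the relative energy (the ``quadratic form on curve perturbations'', the gauge/orthogonality conditions defining $\dot z,\dot T$ as projections of ``the discrepancy'' onto the low modes, and the spectral decay) presupposes that the weak interface is a small graph-type perturbation of the shifted circle with a well-defined mode decomposition. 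For a varifold--BV solution this is not given: smallness of the relative energy alone does not imply unit multiplicity, absence of the other phases, or any graph representation of $\supp|\nabla\chi_1|$ over $\bar I^{z,T}$, and your bootstrap ``stay in the regime where the linearized spectral picture is valid'' is precisely the step that needs a proof. The paper's proof spends most of its length on exactly this reduction: a splitting of times according to whether the dissipation $\int|H_\mu|^2\,d\mu_t$ exceeds $\Lambda\,2\pi/r_T(t)$ (at non-regular times the disproportionately large dissipation absorbs all error terms, with no graph structure needed), and, at regular times, Allard's regularity theorem combined with the decomposition of planar reduced boundaries into Jordan--Lipschitz curves and the coercivity of the error functional to rule out extra phases, hidden boundaries and higher multiplicity and to produce the $H^2$ height function with the quantitative $C^0$/$C^1$ bounds. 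Without a counterpart to this regularity/time-splitting machinery your linearized analysis has nothing to act on, and the curvature of the weak solution (which can be arbitrarily large at some times even when the energy is small) is never controlled in your scheme.

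A related, smaller issue is the definition of the modulation itself. Your exact orthogonality (gauge) conditions only make sense once the discrepancy can be decomposed into modes, i.e.\ again only in the graph regime, and solving them requires an implicit-function/well-posedness argument coupled to a solution whose error is merely BV in time. The paper instead defines $(z,T)$ \emph{unconditionally} through explicit ODEs, $\dot z=\frac{6}{r_T^2}\dashint_{\bar I^{z,T}}\rho\,\n_{\bar I^{z,T}}\,d\mathcal{H}^1$ and $\dot{\mathfrak T}=\frac{4}{r_T}\dashint_{\bar I^{z,T}}\rho\,d\mathcal{H}^1$, where $\rho$ is a slicing-based ``interface error height'' that is meaningful even when no graph structure is available; the constants $6$ and $4$ are chosen large enough to overdamp the unstable modes rather than to enforce exact orthogonality, and existence follows from a truncated Picard--Lindel\"of argument decoupled from the stability analysis. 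If you want to keep your gauge formulation, you would still need a robust, slicing-type substitute for the mode projections valid for general varifold--BV data, plus the regularity reduction above; as it stands, the proposal does not close.
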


We phrased our main result in a form emphasizing the main
contribution of this work, i.e., stability of the evolution
for times close to a circular topology change (formalized above
by means of the notion of quantitative closeness to a shrinking circle,
see Definition~\ref{DefinitionShrinkingCircle} for details). 
One may also derive a corresponding stability estimate starting 
from initial data not entailing an approximately self-similarly
evolving solution at early times. 

\begin{remark}
	Consider a smoothly evolving two-phase solution
	to mean curvature flow $\bar \chi = (\bar \chi_1 ,\ldots,\bar\chi_P\equiv 1 {-} \bar\chi_1)$
	with initial data $\bar\chi_{0,1}=\chi_{\mathcal{A}_0}$ for some
	smooth, bounded, open and simply connected initial set $\mathcal{A}_0\subset\mathbb{R}^2$.
	By the Gage--Hamilton--Grayson theorem (\cite{GageHamilton},\cite{Grayson1987}), 
	the solution goes extinct at time $T_{ext}=\smash{\frac{\mathrm{vol}(\mathcal{A}_0)}{\pi}}$,
	and for any given $\delta_{\mathrm{asymp}} \in (0,1)$, there exists a time 
	$t_0=t_0(\mathcal{A}_0,\delta_{\mathrm{asymp}}) < T_{ext}$ 
	such that for all $t \in [t_0,T_{ext})$ it holds that the interior of $\{\bar\chi_1(\cdot,t) = 1\}$
	is $\delta_{\mathrm{asymp}}$-close to a circle with radius $r(t):=\sqrt{2(T_{ext}{-}t)}$
	in the sense of Definition~\ref{DefinitionShrinkingCircle}. 
	
	In particular, from some time onwards one is in the asymptotic regime
	close to the extinction time for which the conclusions of Theorem~\ref{theo:mainResultC}
	apply, at least if at time $t_0=t_0(\mathcal{A}_0,\delta_{\mathrm{asymp}})$ the assumption~\eqref{ineq:initialhp}
	on the smallness of the initial error is satisfied (i.e., with respect to $r(t_0)$).
	Based on the weak-strong stability estimate prior to topology changes from~\cite{FischerHenselLauxSimon},
	this requirement can be translated into a condition at the initial time $t=0$:
	there exists a constant $\mu_0=\mu_0(t_0,\mathcal{A}_0)>0$ such that if $E[\chi_0|\bar\chi_0] < \frac{1}{\mu_0}\delta r_0$
	then $E[\chi|\bar\chi](t_0) < \delta r(t_0)$.
	
	In summary, for general initial data~$\mathcal{A}_0$ as considered in this remark, 
	one first has, thanks to the main result of~\cite{FischerHenselLauxSimon},
	at least stability in the sense of $\frac{d}{dt} E[\chi|\bar\chi](t) \leq C(t) E[\chi|\bar\chi](t)$ for times
	$t \in (0,t_0)$ where $C(t) \sim (2(T_{ext}{-}t))^{-1} = r(t)^{-2}$.
	Then, if $E[\chi_0|\bar\chi_0] < \frac{1}{\mu_0}\delta r_0$, in addition the decay estimate~\eqref{ineq:decayE} from Theorem~\ref{theo:mainResultC}
	holds true for all times in the asymptotic regime $(t_0,T_{ext})$. 
	\hfill$\diamondsuit$
\end{remark}

Before we recall the precise definitions of the two weak solution concepts
to which our main result applies, we provide two comments on the latter.

\begin{itemize}[leftmargin=0.7cm]
\item 
	First, note that the decay exponent $\alpha < 5$ in our stability estimate \eqref{ineq:decayE} 
	is optimal in the sense that it is consistent with the results obtained by Gage--Hamilton~\cite{GageHamilton}.
	More precisely, from \cite[Corollary 5.7.2]{GageHamilton} one can read off that for
	a smooth, closed and simple curve $\partial\mathcal{A}(t)$ shrinking by MCF
	to the origin $x=0$, it holds
	asymptotically as $t \uparrow T_{ext}$ that
	$\sup_{\partial\mathcal{A}(t)} |\nabla^\mathrm{tan} H_{\partial\mathcal{A}(t)}| 
	\lesssim r(t)^{- \tilde\alpha}$, for any $0<\tilde \alpha \ll 1$. Since $H_{\partial\mathcal{A}}:=-\nabla^\mathrm{tan}\cdot \n_{\partial\mathcal{A}}$, by dimensional analysis, one then expects
	from the fact that our error functional behaves like a tilt excess that one gets
	decay for any exponent $\alpha =5- \tilde\alpha$, $0<\tilde \alpha \ll 1$.
\item Second, note that the error bounds \eqref{eq:upperBoundTranslation}--\eqref{eq:upperBoundTimeDilation} 
	on the space-time shift $(z,T)$ are optimal in terms of the scaling 
	$\sqrt{\frac{1}{r_0}E[\mathcal{V}_0,\chi_0|\bar\chi_0]}$. For example, 
	let $0<\delta \ll 1 $ and consider, next to a shrinking circle
	with initial radius~$r_0$, a shrinking circle with initial radius $(1+\sqrt{\delta})r_0$
	(both centered at the origin). Note that the initial error between the two solutions
	indeed satisfies our assumption~\eqref{ineq:initialhp},
	cf.\ \eqref{eq:bulkPerturbativeRegime}--\eqref{eq:relEntropyPerturbativeRegime}.
	Since the relative error between the two extinction times is $\sim \sqrt{\delta}$,
  this shows the claim for~\eqref{eq:upperBoundTimeDilation}. Shifting instead
	a shrinking circle with initial radius~$r_0$ initially by $\sqrt{\delta}r_0 v$, $v \in \mathbb{S}^1$,
	in turn illustrates the claim for~\eqref{eq:upperBoundTranslation}.
\end{itemize}

\begin{figure}
	\includegraphics{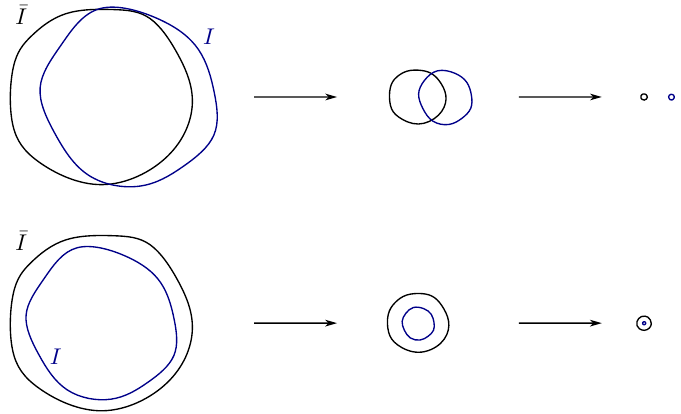}
	\caption{First case - Motivation for space shift $z$: $\bar I = \partial \{\bar \chi_1 = 1\}$ and $I = \partial^\ast \{\chi_1 = 1\}$  simultaneously shrink to two distinct points which are shifted by $z$.
	Second case - Motivation for time shift $T$: $\bar I = \partial \{\bar \chi_1 = 1\}$ and $I = \partial^\ast \{\chi_1 = 1\}$ shrink to the same point but at distinct times.}
	\label{Fig:spacetime}
\end{figure}

\begin{figure}
	\includegraphics{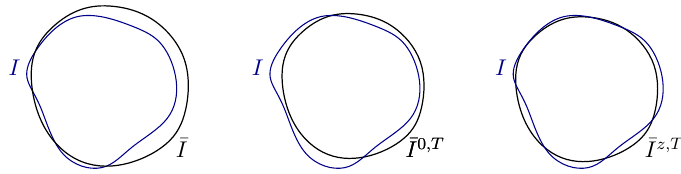}
	\caption{The interface $\bar I = \partial \{\bar \chi_1 = 1\}$ is dynamically adapted to $I = \partial^\ast \{\chi_1 = 1\}$ by means of the space-time shift $(z,T)$, namely $\bar I^{z,T} = \partial \{\bar \chi^{z,T}_1 = 1\}$.}
	\label{Fig:spacetime2}
\end{figure}

Our arguments to prove weak-strong stability up to dynamic shift for circular topology change work for both BV solutions in the sense of Laux-Otto-Simon (\cite{LauxOtto}, \cite{Laux2020}, \cite{LauxSimon}) and for more general varifold-BV solutions, recently introduced by Stuvard and Tonegawa in \cite{StuvardTonegawa}.
Here we recall the definitions of both of these weak 
solution concepts (cf. \cite[Definition 12]{FischerHenselLauxSimon} and \cite[Definition 18]{FischerHenselLauxSimon}).

\begin{definition}[BV solution to multiphase MCF]
	\label{DefinitionWeakSolution}
	Let $d = 2$ and $P \geq 2$. A measurable map
	\begin{align*}
		\chi=(\chi_1,\ldots,\chi_P)\colon\mathbb{R}^2 \times [0,\infty) \to \{0,1\}^P
	\end{align*}
	(or the corresponding tuple of sets $\Omega_i(t):=\{\chi_i(t) =1\}$ for $i=1,\ldots, P$)
	is called a \emph{global-in-time BV~solution to multiphase MCF} with
	initial data $\chi_0=(\chi_{0,1},\ldots,\chi_{0,P})\colon$ $\mathbb{R}^2 \to \{0,1\}^P$
	(of finite interface energy in the sense of~\cite[Definition~12]{FischerHenselLauxSimon})
	if the following conditions are satisfied:
	\begin{itemize}[leftmargin=0.7cm]
		\item[i)] For any $T_{\mathrm{BV}} \in (0,\infty)$, $\chi$ is a BV~solution to multiphase~MCF
		on $[0,T_{\mathrm{BV}})$ with initial data~$\chi_0$ in the sense of~\cite[Definition~13]{FischerHenselLauxSimon}
		(with trivial surface tension matrix $\sigma=\mathrm{diag}(1,\ldots,1)\in\mathbb{R}^{P\times P}$) such that
		\begin{itemize}[leftmargin=0.7cm]
			\item[i.a)] (Partition with finite interface energy) For almost every $T\in [0,\TBV)$, $\chi(T)$
			is a partition of $\Rd[2]$ with interface energy 
			\begin{align}\label{EnergyFunctionalBVPartition}
				E[\chi] := 
				\frac{1}{2}
				\sum_{i,j=1,i\neq j}^P 
				\mathcal{H}^1(I_{i,j}(t))
			\end{align}
			such that
			\begin{align}\label{GlobalEnergyBoundBVSolution}
				\esssup_{T\in [0,\TBV)} E[\chi(\cdot,T)] 
				 < \infty,
			\end{align}
			where $I_{i,j}(t)=\partial^*\{\chi_i(\cdot,t) = 1\} \cap \partial^*\{\chi_j(\cdot,t)=1\}$
			for $i\neq j$ is the interface between 
			the $i$-th and the $j$-th phase. We also define $\n_{i,j}(\cdot,t) 
			:= -\frac{\nabla\chi_i(\cdot,t)}{|\nabla\chi_i(\cdot,t)|}
			= \frac{\nabla\chi_j(\cdot,t)}{|\nabla\chi_j(\cdot,t)|}$
			abeing the unit normal vector field along~$I_{i,j}(t)$
			pointing from the $i$-th to the $j$-th phase.
			\item[i.b)] (Evolution equation) For all $i\in \{1,\ldots,P\}$, there exist normal velocities 
			$V_i\in L^2(\Rd[2]\times [0,\TBV),|\nabla\chi_i|\otimes\mathcal{L}^1)$
			in the sense that each $\chi_i$ satisfies the evolution equation
			\begin{align}\label{EvolutionPhasesBVSolution}
				\nonumber
				&\int_{\Rd[2]} \chi_i(\cdot,T) \varphi(\cdot,T) \, \dx
				-\int_{\Rd[2]} \chi_{0,i} \varphi(\cdot,0)  \, \dx
				\\&
				=\int_0^T \int_{\Rd[2]} V_i \varphi  \, \mathrm{d}|\nabla\chi_i| \dt
				+ \int_0^T \int_{\Rd[2]} \chi_i \partial_t \varphi  \, \dx \dt
			\end{align}
			for almost every $T\in [0,\TBV)$ and all $\varphi\in C^\infty_{\mathrm{cpt}}(\Rd[2] \times [0,\TBV])$.
			Moreover, the (reflection) symmetry condition 
			$V_i \smash{\frac{\nabla \chi_i}{|\nabla \chi_i|}}=V_j  \smash{\frac{\nabla \chi_j}{|\nabla \chi_j|}}$
			shall hold $\mathcal{H}^{1}$-almost everywhere on 
			the interfaces $I_{i,j}$ for $i\neq j$.
			\item[i.c)] (BV formulation of mean curvature) The normal velocities satisfy the equation
			\begin{align}\label{BVFormulationMeanCurvature}
				\nonumber
				&\sum_{i,j=1,i\neq j}^P \int_0^{\TBV}\int_{I_{i,j}(t)}V_i \frac{\nabla \chi_i}{|\nabla \chi_i|} \cdot \vec{B} 
				\,\mathrm{d}\mathcal{H}^{d-1} \dt
				\\&
				= \sum_{i,j=1,i\neq j}^P \int_0^{\TBV}\int_{I_{i,j}(t)} 
				\bigg(\Id-\frac{\nabla \chi_i}{|\nabla \chi_i|}\otimes \frac{\nabla \chi_i}{|\nabla \chi_i|}\bigg) : \nabla \vec{B} 
				\,\mathrm{d}\mathcal{H}^{d-1} \dt
			\end{align}
			for all $B\in C^\infty_{\mathrm{cpt}}(\Rd[2] \times [0,\TBV];\Rd)$.
		\end{itemize}
		
		\item[ii)] {For all} $[s,\tau] \subset [0,\infty)$, 
		the energy dissipation inequality 
		\begin{align}
			\label{eq:energyDissip}
			E[\chi(\cdot,\tau)] + \int_{s}^\tau
			\sum_{i,j=1,i\neq j}^{P} \int_{I_{i,j}(t)}
			\frac{1}{2}|V_i|^2 \,d\mathcal{H}^1dt
			\leq E[\chi(\cdot,s)]
		\end{align}
		holds true, and in addition more generally the corresponding Brakke inequality in
		the BV-setting~\cite[Definition~2.1]{Laux2020}.
		\hfill$\diamondsuit$
	\end{itemize} 
\end{definition}

\begin{definition}[Varifold-BV solution to multiphase MCF,
cf.\ \cite{StuvardTonegawa}] \label{DefinitionVarSolution}
	Let $\mathcal{V} = (\mathcal{V}_t)_{t\in(0,\infty)}$ be a measurable family of integral and rectifiable
	$(d{-}1)$-varifolds; denote by $(\mu_t)_{t\in (0,\infty)}$
	the associated family of weight measures. 
	Let $(\chi_1, . . . , \chi_P )\colon \mathbb{R}^d\times [0, \infty) \rightarrow\{0, 1\}^P$
	denote a family of indicator functions of sets
	with bounded perimeter subject to the properties in item i.a) of Definition \ref{DefinitionWeakSolution}.
	
	The tuple $(\mathcal{V}, \chi)$ is called a \emph{global-in-time varifold-BV~solution to multiphase MCF} with
		initial data $(\mathcal{V}_0,\chi_0)$, where $\mathcal{V}_0$ 
		is an integral and rectifiable $(d{-}1)$-varifold and~$\chi_0$
		is as in Definition \ref{DefinitionWeakSolution}, if the following conditions are satisfied:
	\begin{itemize}
		\item[i)]  For a.e.\ $t \in [0, \infty)$, there exists a generalized mean curvature vector $H_\mu(\cdot, t) \in
		L^2 (\mathbb{R}^d, \mu_t)$ of $\mathcal{V}_t$ in the sense that
		\begin{align}\label{eq:Hvarifold}
			- \int_{\mathbb{R}^d} H_\mu \cdot B \, \mathrm{d} \mu_t = \int_{\mathbb{R}^d \times \mathbf{G}(d, d-1)} \Id_{\mathbf{G}(d, d-1)} : \nabla B \, \mathrm{d} \mathcal{V}_t
		\end{align}
		for all $B \in C^\infty_{cpt} (\Rd; \Rd)$, where $\mathbf{G}(d, d-1)$ denotes the space of all 
		$(d{-}1)$-dimensional linear subspaces of $\Rd$. 
		
		\item[ii)]The family of varifolds $\mathcal{V}$ is a Brakke solution to multiphase mean curvature
		flow (cf.\ \cite[Definition 2.1]{StuvardTonegawa}). 
		Furthermore, {for all} $[s,\tau] \subset [0,\infty)$
		the global energy dissipation estimate
		\begin{align}
			\mu_\tau (\Rd) + \int_s^\tau \int_{\Rd} |H_\mu|^2 \, \mathrm{d} \mu_t \leq \mu_s(\Rd)
		\end{align}
		holds true. 	
		
		\item[iii)] For a.e.~$t \in (0, \infty)$, the varifold $\mathcal{V}_t$  describes the interfaces $\partial^\ast \{\chi_i(\cdot, t) = 1\}$ in the sense that
		\begin{align} \label{eq:varsol3}
			\frac12 \sum_{i=1}^P |\nabla \chi_i (\cdot, t)| \leq \mu_t.
		\end{align}
		\item[iv)] The indicator functions $\chi_i$ evolve according to the mean curvature of $\mathcal{V}$  in
		the sense that
		\begin{align}
			\partial_t \chi_i + H_\mu \cdot \nabla \chi_i = 0
		\end{align}
		holds distributionally for all $i \in \{1,...,P\}$. 	\hfill$\diamondsuit$
	\end{itemize}
\end{definition}

We finally formalize the notion of being quantitatively close to a circle.

\begin{definition}[Quantitative closeness to circle]
	\label{DefinitionShrinkingCircle}
	Let $\mathcal{A} \subset \mathbb{R}^2$ be a bounded, open and simply connected set
	with $C^\infty$ boundary $\partial\mathcal{A}$. Fix two constants $\delta_{\mathrm{asymp}} \in (0,\frac{1}{2})$
	and $r > 0$. We refer to $\mathcal{A}$ as \emph{$\delta_{\mathrm{asymp}}$-close to a circle with radius~$r$}	
	if there exists an arc-length parametrization $\gamma\colon[0,L) \to \mathbb{R}^2$ of~$\partial\mathcal{A}$
	such that $\smash{\frac{1}{2}}r$ is a tubular neighborhood width of $\partial\mathcal{A}$ and
	\begin{align}
		\label{eq:asymptotically_circular1}
		\frac{1}{2\pi r}|L - 2\pi r| &\leq \delta_{\mathrm{asymp}},
		\\
		\label{eq:asymptotically_circular2}
		\sup_{\theta \in [0,L)} 
		\big|\n_{\partial\mathcal{A}}\big(\gamma(\theta)\big) - (-e^{2\pi i \frac{\theta}{L}})\big|
		&\leq \delta_{\mathrm{asymp}},
		\\
		\label{eq:asymptotically_circular3}
		\sup_{\theta \in [0,L)} 
		r \Big|H_{\partial\mathcal{A}}\big(\gamma(\theta)\big) - \frac{1}{r}\Big| &\leq \delta_{\mathrm{asymp}},
		\\
		\label{eq:asymptotically_circular4}
		\sup_{\theta \in [0,L)} 
		r^2 \big|(\nabla^\mathrm{tan} H_{\partial\mathcal{A}})\big(\gamma(\theta)\big)\big| &\leq \delta_{\mathrm{asymp}},
	\end{align}
	where $\n_{\partial\mathcal{A}}$ denotes the unit normal vector field along~$\partial\mathcal{A}$
	pointing inside~$\mathcal{A}$ and $H_{\partial\mathcal{A}}:=-\nabla^\mathrm{tan}\cdot \n_{\partial\mathcal{A}}$
	is the associated scalar mean curvature of~$\partial\mathcal{A}$. 
	\hfill$\diamondsuit$
\end{definition}

\paragraph{\textbf{Notation and some elementary differential geometry}} 
For the smoothly evolving $\bar\chi$, we write $\n_{\bar{I}}(\cdot,t)$ for the unit normal vector field
of~$\bar{I}(\cdot,t):=\partial\{\bar\chi_1(\cdot,t){=}1\}$ 
pointing inside $\{\bar\chi_1(\cdot,t){=}1\}$,
and also define a tangent vector field through 
$\tau_{\bar{I}}(\cdot,t) := J^{-1}\n_{\bar{I}}(\cdot,t)$
with $J \in \mathbb{R}^{2\times 2}$
being counter-clockwise rotation by $90^\circ$, $t \in (0,T)$.
Curvature is defined by $H_{\bar{I}}(\cdot,t) := - \nabla^\mathrm{tan} \cdot \n_{\bar{I}}(\cdot,t)$ for $t \in (0,T_{ext})$.
In particular, it holds
\begin{align}
	\nabla^\mathrm{tan} \n_{\bar{I}} &= -H_{\bar{I}} \tau_{\bar{I}} \otimes\tau_{\bar{I}},
	&&\nabla^\mathrm{tan} \tau_{\bar{I}} = H_{\bar{I}}\n_{\bar{I}} \otimes \tau_{\bar{I}}.
\end{align}

Within the tubular neighborhood $\{x\in\mathbb{R}^2\colon \dist(x,\bar{I}(t)) < r(t)/2\}$,
the nearest-point projection onto~$\partial\{\bar\chi(\cdot,t){=}1\}$ is denoted
by $P_{\bar I}(\cdot,t)$, whereas we write $\sdist_{\bar I}(\cdot,t)$ for the signed
distance function, with orientation fixed through the requirement 
$\nabla \sdist_{\bar I}(\cdot,t)|_{\bar I} = \n_{\bar{I}}(\cdot,t)$, $t \in (0,T_{ext})$.

Given a map $f\colon \mathbb{R}^2 \times [0,T_{ext}) \to \mathbb{R}^m$
(or $f\colon \bigcup_{t\in [0,T_{ext})}\bar{I}(t){\times}\{t\} \to \mathbb{R}^m$), we will use the notation $f^{z,T}$ 
to refer to the space-time shifted function $\mathbb{R}^2 \times (0,\tchi) \ni (x,t) 
\mapsto f(x {-} z(t), T(t)) \in \mathbb{R}^m$  
(or in the other case $\bigcup_{t\in [0,\tchi)}(z(t){+}\bar{I}(T(t)){\times}\{t\} 
\ni (x,t) \mapsto f(x {-} z(t), T(t)) \in \mathbb{R}^m$)
for any $\tchi \in (0,\infty)$, 
$z\colon [0,\tchi) \to \mathbb{R}^2$ 
and $T\colon	 [0, \tchi) \to [0,T_{ext})$.
We also define $\bar{I}:=\bigcup_{t\in [0,T_{ext})}\bar{I}(t){\times}\{t\}$.

The shifted geometry itself will be abbreviated by $\bar{I}^{z,T}(t) := z(t) + \bar{I}(T(t))$, $t \in (0,\tchi)$,
and analogously for an associated arc-length parametrization~$\bar\gamma(\cdot,t)$ of $\bar{I}(\cdot,t)$:
$\bar\gamma^{z,T}(\cdot,t) := z(t) + \bar\gamma(\cdot,T(t))$, $t \in (0,\tchi)$.
We also write $\bar{I}^{z,T}:=\bigcup_{t\in [0,t_\chi)}\bar{I}^{z,T}(t){\times}\{t\}$.
Note then that
\begin{align}
	\label{eq:shiftedSignedDistance}
	\sdist_{\bar I}^{z,T}(\cdot,t) = \sdist_{\bar I^{z,T}}(\cdot,t),
\end{align}
and thus as a direct consequence
\begin{align}
	\label{eq:shiftedNormal}
	\n^{z,T}_{\bar{I}}(\cdot,t) = \n_{\bar{I}^{z,T}}(\cdot,t).
\end{align}
Indeed, the former simply follows from
\begin{align*}
	\sdist_{\bar I}(\cdot,t) &= \dist\big(\cdot,\mathbb{R}^2\setminus\{\bar\chi_1(\cdot,t){=}1\}\big) 
	- \dist\big(\cdot,\{\bar\chi_1(\cdot,t){=}1\}\big).
\end{align*} 
Furthermore, within the tubular neighborhood
$\{x\in\mathbb{R}^2\colon \dist(x{-}z(t),\bar{I}(T(t)))<r(T(t)	)/2\}
= \{x\in\mathbb{R}^2\colon \dist(x,\bar{I}^{z,T}(t))<r(T(t)	)/2\}$ it holds
\begin{align}
	\label{eq:shiftedProjection}
	P_{\bar{I}}^{z,T}(\cdot,t) = -z(t) + P_{\bar{I}^{z,T}}(\cdot,t).
\end{align}

Finally, for simplicity, 
we will denote  $\frac{d}{\mathrm{d} t} f $ by $\dot{f}$.

\section{Overview of the strategy}
For the rest of the paper, we consider the more general framework of varifold-BV solutions. 
In particular, it follows that all the results hold also for BV solutions.

We fix a global-in-time varifold-BV solution $(\mathcal{V},\chi=(\chi_1,\ldots,\chi_P))$ 
to (planar) multiphase MCF in the sense of Definition~\ref{DefinitionVarSolution} as well as
a smoothly evolving two-phase solution
to MCF $\bar \chi = (\bar \chi_1 ,\ldots,\bar\chi_P\equiv 1 {-} \bar\chi_1)$
with extinction time $T_{ext} =: \smash{\frac{1}{2}}r_0^2 > 0$. 
We also assume
that for all $t \in (0,T_{ext})$ the interior of the phase $\{\bar\chi_1(\cdot,t) {=} 1\} \subset \mathbb{R}^2$ is 
$\delta_{\mathrm{asymp}}$-close to a circle with radius $r(t):=\sqrt{2(T_{ext}{-}t)}$
in the sense of Definition~\ref{DefinitionShrinkingCircle}.
Consistent with the claim of Theorem~\ref{theo:mainResultC},
we will choose a suitable value of the constant $\delta_{\mathrm{asymp}}$
in the course of the proof.

\subsection{Heuristics: Leading-order behaviour near extinction time}
\label{subsec:heuristicsLeadingOrder}
The aim of this subsection is to compute
heuristically the time evolution of our linearized error functional in the simplified case of a centered self-similarly shrinking circle. As a result, our analysis reveals the instability of our linearized error functional near the extinction time.

Consider a centered circle self-similarly shrinking by mean curvature flow: $t \mapsto 
\partial B_{r(t)} = \mathrm{im}\,\bar{\gamma}(t) \subset \mathbb{R}^2$,
where $\bar{\gamma}(t)\colon [0,2\pi r(t)) \to \partial B_{r(t)}$,
$\theta \mapsto r(t) e^{i\frac{\theta}{r(t)}}$, is an arc-length
parametrization of $\partial B_{r(t)}$. In particular, $\dot r = -\frac{1}{r}$
in the interval $(0,\frac{1}{2}r_0^2=T_{ext})$ for $r_0:=r(0)>0$,
i.e., $r(t) = \sqrt{2(T_{ext} - t)}$.

Apart from the shrinking circle, let us consider a second solution to mean curvature flow,
for which we in addition assume that it can be written as a smooth graph over the self-similarly
shrinking circle. More precisely, there exists a smooth time-dependent height function
$h(\cdot,t)\colon\partial B_{r(t)} \to \mathbb{R}$ with $|h(\cdot,t)| \ll r(t)$ and $|h'(\cdot,t)| \ll 1$
such that this second solution is represented as the image of the curve
\begin{align}
	\label{eq:heuristics1}
	\gamma_h(\cdot,t) := \big(\mathrm{id} + h(\cdot,t) \n_{\partial B_{r(t)}}\big) \circ \bar\gamma(\cdot,t)
	\quad\text{on } [0,2\pi r(t)),
\end{align}
where $\n_{\partial B_{r(t)}}$ denotes the inward-pointing unit normal along $\partial B_{r(t)}$
and by slight abuse of notation $h'(\cdot,t) := (\tau_{\partial B_{r(t)}}\cdot\nabla^\mathrm{tan}) h(\cdot,t)$
for the choice of tangent vector field $\tau_{\partial B_{r(t)}}\big(\bar\gamma(\theta,t)\big) = i e^{i\frac{\theta}{r(t)}}$.
As we will show in Lemma \ref{lemma:relentfreaze}, our error functional in this perturbative setting corresponds to leading order to
\begin{align}
	\label{eq:heuristics9}
	E_h(t) := \int_{\partial B_{r(t)}} \frac{1}{2} \frac{h^2(\cdot,t)}{r^2(t)}
	+ \frac{1}{2} (h')^2(\cdot,t) \,d\mathcal{H}^1.
\end{align}
For the current purposes, we content ourselves with studying the stability of~$E_h(t)$ near the extinction time.

To this end, we have to derive the PDE satisfied by the height function~$h$
(and its derivative). Dropping from now on for ease of notation the time dependence
of all involved quantities, we first note that by definition in case of self-similarly shrinking circle
\begin{align}
	\label{eq:heuristics2}
	\partial_t \bar \gamma = \Big(\frac{1}{r}\n_{\partial B_{r}}
	+ \lambda\tau_{\partial B_{r}}\Big) \circ \bar\gamma
	\quad\text{on } [0,2\pi r),
\end{align}
where $\lambda$ denotes the tangential velocity.  
Second, we may then, on one side, directly compute based on the definition~\eqref{eq:heuristics1}
\begin{equation}
	\label{eq:heuristics3}
	\begin{aligned}
		\partial_t \gamma_h &= \bigg(\Big(\frac{1}{r} + \partial_t h
		+ \lambda h'\Big) \n_{\partial B_{r}}\bigg) \circ \bar\gamma
		+ \bigg(\lambda \Big(1 - \frac{h}{r}\Big) \tau_{\partial B_{r}}\bigg) \circ \bar\gamma.
	\end{aligned}
\end{equation}
On the other side, since $\gamma_h$ is assumed to evolve by mean curvature flow, it holds
\begin{align}
	\label{eq:heuristics4}
	H_{\gamma_h} = \partial_t \gamma_h \cdot \n_{\gamma_h}
	\quad\text{on } [0,2\pi r),
\end{align}
where the normal $\n_{\gamma_h}$ and mean curvature $H_{\gamma_h}$
of the curve $\gamma_h$ are given by the elementary formulas
(with $J$ denoting the counter-clockwise rotation by~$90^\circ$)
\begin{equation}
	\begin{aligned}
		\label{eq:heuristics5}
		\n_{\gamma_h} &= J \frac{\partial_\theta\gamma_h}{|\partial_\theta\gamma_h|}
		= \Bigg(\frac{\Big(1-\frac{h}{r}\Big)\n_{\partial B_{r}} - h' \tau_{\partial B_{r}}}
		{\sqrt{\big(1-\frac{h}{r}\big)^2 + (h')^2}}\Bigg) \circ \bar\gamma
	\end{aligned}
\end{equation}
and
\begin{equation}
	\begin{aligned}
		\label{eq:heuristics6}
		H_{\gamma_h} &= \frac{\partial_{\theta\theta}\gamma_h}{|\partial_\theta\gamma_h|^2} \cdot \n_{\gamma_h}
		= \Bigg(\frac{\Big(1-\frac{h}{r}\Big)\Big(\frac{1}{r}+h''-\frac{h}{r^2}\Big) + 2 \frac{(h')^2}{r}}
		{\Big(\big(1-\frac{h}{r}\big)^2 + (h')^2\Big)^\frac{3}{2}}\Bigg) \circ \bar\gamma. 
	\end{aligned}
\end{equation}
From~\eqref{eq:heuristics3}--\eqref{eq:heuristics6}, one may now deduce the non-linear PDE
satisfied by the height function~$h$.
However, because in what follows we are only interested in identifying the leading-order behavior, we suppose
from now on that the height function~$h$ instead satisfies the corresponding linearized equation:
\begin{align}
	\label{eq:heuristics7}
	\partial_t h = h'' +\frac{h}{r^2}
	\quad\text{on } \partial B_{r}.
\end{align}
From this, using $(\partial_t h)'= \partial_t h' +  \frac{h}{r^2} $, we in particular deduce
\begin{align}
	\label{eq:heuristics8}
	\partial_t h' =  h''' +  2\frac{h'}{r^2} .
\end{align}
Indeed, this follows easily from~\eqref{eq:heuristics7}
and exploiting the change of variables $\widetilde{h}(\theta) := h(re^{i\theta})$
as a useful computational device.

Recalling~\eqref{eq:heuristics9}, we thus get from the transport theorem
as well as~\eqref{eq:heuristics7}--\eqref{eq:heuristics8}
\begin{align}
	\nonumber
	\frac{d}{dt} E_h &= 
	\int_{\partial B_{r}} \partial_t\bigg(\frac{1}{2} \frac{h^2}{r^2} 
	+ \frac{1}{2} (h')^2\bigg) \,d\mathcal{H}^1
	- \int_{\partial B_{r}} H_{\partial B_{r}}^2\bigg(\frac{1}{2} \frac{h^2}{r^2} 
	+ \frac{1}{2} (h')^2\bigg) \,d\mathcal{H}^1
	\\& 
	\label{eq:heuristics10}
	=\int_{\partial B_{r}} \frac{h}{r} 
	\Big(2\frac{h}{r^3} + \frac{h''}{r}\Big) \,d\mathcal{H}^1
	+ \int_{\partial B_{r}} h'
	\Big(h'''+ 2\frac{h'}{r^2}\Big) \,d\mathcal{H}^1
	\\&~~~
	\nonumber
	- \int_{\partial B_{r}} \frac{1}{r^2}\bigg(\frac{1}{2} \frac{h^2}{r^2} 
	+ \frac{1}{2} (h')^2\bigg) \,d\mathcal{H}^1.
\end{align}
Integrating by parts and collecting similar terms therefore yields
\begin{align}
	\label{eq:heuristics11}
	\frac{d}{dt} E_h + \int_{\partial B_{r}} (h'')^2 \,d\mathcal{H}^1
	= \int_{\partial B_{r}} \frac{3}{2} \frac{h^2}{r^4} 
	+ \frac{1}{2} \frac{(h')^2}{r^2} \,d\mathcal{H}^1.
\end{align}
Fourier decomposing 
\begin{align}
	\label{eq:fourierDecomp}
	[0,2\pi) \ni \theta \mapsto \widetilde{h}(\theta) = h(re^{i\theta})
	= a_0 \frac{1}{\sqrt{2\pi}}\chi_{[0,2\pi]} + \sum_{k=1}^\infty a_k \frac{\cos(k\theta)}{\sqrt{\pi}} 
	+ b_k \frac{\sin(k\theta)}{\sqrt{\pi}},
\end{align}
where we also recall the formulas for the associated Fourier coefficients
\begin{align*}
	a_0 = \int_0^{2\pi} \frac{1}{\sqrt{2\pi}} \widetilde h(\theta) \,d\theta,
	\quad
	a_k = \int_0^{2\pi} \widetilde h(\theta) \frac{\cos(k\theta)}{\sqrt{\pi}} \,d\theta,
	\quad
	b_k = \int_0^{2\pi} \frac{\sin(k\theta)}{\sqrt{\pi}} \,d\theta, 
\end{align*} 
then rearranges \eqref{eq:heuristics11} as
\begin{equation}
	\begin{aligned}
		\label{eq:heuristics12}
		&\frac{d}{dt} E_h + \frac{1}{r^3} \sum_{k=1}^\infty k^4\big(a_k^2+b_k^2\big)
		= \frac{1}{r^3} \frac{3}{2} a_0^2
		+ \frac{1}{r^3} \sum_{k=1}^\infty \Big(\frac{3}{2} + \frac{1}{2}k^2\Big)\big(a_k^2+b_k^2\big).
	\end{aligned}
\end{equation}
Since $k^4 - \frac{3}{2} - \frac{1}{2}k^2 > 0$ for $k \geq 2$, we infer that only
the modes $(a_0,a_1,b_1)$ are unstable near the extinction time (in the sense that
these are precisely those inducing the borderline non-integrable singularity $r^{-2}$
in the Gronwall estimate of $E_h$). 

\subsection{Heuristics: Decay estimate} 
\label{subsec:heuristicsDecay}
Geometrically, the unstable
modes correspond to time dilations and spatial translations.
The basic idea of the present work is to correct these by dynamically 
adapting the smoothly evolving strong solution. In the simplified
context of a self-similarly shrinking circle, this works heuristically as follows.

Consider $t_h > 0$ (to be interpreted as an upper bound for the perturbed solution)
as well as a smooth path $z\colon (0,t_h) \to \mathbb{R}^2$ of translations 
together with a smooth time diffeomorphism $T\colon (0,t_h) \to (0,\smash{\frac{1}{2}}r_0^2)$,
the latter to be thought of as a perturbation of the identity: $T(t) =: t + \mathfrak{T}(t)$
for $t \in (0,t_h)$. Based on this input, 
we then introduce the dynamically adapted solution
\begin{align}
	\label{eq:heuristics13}
	\bar\gamma^{z,T}(\theta,t) := \bar\gamma(\theta,T(t)) + z(t),
	\quad \theta \in [0,2\pi r_T(t)),\,t\in (0,t_h),
\end{align}
where $r_T(t):=r(T(t))$, and assume that the perturbed solution $\gamma_h$ is given by
\begin{align}
	\label{eq:heuristics14}
	\gamma_h(\cdot,t) = \big(\mathrm{id} + h(\cdot,t) \n_{\partial B_{r_T(t)}(z(t))}\big) \circ \bar\gamma^{z,T}(\cdot,t),
	\quad t\in (0,t_h),
\end{align}
where $|h(\cdot,t)| \ll r_T(t)$ and $|h'(\cdot,t)| \ll 1$. We are again interested in the
stability properties of
\begin{align}
	\label{eq:heuristics15}
	E_h^{z,T}(t) := \int_{\partial B_{r_T(t)}(z(t))} \frac{1}{2} \frac{h^2(\cdot,t)}{r^2(t)}
	+ \frac{1}{2} (h')^2(\cdot,t) \,d\mathcal{H}^1,
	\quad t\in (0,t_h).
\end{align}
In fact, we actually aim to identify ODEs for $z$ and $\mathfrak{T}$ such that $E_h^{z,T}$
satisfies a quantitative decay estimate on $(0,t_h)$. One of course already expects the
ODE for~$\mathfrak{T}$ to involve the mode $a_0$, whereas the ODE for $z$ is expected to be
encoded in terms of $(a_1,b_1)$. From now on, we again make use of the notational convention
of suppressing the time dependence of all involved quantities. To this end, it will
be convenient to associate to any map $f(\cdot,t)\colon \partial B_{r(t)} \to \mathbb{R}$
its time-rescaled version $f_T(\cdot,t)\colon \partial B_{r_T(t)} \to \mathbb{R}$ 
defined by $f_T(\cdot,t):=f(\cdot,T(t))$.

We start by computing the normal speed of $\partial B_{r_T}(z)$. 
By definition~\eqref{eq:heuristics13},
\begin{align}
	\label{eq:heuristics16}
	\partial_t \bar\gamma^{z,T} = (\partial_t\bar\gamma)_T \big(1 + \dot{\mathfrak{T}}\big)
	+ \dot{z}.
\end{align}
Hence, the normal speed of $\partial B_{r_T}(z)$
in the direction of $\n_{\partial B_{r_T}(z)}$ is given by
\begin{align}
	\label{eq:heuristics17}
	V_{\partial B_{r_T}(z)} = \frac{1}{r_T} \big(1 + \dot{\mathfrak{T}}\big)
	+ \n_{\partial B_{r_T}(z)} \cdot \dot{z}.
\end{align}
The tangential speed in the direction of $\tau_{\partial B_{r_T}(z)}$ is furthermore given by
\begin{align}
	\label{eq:heuristics18}
	\lambda_{\partial B_{r_T}(z)} = \lambda_T \big(1 + \dot{\mathfrak{T}}\big) 
	+ \tau_{\partial B_{r_T}(z)} \cdot \dot{z},
\end{align}
where $\lambda$ is the tangential velocity from~\eqref{eq:heuristics2}.
In particular, we may now compute
\begin{equation}
	\label{eq:heuristics19}
	\begin{aligned}
		\partial_t \gamma_h
		&= \bigg(\Big(V_{\partial B_{r_T}(z)} {+} \partial_t h
		{+} \lambda_{\partial B_{r_T}(z)} h'\Big) \n_{\partial B_{r_T}(z)}\bigg) \circ \bar\gamma^{z,T}
		\\&~~~
		+ \bigg(\Big(\lambda_{\partial B_{r_T}(z)} - \lambda_T\big(1 + \dot{\mathfrak{T}}\big)\frac{h}{r_T}\Big)
		\tau_{\partial B_{r_T}(z)}\bigg) \circ \bar\gamma^{z,T}.
	\end{aligned}
\end{equation}
Furthermore, the analogous versions of the formulas~\eqref{eq:heuristics5}--\eqref{eq:heuristics6} hold true:
\begin{equation}
	\begin{aligned}
		\label{eq:heuristics20}
		\n_{\gamma_h} 
		= \bigg(\frac{\Big(1-\frac{h}{r_T}\Big)\n_{\partial B_{r_T}(z)} 
			- h' \tau_{\partial B_{r_T}(z)}}
		{\sqrt{\big(1-\frac{h}{r_T}\big)^2 + (h')^2}}\bigg) \circ \bar\gamma^{z,T}
	\end{aligned}
\end{equation}
and
\begin{equation}
	\begin{aligned}
		\label{eq:heuristics21}
		H_{\gamma_h}
		= \bigg(\frac{\Big(1-\frac{h}{r_T}\Big)\Big(\frac{1}{r_T}+h''-\frac{h}{r_T^2}\Big) + 2 \frac{(h')^2}{r_T}}
		{\sqrt{\big(1-\frac{h}{r_T}\big)^2 + (h')^2}^3}\bigg) \circ \bar\gamma^{z,T}.
	\end{aligned}
\end{equation}
Combining the information provided by~\eqref{eq:heuristics17}--\eqref{eq:heuristics20}, we deduce
\begin{align*}
	\partial_t \gamma_h \cdot \n_{\gamma_h}&= 
	\Big(1-\frac{h}{r_T}\Big)
	\Big(V_{\partial B_{r_T}(z)} {+} \partial_t h
	{+} \lambda_{\partial B_{r_T}(z)} h'\Big)
	\\&~~~
	- h'\bigg(\Big(1-\frac{h}{r_T}\Big)\lambda_T\big(1 + \dot{\mathfrak{T}}\big)
	+ \tau_{\partial B_{r_T}(z)} \cdot \dot{z}\bigg)
	\\&
	= \Big(1-\frac{h}{r_T}\Big) \Big(\frac{1}{r_T}\big(1 + \dot{\mathfrak{T}}\big) 
	+ \n_{\partial B_{r_T}(z)} \cdot \dot{z} + \partial_t h\Big)
	- \frac{h}{r_T} h' \tau_{\partial B_{r_T}(z)} \cdot \dot{z}.
\end{align*}
Turning as in Section~\ref{subsec:heuristicsLeadingOrder} to the linearized PDE satisfied by the height function, we therefore obtain
\begin{align}
	\label{eq:heuristics22}
	\partial_t h = h'' + \frac{h}{r^2_T}
	- \frac{\dot{\mathfrak{T}}}{r_T} - \n_{\partial B_{r_T}(z)} \cdot \dot{z}
\end{align}
as well as
\begin{align}
	\label{eq:heuristics23}
	(\partial_t h') = h'''+ (2 + \dot{\mathfrak{T}})\frac{h'}{r^2_T}
	+ \frac{1}{r_T} \tau_{\partial B_{r_T}(z)} \cdot \dot{z}.
\end{align}

We may now finally compute based on the transport theorem, the definition~\eqref{eq:heuristics15},
and the formulas~\eqref{eq:heuristics17} as well as~\eqref{eq:heuristics22}--\eqref{eq:heuristics23}
\begin{align}
	\nonumber
	\frac{d}{dt} E_h^{z,T}
	&=\int_{\partial B_{r_T}(z)} \partial_t\bigg(\frac{1}{2} \frac{h^2}{r_T^2} 
	+ \frac{1}{2} (h')^2\bigg) \,d\mathcal{H}^1
	\\&~~~
	\nonumber
	- \int_{\partial B_{r_T(z)}} H_{\partial B_{r_T}(z)}V_{\partial B_{r_T}(z)} 
	\bigg(\frac{1}{2} \frac{h^2}{r_T^2} + \frac{1}{2} (h')^2\bigg) \,d\mathcal{H}^1
	\\&
	\nonumber
	= \int_{\partial B_{r_T}(z)} \frac{h}{r_T} \bigg(\frac{1{+}\dot{\mathfrak{T}}}{r_T^3}h
	+ \frac{1}{r_T}\Big(h'' + \frac{h}{r^2_T}
	- \frac{\dot{\mathfrak{T}}}{r_T} - \n_{\partial B_{r_T}(z)} \cdot \dot{z}\Big)\bigg) \,d\mathcal{H}^1
	\\&~~~
	\nonumber
	+ \int_{\partial B_{r_T}(z)} h' \Big(h'''+ (2 + \dot{\mathfrak{T}})\frac{h'}{r^2_T}
	+ \frac{1}{r_T} \tau_{\partial B_{r_T}(z)} \cdot \dot{z}\Big)  \,d\mathcal{H}^1
	\\&~~~
	\nonumber
	- \int_{\partial B_{r_T(z)}} \frac{1}{r_T^2} 
	\bigg(\frac{1}{2} \frac{h^2}{r_T^2} + \frac{1}{2} (h')^2\bigg) \,d\mathcal{H}^1
	\\&~~~
	\nonumber
	- \int_{\partial B_{r_T(z)}} H_{\partial B_{r_T}(z)}\big(V_{\partial B_{r_T}(z)} - H_{\partial B_{r_T}(z)}\big)
	\bigg(\frac{1}{2} \frac{h^2}{r_T^2} + \frac{1}{2} (h')^2\bigg) \,d\mathcal{H}^1.
\end{align}
Hence, integrating by parts and collecting again similar terms yields
\begin{align}
	\label{eq:heuristics24}
	\frac{d}{dt} E_h^{z,T}
	&=  \int_{\partial B_{r_T}(z)} \frac{3}{2}\frac{h^2}{r_T^4} \,d\mathcal{H}^1
	- \int_{\partial B_{r_T}(z)} \frac{h}{r_T^3} \dot{\mathfrak{T}} \,d\mathcal{H}^1
	\\&~~~
	\nonumber
	+ \int_{\partial B_{r_T}(z)} \frac{1}{2}\frac{(h')^2}{r_T^2} \,d\mathcal{H}^1
	- \int_{\partial B_{r_T}(z)} 2\frac{h}{r_T^2} \n_{\partial B_{r_T}(z)} \cdot \dot{z} \,d\mathcal{H}^1
	\\&~~~
	\nonumber
	- \int_{\partial B_{r_T}(z)} (h'')^2 \,d\mathcal{H}^1
	\\&~~~
	\nonumber
	+ R_{h.o.t.},
\end{align}
where 
\begin{align}
	\label{eq:heuristics25}
	R_{h.o.t.} := \int_{\partial B_{r_T}(z)} \frac{1}{r_T}\Big(\frac{\dot{\mathfrak{T}}}{r_T} 
	- \n_{\partial B_{r_T}(z)} \cdot \dot{z}\Big)
	\bigg(\frac{1}{2} \frac{h^2}{r_T^2} + \frac{1}{2} (h')^2\bigg) \,d\mathcal{H}^1.
\end{align}

Based on the Fourier decomposition~\eqref{eq:fourierDecomp},
the identity~\eqref{eq:heuristics24} now motivates to define
\begin{align}
	\label{eq:expectedODEs}
	\dot{\mathfrak{T}} = \frac{c_T}{r_T}\dashint_{0}^{2\pi} \widetilde{h} \,d\theta,
	\quad 
	\dot{z} = \frac{c_z}{r_T^2}\dashint_0^{2\pi} \widetilde{h} (-e^{i\theta}) \,d\theta,
\end{align}
where the constants $(c_T,c_z)$ are yet to be chosen. Indeed, with these choices
we get
\begin{equation}
	\begin{aligned}
		\label{eq:heuristics26}
		&\frac{d}{dt} E_h^{z,T} + \frac{(c_T {-} 3/2)}{r_T^2} \frac{a_0^2}{r_T} + \frac{(c_z {-} 1)}{r_T^2} \frac{a_1^2+b_1^2}{r_T}
		\\&~~~~~~~~~~~~~~~~~
		+ \frac{1}{r_T^2} \sum_{k = 2}^\infty \Big(k^4{-}\frac{3}{2}{-}\frac{1}{2}k^2\Big)\frac{a_k^2+b_k^2}{r_T}
		= R_{h.o.t.},
	\end{aligned}
\end{equation}
where, due to $|\dot{\mathfrak{T}}| \leq c_T\frac{1}{r_T}\|h\|_{L^\infty(\partial B_{r_T}(z))}$
and $|\dot{z}| \leq c_z\frac{1}{r_T^2}\|h\|_{L^\infty(\partial B_{r_T}(z))}$, one has
an estimate for the remainder term in the form of
\begin{align}
	\label{eq:heuristics27}
	\Big|R_{h.o.t.}\Big| \leq \Big(c_T {+} c_z\Big) \frac{\|h\|_{L^\infty(\partial B_{r_T}(z))}}{r_T}
	\frac{1}{r_T^3} \Big(\frac{1}{2}a_0^2 + \sum_{k=1}^\infty \frac{1}{2}(1{+}k^2)(a_k^2+b_k^2)\Big).
\end{align}
Hence, for given $\widetilde{\delta} \in (0,1)$, 
if $|h|\ll_{\widetilde{\delta},c_T,c_z} r_T$, one
gets an upgrade of~\eqref{eq:heuristics24} in the form of
\begin{equation}
	\label{eq:heuristics28}
	\begin{aligned}
		&\frac{d}{dt} E_h^{z,T} + \frac{c_T {-} 3/2(1{+}\widetilde{\delta})}{r_T^2} \frac{a_0^2}{r_T} 
		+ \frac{c_z {-} (1 {+} \widetilde{\delta})}{r_T^2} \frac{a_1^2+b_1^2}{r_T}
		\\&~~~~~~~~~~~~~~~~~~~
		+ \frac{1}{r_T^2} \sum_{k = 2}^\infty \bigg(k^4{-}(1{+}\widetilde{\delta})\Big(\frac{3}{2}{+}\frac{1}{2}k^2\Big)\bigg)\frac{a_k^2+b_k^2}{r_T}
		\leq 0.
	\end{aligned}
\end{equation}
Because of
\begin{align}
	\label{eq:heuristics28b}
	E_h^{z,T} = \frac{1}{2}\frac{a_0^2}{r_T} + \sum_{k=1}^\infty \frac{1}{2}(1{+}k^2)\frac{a_k^2+b_k^2}{r_T},
\end{align}
we deduce that for any constant $\alpha > 1$ satisfying
\begin{align}
	\label{eq:heuristics29}
	\alpha &\leq \min\{2c_T - 3(1{+}\widetilde{\delta}), c_z - (1{+}\widetilde{\delta})\},
	\\
	\label{eq:heuristics29b}
	\alpha \frac{1}{2}(1{+}k^2) &\leq k^4{-}(1{+}\widetilde{\delta})\Big(\frac{3}{2}{+}\frac{1}{2}k^2\Big),
	&& k\geq 2,
\end{align}
it holds
\begin{align}
	\label{eq:heuristics30}
	\frac{d}{dt} E_h^{z,T} + \frac{\alpha}{r_T^2} E_h^{z,T} \leq 0.
\end{align}

Choosing $c_T:=4$ and $c_z:= 6$,
optimizing shows that for any desired exponent $\alpha \in (1,5)$ there exists a choice
of the constant $\widetilde{\delta}$ such that~\eqref{eq:heuristics30} holds true
(in the perturbative regime $|h| \ll_{c_T,c_z,\widetilde{\delta}} 1$ with linearized
evolution law~\eqref{eq:heuristics22}). Indeed, the function
$f\colon [2,\infty) \to [0,\infty),\,x\mapsto \smash{\big(\frac{1}{2}(1{+}x^2)\big)^{-1}\big(x^4-(\frac{3}{2}{+}\frac{1}{2}x^2)\big)}$
is monotonically increasing and satisfies $f(2) = 5$. 

Finally, since 
$|\dot{\mathfrak{T}}| \leq \smash{c_T\frac{1}{r_T}\|h\|_{L^\infty(\partial B_{r_T}(z))}}$,
one may choose for any $\alpha \in (1,5)$ the constant $\widetilde{\delta}$
such that 
(in the perturbative regime $|h| \ll_{c_T,c_z,\widetilde{\delta}} 1$ with linearized
evolution law~\eqref{eq:heuristics22})
it even holds $\frac{d}{dt} E_h^{z,T} + (1{+}\dot{\mathfrak{T}})\frac{\alpha}{r_T^2} E_h^{z,T} \leq 0$,
so that $\smash{\frac{d}{dt}r_T^\alpha = -\alpha r_T^{\alpha-1}\frac{1}{r_T}
=-(1{+}\dot{\mathfrak{T}})\frac{\alpha}{r_T^2} r_T^\alpha}$ implies
\begin{align}
	\label{eq:heuristics31}
	E_h^{z,T}(t) \leq E_h^{z,T}(0) \Big(\frac{r_T(t)}{r_0}\Big)^\alpha,
	\quad t \in (0,t_h).
\end{align}
This is precisely the type of decay estimate (or, weak-strong stability estimate up to shift)
claimed in our main result, Theorem~\ref{theo:mainResultC}.

Before we turn in the upcoming subsections to a description of the key ingredients and steps for our proof of
Theorem~\ref{theo:mainResultC} (with the above considerations,
of course, being their main motivation), let us provide some final remarks on the main
assumptions behind the derivation of the decay estimate~\eqref{eq:heuristics31}.

First, one may derive a version of~\eqref{eq:heuristics24} also in the case where
the time-evolving curve $\bar\gamma$ is not parametrizing a perfect circle. The
main difference in this case is that the coefficients are not anymore simply constant
along~$\bar\gamma$ (i.e., not proportional to inverse powers of~$r_T$). It is
precisely at this stage where we exploit our notion of quantitative closeness
of the strong solution to a circular solution, cf.\ Definition~\ref{DefinitionShrinkingCircle},
allowing us to effectively reduce the situation to the constant-coefficient
computation~\eqref{eq:heuristics24} (i.e., in PDE~jargon, we perform nothing else than a global freezing
of coefficients). 

A second simplifying assumption was the usage of the linearized evolution law~\eqref{eq:heuristics22}
for the height function~$h$ as well as that we only considered the stability of the
leading order contribution~$E_h$ to our actual error functional. It will turn out that
these linearization errors are harmless and only impact the final stability estimate
qualitatively in the same manner as the term~$R_{h.o.t.}$ from~\eqref{eq:heuristics24}.

Needless to say, in the general setting of Theorem~\ref{theo:mainResultC} where we
aim for quantitative stability beyond circular topology change even for
the broader class of weak (i.e., varifold-$BV$) solutions, we can not rely
on the above considerations (e.g., transport theorem, derivation of the
(linearized) evolution law~\eqref{eq:heuristics22}) in order to rigorously derive the evolution of the error functional.
In order to still unravel the structure of the right hand side of~\eqref{eq:heuristics24}, we
instead make use of the recently introduced notions of gradient flow
calibrations and relative entropies for multiphase mean curvature flow
from~\cite{FischerHenselLauxSimon},
serving as a robust replacement of the above considerations to the weak setting.

Last but not least, one of course also needs an independent argument ensuring
that one can reduce the whole estimation strategy to a perturbative graph setting as above.
This, however, is precisely one of the key points of the upcoming subsections. 

\subsection{A general stability estimate for multiphase MCF}
\label{subsec:prelimStability}
Starting point of our strategy is a stability estimate, 
see Lemma~\ref{lem:preliminaryStability} below,
which one may essentially directly infer from the combination of~\cite[Proposition~17]{FischerHenselLauxSimon} and~\cite[Lemma~20]{FischerHenselLauxSimon} (or more precisely, their proofs), together with the following compatibility properties of the varifold $\mathcal{V}_t$ and the indicator functions $\chi_i$.
From Definition \ref{DefinitionVarSolution} one may infer that, for each $ i\in \{1,...,P\}$ 
and a.e.\ $t\in (0,\infty)$,  the Radon--Nikod\'{y}m derivatives
\begin{align}\label{eq:multiplicity}
\omega_i(\cdot, t) := \frac{\mathrm{d} |\nabla \chi_i(\cdot, t)|}{\mathrm{d} \mu_t}  \in [0,1], \quad \omega(\cdot, t) := \frac{\mathrm{d} \frac12 \sum_{i=1}^{P} |\nabla \chi_i(\cdot, t)|}{\mathrm{d} \mu_t}  \in [0,1]
\end{align}
exist. Note that $\omega = \frac12 \sum_{i=1}^{P} \omega_i $. 
Since $\mathcal{V}$ a family of integral varifolds, we have that $\omega \in \{1/n : n \in \mathbb{N}\} \cup \{0\}$ and 
\[
\mu_t \llcorner \Big\{\frac12 \sum_{i=1}^P \omega_i (\cdot, t) = 1\Big\} = \mathcal{H}^{d-1}\llcorner \Big(\Big\{\frac12 \sum_{i=1}^P \omega_i (\cdot, t) = 1\Big\} \cap \bigcup_{i \neq j} I_{i,j}(t)\Big)
\]
and, since $\mathcal{V}_t$ is rectifiable for a.e. $t \in (0,\infty)$, it follows that 
\begin{align*}\notag 
&\mathcal{V}_t \llcorner \Big\{\frac12 \sum_{i=1}^P \omega_i (\cdot, t) = 1\Big\} \\
&= \frac12 \sum_{i=1}^P \Big( \supp |\nabla \chi_i(\cdot, t)| \llcorner \Big\{\frac12 \sum_{i=1}^P \omega_i (\cdot, t) = 1\Big\} \otimes \big(\delta_{\operatorname{Tan}_x^{d-1}(\supp |\nabla \chi_i(\cdot, t)|)}\big)_{x \in \supp |\nabla \chi_i|}\Big) 
\end{align*}
for a.e. $t \in (0,\infty)$. In particular, from Brakke's perpendicularity theorem it follows that for a.e. $t \in (0,\infty)$
\[
H_\mu(\cdot, t)= \Big( H_\mu(\cdot, t) \cdot \frac{\nabla \chi_i (\cdot, t)}{|\nabla \chi_i (\cdot, t)|}\Big) \frac{\nabla \chi_i (\cdot, t)}{|\nabla \chi_i (\cdot, t)|}
\]
$\mathcal{H}^{d-1}$-a.e. on $\{\frac12 \sum_{i=1}^P \omega_i (\cdot, t) = 1\} \cap \supp |\nabla \chi_i(\cdot, t)|$ for all $i \in \{1,...,P\}$.
We define 
\[
V_i := - H_\mu(\cdot, t) \cdot \frac{\nabla \chi_i (\cdot, t)}{|\nabla \chi_i (\cdot, t)|}
\]
 and $V_{i,j}:=V_i=-V_j$ for all $i,j \in \{1,...,P\}$, $i \neq j$.

\begin{lemma}[Preliminary stability estimate]
	\label{lem:preliminaryStability}
	Let 
	$((\xi_i)_{i=1,\ldots,P},(\vartheta_i)_{i=1,\ldots,P{-}1},B)$---to be thought of
	as being constructed from~$\bar\chi$---such that
	\begin{align*}
		\xi_i &\in W^{1,\infty}_{loc}\big([0,T_{ext});W^{1,\infty}(\mathbb{R}^2;\mathbb{R}^2)\big) \cap 
		L^\infty_{loc}\big([0,T_{ext});W^{2,\infty}(\mathbb{R}^2;\mathbb{R}^2)\big),
		\\
		\vartheta_i &\in W^{1,1}_{loc}\big([0,T_{ext});L^1(\mathbb{R}^2)\big) \cap 
		L^1_{loc}\big([0,T_{ext});(W^{1,1} {\cap} W^{1,\infty})(\mathbb{R}^2)\big),
		\\
		B &\in L^\infty_{loc}\big([0,T_{ext});W^{2,\infty}(\mathbb{R}^2;\mathbb{R}^2)\big),
	\end{align*}
	where $(\vartheta_i)_{i=1,\ldots,P{-}1}$ is supposed to satisfy,
	for all $t \in (0,T_{ext})$,
	\begin{align*}
		\vartheta_1(\cdot,t) &< 0 &&\text{in the interior of } \{\bar\chi_1(\cdot,t) = 1\},
		\\
		\vartheta_1(\cdot,t) &> 0 &&\text{in the exterior of } \overline{\{\bar\chi_1(\cdot,t) = 1\}},
		\\
		\vartheta_i(\cdot,t) &= 1 &&\text{throughout } \mathbb{R}^2 \text{ for } 
		i \in \{1,\ldots,P{-}1\}.
	\end{align*}
	Define $\xi_{i,j} := \xi_{i} - \xi_{j}$ for all distinct $i,j\in\{1,\ldots,P\}$.
	Consider in addition a triple $(\tchi,z,T)$ so that $\tchi \in (0,\infty)$,
	$z \in W^{1,\infty}_{loc}((0,\tchi);\mathbb{R}^2)$ and $T\in W^{1,\infty}((0,\tchi);(0,T_{ext}))$,
	and define for all $t \in (0,\tchi)$
	\begin{align}
		\label{def:interfaceerror}
		E_{\mathrm{int}}[\mathcal{V},\chi|\bar\chi^{z,T}](t) &:= 
		\mu_t(\Rd) - \sum_{i,j=1,i\neq j}^{P} \frac{1}{2}
		\int_{I_{i,j}(t)} \n_{i,j}(\cdot,t) \cdot \xi_{i,j}^{z,T}(\cdot,t) \,d\mathcal{H}^1,
		\\
		\label{def:bulkError}
		E_{\mathrm{bulk}}[\chi|\bar\chi^{z,T}](t) &:= \sum_{i=1}^{P-1} \int_{\mathbb{R}^2} 
		|\chi_i(\cdot,t) {-} \bar\chi_i^{z,T}(\cdot,t)| |\vartheta_i^{z,T}(\cdot,t)| \,dx,
		\\
		\label{def:relenergy}
		E[\mathcal{V},\chi|\bar\chi^{z,T}](t) &:= E_{\mathrm{int}}[\mathcal{V},\chi|\bar\chi^{z,T}](t) + E_{\mathrm{bulk}}[\chi|\bar\chi^{z,T}](t).
	\end{align}
	
	Then, for all $[s,\tau] \subset [0,\tchi)$
	and all $\psi \in C^1_{cpt}([0,\tchi);[0,\infty))$, 
	it holds
	\begin{equation}
		\label{eq:prelimStabilityRelEntropy}
		\begin{aligned}
			&\psi(\tau)
			E_{\mathrm{int}}[\mathcal{V}, \chi|\bar\chi^{z,T}](\tau) 
			+ \int_{s}^{\tau} \psi(t) \sum_{i,j=1,i\neq j}^{P} \frac{1}{2} 
			\mathcal{D}_{i,j}[\chi|\bar\chi^{z,T}](t) \,dt
			\\&~~~
			\leq \psi(s)
			E_{\mathrm{int}}[\mathcal{V}, \chi|\bar\chi^{z,T}](s) 
				+ \int_{s}^{\tau} \psi(t) 
				RHS^{\operatorname{var-BV}}[\mathcal{V},\chi|\bar\chi^{z,T}](t ) \,dt 
				\\&~~~~~~
			+ \int_{s}^{\tau} \psi(t) \sum_{i,j=1,i\neq j}^{P} \frac{1}{2} 
			RHS_{i,j}^{\mathrm{int}}[\chi|\bar\chi^{z,T}](t) \,dt
			\\&~~~~~~
			+ \int_{s}^{\tau} \Big(\frac{d}{dt}\psi(t)\Big) 
		E_{\mathrm{int}}[\mathcal{V}, \chi| \bar \chi^{z,T}](t) \,dt,
		\end{aligned}
	\end{equation}
	as well as
	\begin{equation}
	\begin{aligned}
		\label{eq:prelimStabilityBulk}
		&\psi(\tau)
		E_{\mathrm{bulk}} [\chi|\bar\chi^{z,T}](\tau) 
		\\&~~~
		= \psi(s)
		E_{\mathrm{bulk}}[\chi|\bar\chi^{z,T}](s) + \int_{s}^{\tau} \psi(t)
		\sum_{i=1}^{P-1} RHS_{i}^{\mathrm{bulk}}[\chi|\bar\chi^{z,T}](t) \,dt
		\\&~~~
		+ \int_{s}^{\tau} \Big(\frac{d}{dt}\psi(t)\Big) 
		E_{\mathrm{bulk}}[\mathcal{V}, \chi| \bar \chi^{z,T}](t) \,dt,
	\end{aligned}
	\end{equation}
	where the individual terms are given by
	\begin{align*}
		\mathcal{D}_{i,j}[\chi|\bar\chi^{z,T}](t)
		&:=  \int_{I_{i,j}(t)} \frac{1}{2} 
		\big|V_{i,j}{+}\nabla\cdot\xi_{i,j}^{z,T}\big|^2(\cdot,t) \,d\mathcal{H}^1
		\\&~~~~
		+ \int_{I_{i,j}(t)} \frac{1}{2} 
		\big|V_{i,j}\n_{i,j}{-}(B^{z,T}\cdot\xi_{i,j}^{z,T})\xi_{i,j}^{z,T}\big|^2(\cdot,t) \,d\mathcal{H}^1,
	\end{align*}
\begin{align*}
	&  RHS^{\operatorname{var-BV}}[\mathcal{V},\chi|\bar\chi^{z,T}](t) \\
	& := - \int_{\Rd} |H_\mu|^2 \Big(1- \frac12 \sum_{i=1}^P \rho_i \Big) \, {d}\mu_t 	+ \int_{\Rd} H_\mu \cdot B \Big(1- \frac12 \sum_{i=1}^P \rho_i \Big) \, {d}\mu_t\\
&~~~~	-     \sum_{i,j=1,i\neq j}^{P} \frac{1}{2}  \int_{I_{i,j}} \big( \Id - \n_{i,j} \otimes \n_{i,j}\big)  : \nabla B \,  d\mathcal{H}^1
	+ \int_{\Rd \times \mathbf{G}(d,d-1)} \Id_{\mathbf{G}(d,d-1)} : \nabla B \, d \mathcal{V}_t
,
\end{align*}
	and
	\begin{align*}
		&RHS_{i,j}^{\mathrm{int}}[\chi|\bar\chi^{z,T}](t) 
		\\&~
		:= 
		- \int_{I_{i,j}(t)} 
		\big(\partial_t\xi_{i,j}^{z,T} {+} (B^{z,T}\cdot\nabla)\xi_{i,j}^{z,T} {+} (\nabla B^{z,T})^\mathsf{T}\xi_{i,j}^{z,T}\big)(\cdot,t)
		\cdot (\n_{i,j}{-}\xi_{i,j}^{z,T})(\cdot,t)\,d\mathcal{H}^1
		\\&~~~~~
		- \int_{I_{i,j}(t)} \big(\partial_t\xi_{i,j}^{z,T} {+} (B^{z,T}\cdot\nabla)\xi_{i,j}^{z,T}\big)(\cdot,t)
		\cdot \xi_{i,j}^{z,T}(\cdot,t) \,d\mathcal{H}^1
		\\&~~~~~
		+ \int_{I_{i,j}(t)} \frac{1}{2} \big|\nabla\cdot\xi_{i,j}^{z,T} {+} B^{z,T}\cdot\xi_{i,j}^{z,T}\big|^2(\cdot,t) \,d\mathcal{H}^1
		\\&~~~~~
		- \int_{I_{i,j}(t)} \frac{1}{2} \big|B^{z,T}\cdot\xi_{i,j}^{z,T}\big|(\cdot,t) 
		\big(1 - |\xi_{i,j}^{z,T}|^2\big)(\cdot,t)\,d\mathcal{H}^1
		\\&~~~~~
		- \int_{I_{i,j}(t)} (1 - \n_{i,j}\cdot\xi_{i,j}^{z,T})(\cdot,t) \nabla\cdot\xi_{i,j}^{z,T}(\cdot,t)
		(B^{z,T}\cdot\xi_{i,j}^{z,T})(\cdot,t) \,d\mathcal{H}^1
		\\&~~~~~
		+ \int_{I_{i,j}(t)} \big((\mathrm{Id} {-} \xi_{i,j}^{z,T}\otimes \xi_{i,j}^{z,T})B^{z,T}\big)(\cdot,t) \cdot 
		\big((V_{i,j} {+} \nabla\cdot\xi_{i,j}^{z,T})\n_{i,j}\big)(\cdot,t) \,d\mathcal{H}^1
		\\&~~~~~
		+ \int_{I_{i,j}(t)} (1 - \n_{i,j}\cdot\xi_{i,j}^{z,T})(\cdot,t) \nabla\cdot B^{z,T}(\cdot,t) \,d\mathcal{H}^1
		\\&~~~~~
		- \int_{I_{i,j}(t)} (\n_{i,j} {-} \xi_{i,j}^{z,T})(\cdot,t) \otimes (\n_{i,j} {-} \xi_{i,j}^{z,T})(\cdot,t)
		: \nabla B^{z,T}(\cdot,t) \,d\mathcal{H}^1,
	\end{align*}
	as well as
	\begin{align*}
		&RHS_{i}^{\mathrm{bulk}}[\chi|\bar\chi^{z,T}](t)
		\\&~
		:= - \sum_{j = 1, j\neq i}^{P} \int_{I_{i,j}(t)} \vartheta_i^{z,T}(\cdot,t) 
		(B^{z,T}\cdot\xi_{i,j}^{z,T} {-} V_{i,j})(\cdot,t) \,d\mathcal{H}^1
		\\&~~~~~
		- \sum_{j = 1, j\neq i}^{P} \int_{I_{i,j}(t)} \vartheta_i^{z,T}(\cdot,t) 
		B^{z,T}(\cdot,t)\cdot (\n_{i,j} - \xi_{i,j}^{z,T})(\cdot,t) \,d\mathcal{H}^1
		\\&~~~~~
		+ \int_{\mathbb{R}^2} (\chi_i {-} \bar\chi_i^{z,T})(\cdot,t) \vartheta_i^{z,T}(\cdot,t) \nabla\cdot B^{z,T}(\cdot,t) \,dx
		\\&~~~~~
		+ \int_{\mathbb{R}^2} (\chi_i {-} \bar\chi_i^{z,T})(\cdot,t) 
		\big(\partial_t\vartheta_i^{z,T} {+} (B^{z,T}\cdot\nabla)\vartheta_i^{z,T}\big)(\cdot,t) \,dx.
	\end{align*}
\end{lemma}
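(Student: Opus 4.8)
The plan is to read \eqref{eq:prelimStabilityRelEntropy}--\eqref{eq:prelimStabilityBulk} as bookkeeping (in)equalities: differentiate the two pieces $E_{\mathrm{int}}$ and $E_{\mathrm{bulk}}$ of the relative energy in time, test against the weight $\psi$, substitute the weak formulations available for $(\mathcal V,\chi)$ and the (smooth, but merely space-time shifted) data built from $\bar\chi$, integrate by parts, and sort the resulting terms. This is precisely the algebra carried out in the proofs of \cite[Proposition~17 and Lemma~20]{FischerHenselLauxSimon}. What I would do is re-run that computation with $\bar\chi$ replaced verbatim by $\bar\chi^{z,T}$ — legitimate because that computation never uses that $\bar\chi$ solves mean curvature flow, only the stated regularity of $\xi_i,\vartheta_i,B$, which is inherited by $\xi_i^{z,T},\vartheta_i^{z,T},B^{z,T}$ since $z,T\in W^{1,\infty}$ (e.g.\ $\partial_t\xi_i^{z,T}=\dot T\,(\partial_t\xi_i)^{z,T}-(\dot z\cdot\nabla\xi_i)^{z,T}\in L^\infty_{loc}$) — and to insert the varifold compatibility facts recalled just before the lemma wherever the old proof used the BV structure of the interface.

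For the bulk identity \eqref{eq:prelimStabilityBulk} I would first observe that the sign conditions on $\vartheta_1$, together with $\bar\chi_i^{z,T}\equiv 0$ for $2\le i\le P-1$ and the exclusion of $i=P$ from the sum, give the pointwise identity $|\chi_i-\bar\chi_i^{z,T}|\,|\vartheta_i^{z,T}|=(\chi_i-\bar\chi_i^{z,T})\,\vartheta_i^{z,T}$ for each $i\in\{1,\dots,P-1\}$, so that $E_{\mathrm{bulk}}$ is a genuine signed bulk integral. I would then test the distributional law $\partial_t\chi_i+H_\mu\cdot\nabla\chi_i=0$ from Definition~\ref{DefinitionVarSolution}(iv), together with the corresponding smooth transport identity for $\bar\chi_i^{z,T}$, against $\psi\,\vartheta_i^{z,T}$; adding and subtracting the $B^{z,T}$-transport term, integrating by parts in space, and using that on the interfaces $H_\mu\cdot\nabla\chi_i$ is expressed through $V_{i,j}$ and $\n_{i,j}$ (by the definition of $V_{i,j}$ and Brakke perpendicularity, as recorded above) while the velocity of $\bar\chi_i^{z,T}$ contributes $B^{z,T}\cdot\xi_{i,j}^{z,T}$ modulo the tilt defect $B^{z,T}\cdot(\n_{i,j}-\xi_{i,j}^{z,T})$, one reads off exactly the four contributions of $RHS_i^{\mathrm{bulk}}$, with the $\dot\psi$-term split off. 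No inequality is lost, hence the equality.

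For the interface inequality \eqref{eq:prelimStabilityRelEntropy} I would split $E_{\mathrm{int}}=\mu_t(\mathbb R^2)-\sum_{i\ne j}\tfrac12\int_{I_{i,j}}\n_{i,j}\cdot\xi_{i,j}^{z,T}$. For the mass, invoke the Brakke inequality for $\mathcal V$ against the space-independent weight $\psi(t)$ (admissible after a cut-off exhaustion since $\mu_t$ has uniformly bounded mass), yielding $\psi(\tau)\mu_\tau(\mathbb R^2)\le\psi(s)\mu_s(\mathbb R^2)+\int_s^\tau(-\psi\!\int|H_\mu|^2\,d\mu_t+\dot\psi\,\mu_t(\mathbb R^2))\,dt$; this is the sole source of the inequality sign. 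Into $-\!\int|H_\mu|^2\,d\mu_t$ insert the first-variation identity \eqref{eq:Hvarifold} with test field $B^{z,T}$ and decompose $\mu_t$ and $\mathcal V_t$ over $\{\omega=1\}$ and its complement using the integrality/rectifiability structure recorded above: on $\{\omega=1\}$ one has $\tfrac12\sum_i|\nabla\chi_i|=\mu_t$, $|H_\mu|^2=V_{i,j}^2$, and completing the square in $V_{i,j}$, $\nabla\cdot\xi_{i,j}^{z,T}$, $B^{z,T}\cdot\xi_{i,j}^{z,T}$ produces the dissipation $\mathcal D_{i,j}$ (kept on the left) together with interfacial cross-terms, while the complementary part collects into $RHS^{\operatorname{var-BV}}$ (all four of whose terms vanish when $\mathcal V_t$ equals the multiplicity-one interface $\tfrac12\sum_i|\nabla\chi_i|$). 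Differentiating the calibration term $\int_{I_{i,j}}\n_{i,j}\cdot\xi_{i,j}^{z,T}$ via the transport law for $\chi_i$, commuting $\partial_t$ with the shift, and integrating by parts, one then collects the calibration defects $\partial_t\xi_{i,j}^{z,T}+(B^{z,T}\cdot\nabla)\xi_{i,j}^{z,T}+(\nabla B^{z,T})^{\mathsf T}\xi_{i,j}^{z,T}$ and $\partial_t\xi_{i,j}^{z,T}+(B^{z,T}\cdot\nabla)\xi_{i,j}^{z,T}$, the length defect $1-|\xi_{i,j}^{z,T}|^2$, the tilt excess $(\n_{i,j}-\xi_{i,j}^{z,T})\otimes(\n_{i,j}-\xi_{i,j}^{z,T})$, and the remaining lower-order pairings — together exactly the eight terms of $RHS_{i,j}^{\mathrm{int}}$; the $\dot\psi$-contribution is again split off. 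Summing the two pieces gives \eqref{eq:prelimStabilityRelEntropy}.

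The step I expect to be the real work is the varifold decomposition along $\{\omega=1\}$ and its complement: verifying, via rectifiability, integrality and Brakke's perpendicularity theorem (in the form recorded before the lemma), that the interfacial part of the first-variation identity \eqref{eq:Hvarifold} collapses exactly onto the BV quantities $V_{i,j},\n_{i,j}$, so that everything not absorbed into $\mathcal D_{i,j}$ and into the $RHS_{i,j}^{\mathrm{int}}$ terms coincides precisely with $RHS^{\operatorname{var-BV}}$ — and, a minor but necessary point, that the chain-rule identities for the shifted fields $\xi_i^{z,T},\vartheta_i^{z,T},B^{z,T}$ introduce no term beyond those already present, so that the entire \cite{FischerHenselLauxSimon} computation goes through unchanged with $\bar\chi^{z,T}$ in place of $\bar\chi$.
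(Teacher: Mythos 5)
Your proposal is correct and follows essentially the same route as the paper: the paper's proof likewise consists of re-running the computations of \cite[Proposition~17, Lemma~20, and Section~4.4]{FischerHenselLauxSimon} verbatim with the shifted fields $\xi_{i,j}^{z,T},\vartheta_i^{z,T},B^{z,T}$ (whose admissibility follows exactly as you note from $z,T\in W^{1,\infty}$), and of handling the time weight $\psi$ by invoking the general form of Brakke's inequality from \cite[Definition~2.1~(d)]{StuvardTonegawa} rather than only the energy dissipation inequality. Your additional detail on the bulk sign identity, the completion of squares, and the decomposition along $\{\omega=1\}$ matches the content of the cited arguments, so no further comment is needed.
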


One may rewrite \eqref{def:interfaceerror} as (cf. \cite[Section 4.4]{FischerHenselLauxSimon})
\begin{align}\label{varinterfaceerror}
E_{\mathrm{int}}[\mathcal{V},\chi|\bar\chi^{z,T}](t) = \int_{\Rd} 1 - \frac12 \sum_{i=1}^{P} \omega_i (\cdot, t) \,  \mathrm{d} \mu_t  + E_{\mathrm{int}}[\chi|\bar\chi^{z,T}](t),
\end{align}
where $E_{\mathrm{int}}[\chi|\bar\chi^{z,T}]$ is the interface error for BV solutions in the 
sense of Definition~\ref{DefinitionWeakSolution}, namely 
\begin{align*}E_{\mathrm{int}}[\chi|\bar\chi^{z,T}](t)&= \sum_{i,j=1,i\neq j}^{P} \frac{1}{2} \int_{I_{i,j}(t)} 1 - \n_{i,j}(\cdot,t) \cdot \xi_{i,j}^{z,T}(\cdot,t) \,d\mathcal{H}^1.\notag 
\end{align*}
Observe that the first term on the right hand side of \eqref{varinterfaceerror} is nonnegative by \eqref{eq:varsol3} and provides
control of the multiplicity of the varifold whenever it exceeds the multiplicity of the BV interface $\frac12 \sum_{i=1}^{P} |\nabla \chi_i (\cdot, t)|$.
In particular, the varifold-BV interface error  \eqref{def:interfaceerror} controls the interface error for BV solutions.

The next two steps of our strategy are concerned with the construction
of the input data for Lemma~\ref{lem:preliminaryStability}: first $(\tchi,z,T)$
and second $((\xi_i)_{i=1,\ldots,P},(\vartheta_i)_{i=1,\ldots,P{-}1},B)$.

\subsection{Construction of dynamic shifts}
In case of  a single closed curve, a characteristic length scale associated with the evolution of~$\bar\chi$
is given by $r(t) := \sqrt{\frac{\mathrm{vol}(\{\bar{\chi}_1(\cdot,t){=}1\})}{\pi}}$,
$t \in [0,T_{ext})$. Since
\begin{align*}
	\frac{d}{dt} \mathrm{vol}(\{\bar{\chi}(\cdot,t){=}1\}) = -2\pi
\end{align*}
we infer that 
\begin{equation}\label{eq:timeevr}
	\begin{cases}
		\dot r(t) = -\frac{1}{r(t)} , & t \in (0, T_{ext}),\\
		r(0) = r_0 := \sqrt{\frac{\mathrm{vol}(\{\bar{\chi}_{1}(\cdot,0){=}1\})}{\pi}},
	\end{cases}
\end{equation}
Hence, $r(t) = \sqrt{2(T_{ext} - t)}$ and $T_{ext}= \frac12 r_0^2$.

In Subsection~\ref{subsec:heuristicsDecay}, we already derived the
defining ODEs for $(z,T = \mathrm{id}{+}\mathfrak{T})$, at least in
a regime where the weak solution is represented as a sufficiently regular
graph over the smooth solution, cf.\ \eqref{eq:expectedODEs}. Of course,
there is no guarantee to be in that regime for all times, so that the
general construction needs a robust version of~\eqref{eq:expectedODEs}.
To this end, it is convenient to work with the notion of interface
error heights (see Figure \ref{Figure:errorheights}).

\begin{figure}\small 
\includegraphics{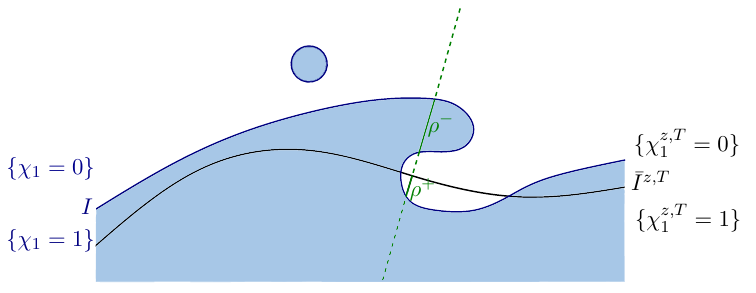}
 \caption{Interface error heights}
 \label{Figure:errorheights}
 \end{figure}

\begin{construction}[Interface error heights] 
	\label{DefinitionErrorHeights}	
 Consider a triple $(\tchi,z,T)$ so that $\tchi \in (0,\infty)$,
	$z \in W^{1,\infty}_{loc}((0,\tchi);\mathbb{R}^2)$ and $T\in W^{1,\infty}((0,\tchi);(0,T_{ext}))$.
	Let $\zeta\colon\mathbb{R} \rightarrow  [0,1]$ be a smooth cutoff function
	such that $\zeta(s)=1$ for $|s|\leq 1/(16C_\zeta)$
	and $\zeta(s)=0$ for $|s| > 1/(8C_\zeta)$,
	where $C_\zeta \in [1,\infty)$ is a given constant.
	We then define \emph{interface error heights}
	\begin{align*}
		\rho(\cdot,\cdot;z,T),\rho_\pm(\cdot,\cdot;z,T) \colon 
		\bar{I}^{z,T}
		\rightarrow \mathbb{R}
	\end{align*}
	through a slicing construction (recall that $r_T(t) := r(T(t)),\,t\in (0,t_{\chi})$):
	\begin{align} 
		\label{eq:defrho_plus} 
		\rho_+(x, t;z,T) &:=  \int_0^{\frac12 r_T(t)} (\bar\chi_1^{z,T} - \chi_1)
		\big(x {+} \ell \n_{\bar{I}^{z,T}}(\cdot,t),t\big)  \zeta\Big(\frac{\ell}{r_T(t)}\Big) \,d\ell,
		\\  
		\label{eq:defrho_minus}
		\rho_-(x, t;z,T) &:= \int_{-\frac12 r_T(t)}^{0} (\chi_1 - \bar\chi_1^{z,T}) 
		\big(x {+} \ell \n_{\bar{I}^{z,T}}(\cdot,t),t\big)  \zeta\Big(\frac{\ell}{r_T(t)}\Big) \,d\ell,
		\\
		\label{eq:defrho}
		\rho(x,t;z,T) &:= \rho_+(x,t;z,T) - \rho_-(x,t;z,T).
	\end{align}
\end{construction}

We have everything in place to construct the dynamic shifts.

\begin{lemma}[Existence of space-time shifts] \label{lemma:existzT}
	There exists a unique choice of
	\begin{itemize}[leftmargin=0.7cm]
		\item a time horizon $\tchi > 0$,
		\item a path of translations $z \in W^{1,\infty}_{loc}((0,\tchi);\mathbb{R}^2)$, and
		\item a strictly increasing bijection 
		$T\in W^{1,\infty}((0,\tchi);(0,T_{ext}))$,
	\end{itemize}
	which in addition satisfy $(z(0),T(0)) = (0,0)$
	as well as, by defining $\mathfrak{T} := T - \mathrm{id}$,
	\begin{align}\label{eq:evzT} 
		\begin{bmatrix}
			\dot{z}(t) \\ \dot{\mathfrak{T}}(t)
		\end{bmatrix} 
		= 		\begin{bmatrix} \frac{6}{r_T^2(t)} \dashint_{\bar I^{z,T}(t)} 
			\rho(\cdot,t;z,T) \n_{\bar{I}^{z,T}}(\cdot,t) \,d\mathcal{H}^1 \\ 
			\frac{4}{r_T(t)} \dashint_{\bar I^{z,T}(t)} \rho(\cdot,t;z,T) \,d\mathcal{H}^1
		\end{bmatrix}, \quad t \in (0,\tchi).
	\end{align}
	Moreover, for given $\delta_{\mathrm{err}} \in (0,\frac{1}{2})$ one may choose
	the constant $C_\zeta \gg_{\delta_{\mathrm{err}}} 1$ 
	from Construction~\ref{DefinitionErrorHeights} such that 
	\begin{align} \label{ineq:zTprime}
		|\dot{z}(t)| \leq \delta_{\mathrm{err}}\frac{1}{r_T(t)},
		\quad |\dot{\mathfrak{T}}(t)| \leq \delta_{\mathrm{err}},
		\quad t \in (0,\tchi).
	\end{align}
\end{lemma}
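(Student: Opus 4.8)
The plan is to recognise the system~\eqref{eq:evzT} with initial datum $(z(0),\mathfrak{T}(0))=(0,0)$ as a Carathéodory-type ODE $\dot X = F(t,X)$ for the state $X=(z,\mathfrak{T})\in\mathbb{R}^2\times\mathbb{R}$, posed on the time-dependent domain $\mathcal{U}:=\{(t,X)\colon t+\mathfrak{T}\in[0,T_{ext})\}$. I would verify that $F$ satisfies the Carathéodory hypotheses together with a local Lipschitz bound in $X$ on compact subsets of $\mathcal{U}$, conclude existence and uniqueness of the maximal solution, and finally read off~\eqref{ineq:zTprime} and the qualitative properties of $\tchi$ and $T$ from the pointwise bound on the interface error heights $\rho$. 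Note that~\eqref{eq:evzT} is a genuine ODE and not a delay equation: $\dot z(t),\dot{\mathfrak{T}}(t)$ depend only on $(z(t),T(t))$ and on the (fixed) data $\chi_1(\cdot,t),\bar\chi$ at time~$t$.

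First I would make the right-hand side explicit. For a path $(z,T)$ with $T(t)\in[0,T_{ext})$ the shifted interface $\bar{I}^{z,T}(t)=z(t)+\bar{I}(T(t))$ is a smooth closed curve and $\tfrac12 r_T(t)$ is a valid tubular-neighbourhood width by Definition~\ref{DefinitionShrinkingCircle}. Combining~\eqref{eq:shiftedNormal} with the change of variables $x=z(t)+y$, $y\in\bar{I}(T(t))$, the averages in~\eqref{eq:evzT} turn into (normalised) integrals over $\bar{I}(T(t))$ of
\[
\rho\big(z(t){+}y,t;z,T\big)=\int_{-r_T(t)/2}^{r_T(t)/2}\Big(\bar\chi_1\big(y{+}\ell\,\n_{\bar I}(y,T(t)),T(t)\big)-\chi_1\big(z(t){+}y{+}\ell\,\n_{\bar I}(y,T(t)),t\big)\Big)\zeta\Big(\tfrac{\ell}{r_T(t)}\Big)\,d\ell ,
\]
in which the normal and the arc-length measure no longer depend on $z$ and the only $z$-dependence sits inside the argument of $\chi_1$. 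Since $|\bar\chi_1{-}\chi_1|\le 1$ and $\zeta(s/r_T)=0$ for $|s|>r_T/(8C_\zeta)$, this gives the pointwise bound $|\rho|\le r_T/(4C_\zeta)$, so $F$ is well defined with $|F_1|\le \tfrac{3}{2C_\zeta}\tfrac1{r_T}$ and $|F_2|\le\tfrac1{C_\zeta}$; measurability of $F(\cdot,X)$ in $t$ follows from joint measurability of $\chi$ and smoothness of $\bar\chi$, and the above bounds provide the local integrable domination needed for Carathéodory's theorem.

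Next I would establish local Lipschitz dependence of $F(t,\cdot)$, which is the heart of the matter and the step I expect to be the main obstacle, since $\chi_1$ is merely of bounded variation. The $\bar\chi_1$-contribution and all geometric quantities ($\n_{\bar I}(\cdot,T)$, the arc-length measure on $\bar I(T)$, $r_T=r(T)$, an arc-length parametrisation) depend smoothly on $T$. For the $\chi_1$-contribution I would change variables through the bi-Lipschitz tubular-neighbourhood map $(y,\ell)\mapsto y+\ell\,\n_{\bar I}(y,T)$ and, to compare different times, through the nearest-point projection $P_{\bar I}(\cdot,T')$ restricted to $\bar I(T)$ (which is $C^1$-close to the identity with displacement $\lesssim|T{-}T'|/r_T$ as long as $|T{-}T'|$ is small relative to $T_{ext}{-}T$). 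This reduces the variation of $F$ under $(z,T)\mapsto(z',T')$ to an estimate of $\|\chi_1(\cdot{+}v,t)-\chi_1(\cdot,t)\|_{L^1}$ for a displacement field $v$ with $\|v\|_\infty\lesssim|z{-}z'|+|T{-}T'|/r_T$. The uniform perimeter bound $\mathrm{Per}(\{\chi_1(\cdot,t){=}1\})\le 2\,\esssup_{T} E[\chi(\cdot,T)]<\infty$ from~\eqref{GlobalEnergyBoundBVSolution}, together with the standard $BV$ translation estimate $\|\chi_1(\cdot{+}v,t)-\chi_1(\cdot,t)\|_{L^1}\le\|v\|_\infty\,\mathrm{Per}(\{\chi_1(\cdot,t){=}1\})$, then shows that $F(t,\cdot)$ is Lipschitz on compact subsets of $\mathcal{U}$, with a constant degenerating no worse than a fixed power of $1/r_T$ — hence finite on every $[0,\tchi']$ with $\tchi'<\tchi$, since there $r_T(t)=\sqrt{2(T_{ext}{-}T(t))}$ is bounded below.

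Finally I would conclude by the Carathéodory existence and uniqueness theorem: there is a unique maximal solution $(z,\mathfrak{T})$ on some $[0,\tchi)$ with $(z(0),\mathfrak{T}(0))=(0,0)$, and since $F$ is bounded and Lipschitz wherever $T$ stays bounded away from $T_{ext}$, the solution extends until $T(t)\uparrow T_{ext}$, which defines $\tchi$; uniqueness of the triple follows from uniqueness of $(z,\mathfrak{T})$ on common intervals together with equality of maximal intervals. The bound $|\rho|\le r_T/(4C_\zeta)$ immediately yields $|\dot z|\le\tfrac{3}{2C_\zeta}\tfrac1{r_T}$ and $|\dot{\mathfrak{T}}|\le\tfrac1{C_\zeta}$, so choosing $C_\zeta\ge\tfrac{3}{2\delta_{\mathrm{err}}}$ gives~\eqref{ineq:zTprime}. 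Then $\dot T=1+\dot{\mathfrak{T}}\in(1{-}\delta_{\mathrm{err}},1{+}\delta_{\mathrm{err}})\subset(\tfrac12,\tfrac32)$, so $T$ is strictly increasing and bi-Lipschitz with $T(0)=0$; from $T(t)\ge t/2$ one gets $\tchi\le 2T_{ext}<\infty$, while $T(t)\le 3t/2$ forces $\tchi>\tfrac23 T_{ext}>0$, and $T\colon(0,\tchi)\to(0,T_{ext})$ is a bijection with $T\in W^{1,\infty}$ and $z\in W^{1,\infty}_{loc}$. This gives all the assertions of the lemma.
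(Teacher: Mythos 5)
Your argument is correct in substance and rests on the same two pillars as the paper's proof: ODE well-posedness for \eqref{eq:evzT} via a Lipschitz estimate on the right-hand side (smoothness of $\bar\chi$ for the geometric factors, the uniform perimeter bound for the $\chi_1$-contribution), and the pointwise bound on $\rho$ coming from the cutoff $\zeta$ to get \eqref{ineq:zTprime} and hence the bi-Lipschitz, strictly increasing character of $T$. The difference lies in the technical devices. To tame the degeneracy of the coefficients as $T \uparrow T_{ext}$, the paper does not work with a maximal Carath\'eodory solution: it truncates the time variable at the levels $\tfrac12 r_0^2(1-\tfrac1k)$, solves each truncated problem globally by Picard--Lindel\"of (the truncated right-hand side is globally Lipschitz in $(z,T)$ uniformly in $t$), and then patches the solutions together along the stopping times $t_k=\sup\{t\colon T_k(t)<T_{ext}(1-\tfrac1k)\}$, setting $\tchi=\sup_k t_k$. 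Your continuation argument with Lipschitz constants degenerating like a power of $1/r_T$ achieves the same thing and is arguably a bit more streamlined, provided you spell out why the maximal solution can only terminate through $T(t)\uparrow T_{ext}$ (boundedness of $z$ on any interval where $T$ stays away from $T_{ext}$, plus $\dot T\geq\tfrac12$), which you do sketch.

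One spot deserves a word of caution. In your Lipschitz step the $T$-dependence enters the argument of the rough function $\chi_1$ (through the slicing point $y+\ell\,\n_{\bar I}(y,T)$, $y\in\bar I(T)$), so comparing $(z,T)$ with $(z',T')$ produces a \emph{non-constant} displacement field acting on $\chi_1$, and the estimate you invoke, $\|\chi_1(\cdot+v)-\chi_1\|_{L^1}\leq\|v\|_\infty\,\mathrm{Per}$, is the standard one only for constant shifts. It does extend to bi-Lipschitz deformations that are $C^1$-close to the identity (which is the situation you create via the nearest-point projection), but this generalization should be stated and justified rather than cited as the standard estimate. The paper sidesteps the issue entirely by first rewriting the averaged height as a bulk integral over the tubular neighborhood and then shifting the integration variable by $z$: in that formulation all $T$-dependence sits in smooth weights ($\sdist$, $H\circ P$, $\zeta$, $\nabla\sdist$, $\bar\chi_1$) and the indicator $\chi_1$ only ever sees the constant translation $z$, so the plain constant-shift translation estimate (together with smoothness of the strong solution in $T$, kept quantitative by the truncation) suffices. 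Adopting that change of variables would close this small gap in your write-up; otherwise the proposal delivers all assertions of the lemma, including uniqueness and the bounds \eqref{ineq:zTprime} with $C_\zeta\gtrsim\delta_{\mathrm{err}}^{-1}$ as in Construction~\ref{DefinitionErrorHeights}.
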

The proof of Lemma \ref{lemma:existzT} is given in Section~\ref{subsec:existencespacetimeshifts}.

\subsection{Construction of gradient flow calibrations}
In contrast to~\cite{FischerHenselLauxSimon}, in the present work the smoothly evolving solution~$\bar\chi$
stems from a simple two-phase geometry instead of a more complicated multiphase geometry
with branching interfaces. As a consequence, the construction of a gradient flow 
calibration (cf.\ \cite[Definition~2 and Definition~4]{FischerHenselLauxSimon})
is particularly simple and can be given directly as follows.

\begin{construction}[Gradient flow calibration up to extinction time]
	\label{DefinitionCalibration}	
	Consider a smooth cutoff function
	$\eta\colon\mathbb{R} \rightarrow [0,1]$ such that 
	$\eta(s)=1$ for $|s| \leq 1/8$, $\eta(s)=0$ for $|s|\geq 1/4$
	and $\|\eta'\|_{L^\infty(\mathbb{R})} \leq 16$. We then 
	define an extension~$\xi\colon\mathbb{R}^2{\times}[0,T_{ext})\to\mathbb{R}^2$ of the unit vector field~$\n_{\bar{I}}$ by means of	
	\begin{equation}\label{eq:defxi}
		\xi(x,t) =  \eta\Big(\frac{\sdist_{\bar I}(x,t)}{r(t)}\Big) \n_{\bar{I}}\big(P_{\bar I}(x,t),t\big),
		\quad (x,t) \in \mathbb{R}^2{\times}[0,T_{ext}).
	\end{equation}
	
	Based on this auxiliary construction, we may now introduce families of vector fields
	$(\xi_{i})_{i=1,\ldots,P}$ and $(\xi_{i,j})_{i,j\in\{1,\ldots,P\},i\neq j}$
	(defined as maps $\mathbb{R}^2{\times}[0,T_{ext}) \to \mathbb{R}^2$) by the following simple procedure:
	\begin{itemize}[leftmargin=1.4cm]
		\item $\xi_{i,j} := \xi_{i} - \xi_{j}$ for any $i,j \in \{1,...,P\}$, $i \neq j$.
		\item  $\xi_i \equiv 0$ for all $i \notin \{1,P\}$.
		\item $\xi_1 := -\frac{1}{2}\xi$ and $\xi_P:=\frac{1}{2}\xi$.
	\end{itemize}	
	
	Furthermore, we define an extension $B\colon \mathbb{R}^2{\times}[0,T_{ext})\to\mathbb{R}^2$
	of the normal velocity field $H_{\bar{I}}\n_{\bar{I}}$ of $\bar\chi$ through
	\begin{align}\label{eq:defB}
		B(x,t) :=	\eta\Big(\frac{\sdist_{\bar I}(x,t)}{r(t)}\Big) 
		(H_{\bar{I}}\n_{\bar{I}})\big(P_{\bar I}(x,t),t\big),
		\quad (x,t) \in \mathbb{R}^2{\times}[0,T_{ext}).
	\end{align}
	
	Finally, for the construction of the family $(\vartheta_i)_{i=1,\ldots,P{-}1}$
	(defined as functions mapping $\mathbb{R}^2{\times}[0,T_{ext})\to [-1,1]$), we proceed as follows.	
	Let $\bar \vartheta\colon \mathbb{R} \to [-1,1]$ be a smooth function such that 
	$\bar \vartheta (s)= -s$ for $|s|\leq 1/4$,  $\bar \vartheta (s)=-1$  for $s\geq 1/2$, 
	$\bar \vartheta (s)= 1$  for $s\leq -1/2$, and $\|\bar\vartheta'\|_{L^\infty(\mathbb{R})}\leq 4$.
	We then define 
	\begin{align}
		\label{eq:defWeight}
		\vartheta(x,t) := \frac{1}{r(t)}\bar\vartheta\Big(\frac{\sdist_{\bar I}(x,t)}{r(t)}\Big),
		\quad (x,t) \in \mathbb{R}^2{\times}[0,T_{ext}),
	\end{align}
	and, at last,
	\begin{itemize}[leftmargin=1.4cm]
		\item $\vartheta_1 := \vartheta$,
		\item  $\vartheta_i := 1$ for all $i \in \{2,\ldots,P{-}1\}$.
	\end{itemize}	
\end{construction}

Note that the gradient flow calibration $((\xi_i)_{i=1,\ldots,P},(\vartheta_i)_{i=1,\ldots,P{-}1},B)$
from Construction~\ref{DefinitionCalibration} is an admissible input for
Lemma~\ref{lem:preliminaryStability}. From now on, whenever we refer to
an admissible element from either $(\tchi,z,T)$ or 
$((\xi_i)_{i=1,\ldots,P},(\vartheta_i)_{i=1,\ldots,P{-}1},B)$, we
always mean their specific realizations provided by Lemma~\ref{lemma:existzT}
or Construction~\ref{DefinitionCalibration}, respectively.

\subsection{Time splitting: Regular vs.\ non-regular times}
\label{subsec:timeSplitting}
With the input for Lemma~\ref{lem:preliminaryStability}
being constructed, the main remaining major task is to upgrade
the preliminary stability estimates~\eqref{eq:prelimStabilityRelEntropy} 
and~\eqref{eq:prelimStabilityBulk} to the decay estimate~\eqref{ineq:decayE}
for the overall error. The main idea here is to reduce the whole estimation
strategy to a regime where the weak solution~$\chi$ is only a small perturbation
of~$\bar\chi$, for which we in turn already formally identified the leading-order contributions
to the stability estimates in 
Subsections~\ref{subsec:heuristicsLeadingOrder}--\ref{subsec:heuristicsDecay}.

\begin{definition}[Regular and non-regular times]
	Fix $\Lambda > 0$.	We then define a disjoint decomposition 
	\[
	(0, \tchi) = \mathcal{T}_{\mathrm{non\text{-}reg}}(\Lambda) \cup \mathcal{T}_{\mathrm{reg}}(\Lambda)
	\]
	where
	\begin{align} \label{def:badTimes}
		\mathcal{T}_{\mathrm{non\text{-}reg}}(\Lambda) 
		:= \Bigg\{t \in \left(0 , \tchi\right) \colon
		 \int_{\Rd} |H_\mu(\cdot,t)|^2 \,d\mu_t  \geq \Lambda\frac{2\pi}{r_T(t)}
		\Bigg\}
	\end{align}
and~$H_{\mu}(\cdot,t)$ is the 
generalized mean curvature vector of~$\mathcal{V}_t$, see~\eqref{eq:Hvarifold}.
\end{definition}
Observe that in the framework of BV solutions in the sense of Definition \ref{DefinitionWeakSolution}, the defining inquality in \eqref{def:badTimes} reduces to
\[
\sum_{i,j=1, i \neq j}^P \int_{{I}_{i,j}(t)}
\frac{1}{2}|V_{i,j}(\cdot,t)|^2 \,d\mathcal{H}^1 \geq \Lambda\frac{2\pi}{r_T(t)}.
\]

The motivation behind the previous definition is as follows. On one side, for non-regular
times, the right hand sides of the preliminary stability estimates~\eqref{eq:prelimStabilityRelEntropy} 
and~\eqref{eq:prelimStabilityBulk} turn out to be easily estimated thanks to the 
defining condition of disproportionally large dissipation of the weak solution,
cf.\ \eqref{def:badTimes}. On the other side,  
the opposite of \eqref{def:badTimes} together with a smallness assumption 
on the overall error (consistent with the decay~\eqref{ineq:decayE}) imply
for regular times the desired perturbative setting. The latter is formalized
in the following result.

\begin{proposition}[Perturbative regime at regular times] \label{theo:graph}
	Fix $\Lambda > 0$ and let $t \in \mathcal{T}_{\mathrm{reg}}(\Lambda)$, i.e., 
	$t \in (0,\tchi)$ such that
	\begin{align} \label{boundgoodtimes}
		\int_{\Rd} |H_\mu(\cdot,t)|^2 \,d\mu_t 
		< \Lambda\frac{2\pi}{r_T(t)}.
	\end{align}
	Given $C_\zeta \geq 1$ from Construction~\ref{DefinitionErrorHeights} and given any
	$C, C' \geq 1$, there exists a constant $\delta \ll_{\Lambda,C,C',C_\zeta} \frac{1}{2}$  such that 
	\begin{align}\label{hp:regularitygoodtimes}
		E[\mathcal{V}, \chi | \bar \chi^{z,T}](t)  \leq \delta {r_T(t)} 
	\end{align}
	implies: 
	\begin{itemize}[leftmargin=0.7cm]
		\item $\chi_i(\cdot,t) \equiv 0$ for all $i \notin \{1,P\}$ 
		and
		\begin{align*}
		\mathcal{V}_t = \big(\mathcal{H}^1 \llcorner \supp|\nabla\chi_1(\cdot,t)|\big)
		\otimes (\delta_{\mathrm{Tan}_x(\supp|\nabla\chi_1(\cdot,t)|)})_{x \in \supp|\nabla\chi_1(\cdot,t)|}.
		\end{align*}
		\item There exists a \emph{height function} 
		\begin{align}
			\label{eq:graphReg}
			h(\cdot, t)\in H^2(\bar{I}^{z,T}(t))
		\end{align}
		such that the only remaining interface is given by
		\begin{align}
			\label{eq:graphRep}
			I_{1,P}(t) = \big\{x\in\bar{I}^{z,T}(t)\colon x + h(x,t)\n_{\bar{I}^{z,T}}(x,t)\big\}.
		\end{align}
		\item Finally, it holds
		\begin{align}
			\label{eq:smallC0norm}
			\|h(\cdot,t)\|_{L^\infty(\bar{I}^{z,T}(t))} &\leq \frac{r_T(t)}{16 \max\{C,C_\zeta\}},
			\\
			\label{eq:smallC1norm}
			\|h'(\cdot,t)\|_{L^\infty(\bar{I}^{z,T}(t))} &\leq \frac{1}{C'}.
		\end{align}
	\end{itemize}
	In particular, the height function~$h(\cdot,t)$ coincides with the
	interface error height $\rho(\cdot,t;z,T)$ from Construction~\ref{DefinitionErrorHeights}
	and~\eqref{eq:evzT} simply reads
	\begin{align}\label{eq:evzTgraph} 
		\begin{bmatrix}
			\dot{z}(t) \\ \dot{\mathfrak{T}}(t)
		\end{bmatrix} 
		= 		\begin{bmatrix} \frac{6}{r_T^2(t)} \dashint_{\bar I^{z,T}(t)} 
			h(\cdot,t) \n_{\bar{I}^{z,T}}(\cdot,t) \,d\mathcal{H}^1 \\ 
			\frac{4}{r_T(t)} \dashint_{\bar I^{z,T}(t)} h(\cdot,t) \,d\mathcal{H}^1
		\end{bmatrix}.
	\end{align}
\end{proposition}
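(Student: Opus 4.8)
The plan is to use the smallness of the error functional $E[\mathcal{V},\chi|\bar\chi^{z,T}](t) \leq \delta r_T(t)$ in combination with the quantitative closeness of $\bar\chi$ to a shrinking circle to first control the varifold, then the phases, and finally extract the height function via the interface error heights from Construction~\ref{DefinitionErrorHeights}. First I would exploit the coercivity of the interface error: by~\eqref{varinterfaceerror} the term $\int_{\mathbb{R}^2} 1 - \frac12\sum_i \omega_i \,d\mu_t$ together with $E_{\mathrm{int}}[\chi|\bar\chi^{z,T}]$ is controlled by $\delta r_T(t)$. The first part, controlling multiplicity excess, forces $\mu_t$ to agree $\mathcal{H}^1$-a.e.\ with $\frac12\sum_i |\nabla\chi_i(\cdot,t)|$ \emph{locally near $\bar I^{z,T}(t)$} wherever the calibration vector field $\xi$ is nondegenerate (i.e.\ where $\eta = 1$); since $|\xi_{i,j}| \leq 1$ and $1 - \n_{i,j}\cdot\xi_{i,j}^{z,T}$ is small in an integrated sense, a standard compactness/measure-theoretic argument (as in \cite[Section~4.4]{FischerHenselLauxSimon}) shows that outside the tubular neighborhood the interface measure must vanish, while inside it the varifold is the rectifiable one associated to $\supp|\nabla\chi_1|$. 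To handle the non-regular-looking degeneracy one uses the regular-time bound~\eqref{boundgoodtimes} on $\int |H_\mu|^2\,d\mu_t$, which via the monotonicity/energy structure (or simply isoperimetric-type estimates for integral varifolds with $L^2$ mean curvature) prevents the varifold from having mass far from $\bar I^{z,T}(t)$ once the error is small — this is where $\Lambda$ and $\delta \ll_\Lambda 1$ interact.

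Next I would rule out the spurious phases $i \notin \{1,P\}$: the bulk error $E_{\mathrm{bulk}}$ with the weights $\vartheta_i \equiv 1$ for $2 \leq i \leq P-1$ and $\vartheta_P \equiv 1$ forces $\int_{\mathbb{R}^2} |\chi_i - \bar\chi_i^{z,T}|\,dx = \int_{\mathbb{R}^2}\chi_i\,dx$ to be $\lesssim \delta r_T(t)$, hence $\lesssim \delta r_T(t)^2$ after also using that the only interface contributions with small energy must be concentrated near $\bar I^{z,T}$. Then the partition property $\sum_i \chi_i = 1$ together with the interface energy bound $E_{\mathrm{int}} \lesssim \delta r_T(t)$ controls $\mathcal{H}^1(I_{i,j}(t))$ for the spurious phases; since any nonempty phase with finite perimeter in the plane carries at least isoperimetrically-controlled interface energy relative to its volume, once the volume is this small the curve $I_{i,j}$ would have to be very short and enclose a tiny region, but the $L^\infty$-control on the normal mismatch (coming from coercivity of $1 - \n_{i,j}\cdot\xi_{i,j}^{z,T}$ combined with $|\xi_{i,j}| < 1$ away from $\bar I^{z,T}$) forces it to be empty. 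This is essentially the argument of \cite[Lemma~20 and Proposition~17]{FischerHenselLauxSimon} localized; I would import it. Having eliminated the extra phases, only $I_{1,P}(t)$ remains, sitting in the tubular neighborhood of $\bar I^{z,T}(t)$ of width $r_T(t)/2$.

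Then I would produce the height function. Since $I_{1,P}(t)$ lies inside the tubular neighborhood where the nearest-point projection $P_{\bar I^{z,T}}$ and signed distance $\sdist_{\bar I^{z,T}}$ are smooth diffeomorphic coordinates (using that $\tfrac12 r_T(t)$ is a tubular-neighborhood width, part of Definition~\ref{DefinitionShrinkingCircle}), the interface is a one-dimensional integral varifold of multiplicity one that is a small-mass perturbation of $\bar I^{z,T}(t)$. To see it is a graph, one argues that if $\p^*\{\chi_1 = 1\}$ had two sheets over some point of $\bar I^{z,T}(t)$, the enclosed region between them would contribute a definite amount to the bulk error $E_{\mathrm{bulk}}$ weighted by $|\vartheta_1^{z,T}| \sim \dist/r_T$, contradicting smallness once $\delta$ is small enough relative to $C,C',C_\zeta$; more precisely one shows $\rho_+ + \rho_- = 0$ $\mathcal{H}^1$-a.e., so that $\rho = \rho_+ - \rho_- = \rho_+$, and $\rho(\cdot,t;z,T)$ recovered from Construction~\ref{DefinitionErrorHeights} is exactly the signed height, living in $H^2$ by the $L^2$-mean-curvature bound~\eqref{boundgoodtimes} and elliptic regularity for the graph equation. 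The $L^\infty$ bounds~\eqref{eq:smallC0norm}--\eqref{eq:smallC1norm} then follow: the $C^0$ bound is $\|h\|_\infty \lesssim \delta r_T(t) / (\text{length of }\bar I^{z,T}) \lesssim \delta r_T(t)$ after using a trace/interpolation inequality $\|h\|_\infty^2 \lesssim \|h\|_{L^1}\|h'\|_\infty + \|h\|_{L^1}/r_T$ controlled by $E_{\mathrm{int}}$ and $E_{\mathrm{bulk}}$ and the $H^2$-bound, and similarly for $\|h'\|_\infty$ using the $L^2$ curvature bound; choosing $\delta \ll_{\Lambda,C,C',C_\zeta} 1$ makes the right-hand sides as small as prescribed. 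Finally, the identification of $h$ with $\rho(\cdot,t;z,T)$ is immediate from the definitions~\eqref{eq:defrho_plus}--\eqref{eq:defrho} once $\zeta \equiv 1$ on the range of heights, which holds because $\|h\|_\infty \leq r_T(t)/(16 C_\zeta)$, and then~\eqref{eq:evzTgraph} is just~\eqref{eq:evzT} with $\rho$ replaced by $h$.

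\textbf{Main obstacle.} The hard part is the rigidity argument turning the weak (varifold-BV) smallness into the genuinely geometric conclusions — in particular showing that \emph{all} the varifold mass (including possible higher-multiplicity pieces or mass far from $\bar I^{z,T}(t)$) is accounted for by a single multiplicity-one graph, and that no spurious phases survive. This requires carefully combining the three pieces of information — multiplicity excess from $\int 1 - \tfrac12\sum\omega_i\,d\mu_t$, the $L^2$ mean curvature bound at regular times, and the bulk error with its sign-definite weight $\vartheta_1$ — at the right relative scales, which is exactly the quantitative, localized version of the qualitative stability arguments of \cite{FischerHenselLauxSimon}, now made uniform down to the extinction time using Definition~\ref{DefinitionShrinkingCircle}.
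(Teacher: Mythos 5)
There is a genuine gap: your argument never introduces a regularity input that converts \emph{integrated smallness} into the \emph{exact} structural conclusions of the proposition, and the replacements you offer do not work. The paper's proof hinges on two external tools used together: Allard's regularity theorem (applied at the scale $\varrho \sim \tilde\varepsilon^2 r_T/\Lambda$, whose hypotheses are verified in Lemma~\ref{lemma:hpAllard} using precisely the regular-time bound~\eqref{boundgoodtimes}, the mass bound of Lemma~\ref{prop:regInterfaceWeakSol}, and the tilt-excess control furnished by $E_{\mathrm{int}}$ via the folding estimate of Lemma~\ref{lemma:error}), and the decomposition of $\partial^*\{\chi_i=1\}$ into Jordan--Lipschitz curves (Theorem~\ref{theo:Jordan}). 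The exclusion arguments then all have the same shape: any spurious phase, far-away interface piece, extra Jordan component, or fold has $\mathcal{H}^1$-measure $\lesssim E_{\mathrm{int}} \lesssim \delta r_T$, hence its closed Jordan curve is trapped inside a ball of radius $\gamma\varrho/2$, which contradicts the fact that $\supp\mu$ is a single $C^{1,1/2}$ graph over a line inside $B_{\gamma\varrho}$; and unit density of $\mathcal{V}_t$ (hence the exact identity for the varifold) also comes from Allard. Your substitutes fail at exactly these points: two nearby sheets of $\partial^*\{\chi_1=1\}$ enclose an arbitrarily small area, so they do \emph{not} contribute "a definite amount" to $E_{\mathrm{bulk}}$; there is no $L^\infty$ control on the normal mismatch coming from the (integrated) coercivity of $1-\n_{i,j}\cdot\xi_{i,j}^{z,T}$; and smallness of $\mathcal{H}^1(I_{i,j})$ or of the multiplicity-excess $\int(1-\tfrac12\sum_i\omega_i)\,d\mu_t$ shows these quantities are small, not zero. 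Since every conclusion of Proposition~\ref{theo:graph} is an identity (empty spurious phases, unit multiplicity, a single graph), an argument based purely on smallness of the error functional cannot close; you need the definite-scale local graph property to rule the small pieces out entirely, and this is the step your proposal is missing.

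Parts of your outline are close in spirit to the paper once that input is granted: the isoperimetric lower bound ensuring the interface has length comparable to $\bar I^{z,T}$, the identification $h=\rho$ via $\zeta\equiv 1$ on the relevant range, the $H^2$ regularity by testing the first-variation identity with the $L^2$ curvature bound, and interpolation-type reasoning for $\|h'\|_\infty$ (the paper instead gets $\sup|h'|$ by comparing with the Lipschitz bound on the local Allard height functions, and gets $\sup|h|$ by a contradiction with the bound $\tfrac{1}{r_T^3}\|\chi_1-\bar\chi_1^{z,T}\|_{L^1}^2 \lesssim E_{\mathrm{bulk}}$). But as written, the core rigidity step — no other phases, no hidden or higher-multiplicity varifold mass, no folds, exactly one graph — is not established by your proposal.
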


In the perturbative regime of Proposition~\ref{theo:graph},
our error functionals take the following form.

\begin{lemma}[Error functionals in perturbative regime]
	\label{lemma:relentfreaze}
	Fix $t \in (0,\tchi)$ and assume that the conclusions of Proposition~\ref{theo:graph}
	hold true. Given $\delta_{\mathrm{err}} \in (0,1)$, one may select $C,C' \gg_{\delta_{\mathrm{err}}} 1$
	from~\emph{\eqref{eq:smallC0norm}--\eqref{eq:smallC1norm}} such that
	\begin{equation}
		\label{eq:bulkPerturbativeRegime}
		\begin{aligned}
			&(1{-}\delta_{\mathrm{err}}) \int_{\bar{I}^{z,T}(t)} 
			\frac{1}{2} \Big(\frac{h(\cdot,t)}{r_T(t)}\Big)^2 \,d\mathcal{H}^1
			\\&~~~
			\leq E_{\mathrm{bulk}}[\chi|\bar\chi^{z,T}](t)
			\leq (1{+}\delta_{\mathrm{err}}) \int_{\bar{I}^{z,T}(t)} 
			\frac{1}{2} \Big(\frac{h(\cdot,t)}{r_T(t)}\Big)^2 \,d\mathcal{H}^1
		\end{aligned}
	\end{equation}
	as well as
	\begin{equation}
		\label{eq:relEntropyPerturbativeRegime}
		\begin{aligned}
			&(1{-}\delta_{\mathrm{err}}) \int_{\bar{I}^{z,T}(t)} 
			\frac{1}{2} |h'(\cdot,t)|^2 \,d\mathcal{H}^1
			\\&~~~
			\leq
			E_{\mathrm{int}}[ \chi|\bar\chi^{z,T}](t)
			\leq (1{+}\delta_{\mathrm{err}}) \int_{\bar{I}^{z,T}(t)} 
			\frac{1}{2} |h'(\cdot,t)|^2 \,d\mathcal{H}^1.
		\end{aligned}
	\end{equation}
\end{lemma}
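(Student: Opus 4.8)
The plan is to exploit the graph representation \eqref{eq:graphRep} from Proposition~\ref{theo:graph}, which reduces both error functionals to explicit integrals over $\bar I^{z,T}(t)$ in terms of $h$, and then to Taylor-expand the relevant geometric quantities to second order in $(h,h')$, using the smallness \eqref{eq:smallC0norm}--\eqref{eq:smallC1norm} together with quantitative closeness to a circle (Definition~\ref{DefinitionShrinkingCircle}) to absorb all cubic and higher-order remainders, as well as the deviation of $\bar\chi$ from a perfect circle, into the multiplicative factor $(1\pm\delta_{\mathrm{err}})$. Throughout, I suppress the time dependence and write $r = r_T(t)$, $\n = \n_{\bar I^{z,T}}$, $\tau = \tau_{\bar I^{z,T}}$, $H = H_{\bar I^{z,T}}$; note $|rH - 1| \leq \delta_{\mathrm{asymp}}$ and $r^2|\nabla^{\mathrm{tan}}H| \leq \delta_{\mathrm{asymp}}$ on $\bar I^{z,T}(t)$ by translation-invariance of these quantities.

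For the bulk term \eqref{eq:bulkPerturbativeRegime}: since $\chi_i \equiv 0$ for $i \notin \{1,P\}$ and $\vartheta_i = 1$ for $2 \leq i \leq P-1$, only the $i=1$ summand in \eqref{def:bulkError} survives, and $\chi_1 - \bar\chi_1^{z,T}$ is supported in the symmetric difference of $\{\bar\chi_1^{z,T}=1\}$ and its normal graph perturbation by $h$. Using the coarea formula with respect to $\sdist_{\bar I^{z,T}}$ and the fact that $h = \rho$ coincides with the signed normal displacement, one writes $E_{\mathrm{bulk}} = \int_{\bar I^{z,T}(t)} \big( \int_0^{h(x)} |\vartheta^{z,T}(x + \ell\n, t)|\, J(x,\ell)\, d\ell \big)\, d\mathcal{H}^1$, where $J$ is the Jacobian of the normal-coordinate change ($J = 1 + O(|\ell| H) = 1 + O(\delta_{\mathrm{asymp}})$ for $|\ell| \leq r/2$). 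Since $\vartheta^{z,T}(x+\ell\n) = \frac{1}{r}\bar\vartheta(\ell/r) = -\ell/r^2$ for $|\ell| \leq r/4$, and $|h| \leq r/(16\max\{C,C_\zeta\}) \ll r/4$ by \eqref{eq:smallC0norm}, the inner integral equals $\int_0^{h(x)} \frac{|\ell|}{r^2}(1 + O(\delta_{\mathrm{asymp}}))\, d\ell = \frac12 \frac{h^2}{r^2}(1 + O(\delta_{\mathrm{asymp}}))$; integrating over $\bar I^{z,T}(t)$ and choosing $\delta_{\mathrm{asymp}}$ small relative to $\delta_{\mathrm{err}}$ yields \eqref{eq:bulkPerturbativeRegime}.

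For the interface term \eqref{eq:relEntropyPerturbativeRegime}: by \eqref{varinterfaceerror} and the identification $\mathcal{V}_t = \mathcal{H}^1\llcorner\supp|\nabla\chi_1|\otimes(\delta_{\mathrm{Tan}})$ from Proposition~\ref{theo:graph}, the multiplicity defect $\int 1 - \frac12\sum_i\omega_i\, d\mu_t$ vanishes, so $E_{\mathrm{int}}[\mathcal{V},\chi|\bar\chi^{z,T}] = E_{\mathrm{int}}[\chi|\bar\chi^{z,T}] = \int_{I_{1,P}(t)} 1 - \n_{1,P}\cdot\xi_{1,P}^{z,T}\, d\mathcal{H}^1$ (the factor $\tfrac12$ with the double sum over $i\neq j$ reduces to the single pair $\{1,P\}$ counted twice, and $\xi_{1,P} = \xi_P - \xi_1 = \xi$ has unit length on $\bar I^{z,T}$). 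Parametrizing $I_{1,P}(t)$ as $\gamma_h = (\mathrm{id} + h\n)\circ\bar\gamma^{z,T}$ exactly as in \eqref{eq:heuristics14}, one has from \eqref{eq:heuristics20} the exact formula $\n_{\gamma_h} = \big((1 - hH)\n - h'\tau\big)/\sqrt{(1-hH)^2 + (h')^2}$ (with $1/r$ replaced by $H$ for the non-circular case), while $\xi_{1,P}^{z,T} = \n(P_{\bar I^{z,T}}(\gamma_h))$ plus an $O(h/r)$-correction coming from the cutoff $\eta$ and the fact that $\n$ is evaluated at the footpoint rather than at $\gamma_h$; since $\eta \equiv 1$ for $|\sdist_{\bar I}|/r \leq 1/8$ and $|h|/r$ is much smaller, $\xi_{1,P}^{z,T}\circ\gamma_h = \n\circ\bar\gamma^{z,T}$ exactly. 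Hence $1 - \n_{\gamma_h}\cdot\xi_{1,P}^{z,T} = 1 - (1-hH)/\sqrt{(1-hH)^2 + (h')^2}$, and pulling the arclength element of $\gamma_h$, $d\mathcal{H}^1\llcorner I_{1,P} = |\partial_\theta\gamma_h|\, d\theta = \sqrt{(1-hH)^2 + (h')^2}\, d\theta$, the product becomes $\big(\sqrt{(1-hH)^2 + (h')^2} - (1-hH)\big) d\theta$; Taylor-expanding the square root with $|hH| \leq \delta_{\mathrm{asymp}}/16$ and $|h'| \leq 1/C'$ gives $\tfrac12(h')^2(1 + O(\delta_{\mathrm{asymp}}) + O(1/C'))/(1-hH)$, and after re-expressing $d\theta$ in terms of $d\mathcal{H}^1$ on $\bar I^{z,T}(t)$ (another factor $1 + O(\delta_{\mathrm{asymp}})$) and choosing $C', \delta_{\mathrm{asymp}}$ appropriately large/small in terms of $\delta_{\mathrm{err}}$, one arrives at \eqref{eq:relEntropyPerturbativeRegime}.

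The main obstacle is bookkeeping the various $1 + O(\cdot)$ factors — the Jacobian of normal coordinates, the difference between $d\theta$ and arclength on $\bar I^{z,T}$, the evaluation of $\xi$ and $\vartheta$ at footpoints versus graph points, and the deviation $|rH - 1|$ of the strong solution from a circle — and verifying that each is controlled by a \emph{single} smallness parameter (a suitable combination of $\delta_{\mathrm{asymp}}$, $1/C$, $1/C'$) so that they can all be jointly absorbed into $\delta_{\mathrm{err}}$; this is where the quantitative-closeness hypothesis of Definition~\ref{DefinitionShrinkingCircle} is essential, since without it the coefficient $H$ would not be uniformly close to $1/r$ and the leading quadratic form would not match $E_h$ of \eqref{eq:heuristics9}.
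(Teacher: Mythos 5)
Your proposal is correct and follows essentially the same route as the paper, which proves the lemma by combining the tubular-neighborhood change of variables~\eqref{eq:aux31} and coarea formula~\eqref{eq:aux30} with the explicit local formulas $\vartheta^{z,T}=-s_{\bar I^{z,T}}/r_T^2$ and $\xi^{z,T}=\bar{\n}_{\bar I^{z,T}}$ (cf.~\eqref{eq:aux15}, \eqref{eq:aux17}), the linearization~\eqref{eq:aux25} of the normal of the graph, and the height bounds~\eqref{eq:smallC0norm}--\eqref{eq:smallC1norm}. One bookkeeping remark: several of your error factors are attributed to $\delta_{\mathrm{asymp}}$ (e.g.\ the Jacobian factor and the bound on $|hH_{\bar I^{z,T}}|$), whereas on the relevant range $|\ell|\leq |h|$ they are in fact of size $O(1/C)+O(1/C')$ (using only $|H_{\bar I^{z,T}}|\leq 2/r_T$), which is precisely why the lemma can be stated with only $C,C'\gg_{\delta_{\mathrm{err}}}1$ at one's disposal and without further shrinking $\delta_{\mathrm{asymp}}$.
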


The proofs of Proposition~\ref{theo:graph} and of Lemma~\ref{lemma:relentfreaze} are given in Section~\ref{proof:reductiontograph} and in Section~\ref{proof:errorfuncperturbative}, respectively.

\subsection{Stability estimates at non-regular times}
\label{subsec:stabilityEstimateNonRegular}
In a next step, we take care of the estimation of
the right hand sides of~\eqref{eq:prelimStabilityRelEntropy} 
and~\eqref{eq:prelimStabilityBulk} 
in the case of disproportionally large dissipation.

\begin{lemma} \label{lemma:stabilitybad}
	There exist $\delta,\delta_{\mathrm{asymp}}  \ll \frac{1}{2}$ as well as $\Lambda \gg_{\delta,\delta_{\mathrm{asymp}} } 1$
	such that for every $t \in \mathcal{T}_{\mathrm{non\text{-}reg}}(\Lambda)$
	satisfying~\eqref{eq:asymptotically_circular1} and $E[\mathcal{V}, \chi|\bar\chi^{z,T}](t) \leq \delta r_T(t)$
	it holds
	\begin{align} 
		\label{ineq:stabilitybad}
		&\sum_{i,j=1,i\neq j}^{P} \frac{1}{2} 
		\Big( -\mathcal{D}_{i,j}[\chi|\bar\chi^{z,T}](t)
		+  RHS_{i,j}^{\mathrm{int}}[\chi|\bar\chi^{z,T}](t) \Big)\\\notag
		&~~~~+  RHS^{\operatorname{var-BV}}[\mathcal{V},\chi|\bar\chi^{z,T}](t)
			+ \sum_{i=1}^{P-1} RHS_{i}^{\mathrm{bulk}}[\chi|\bar\chi^{z,T}](t)
		\\&~~~~
		\nonumber
		\leq
		- \frac{1}{2} \int_{\Rd}|H_\mu(\cdot,t)|^2 \,d\mu_t.
	\end{align}
\end{lemma}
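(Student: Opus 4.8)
The plan is to show that at a non-regular time the negative dissipation term $-\frac12\int_{\mathbb{R}^d}|H_\mu|^2\,d\mu_t$ (which equals $-\sum_{i,j}\int_{I_{i,j}}\frac14|V_{i,j}|^2\,d\mathcal H^1$ in the BV setting) is large enough to absorb all the right-hand-side contributions $\mathcal D_{i,j}$, $RHS_{i,j}^{\mathrm{int}}$, $RHS^{\operatorname{var-BV}}$ and $RHS_i^{\mathrm{bulk}}$ once the dissipation exceeds $\Lambda\frac{2\pi}{r_T(t)}$. The key structural point is scaling: the calibration data from Construction~\ref{DefinitionCalibration} satisfies $|\xi|\le 1$, $\|\nabla\xi\|_{L^\infty}\lesssim r_T^{-1}$, $\|\partial_t\xi\|_{L^\infty}\lesssim r_T^{-2}$, $|B|\lesssim r_T^{-1}$ (from \eqref{eq:asymptotically_circular3}), $\|\nabla B\|_{L^\infty}\lesssim r_T^{-2}$, and the perimeter of $I_{i,j}(t)$ contributing to the error is controlled by $E[\mathcal V,\chi|\bar\chi^{z,T}](t)$ plus the (bounded) perimeter of $\bar I^{z,T}(t)\sim r_T$. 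I would first record these bounds together with the shift bounds $|\dot z|\le\delta_{\mathrm{err}}r_T^{-1}$, $|\dot{\mathfrak T}|\le\delta_{\mathrm{err}}$ from Lemma~\ref{lemma:existzT}, so that the shifted fields $\xi^{z,T}$, $B^{z,T}$ enjoy the same estimates (the chain rule produces only harmless factors $1+\dot{\mathfrak T}$ and extra $\dot z$ terms of the same order).

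Next I would bound each right-hand-side term by a universal constant times $r_T^{-1}\big(E_{\mathrm{int}}+E_{\mathrm{bulk}}\big) + (\text{small})\cdot\int|H_\mu|^2\,d\mu_t$. Concretely: every term in $RHS_{i,j}^{\mathrm{int}}$ that is quadratic in $(\n_{i,j}-\xi_{i,j}^{z,T})$ or carries a factor $1-|\xi_{i,j}^{z,T}|^2$ or $1-\n_{i,j}\cdot\xi_{i,j}^{z,T}$ is controlled by the coercivity of $E_{\mathrm{int}}$ times the $L^\infty$ bounds on $\partial_t\xi,\nabla\xi,B,\nabla B$, hence by $Cr_T^{-1}E_{\mathrm{int}}$ (after using $r_T^{-2}\cdot r_T$-worth of interface); the terms linear in $V_{i,j}$, such as $\int(\mathrm{Id}-\xi\otimes\xi)B\cdot(V_{i,j}+\nabla\cdot\xi)\n_{i,j}$ and the analogous piece of $RHS_i^{\mathrm{bulk}}$, I would split via Young's inequality into $\frac18|V_{i,j}|^2$ plus $Cr_T^{-2}\cdot(\text{interface length})$, and likewise $\mathcal D_{i,j}\ge 0$ can simply be dropped (it appears with a good sign). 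For $RHS^{\operatorname{var-BV}}$ I would use that $1-\frac12\sum_i\rho_i\ge 0$ and that by item iii) of Definition~\ref{DefinitionVarSolution} and rectifiability the varifold term $\int\mathrm{Id}_{\mathbf G}:\nabla B\,d\mathcal V_t$ combines with the $I_{i,j}$ surface term to give something controlled by $\|\nabla B\|_{L^\infty}\big(\mu_t(\mathbb R^d)-\frac12\sum|\nabla\chi_i|(\mathbb R^d)\big)$ plus a term absorbed into $\frac14\int|H_\mu|^2 + Cr_T^{-2}\mu_t$; crucially $\mu_t(\mathbb R^d)-\frac12\sum_i|\nabla\chi_i|(\mathbb R^d)\le E_{\mathrm{int}}$ by \eqref{varinterfaceerror}. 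The $H_\mu\cdot B$ term is handled by Young: $\le\frac14\int|H_\mu|^2 + \int|B|^2\le\frac14\int|H_\mu|^2+Cr_T^{-2}\mu_t(\mathbb R^d)$.

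Collecting everything, the left-hand side of \eqref{ineq:stabilitybad} is at most $\frac12\int|H_\mu|^2\,d\mu_t + C_0 r_T^{-1}E[\mathcal V,\chi|\bar\chi^{z,T}](t) + C_1 r_T^{-2}\mu_t(\mathbb R^d)$ for universal $C_0,C_1$ (this looks like the wrong sign on the dissipation, so the estimates must be arranged to instead produce at most $\tfrac12\int|H_\mu|^2\cdot\theta$ with $\theta<1$, i.e.\ one keeps a definite fraction $\ge\tfrac12$ of the dissipation on the good side). The point is then a counting argument: at a non-regular time $\int|H_\mu|^2\,d\mu_t\ge\Lambda\frac{2\pi}{r_T}$, and by the energy dissipation inequality $\mu_t(\mathbb R^d)\le\mu_0(\mathbb R^d)\lesssim r_0$ while $r_T\le r_0$, so $r_T^{-2}\mu_t(\mathbb R^d)\lesssim r_T^{-2}r_0$; to make this comparable to $r_T^{-1}\int|H_\mu|^2$ one uses the near-circularity \eqref{eq:asymptotically_circular1} to get $\mu_t(\mathbb R^d)\le(1{+}\delta_{\mathrm{asymp}})2\pi r_T$ (the length of a nearly-circular interface of radius $r_T$, which also controls the weight since multiplicity-one excess is part of $E_{\mathrm{int}}\le\delta r_T$), hence $C_1 r_T^{-2}\mu_t(\mathbb R^d)\le C_1'r_T^{-1}\lesssim\frac{C_1'}{\Lambda 2\pi}\int|H_\mu|^2$, and similarly $C_0 r_T^{-1}E\le C_0\delta r_T\cdot r_T^{-1}\le C_0\delta\cdot\frac{r_T}{r_T}\le\frac{C_0\delta}{\Lambda 2\pi}\int|H_\mu|^2$. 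Choosing first $\delta_{\mathrm{asymp}}$ small, then $\delta$ small, then $\Lambda\gg_{\delta,\delta_{\mathrm{asymp}}}1$ makes both error constants $\le\frac18$, leaving the left side $\le(\tfrac12\text{-side})\le-\tfrac12\int|H_\mu|^2$ after the usual rearrangement — here one must be careful that the dissipation term $-\mathcal D_{i,j}$ appears with a good sign on the left and that the genuinely negative $-\tfrac12\int|H_\mu|^2$ on the right has room to spare, i.e.\ one really proves the left side is $\le-(1{-}\tfrac{C_0\delta+C_1'}{\Lambda 2\pi})\tfrac12\int|H_\mu|^2\le-\tfrac12\int|H_\mu|^2$ by eating the loss into the unused half of the dissipation. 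The main obstacle I anticipate is the bookkeeping of the varifold term $RHS^{\operatorname{var-BV}}$: one must exploit the sign of $1-\tfrac12\sum\rho_i$, Brakke's perpendicularity (already invoked in the text before Lemma~\ref{lem:preliminaryStability}) and the identity \eqref{varinterfaceerror} carefully so that the multiplicity excess $\mu_t(\mathbb R^d)-\tfrac12\sum|\nabla\chi_i|(\mathbb R^d)$ is genuinely bounded by $E_{\mathrm{int}}$ rather than merely by $\mu_t(\mathbb R^d)$; the interface (BV) terms and bulk terms are comparatively routine scaling estimates.
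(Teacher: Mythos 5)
Your overall strategy (absorb every right-hand side term into the disproportionately large dissipation at a non-regular time, using the scaling bounds on the calibration, the mass bound $\mu_t(\mathbb{R}^2)\lesssim r_T$, the smallness $E\leq \delta r_T$, and then choosing $\delta_{\mathrm{asymp}},\delta$ small and $\Lambda$ large) is the same as the paper's, but you miss the paper's key simplification and, more importantly, your accounting of the dissipation has a genuine gap. The paper exploits that the error functional does not depend on $B$ and that Lemma~\ref{lem:preliminaryStability} holds for \emph{any} admissible $B$, so it evaluates the right hand sides with $B\equiv 0$: then $-\mathcal{D}_{i,j}+RHS^{\mathrm{int}}_{i,j}$ collapses to $-\int_{I_{i,j}}|V_{i,j}|^2-\int_{I_{i,j}}V_{i,j}\,\nabla\cdot\xi^{z,T}_{i,j}-\int_{I_{i,j}}\n_{i,j}\cdot\partial_t\xi^{z,T}_{i,j}$, $RHS^{\operatorname{var-BV}}$ reduces to $-\int|H_\mu|^2(1-\tfrac12\sum_i\omega_i)\,d\mu_t\leq 0$, and only a handful of error terms of size $r_T^{-1/2}(\int|V_{i,j}|^2)^{1/2}$ or $r_T^{-1}$ remain (estimated via Lemma~\ref{lemma:propgradfllow}, Lemma~\ref{prop:regInterfaceWeakSol}, and~\eqref{eq:asymptotically_circular1}), which are absorbed using~\eqref{def:badTimes}. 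Keeping the actual $B$ and splitting by Young, as you propose, is workable but forces you to track many more terms, and it is exactly there that your argument breaks down.

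The concrete gap: you propose to "simply drop" $-\mathcal{D}_{i,j}$ because it has a good sign. But on the unit-density part $\{\tfrac12\sum_i\omega_i=1\}$ of $\mu_t$ the weight $1-\tfrac12\sum_i\omega_i$ vanishes, so the first term of $RHS^{\operatorname{var-BV}}$ contributes \emph{no} negativity there; the only source of the required $-\tfrac12\int_{\{\omega=1\}}|H_\mu|^2\,d\mu_t$ is the interface dissipation itself ($\mathcal{D}_{i,j}$, or equivalently the $-\int_{I_{i,j}}|V_{i,j}|^2$ terms in the $B\equiv0$ form), converted into $|H_\mu|^2$ via Brakke perpendicularity and the $\mathcal{H}^1$-a.e.\ coincidence of $\Tan_x\mu_t$ with $\Tan_x I_{i,j}$, which give $|V_{i,j}|=|H_\mu|$ along the interfaces. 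If you discard $\mathcal{D}_{i,j}$, your final bound can at best produce $-\tfrac12\int_{\{\omega<1\}}|H_\mu|^2\,d\mu_t$, not the claimed $-\tfrac12\int_{\mathbb{R}^2}|H_\mu|^2\,d\mu_t$; moreover the Young remainders $\tfrac18|V_{i,j}|^2$ you generate are not of order $r_T^{-1}$ and can only be absorbed into that same retained interface dissipation. Your concluding display (left hand side $\leq +\tfrac12\int|H_\mu|^2+C_0r_T^{-1}E+C_1r_T^{-2}\mu_t$) indeed has the dissipation on the wrong side, and the fix you gesture at ("eating the loss into the unused half") is precisely the retained-$\mathcal{D}_{i,j}$/perpendicularity bookkeeping that needs to be carried out. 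Two smaller slips: the quadratic-in-error terms carry $\|\nabla B^{z,T}\|_{L^\infty}\lesssim r_T^{-2}$, so they are of order $r_T^{-2}E\leq\delta r_T^{-1}$ (not $r_T^{-1}E\leq\delta$, whose absorption into $\Lambda^{-1}\int|H_\mu|^2\gtrsim r_T^{-1}$ as you wrote only works if $r_T\lesssim1$); and the bound $\mu_t(\mathbb{R}^2)\leq(1+\delta_{\mathrm{asymp}})2\pi r_T$ is not what the hypotheses give you — the correct route is $\mu_t(\mathbb{R}^2)=E_{\mathrm{int}}+\sum_i\int\chi_i\,\nabla\cdot\xi_i^{z,T}\,dx\leq\widetilde C r_T$ as in Lemma~\ref{prop:regInterfaceWeakSol}, which suffices.
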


We may easily post-process the estimate~\eqref{ineq:stabilitybad}
to an estimate in terms of our error functional consistent with the final
decay estimate~\eqref{ineq:decayE}.

\begin{corollary}
	\label{cor:nonRegular}
	There exist $\delta,\delta_{\mathrm{asymp}}  \ll \frac{1}{2}$ as well as $\Lambda \gg_{\delta,\delta_{\mathrm{asymp}} } 1$
	such that for every $t \in \mathcal{T}_{\mathrm{non\text{-}reg}}(\Lambda)$
	satisfying~\eqref{eq:asymptotically_circular1} and
	$E[\mathcal{V},\chi|\bar\chi^{z,T}](t) \leq \delta r_T(t)$ it holds
	\begin{align} 
		\label{ineq:stabilitybad2}
			&\sum_{i,j=1,i\neq j}^{P} \frac{1}{2} 
		\Big( -\mathcal{D}_{i,j}[\chi|\bar\chi^{z,T}](t)
		+  RHS_{i,j}^{\mathrm{int}}[\chi|\bar\chi^{z,T}](t) \Big)\\\notag
		&~~~~+  RHS^{\operatorname{var-BV}}[\mathcal{V},\chi|\bar\chi^{z,T}](t)
		+ \sum_{i=1}^{P-1} RHS_{i}^{\mathrm{bulk}}[\chi|\bar\chi^{z,T}](t)
		\\&~~~~
		\nonumber
		\leq - \frac{5}{r_T^2(t)} E[\mathcal{V},\chi|\bar\chi^{z,T}](t).
	\end{align}
\end{corollary}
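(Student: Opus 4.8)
The goal is to upgrade Lemma~\ref{lemma:stabilitybad}, whose right-hand side is $-\tfrac12 \int_{\mathbb R^d}|H_\mu|^2\,d\mu_t$, to something of the form $-\tfrac{5}{r_T^2(t)}E[\mathcal V,\chi|\bar\chi^{z,T}](t)$. The natural strategy is simply to chain the two facts that are already available at a non-regular time $t \in \mathcal T_{\mathrm{non\text{-}reg}}(\Lambda)$ under the smallness hypothesis $E[\mathcal V,\chi|\bar\chi^{z,T}](t) \le \delta r_T(t)$: namely \eqref{ineq:stabilitybad} together with the defining large-dissipation inequality \eqref{def:badTimes}, i.e.\ $\int_{\mathbb R^d}|H_\mu(\cdot,t)|^2\,d\mu_t \ge \Lambda \tfrac{2\pi}{r_T(t)}$.

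First I would bound the dissipation below in terms of the error functional. Since $E[\mathcal V,\chi|\bar\chi^{z,T}](t) = E_{\mathrm{int}} + E_{\mathrm{bulk}}$ and $E_{\mathrm{bulk}} \ge 0$, while $E_{\mathrm{int}}[\mathcal V,\chi|\bar\chi^{z,T}](t) = \mu_t(\mathbb R^d) - \sum_{i,j}\tfrac12\int_{I_{i,j}}\n_{i,j}\cdot\xi_{i,j}^{z,T}\,d\mathcal H^1$, one uses that $|\xi_{i,j}^{z,T}| \le 1$ (from Construction~\ref{DefinitionCalibration}, as $\xi_{1,P} = -\xi$ with $|\xi|\le 1$, and the remaining $\xi_{i,j}$ have norm $\le 1$ too) together with $\tfrac12\sum_i|\nabla\chi_i|(\cdot,t) \le \mu_t$ from \eqref{eq:varsol3}, so the boundary integral is at most $\mu_t(\mathbb R^d)$ and hence $E_{\mathrm{int}} \ge 0$; more usefully one needs $\mu_t(\mathbb R^d)$ itself to be comparable to a constant times $r_T(t)$ (or at least bounded above by one), which follows since the smoothly evolving reference solution has $\mathcal H^1(\bar I^{z,T}(t)) = \mathcal H^1(\bar I(T(t)))$ close to $2\pi r_T(t)$ (by \eqref{eq:asymptotically_circular1}), and the error bound $E[\mathcal V,\chi|\bar\chi^{z,T}](t) \le \delta r_T(t)$ forces $\mu_t(\mathbb R^d)$ to be within $O(\delta r_T(t))$ of this; concretely $\mu_t(\mathbb R^d) \le (2\pi + C\delta_{\mathrm{asymp}} + C\delta) r_T(t) \le 4\pi\, r_T(t)$ for $\delta,\delta_{\mathrm{asymp}}$ small. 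Similarly $E[\mathcal V,\chi|\bar\chi^{z,T}](t) \le \mu_t(\mathbb R^d) + E_{\mathrm{bulk}}$ is bounded by a constant multiple of $r_T(t)$.

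Second, combine: from \eqref{def:badTimes},
\begin{align*}
	-\frac12\int_{\mathbb R^d}|H_\mu(\cdot,t)|^2\,d\mu_t
	&\le -\frac14\int_{\mathbb R^d}|H_\mu(\cdot,t)|^2\,d\mu_t - \frac14 \cdot \Lambda\frac{2\pi}{r_T(t)}
	\le - \frac{\pi\Lambda}{2\, r_T(t)}.
\end{align*}
Since $E[\mathcal V,\chi|\bar\chi^{z,T}](t) \le C_\ast r_T(t)$ for the explicit constant $C_\ast$ from the previous step, we get $\tfrac{1}{r_T(t)} \ge \tfrac{1}{C_\ast}\cdot \tfrac{E[\mathcal V,\chi|\bar\chi^{z,T}](t)}{r_T^2(t)}$, whence
\begin{align*}
	-\frac{\pi\Lambda}{2\, r_T(t)} \le -\frac{\pi\Lambda}{2 C_\ast}\cdot\frac{E[\mathcal V,\chi|\bar\chi^{z,T}](t)}{r_T^2(t)}.
\end{align*}
Choosing $\Lambda$ large enough that $\tfrac{\pi\Lambda}{2C_\ast} \ge 5$ (possible since $C_\ast$ depends only on already-fixed constants, and enlarging $\Lambda$ is consistent with the hypotheses of Lemma~\ref{lemma:stabilitybad}) and invoking \eqref{ineq:stabilitybad} for the left-hand side of \eqref{ineq:stabilitybad2} yields the claim.

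**Main obstacle.** The only genuinely non-routine point is producing the \emph{upper} bound $E[\mathcal V,\chi|\bar\chi^{z,T}](t) \lesssim r_T(t)$ with a constant that does not itself depend on $\Lambda$ (otherwise the final choice $\tfrac{\pi\Lambda}{2C_\ast}\ge 5$ could be circular). This needs a little care: one must get $\mu_t(\mathbb R^d) \le C r_T(t)$ from the smallness assumption $E \le \delta r_T(t)$ and the closeness-to-circle hypothesis \eqref{eq:asymptotically_circular1}, using that $E_{\mathrm{int}} = \mu_t(\mathbb R^d) - (\text{boundary term}) \le \delta r_T(t)$ and that the boundary term is at most $\mathcal H^1$ of the reference interface plus an error controlled by $E_{\mathrm{int}}$ — essentially the coercivity of $E_{\mathrm{int}}$ bounding the interfacial measure, exactly the bookkeeping recorded around \eqref{varinterfaceerror}. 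Once that constant is pinned down purely in terms of $\delta, \delta_{\mathrm{asymp}}$ (not $\Lambda$), the Gronwall-consistent form follows by the elementary absorption above.
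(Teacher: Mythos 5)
Your proposal is correct and is in essence the paper's own argument: combine \eqref{ineq:stabilitybad} with the defining condition \eqref{def:badTimes} of non-regular times, use $E[\mathcal{V},\chi|\bar\chi^{z,T}](t)\lesssim r_T(t)$ to convert $\frac{1}{r_T(t)}$ into $\frac{1}{r_T^2(t)}E[\mathcal{V},\chi|\bar\chi^{z,T}](t)$, and enlarge $\Lambda$ (the paper takes $\Lambda:=\max\{\widetilde\Lambda,10\}$). The only quibble is that the ``main obstacle'' you single out is not one: the needed upper bound is simply the corollary's hypothesis $E[\mathcal{V},\chi|\bar\chi^{z,T}](t)\leq\delta r_T(t)\leq r_T(t)$, with $\delta$ fixed prior to (hence independently of) $\Lambda$, so your detour through the mass bound $\mu_t(\mathbb{R}^2)\lesssim r_T(t)$ (which, where it is genuinely needed, is Lemma~\ref{prop:regInterfaceWeakSol} and rests on the divergence-theorem estimate of $\sum_i\int\chi_i\,\nabla\cdot\xi_i^{z,T}\,dx$ rather than the slightly hand-wavy ``boundary term controlled by the reference interface plus $E_{\mathrm{int}}$'' you sketch) is superfluous here.
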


In fact, there is nothing particular about having~$5$ as a factor on the right hand side
of~\eqref{ineq:stabilitybad2}, and it can indeed be replaced by any constant~$C$.
The proofs of Lemma~\ref{lemma:stabilitybad} and Corollary~\ref{cor:nonRegular} 
can be found in Section~\ref{proof:stabilitybad}. 

\subsection{Stability estimates for perturbative regime}
\label{subsec:stabilityEstimateGraph}
We proceed with the estimation of
the right hand sides of~\eqref{eq:prelimStabilityRelEntropy} 
and~\eqref{eq:prelimStabilityBulk} in the 
perturbative regime described by Proposition~\ref{theo:graph}.
We first derive the version of the stability estimate~\eqref{eq:heuristics24}
without making use of the assumption on~$\bar\chi$
being quantitatively close to a shrinking circle. 
The derivation of these estimates, namely the proofs of the following lemmas, are contained in Sections \ref{proof:perreg1}-\ref{proof:perreg2}-\ref{proof:perreg3}.

\begin{lemma}
	[Stability estimate in perturbative setting: variable coefficients]
	\label{lemma:relenineqgraph}
	Fix $t \in (0,\tchi)$ and assume that the conclusions of Proposition~\ref{theo:graph}
	hold true. Given $\delta_{\mathrm{err}} \in (0,1)$, one may choose the constants 
	$C,C' \gg_{\delta_{\mathrm{err}}} 1$
	from~\emph{\eqref{eq:smallC0norm}--\eqref{eq:smallC1norm}} such that
	\begin{equation}
		\begin{aligned}\label{ineq:graphset}
			&\sum_{i,j=1,i\neq j}^{P} \frac{1}{2} 
		\Big( -\mathcal{D}_{i,j}[\chi|\bar\chi^{z,T}](t)
		+  RHS_{i,j}^{\mathrm{int}}[\chi|\bar\chi^{z,T}](t) \Big)\\
		&~~~~+  RHS^{\operatorname{var-BV}}[\mathcal{V},\chi|\bar\chi^{z,T}](t)
			+ \sum_{i=1}^{P-1} RHS_{i}^{\mathrm{bulk}}[\chi|\bar\chi^{z,T}](t)
			\\&~~~~~~~~
			\leq R_{l.o.t.} + R_{h.o.t.},
		\end{aligned}
	\end{equation}
	where the leading order terms are given by
	\begin{align*}
		R_{l.o.t.} &:= - \int_{\bar{I}^{z,T}(t)} (h'')^2(\cdot,t) \,d\mathcal{H}^1
		\\&~~~~
		+ \int_{\bar{I}^{z,T}(t)} \bigg(\frac{3}{2} H_{\bar{I}^{z,T}}^2(\cdot,t) - \frac{1}{r_T^2(t)}\bigg)
		(h')^2(\cdot,t) \,d\mathcal{H}^1
		\\&~~~~
		+ \int_{\bar{I}^{z,T}(t)} \frac{1}{r_T^2(t)}\bigg(\frac{1}{2}
		H_{\bar{I}^{z,T}}^2(\cdot,t) + \frac{1}{r_T^2(t)}\bigg)	h^2(\cdot,t) \,d\mathcal{H}^1
		\\&~~~~
		- \int_{\bar{I}^{z,T}(t)} \Big(\frac{1}{r_T^2(t)} + H_{\bar{I}^{z,T}}^2(\cdot,t) \Big)h(\cdot,t)
		\n_{\bar{I}^{z,T}}(\cdot,t) \cdot \dot{z}(t) \,d\mathcal{H}^1 
		\\&~~~~
		- \int_{\bar{I}^{z,T}(t)} \frac{1}{r^2_T(t)} H_{\bar{I}^{z,T}}(\cdot,t)
		h(\cdot,t) \dot{\mathfrak{T}}(t)\,\,d\mathcal{H}^1
		\\&~~~~
		- \int_{\bar{I}^{z,T}(t)} H_{\bar{I}^{z,T}}'(\cdot,t) 
		\big(\tau_{\bar{I}^{z,T}}(\cdot,t)\cdot\dot{z}(t)\big) h(\cdot,t) \,d\mathcal{H}^1 
		\\&~~~~
		- \int_{\bar{I}^{z,T}(t)} H_{\bar{I}^{z,T}}'(\cdot,t) \dot{\mathfrak{T}}(t) h'(\cdot,t) \,d\mathcal{H}^1 
		\\&~~~~
		+ \int_{\bar{I}^{z,T}(t)} 2 H_{\bar{I}^{z,T}}(\cdot,t) H_{\bar{I}^{z,T}}'(\cdot,t) 
		h(\cdot,t) h'(\cdot,t) \,d\mathcal{H}^1
	\end{align*}
	and the higher order terms are given by
	\begin{align*}
		R_{h.o.t.} &:=
		\delta_{\mathrm{err}}
		\int_{\bar{I}^{z,T}(t)} (h'')^2(\cdot,t) \,d\mathcal{H}^1
		\\&~~~~
		+ \delta_{\mathrm{err}}
		\int_{\bar{I}^{z,T}(t)} \Big(\frac{1}{r_T^2(t)} {+} \big|H_{\bar{I}^{z,T}}'\big|(\cdot,t)
		\Big) (h')^2(\cdot,t) \,d\mathcal{H}^1
		\\&~~~~
		+ \delta_{\mathrm{err}}
		\int_{\bar{I}^{z,T}(t)} \Big(\frac{1}{r_T^4(t)} {+} \big(H_{\bar{I}^{z,T}}'\big)^2(\cdot,t)
		\Big) h^2(\cdot,t) \,d\mathcal{H}^1
		\\&~~~~
		+ \delta_{\mathrm{err}} 
		\int_{\bar{I}^{z,T}(t)} \frac{1}{r_T(t)}\big|h'(\cdot,t) \tau_{\bar{I}^{z,T}} \cdot \dot{z}\big| 
		+ \frac{1}{r_T^2(t)}\big|h(\cdot,t) \n_{\bar{I}^{z,T}} \cdot \dot{z}\big|
		\,d\mathcal{H}^1
		\\&~~~~
		+ \delta_{\mathrm{err}} 
		\int_{\bar{I}^{z,T}(t)} \frac{1}{r_T^3(t)} \big|h(\cdot,t)\dot{\mathfrak{T}}\big|
		+ \big|H_{\bar{I}^{z,T}}'(\cdot,t) h'(\cdot,t)\dot{\mathfrak{T}}\big| 
		\,d\mathcal{H}^1.
	\end{align*}
\end{lemma}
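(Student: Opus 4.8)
The plan is to expand every term in~\eqref{ineq:graphset} in the graph chart provided by Proposition~\ref{theo:graph} and to read off the quadratic-in-$h$ contributions. First I would use Proposition~\ref{theo:graph} to reduce to the two-phase situation: only phases $1$ and $P$ are present, and $\mathcal{V}_t$ is exactly the multiplicity-one integral varifold carried by $I_{1,P}(t)$, whose tangent planes are $\Id - \n_{1,P}\otimes\n_{1,P}$ and whose multiplicity satisfies $\omega \equiv 1$ on $\supp\mu_t$. Hence all four terms of $RHS^{\operatorname{var-BV}}[\mathcal{V},\chi|\bar\chi^{z,T}](t)$ vanish (the first two because $1 - \tfrac12\sum_i \omega_i \equiv 0$ on $\supp\mu_t$, the last two by mutual cancellation), and $RHS_i^{\mathrm{bulk}} = 0$ for $i \in \{2,\dots,P{-}1\}$ since there $\chi_i \equiv \bar\chi_i^{z,T} \equiv 0$ and $I_{i,j}(t) = \emptyset$; by the symmetries $\xi_{P,1}^{z,T} = -\xi_{1,P}^{z,T}$, $\n_{P,1} = -\n_{1,P}$, $V_{P,1} = -V_{1,P}$ the orientations $(1,P)$ and $(P,1)$ moreover contribute identically, so the left-hand side of~\eqref{ineq:graphset} collapses to $-\mathcal{D}_{1,P}[\chi|\bar\chi^{z,T}](t) + RHS_{1,P}^{\mathrm{int}}[\chi|\bar\chi^{z,T}](t) + RHS_1^{\mathrm{bulk}}[\chi|\bar\chi^{z,T}](t)$.

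Next I would evaluate the calibration of Construction~\ref{DefinitionCalibration} along the interface $I_{1,P}(t) = \{x + h(x,t)\n_{\bar{I}^{z,T}}(x,t) : x \in \bar{I}^{z,T}(t)\}$, cf.~\eqref{eq:graphRep}. Since $\|h(\cdot,t)\|_{L^\infty}/r_T(t) \leq 1/16$ and $\|h'(\cdot,t)\|_{L^\infty} \leq 1$ by~\eqref{eq:smallC0norm}--\eqref{eq:smallC1norm}, the nearest-point projection of such a point onto $\bar{I}^{z,T}(t)$ is $x$ and its signed distance is $h(x)$, so the cut-offs $\eta$ and $\bar\vartheta$ are in their trivial branches ($\eta \equiv 1$, $\eta' \equiv 0$, $\bar\vartheta(s) = -s$). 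Consequently, on $I_{1,P}(t)$ one reads off from~\eqref{eq:defxi}--\eqref{eq:defWeight} that $\xi_{1,P}^{z,T} = -\n_{\bar{I}^{z,T}}\!\circ P_{\bar{I}^{z,T}}$ (in particular $|\xi_{1,P}^{z,T}| \equiv 1$, so $(\Id - \xi_{1,P}^{z,T}\otimes\xi_{1,P}^{z,T})B^{z,T} \equiv 0$ and the only term of $RHS_{1,P}^{\mathrm{int}}$ involving $V_{1,P}$ disappears), $B^{z,T} = (H_{\bar{I}^{z,T}}\n_{\bar{I}^{z,T}})\!\circ P_{\bar{I}^{z,T}}$, and $\vartheta_1^{z,T} = -h/r_T^2$; while the standard tubular-neighbourhood formulas for $\nabla(\n_{\bar{I}^{z,T}}\!\circ P_{\bar{I}^{z,T}})$ and $\nabla^2 \sdist_{\bar{I}^{z,T}}$, together with~\eqref{eq:shiftedSignedDistance}--\eqref{eq:shiftedProjection} and their time derivatives (which feed $\dot z$, $\dot{\mathfrak{T}}$ into the moving frame), give explicit expressions for $\nabla\xi_{1,P}^{z,T}$, $\nabla B^{z,T}$, $\partial_t\xi_{1,P}^{z,T}$ and $\partial_t\vartheta_1^{z,T}$ in terms of $h$, $H_{\bar{I}^{z,T}}$, $H_{\bar{I}^{z,T}}'$, $r_T$, $\n_{\bar{I}^{z,T}}$, $\tau_{\bar{I}^{z,T}}$, $\dot z$, $\dot{\mathfrak{T}}$ (all Jacobian factors being of the form $1/(1 - h H_{\bar{I}^{z,T}})$). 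Finally $\n_{1,P}$ along the graph is the elementary expression in $h'$ of the type~\eqref{eq:heuristics5} (with $h/r$ replaced by $h H_{\bar{I}^{z,T}}$), and the normal velocity $V_{1,P}$ is rewritten via $\partial_t\chi_1 + H_\mu\cdot\nabla\chi_1 = 0$ as the speed of $I_{1,P}(t)$ relative to the frame $\bar{I}^{z,T}(t)$, which --- using the $H^2$-regularity of $h$ from~\eqref{eq:graphReg} --- equals $-H_{\bar{I}^{z,T}}$ up to a contribution of order $h''$ and lower-order terms in $h$, $h'$, $\dot z$, $\dot{\mathfrak{T}}$.

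Substituting all of this into the formulas for $\mathcal{D}_{1,P}$, $RHS_{1,P}^{\mathrm{int}}$, $RHS_1^{\mathrm{bulk}}$ from Lemma~\ref{lem:preliminaryStability} and Taylor-expanding each integrand in powers of the small quantities $h/r_T$, $h'$, $r_T h''$, $\dot z$, $\dot{\mathfrak{T}}$, the part independent of these vanishes identically --- this is the rigorous counterpart of the exact cancellation in~\eqref{eq:heuristics10} and relies on the gradient-flow calibration identities for $\bar\chi$ (which, after the space-time shift, hold up to errors of order $|\dot z| + |\dot{\mathfrak{T}}|$) together with the mean curvature flow equation for $\bar\chi$. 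The genuinely quadratic contributions, with coefficients built from the exact geometry $H_{\bar{I}^{z,T}}$, $H_{\bar{I}^{z,T}}'$, $r_T$, assemble after integration by parts along the closed curve $\bar{I}^{z,T}(t)$ into $R_{l.o.t.}$, and every remaining term --- cubic or higher in $h$, or quadratic but carrying a spare factor $\|h\|_{L^\infty}/r_T$ or $\|h'\|_{L^\infty}$ --- is bounded by $R_{h.o.t.}$ once $C$, $C'$ in~\eqref{eq:smallC0norm}--\eqref{eq:smallC1norm} are chosen large enough in terms of $\delta_{\mathrm{err}}$. The two perfect-square terms of $\mathcal{D}_{1,P}$ serve simultaneously to produce the $-\int (h'')^2$ in $R_{l.o.t.}$ (each contributing $-\tfrac12(h'')^2$ to leading order) and, via Young's inequality with an $O(\delta_{\mathrm{err}})$ loss into $R_{h.o.t.}$, to absorb the $V_{1,P}$-dependent cross terms coming from $\mathcal{D}_{1,P}$ and from the $\tfrac{h}{r_T^2}(H_{\bar{I}^{z,T}} + V_{1,P})$ contribution of $RHS_1^{\mathrm{bulk}}$.

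I expect the main obstacle to be the bookkeeping: reproducing the precise coefficient structure claimed for $R_{l.o.t.}$ --- in particular the weights $\tfrac32 H_{\bar{I}^{z,T}}^2 - r_T^{-2}$ on $(h')^2$ and $\tfrac12 H_{\bar{I}^{z,T}}^2 + r_T^{-2}$ on $h^2/r_T^2$, and the exact $\dot z$-, $\dot{\mathfrak{T}}$- and $H_{\bar{I}^{z,T}}'$-dependent terms --- requires carefully combining the geometric parts of $\nabla\xi_{1,P}^{z,T}$ and $\nabla B^{z,T}$ (where $H_{\bar{I}^{z,T}}$ enters both directly and through the $1/(1 - h H_{\bar{I}^{z,T}})$ factors), the graph expansion of $V_{1,P}$, and the $\dot z$, $\dot{\mathfrak{T}}$ errors from differentiating the shifted signed distance, and then deciding which cross terms may be discarded into $R_{h.o.t.}$; a secondary point is to verify that the trivial branches of $\eta$ and $\bar\vartheta$ genuinely apply uniformly along $I_{1,P}(t)$, which is exactly why~\eqref{eq:smallC0norm}--\eqref{eq:smallC1norm} are imposed with $C$, $C'$ large. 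Note that, consistent with the lemma's title, the argument nowhere uses closeness of $\bar\chi$ to a shrinking circle --- the $H_{\bar{I}^{z,T}}'$-terms in $R_{l.o.t.}$ are present precisely because circularity is not assumed.
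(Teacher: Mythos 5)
Your overall route coincides with the paper's: reduce via Proposition~\ref{theo:graph} to the two-phase, multiplicity-one situation (so that $RHS^{\operatorname{var-BV}}[\mathcal{V},\chi|\bar\chi^{z,T}](t)$ vanishes and only the pair $(1,P)$ and the bulk term for $i=1$ survive), note that along the graph the cutoffs of Construction~\ref{DefinitionCalibration} are in their trivial branches so that $\xi^{z,T}$, $B^{z,T}$, $\vartheta^{z,T}$ reduce to $\pm\,\n_{\bar I^{z,T}}\circ P_{\bar I^{z,T}}$, $(H_{\bar I^{z,T}}\n_{\bar I^{z,T}})\circ P_{\bar I^{z,T}}$ and $-\sdist_{\bar I^{z,T}}/r_T^2$, expand every quantity in $h$, integrate by parts along the closed curve, and absorb remainders using \eqref{eq:smallC0norm}--\eqref{eq:smallC1norm}; this is exactly the content of Lemma~\ref{lem:perturbativeComputations}, to which the paper's proof of the present lemma defers.

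There is, however, a genuine gap in your treatment of $V_{1,P}$. You propose to rewrite it via $\partial_t\chi_1 + H_\mu\cdot\nabla\chi_1=0$ as a kinematic normal speed of $I_{1,P}(t)$ relative to the moving frame $\bar I^{z,T}(t)$, and you assert that the resulting expansion of $V_{1,P}$ involves $\dot z$ and $\dot{\mathfrak{T}}$. This step fails at a fixed regular time: Proposition~\ref{theo:graph} yields only spatial $H^2$ regularity of $h(\cdot,t)$ at the single time $t$ and no time regularity of the weak interface, so a relative normal speed is not even defined, and the transport equation, which holds only distributionally in space-time, cannot be localized to a pointwise identity at time $t$. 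Even formally, such a rewriting produces a $\partial_t h$ term that can only be removed by invoking the (linearized) evolution law for $h$, i.e.\ by circular reasoning (cf.\ the purely heuristic \eqref{eq:heuristics22}). The correct identification is purely spatial: since $\mathcal{V}_t$ is the multiplicity-one varifold over the $H^2$ graph, $V_{1}=-H_\mu\cdot\frac{\nabla\chi_1}{|\nabla\chi_1|}$ coincides with the distributional curvature of $\gamma_h$, so $V$ is given by \eqref{eq:aux29} and expands as in \eqref{eq:aux26}, containing $h$, $h'$, $h''$ but no $\dot z$, $\dot{\mathfrak{T}}$ and no $\partial_t h$. The shift velocities enter the leading-order terms exclusively through $\partial_t\xi^{z,T}$ and $\partial_t\vartheta^{z,T}$, i.e.\ \eqref{eq:aux23}--\eqref{eq:aux24} (which you do account for); injecting them additionally into $V_{1,P}$ would corrupt precisely the $\dot z$-, $\dot{\mathfrak{T}}$-dependent coefficients of $R_{l.o.t.}$ that the later frozen-coefficient and Fourier arguments rely on.
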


Note that in case $\bar\chi_1$ describes a circle shrinking by mean curvature flow,
then the leading order terms on the right hand side of~\eqref{ineq:graphset}
are indeed precisely those captured by~\eqref{eq:heuristics24}.

In a second step, we post-process the previous estimate~\eqref{ineq:graphset} 
to the constant-coefficient estimate~\eqref{eq:heuristics24}. In PDE jargon,
this amounts to nothing else than a freezing of coefficients, only exploiting
the estimates from Definition~\ref{DefinitionShrinkingCircle}.

\begin{lemma} 
	[Stability estimate in perturbative setting: frozen coefficients]
	\label{lemma:relenineqpolar}
	Fix $t \in (0,\tchi)$, assume that the conclusions of Proposition~\ref{theo:graph}
	hold true, and define $\widetilde h(\cdot,t)\colon [0,2\pi) \to \mathbb{R}$,
	$\theta \mapsto h\big(\bar{\gamma}^{z,T}\big(\frac{L_{\bar{\gamma}^{z,T}}}{2\pi}\theta,t\big),t\big)$,
	where, for each $\widetilde{t} \in (0,T_{ext})$,
	$\bar{\gamma}(\cdot,\widetilde{t}\,)$ denotes an arc-length parametrization of~$\bar{I}(\widetilde{t}\,)$.
	Given $\delta_{\mathrm{err}} \in (0,1)$, one may choose the constants 
	$C,C' \gg_{\delta_{\mathrm{err}}} 1$ from~\emph{\eqref{eq:smallC0norm}--\eqref{eq:smallC1norm}} 
	as well as the constant $\delta_{\mathrm{asymp}} \ll_{\delta_{\mathrm{err}}} \frac{1}{2}$
	from Definition~\ref{DefinitionShrinkingCircle} such that
	\begin{equation}
		\begin{aligned} \label{ineq:freaze}
			&\sum_{i,j=1,i\neq j}^{P} \frac{1}{2} 
		\Big( -\mathcal{D}_{i,j}[\chi|\bar\chi^{z,T}](t)
		+  RHS_{i,j}^{\mathrm{int}}[\chi|\bar\chi^{z,T}](t) \Big)\\
		&~~~~+  RHS^{\operatorname{var-BV}}[\mathcal{V},\chi|\bar\chi^{z,T}](t)
			+ \sum_{i=1}^{P-1} RHS_{i}^{\mathrm{bulk}}[\chi|\bar\chi^{z,T}](t)
			\\&~~~~~~~~
			\leq \widetilde{R}_{l.o.t.} + \widetilde{R}_{h.o.t.},
		\end{aligned}
	\end{equation}
	where the leading order terms are given by
	\begin{align*}
		\widetilde{R}_{l.o.t.} &:= 
		- \frac{1}{r^3_T(t)}\int_{0}^{2\pi } (\partial_\theta ^2 \widetilde h)^{2}(\cdot,t)
		- \frac{1}{2} (\partial_\theta  \widetilde h)^{2}(\cdot,t)
		- \frac{3}{2} \widetilde h^2(\cdot,t) \,d \theta
		\\&~~~~
		- 4 \frac{1}{r_T^3(t)} \bigg(\frac{1}{\sqrt{2\pi}}\int_0^{2\pi} \widetilde h(\cdot,t) \,d \theta \bigg) ^2
		- 6 \frac{1}{r_T^3(t)} \bigg|\frac{1}{\sqrt{\pi}}\int_0^{2\pi} \widetilde h(\cdot,t) e^{i\theta} \,d \theta \bigg| ^2
	\end{align*}
	and the higher order term is simply given by
	\begin{align*}
		\widetilde{R}_{h.o.t.} &:=
		\delta_{\mathrm{err}} \frac{1}{r_T^3(t)}
		\int_{0}^{2\pi } (\partial_\theta ^2 \widetilde h)^{2}(\cdot,t)
		+ \frac{1}{2} (\partial_\theta  \widetilde h)^{2}(\cdot,t)
		+ \frac{3}{2} \widetilde h^2(\cdot,t) \,d \theta.
	\end{align*}
\end{lemma}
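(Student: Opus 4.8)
The plan is to start from the variable-coefficient estimate of Lemma~\ref{lemma:relenineqgraph} and to replace, term by term, the geometric quantities attached to $\bar{I}^{z,T}(t)$ by their frozen circular values, the errors produced being absorbable into the right-hand side $\widetilde{R}_{h.o.t.}$. Since $\bar{I}^{z,T}(t)=z(t)+\bar{I}(T(t))$ is a rigid translate of $\bar{I}(T(t))$, which by hypothesis is $\delta_{\mathrm{asymp}}$-close to a circle of radius $r_T(t)$, the bounds \eqref{eq:asymptotically_circular1}--\eqref{eq:asymptotically_circular4} hold along $\bar{I}^{z,T}(t)$ with $r$ replaced by $r_T(t)$ and $\gamma$ replaced by the arc-length parametrization $\bar\gamma^{z,T}(\cdot,t)$; in particular, writing $L:=L_{\bar\gamma^{z,T}}(t)$, one has $|L-2\pi r_T|\le 2\pi r_T\delta_{\mathrm{asymp}}$, $\big|\n_{\bar{I}^{z,T}}\big(\bar\gamma^{z,T}(\tfrac{L}{2\pi}\theta)\big)+e^{i\theta}\big|\le\delta_{\mathrm{asymp}}$, $|r_T H_{\bar{I}^{z,T}}-1|\le\delta_{\mathrm{asymp}}$, and $r_T^2|H_{\bar{I}^{z,T}}'|\le\delta_{\mathrm{asymp}}$.

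First I would pass to the angular variable $\theta\in[0,2\pi)$ via the affine reparametrization $\theta\mapsto\bar\gamma^{z,T}(\tfrac{L}{2\pi}\theta,t)$. Under it the exact identities $d\mathcal{H}^1=\tfrac{L}{2\pi}\,d\theta$, $h'=\tfrac{2\pi}{L}\partial_\theta\widetilde h$, $h''=\big(\tfrac{2\pi}{L}\big)^2\partial_\theta^2\widetilde h$ (no curvature correction, since $h$ is scalar and the reparametrization affine) and $\dashint_{\bar{I}^{z,T}(t)}(\cdot)\,d\mathcal{H}^1=\dashint_0^{2\pi}(\widetilde{\,\cdot\,})\,d\theta$ hold, so that \eqref{eq:evzTgraph} reads $\dot z=\tfrac{6}{r_T^2}\dashint_0^{2\pi}\widetilde h\,\n_{\bar{I}^{z,T}}\,d\theta$ and $\dot{\mathfrak{T}}=\tfrac{4}{r_T}\dashint_0^{2\pi}\widetilde h\,d\theta$. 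Substituting these identities and the four closeness bounds into $R_{l.o.t.}$ converts its first term into $-\big(\tfrac{2\pi}{L}\big)^3\int_0^{2\pi}(\partial_\theta^2\widetilde h)^2\,d\theta=-\tfrac{1}{r_T^3}\int_0^{2\pi}(\partial_\theta^2\widetilde h)^2\,d\theta+O(\delta_{\mathrm{asymp}})\tfrac1{r_T^3}\int_0^{2\pi}(\partial_\theta^2\widetilde h)^2\,d\theta$; it turns the coefficient $\tfrac32 H_{\bar{I}^{z,T}}^2-\tfrac1{r_T^2}$ into $\tfrac1{2r_T^2}(1+O(\delta_{\mathrm{asymp}}))$ and $\tfrac1{r_T^2}\big(\tfrac12 H_{\bar{I}^{z,T}}^2+\tfrac1{r_T^2}\big)$ into $\tfrac3{2r_T^4}(1+O(\delta_{\mathrm{asymp}}))$, which (using $\tfrac{L}{2\pi}=r_T(1+O(\delta_{\mathrm{asymp}}))$) produce the $\tfrac1{2r_T^3}\int_0^{2\pi}(\partial_\theta\widetilde h)^2$ and $\tfrac3{2r_T^3}\int_0^{2\pi}\widetilde h^2$ contributions of $\widetilde{R}_{l.o.t.}$; and, after inserting the relations for $(\dot z,\dot{\mathfrak{T}})$ and freezing $\n_{\bar{I}^{z,T}}\approx-e^{i\theta}$, $H_{\bar{I}^{z,T}}\approx r_T^{-1}$, the fourth and fifth terms of $R_{l.o.t.}$ collapse to the negative-definite expressions $-\tfrac6{r_T^3}\big|\tfrac1{\sqrt\pi}\int_0^{2\pi}\widetilde h\,e^{i\theta}\,d\theta\big|^2$ and $-\tfrac4{r_T^3}\big|\tfrac1{\sqrt{2\pi}}\int_0^{2\pi}\widetilde h\,d\theta\big|^2$, matching $\widetilde{R}_{l.o.t.}$ exactly — the constants $6$ and $4$ in \eqref{eq:evzTgraph} having been chosen precisely to make this matching hold.

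Next I would collect the leftover terms: the $O(\delta_{\mathrm{asymp}})$-corrections created above when freezing the five leading terms; the last three terms of $R_{l.o.t.}$, each carrying a factor $H_{\bar{I}^{z,T}}'=O(\delta_{\mathrm{asymp}} r_T^{-2})$; and the terms of $R_{h.o.t.}$ from Lemma~\ref{lemma:relenineqgraph}, re-expressed in the $\theta$-variable. Using the bounds $|\dot z|\le\tfrac{6}{r_T^2}\dashint_{\bar{I}^{z,T}(t)}|h|\,d\mathcal{H}^1\lesssim\tfrac1{r_T^2}\|\widetilde h\|_{L^2(0,2\pi)}$ and $|\dot{\mathfrak{T}}|\lesssim\tfrac1{r_T}\|\widetilde h\|_{L^2(0,2\pi)}$ read off from \eqref{eq:evzTgraph}, together with Cauchy--Schwarz and Young's inequality, each of these is bounded by $C\big(\delta_{\mathrm{err}}^{\mathrm{aux}}+\delta_{\mathrm{asymp}}\big)\tfrac1{r_T^3}\int_0^{2\pi}\big((\partial_\theta^2\widetilde h)^2+(\partial_\theta\widetilde h)^2+\widetilde h^2\big)\,d\theta$ for a universal constant $C$, where $\delta_{\mathrm{err}}^{\mathrm{aux}}$ is the parameter fed into Lemma~\ref{lemma:relenineqgraph}. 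One then closes the argument by first fixing $\delta_{\mathrm{err}}^{\mathrm{aux}}:=\delta_{\mathrm{err}}/(2C)$ — this determines admissible $C,C'$ through Lemma~\ref{lemma:relenineqgraph} — and then choosing $\delta_{\mathrm{asymp}}\ll_{\delta_{\mathrm{err}},C_\zeta}\tfrac12$ small enough that the total leftover is at most $\delta_{\mathrm{err}}\tfrac1{r_T^3}\int_0^{2\pi}\big((\partial_\theta^2\widetilde h)^2+\tfrac12(\partial_\theta\widetilde h)^2+\tfrac32\widetilde h^2\big)\,d\theta=\widetilde{R}_{h.o.t.}$.

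The main obstacle is organizational rather than conceptual: one must verify that the $(\dot z,\dot{\mathfrak{T}})$-dependent terms freeze to exactly the stated quadratic forms, so that the destabilizing contributions $\tfrac12(\partial_\theta\widetilde h)^2$ and $\tfrac32\widetilde h^2$ of $\widetilde{R}_{l.o.t.}$ are neither over- nor under-counted, and one must bookkeep the many $O(\delta_{\mathrm{asymp}})$ remainders so that every one of them really fits into the single $H^2(0,2\pi)$-type bound $\widetilde{R}_{h.o.t.}$. In doing so one crucially uses that the conclusions of Proposition~\ref{theo:graph} furnish the smallness $\|h(\cdot,t)\|_{L^\infty}\le r_T(t)/(16\max\{C,C_\zeta\})$ and $\|h'(\cdot,t)\|_{L^\infty}\le 1/C'$, which is what lets one absorb those terms of $R_{h.o.t.}$ that are only quadratically small in $h$ despite being paired with the not-a-priori-small prefactor $r_T^{-1}$.
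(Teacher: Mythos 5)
Your proposal is correct and follows essentially the same route as the paper's proof: starting from Lemma~\ref{lemma:relenineqgraph}, freezing the geometric coefficients via the closeness bounds \eqref{eq:asymptotically_circular1}--\eqref{eq:asymptotically_circular4}, changing variables to the angular parametrization, using the ODE \eqref{eq:evzTgraph} to identify the $\dot z$- and $\dot{\mathfrak{T}}$-terms with the $-\frac{6}{r_T^3}|\cdot|^2$ and $-\frac{4}{r_T^3}(\cdot)^2$ Fourier-mode corrections, and absorbing all $O(\delta_{\mathrm{asymp}})$ remainders together with the inherited $R_{h.o.t.}$ into $\widetilde{R}_{h.o.t.}$ by choosing the auxiliary error parameter (hence $C,C'$) first and $\delta_{\mathrm{asymp}}$ last. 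The bookkeeping you describe, including the treatment of the $H_{\bar{I}^{z,T}}'$-terms via $|H_{\bar{I}^{z,T}}'|\leq\delta_{\mathrm{asymp}}r_T^{-2}$ and the ODE-based bounds on $|\dot z|,|\dot{\mathfrak{T}}|$, matches the paper's Steps 1--6.
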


Since we reduced matters to the constant coefficient case, 
we may now in a third step employ Fourier methods to obtain
in the perturbative regime a stability estimate consistent 
with the decay estimate~\eqref{ineq:decayE}.

\begin{lemma}[Final stability estimate in perturbative setting] \label{lemma:stabilitygood}
	Fix $t \in (0,\tchi)$ and assume that the conclusions of Proposition~\ref{theo:graph}
	hold true. Given $\alpha \in (1,5)$, one may choose the constants 
	$C,C' \gg_{\alpha} 1$ from~\emph{\eqref{eq:smallC0norm}--\eqref{eq:smallC1norm}},
	the constant $\delta_{\mathrm{asymp}} \ll_{\alpha} \frac{1}{2}$
	from Definition~\ref{DefinitionShrinkingCircle}, and
	the constant $C_\zeta \gg_\alpha 1$ from Construction~\ref{DefinitionErrorHeights}
	such that
	\begin{align}\label{ineq:stabilityregular} 
		&	\sum_{i,j=1,i\neq j}^{P} \frac{1}{2} 
		\Big( -\mathcal{D}_{i,j}[\chi|\bar\chi^{z,T}](t)
		+  \frac{1}{2} RHS_{i,j}^{\mathrm{int}}[\chi|\bar\chi^{z,T}](t) \Big)
		+ \sum_{i=1}^{P-1} RHS_{i}^{\mathrm{bulk}}[\chi|\bar\chi^{z,T}](t) 
		\\&~~~ \nonumber
		\leq - \frac{\alpha}{r_T^2(t)}(1 {+} \dot{\mathfrak{T}}) E[\chi|\bar\chi^{z,T}](t).
	\end{align}
\end{lemma}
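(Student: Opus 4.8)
The plan is to use Proposition~\ref{theo:graph} to pass to the perturbative graph regime, Lemma~\ref{lemma:relenineqpolar} to reduce everything to the constant-coefficient quantities $\widetilde{R}_{l.o.t.},\widetilde{R}_{h.o.t.}$, and finally the explicit Fourier/spectral-gap computation of Subsection~\ref{subsec:heuristicsDecay}. First I would invoke Proposition~\ref{theo:graph}: since its conclusions are in force at the time $t$ in question, the only surviving interface $I_{1,P}(t)$ is a small $H^2$-graph $x\mapsto x + h(x,t)\n_{\bar{I}^{z,T}}(x,t)$ over $\bar{I}^{z,T}(t)$, the varifold $\mathcal{V}_t$ is the interface varifold $\mathcal{H}^1\llcorner\supp|\nabla\chi_1(\cdot,t)|\otimes(\delta_{\mathrm{Tan}_x})_x$, and $h(\cdot,t)=\rho(\cdot,t;z,T)$ so that the shift ODEs read \eqref{eq:evzTgraph} with the constants $c_T=4$, $c_z=6$ built in. Two consequences: $RHS^{\operatorname{var-BV}}[\mathcal{V},\chi|\bar\chi^{z,T}](t)=0$ (the multiplicity-defect term in \eqref{varinterfaceerror} vanishes and the two varifold terms in $RHS^{\operatorname{var-BV}}$ cancel for an interface varifold), and $E[\mathcal{V},\chi|\bar\chi^{z,T}](t)=E[\chi|\bar\chi^{z,T}](t)=E_{\mathrm{int}}[\chi|\bar\chi^{z,T}](t)+E_{\mathrm{bulk}}[\chi|\bar\chi^{z,T}](t)$.

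Next I would combine the merged preliminary estimates \eqref{eq:prelimStabilityRelEntropy}--\eqref{eq:prelimStabilityBulk} with Lemma~\ref{lemma:relenineqpolar}, which bounds $\sum_{i,j}\frac12(-\mathcal{D}_{i,j}+RHS_{i,j}^{\mathrm{int}})+RHS^{\operatorname{var-BV}}+\sum_i RHS_i^{\mathrm{bulk}}$ by $\widetilde{R}_{l.o.t.}+\widetilde{R}_{h.o.t.}$. The left-hand side of \eqref{ineq:stabilityregular} differs from this combination by $-RHS^{\operatorname{var-BV}}$, which vanishes, and by $-\frac14\sum_{i,j}RHS_{i,j}^{\mathrm{int}}$, whose leading part is non-positive in the graph regime (there $RHS_{i,j}^{\mathrm{int}}$ is, after the freezing of coefficients, a positive multiple of $\int_0^{2\pi}(\partial_\theta\widetilde h)^2+\widetilde h^2\,d\theta$ plus $\widetilde R_{h.o.t.}$-type remainder, as one sees by matching $\tfrac{d}{dt}E_{\mathrm{int}}+\mathcal{D}_{i,j}$ with $RHS_{i,j}^{\mathrm{int}}$). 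Hence, after enlarging the $\delta_{\mathrm{err}}$-budget in $\widetilde{R}_{h.o.t.}$, the left-hand side of \eqref{ineq:stabilityregular} is $\leq\widetilde{R}_{l.o.t.}+\widetilde{R}_{h.o.t.}$, and it suffices to show $\widetilde{R}_{l.o.t.}+\widetilde{R}_{h.o.t.}\leq -\frac{\alpha}{r_T^2(t)}(1{+}\dot{\mathfrak{T}})E[\chi|\bar\chi^{z,T}](t)$.

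This is the heart of the argument — the rigorous counterpart of the passage from \eqref{eq:heuristics24} to \eqref{eq:heuristics30}. Fourier-decomposing $\widetilde h(\cdot,t)$ as in \eqref{eq:fourierDecomp} with coefficients $a_0,a_k,b_k$, the ODEs \eqref{eq:evzTgraph} are exactly \eqref{eq:expectedODEs} with $c_T=4$, $c_z=6$, and a direct computation gives $\widetilde{R}_{l.o.t.}=-\frac{1}{r_T^3}\big[\tfrac52 a_0^2+5(a_1^2{+}b_1^2)+\sum_{k\geq 2}(k^4{-}\tfrac32{-}\tfrac12 k^2)(a_k^2{+}b_k^2)\big]$ — the negative of the Gronwall integrand of \eqref{eq:heuristics26} — while $\widetilde{R}_{h.o.t.}=\delta_{\mathrm{err}}\frac{1}{r_T^3}\big[\tfrac32 a_0^2+\sum_{k\geq 1}(k^4{+}\tfrac12 k^2{+}\tfrac32)(a_k^2{+}b_k^2)\big]$, and, by Lemma~\ref{lemma:relentfreaze} together with the near-circularity of $\bar{I}^{z,T}(t)$, $E[\chi|\bar\chi^{z,T}](t)\leq\frac{1{+}\delta_{\mathrm{err}}}{2r_T}\big[a_0^2+\sum_{k\geq 1}(1{+}k^2)(a_k^2{+}b_k^2)\big]$ (cf.\ \eqref{eq:heuristics28b}). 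Now the scalar fact that $f(x):=2(x^4{-}\tfrac32{-}\tfrac12 x^2)/(1{+}x^2)$ is increasing on $[2,\infty)$ with $f(2)=5$, together with $2c_T{-}3=c_z{-}1=5$, yields the mode-by-mode inequalities with a strictly positive surplus of order $5{-}\alpha$ for each of $a_0$, $(a_1,b_1)$, and the $k{=}2$ mode, and of order $k^4$ for $k$ large; choosing $\delta_{\mathrm{err}}=\delta_{\mathrm{err}}(\alpha)$ small enough, this surplus absorbs $\widetilde{R}_{h.o.t.}$ and the factor $(1{+}\delta_{\mathrm{err}})(1{+}\dot{\mathfrak{T}})$, where $|\dot{\mathfrak{T}}(t)|\leq\delta_{\mathrm{err}}$ by Lemma~\ref{lemma:existzT} for $C_\zeta\gg_{\delta_{\mathrm{err}}}1$. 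This gives the required inequality, the constants being fixed in the order: first $\alpha$, then $\delta_{\mathrm{err}}$, then $C,C',\delta_{\mathrm{asymp}},C_\zeta$ (each depending only on $\alpha$), so that no circularity arises.

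I expect the main obstacle to be precisely this last point: the spectral gap is \emph{exactly} saturated at $k{=}2$, where $f(2)=5=2c_T{-}3=c_z{-}1$, so the scheme has no room at $\alpha=5$, and everything hinges on $\alpha<5$ providing an $\alpha$-dependent surplus that must simultaneously absorb $\widetilde{R}_{h.o.t.}$, the coefficient errors from replacing $H_{\bar{I}^{z,T}}$ by $1/r_T$ (where $\delta_{\mathrm{asymp}}$ enters, via Lemma~\ref{lemma:relenineqpolar}), the $(1{+}\dot{\mathfrak{T}})$-factor, the $(1{\pm}\delta_{\mathrm{err}})$-discrepancy between the Fourier-side quantity $E_h^{z,T}$ and $E[\chi|\bar\chi^{z,T}]$ from Lemma~\ref{lemma:relentfreaze}, and the weighting mismatch flagged above — all while keeping the dependency chain acyclic. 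The underlying inequalities, by contrast, are elementary once $f$ and the exact forms of $\widetilde{R}_{l.o.t.},\widetilde{R}_{h.o.t.}$ are in hand.
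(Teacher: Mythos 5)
Your proposal is correct and takes essentially the same route as the paper's proof: enter the perturbative graph regime via Proposition~\ref{theo:graph} (so that $RHS^{\operatorname{var-BV}}$ vanishes and $E[\mathcal{V},\chi|\bar\chi^{z,T}]=E[\chi|\bar\chi^{z,T}]$), apply the frozen-coefficient bound of Lemma~\ref{lemma:relenineqpolar} together with the comparison $E[\chi|\bar\chi^{z,T}]\leq(1{+}\delta_{\mathrm{err}})\tfrac{1}{2r_T}\|\widetilde h\|_{H^1(0,2\pi)}^2$ from Lemma~\ref{lemma:relentfreaze}, and conclude by the Fourier/spectral computation of Subsection~\ref{subsec:heuristicsDecay} with $c_T=4$, $c_z=6$, saturation at $5$, and absorption of $\widetilde R_{h.o.t.}$ and the factor $(1{+}\dot{\mathfrak{T}})$ using $|\dot{\mathfrak{T}}|\leq\delta_{\mathrm{err}}$ from Lemma~\ref{lemma:existzT}, exactly as in \eqref{eq:heuristics28}--\eqref{eq:heuristics29b}. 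The only divergence is your extra step bridging the factor $\tfrac12$ in front of $RHS^{\mathrm{int}}_{i,j}$ in the statement versus Lemma~\ref{lemma:relenineqpolar}: this discrepancy is present in the paper itself and is glossed over there (the proof simply invokes Lemma~\ref{lemma:relenineqpolar}, and the way Lemma~\ref{lemma:stabilitygood} is combined with Lemma~\ref{lem:preliminaryStability} in Theorem~\ref{theo:stability} shows the extra $\tfrac12$ is a typo), so your sign claim about $-\tfrac14\sum RHS^{\mathrm{int}}_{i,j}$, which as stated is not fully justified because of the non-sign-definite shift-coupling terms in $RHS^{\mathrm{int}}_{i,j}$, is in fact not needed.
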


\subsection{A priori stability estimate up to extinction time}
The penultimate step of our strategy is simply a summary
of our estimates from Subsections~\ref{subsec:timeSplitting}--\ref{subsec:stabilityEstimateGraph} (see Section \ref{subsec:proofstability} for the proof).

\begin{theorem}\label{theo:stability}
	Fix a decay exponent $\alpha\in (1,5)$ 
	and a time $\widetilde{t}_\chi \in (0,\tchi)$. 
	One may choose the constant $C_\zeta \gg_{\alpha} 1$ from
	Construction~\ref{DefinitionErrorHeights} as well as
	constants $\delta_{\mathrm{asymp}} \ll_{\alpha} \frac{1}{2}$
	and $\delta \ll_{\alpha,C_\zeta} \frac{1}{2}$
	(all independent of~$\widetilde{t}_\chi$) such that if for all $t \in (0,T_{ext})$ 
	the interior of $\{\bar\chi_1(\cdot,t) {=} 1\} \subset \mathbb{R}^2$ is 
	$\delta_{\mathrm{asymp}}$-close to a circle with radius $r(t):=\sqrt{2(T_{ext}{-}t)}$
	in the sense of Definition~\ref{DefinitionShrinkingCircle} and
	\begin{align} 
		\label{hpEt}
		E[\mathcal{V}, \chi| \bar \chi^{z,T}](t) &\leq  
		\delta r_T(t)
		\text{ for all } t \in [0,\widetilde{t}_\chi),
	\end{align} 
	then it holds for all $[s,\tau] \subset [0,\widetilde{t}_\chi)$
	and all $\psi \in C^1_{cpt}([0,\tchi);[0,\infty))$
	\begin{equation}
	\begin{aligned}\label{ineq:aprioristability} 
		&\psi(\tau)
		E[\mathcal{V}, \chi| \bar \chi^{z,T}](\tau) + \int_s^{\tau} \psi(t)
		\frac{\alpha}{r^2_T(t)} \big(1 {+} \dot{\mathfrak{T}}(t)\big) E[\mathcal{V}, \chi| \bar \chi^{z,T}](t) \,dt 
		\\&~~~
		\leq \psi(s)
		E[\mathcal{V}, \chi| \bar \chi^{z,T}](s)
		+ \int_{s}^{\tau} \Big(\frac{d}{dt}\psi(t)\Big) 
		E[\mathcal{V}, \chi| \bar \chi^{z,T}](t) \,dt.
	\end{aligned}
	\end{equation}
\end{theorem}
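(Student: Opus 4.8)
The plan is to assemble Theorem~\ref{theo:stability} from three previously established ingredients: the preliminary stability estimates of Lemma~\ref{lem:preliminaryStability}, the right-hand-side bound at non-regular times (Corollary~\ref{cor:nonRegular}), and the right-hand-side bound in the perturbative regime at regular times (Lemma~\ref{lemma:stabilitygood}). First I would fix $\alpha \in (1,5)$ and choose all remaining constants, tracking the dependencies imposed by the statements we rely on: Lemma~\ref{lemma:stabilitygood} prescribes admissible thresholds $C, C' \gg_\alpha 1$ in~\eqref{eq:smallC0norm}--\eqref{eq:smallC1norm}, $\delta_{\mathrm{asymp}} \ll_\alpha \tfrac12$ in Definition~\ref{DefinitionShrinkingCircle} and $C_\zeta \gg_\alpha 1$ in Construction~\ref{DefinitionErrorHeights}; Corollary~\ref{cor:nonRegular} (via Lemma~\ref{lemma:stabilitybad}) prescribes further smallness of $\delta_{\mathrm{asymp}}$ and $\delta$ and a threshold $\Lambda \gg 1$; Proposition~\ref{theo:graph} additionally requires $\delta \ll_{\Lambda, C, C', C_\zeta}\tfrac12$; and Lemma~\ref{lemma:existzT} requires $C_\zeta \gg_{\delta_{\mathrm{err}}} 1$ so that the space--time shift obeys $|\dot{\mathfrak{T}}(t)| \leq \delta_{\mathrm{err}}$ via~\eqref{ineq:zTprime}. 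I would pick the auxiliary constant $\delta_{\mathrm{err}} \in (0,1)$ small enough that $\alpha(1+\delta_{\mathrm{err}}) \leq 5$, then take $C_\zeta$ as the largest and $\delta_{\mathrm{asymp}},\delta$ as the smallest of the values required above; all these choices depend only on $\alpha$ (and on $C_\zeta$ in the case of $\delta$) and in particular are independent of $\widetilde{t}_\chi$.

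The next step is purely algebraic: adding the interface inequality~\eqref{eq:prelimStabilityRelEntropy} to the bulk identity~\eqref{eq:prelimStabilityBulk} of Lemma~\ref{lem:preliminaryStability}, using $E = E_{\mathrm{int}} + E_{\mathrm{bulk}}$, and transferring the nonnegative dissipation terms to the right-hand side, yields for every $[s,\tau] \subset [0,\tchi)$ and every $\psi \in C^1_{cpt}([0,\tchi);[0,\infty))$
\begin{align*}
\psi(\tau) E[\mathcal{V},\chi|\bar\chi^{z,T}](\tau)
\leq {} & \psi(s) E[\mathcal{V},\chi|\bar\chi^{z,T}](s)
+ \int_s^\tau \psi(t)\, \mathcal{R}(t)\,dt \\
& + \int_s^\tau \Big(\tfrac{d}{dt}\psi(t)\Big) E[\mathcal{V},\chi|\bar\chi^{z,T}](t)\,dt,
\end{align*}
where
\begin{align*}
\mathcal{R}(t) := {} & \sum_{i,j=1,i\neq j}^{P} \tfrac12 \big({-}\mathcal{D}_{i,j}[\chi|\bar\chi^{z,T}](t) + RHS_{i,j}^{\mathrm{int}}[\chi|\bar\chi^{z,T}](t)\big) \\
& + RHS^{\operatorname{var-BV}}[\mathcal{V},\chi|\bar\chi^{z,T}](t) + \sum_{i=1}^{P-1} RHS_{i}^{\mathrm{bulk}}[\chi|\bar\chi^{z,T}](t)
\end{align*}
is precisely the combination bounded in Corollary~\ref{cor:nonRegular} (and, in the perturbative regime, in Lemma~\ref{lemma:stabilitygood}). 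It therefore suffices to prove the pointwise bound $\mathcal{R}(t) \leq -\tfrac{\alpha}{r_T^2(t)}(1+\dot{\mathfrak{T}}(t))\,E[\mathcal{V},\chi|\bar\chi^{z,T}](t)$ for a.e.\ $t \in (0,\widetilde{t}_\chi)$: inserting it into the display above and rearranging reproduces~\eqref{ineq:aprioristability}.

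For the pointwise bound I would use the disjoint decomposition of $(0,\widetilde{t}_\chi)$ into $\mathcal{T}_{\mathrm{non\text{-}reg}}(\Lambda) \cap (0,\widetilde{t}_\chi)$ and $\mathcal{T}_{\mathrm{reg}}(\Lambda) \cap (0,\widetilde{t}_\chi)$. By the standing hypothesis~\eqref{hpEt} we have $E[\mathcal{V},\chi|\bar\chi^{z,T}](t) \leq \delta r_T(t)$ throughout $[0,\widetilde{t}_\chi)$, and since $\{\bar\chi_1(\cdot,T(t)){=}1\}$ is $\delta_{\mathrm{asymp}}$-close to a circle of radius $r_T(t)$, the length condition~\eqref{eq:asymptotically_circular1} holds at $T(t)$ for all $t$. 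Hence on non-regular times Corollary~\ref{cor:nonRegular} applies verbatim and gives $\mathcal{R}(t) \leq -\tfrac{5}{r_T^2(t)} E[\mathcal{V},\chi|\bar\chi^{z,T}](t)$, which by $\alpha(1+\dot{\mathfrak{T}}(t)) \leq 5$ and $E \geq 0$ implies the claimed bound. On regular times the smallness~\eqref{hp:regularitygoodtimes} holds, so Proposition~\ref{theo:graph} places us in the perturbative graph regime; in particular $\mathcal{V}_t$ is the varifold lift of $\chi_1(\cdot,t)$, so $\mu_t = \tfrac12\sum_i|\nabla\chi_i(\cdot,t)|$, which via~\eqref{varinterfaceerror} forces $RHS^{\operatorname{var-BV}}[\mathcal{V},\chi|\bar\chi^{z,T}](t) = 0$ and $E[\mathcal{V},\chi|\bar\chi^{z,T}](t) = E[\chi|\bar\chi^{z,T}](t)$. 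Then Lemma~\ref{lemma:stabilitygood} applies and bounds $\mathcal{R}(t)$ by $-\tfrac{\alpha}{r_T^2(t)}(1+\dot{\mathfrak{T}}(t))E[\chi|\bar\chi^{z,T}](t)$, i.e.\ exactly by $-\tfrac{\alpha}{r_T^2(t)}(1+\dot{\mathfrak{T}}(t))E[\mathcal{V},\chi|\bar\chi^{z,T}](t)$, as required.

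Since every analytic estimate is already contained in the preceding lemmas, the main obstacle here is not a computation but the consistency of the constant hierarchy: one must check that the smallness and largeness thresholds imposed by Corollary~\ref{cor:nonRegular}, Proposition~\ref{theo:graph}, Lemma~\ref{lemma:stabilitygood} and Lemma~\ref{lemma:existzT} can be satisfied simultaneously --- in particular that $\delta_{\mathrm{err}}$ can be taken small enough to guarantee $\alpha(1+\dot{\mathfrak{T}}) \leq 5$ without conflicting with the lower bound on $C_\zeta$, and that $\delta$ can be shrunk to accommodate Proposition~\ref{theo:graph} while keeping the (then only weaker) hypotheses of Corollary~\ref{cor:nonRegular} intact --- and all of this with constants independent of $\widetilde{t}_\chi$. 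A secondary point requiring care is to confirm that in the regular regime the varifold-BV error functional collapses to its BV counterpart and $RHS^{\operatorname{var-BV}}$ vanishes, so that Lemma~\ref{lemma:stabilitygood}, though phrased in terms of $E[\chi|\bar\chi^{z,T}]$ only, genuinely controls the full functional $E[\mathcal{V},\chi|\bar\chi^{z,T}]$ appearing in~\eqref{ineq:aprioristability}.
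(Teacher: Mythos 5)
Your proposal is correct and follows essentially the same route as the paper, whose proof of Theorem~\ref{theo:stability} is precisely the one-line combination of Lemma~\ref{lem:preliminaryStability}, Corollary~\ref{cor:nonRegular} (on $\mathcal{T}_{\mathrm{non\text{-}reg}}$), and Proposition~\ref{theo:graph} together with Lemma~\ref{lemma:stabilitygood} (on $\mathcal{T}_{\mathrm{reg}}$), plus the observation that~\eqref{hpEt} supplies the smallness $E[\mathcal{V},\chi|\bar\chi^{z,T}](t)\leq\delta r_T(t)$ needed in both regimes. Your additional bookkeeping of the constant hierarchy, of $\alpha(1+\dot{\mathfrak{T}})\leq 5$ at non-regular times, and of the collapse $E[\mathcal{V},\chi|\bar\chi^{z,T}]=E[\chi|\bar\chi^{z,T}]$ with $RHS^{\operatorname{var-BV}}=0$ in the perturbative regime simply makes explicit what the paper leaves implicit.
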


The unconditional decay estimate~\eqref{ineq:decayE} from Theorem~\ref{theo:mainResultC} 
now follows by means of a simple ODE argument (cf.\ Section~\ref{sec:proofmain}).
The asserted estimates~\eqref{eq:upperBoundTranslation}--\eqref{eq:upperBoundTimeDilation} 
on the space-time shifts are in turn the content of the following result
(see Subsection~\ref{subsec:boundsShifts} for a proof) .

\begin{lemma}\label{prop:boundzT}
	In the setting of Theorem \ref{theo:stability},
	one may choose the constants such that
	assumption~\eqref{hpEt} implies
	\begin{align} \label{eq:boundzT}
		\frac{1}{r_0}\| z \|_{L^\infty_t(0,\tchi)} &\leq 
		\sqrt{\frac{1}{r_0}E[\mathcal{V}, \chi_0|\bar\chi_0]}, \\
		\label{eq:boundzT2}
		\frac{1}{T_{ext}}	\| T - \mathrm{id}\|_{L^\infty_t(0,\tchi)} &\leq 
		\sqrt{\frac{1}{r_0}E[\mathcal{V}, \chi_0|\bar\chi_0]}.
	\end{align}
\end{lemma}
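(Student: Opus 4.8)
The plan is to integrate the defining ODE~\eqref{eq:evzT} for $(z,\mathfrak{T})$, estimate its right-hand side by the coercivity of the relative energy, and then insert the decay estimate~\eqref{ineq:decayE}. Throughout I would fix $\widetilde{t}_\chi \in (0,\tchi)$; since all bounds below will be uniform in $\widetilde{t}_\chi$, letting $\widetilde{t}_\chi \uparrow \tchi$ at the end yields the asserted $L^\infty(0,\tchi)$ estimates. First, Theorem~\ref{theo:stability} together with the elementary Gronwall/ODE argument of Section~\ref{sec:proofmain} turns hypothesis~\eqref{hpEt} into the pointwise decay $E[\mathcal{V},\chi|\bar\chi^{z,T}](t) \le E_0\,(r_T(t)/r_0)^{\alpha}$ for a.e.\ $t \in (0,\widetilde{t}_\chi)$, where $E_0 := E[\mathcal{V}_0,\chi_0|\bar\chi_0]$.

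The key input is a coercivity bound for the interface error heights of Construction~\ref{DefinitionErrorHeights}, namely
\begin{equation*}
\int_{\bar{I}^{z,T}(t)} \rho^2(\cdot,t;z,T)\,d\mathcal{H}^1 \lesssim r_T^2(t)\,E[\mathcal{V},\chi|\bar\chi^{z,T}](t).
\end{equation*}
This follows directly from the slicing formulas~\eqref{eq:defrho_plus}--\eqref{eq:defrho}: on the support of $\zeta(\cdot/r_T)$ one has $|\sdist_{\bar{I}^{z,T}}| < r_T/4$, hence $|\vartheta_1^{z,T}| = |\sdist_{\bar{I}^{z,T}}|/r_T^2$ there, and since $\bar\chi_1^{z,T}$ restricted to a normal ray is the indicator of a half-line, one gets $\rho_\pm \ge 0$ and, using the elementary inequality $|\{s\in[0,R]\colon g(s)=1\}|^2 \le 2\int_0^R g(s)\,s\,ds$, a bound for each $\rho_\pm(x,t;z,T)^2$ along that ray by $2r_T^2\int |\bar\chi_1^{z,T}-\chi_1|\,|\vartheta_1^{z,T}|\,d\ell$; integrating over $\bar{I}^{z,T}(t)$, using $\rho^2 \le \rho_+^2 + \rho_-^2$, the coarea formula in the tubular neighbourhood (Jacobian close to $1$ for $C_\zeta$ large) and $E_{\mathrm{bulk}}\le E$ gives the claim. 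By Lemma~\ref{lemma:relentfreaze} this is essentially sharp in the perturbative regime of Proposition~\ref{theo:graph}, and in that regime passing to the arc-length Fourier picture of Lemma~\ref{lemma:relenineqpolar} even refines it into the mode-resolved bounds $|\dashint_{\bar{I}^{z,T}(t)}\rho\,d\mathcal{H}^1|^2 \lesssim r_T(t)\,E_{\mathrm{bulk}}$ and $|\dashint_{\bar{I}^{z,T}(t)}\rho\,\n_{\bar{I}^{z,T}}\,d\mathcal{H}^1|^2 \lesssim r_T(t)\,\min\{E_{\mathrm{int}},E_{\mathrm{bulk}}\}$, the latter using $\int_{\bar{I}^{z,T}(t)}\n_{\bar{I}^{z,T}}\,d\mathcal{H}^1 = 0$ and Wirtinger's inequality on the near-circular curve $\bar{I}^{z,T}(t)$.

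Combining this with $\mathcal{H}^1(\bar{I}^{z,T}(t)) = 2\pi r_T(t)\,(1 + O(\delta_{\mathrm{asymp}}))$ (from~\eqref{eq:asymptotically_circular1}) and Cauchy--Schwarz, \eqref{eq:evzT} gives, for a.e.\ $t\in(0,\widetilde{t}_\chi)$,
\begin{equation*}
|\dot z(t)| \lesssim \frac{E[\mathcal{V},\chi|\bar\chi^{z,T}](t)^{1/2}}{r_T(t)^{3/2}}, \qquad |\dot{\mathfrak{T}}(t)| \lesssim \frac{E[\mathcal{V},\chi|\bar\chi^{z,T}](t)^{1/2}}{r_T(t)^{1/2}},
\end{equation*}
so that the decay estimate yields $|\dot z(t)| \lesssim E_0^{1/2} r_0^{-\alpha/2}\, r_T(t)^{(\alpha-3)/2}$ and $|\dot{\mathfrak{T}}(t)| \lesssim E_0^{1/2} r_0^{-\alpha/2}\, r_T(t)^{(\alpha-1)/2}$. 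Finally I would integrate in time: since $\dot r_T = -(1+\dot{\mathfrak{T}})/r_T$ with $|\dot{\mathfrak{T}}| \le \delta_{\mathrm{err}}$ by Lemma~\ref{lemma:existzT}, one has $dt \le (1-\delta_{\mathrm{err}})^{-1} r_T\,(-dr_T)$ along the flow, and since $\alpha>1$ the resulting $r$-integrals $\int_0^{r_0} r^{(\alpha-1)/2}\,dr$ and $\int_0^{r_0} r^{(\alpha+1)/2}\,dr$ are finite; together with $z(0)=\mathfrak{T}(0)=0$ this gives $|z(t)| \lesssim E_0^{1/2} r_0^{1/2}$ and $|\mathfrak{T}(t)| \lesssim E_0^{1/2} r_0^{3/2}$ for all $t\in(0,\widetilde{t}_\chi)$, which after dividing by $r_0$ and by $T_{ext}=\tfrac12 r_0^2$ respectively, and letting $\widetilde{t}_\chi\uparrow\tchi$, reproduces the scaling in~\eqref{eq:boundzT}--\eqref{eq:boundzT2}. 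The main work lies in making the constants precise: a crude Cauchy--Schwarz bound for $\dashint_{\bar{I}^{z,T}}\rho\,\n_{\bar{I}^{z,T}}$ and for $\dashint_{\bar{I}^{z,T}}\rho$ loses an order-one factor, and closing the inequalities with the displayed constant requires the sharp, mode-resolved form of the coercivity estimate together with an appropriate choice of $\delta_{\mathrm{asymp}}$, $\delta_{\mathrm{err}}$, $C_\zeta$ and $\delta$ — and, where needed, running the estimate of Theorem~\ref{theo:stability} with a decay exponent closer to $5$ than the prescribed $\alpha$, which is legitimate because $r_T\le r_0$ on $(0,\tchi)$, so decay with any larger exponent implies decay with exponent $\alpha$.
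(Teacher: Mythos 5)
Your proposal is correct and follows essentially the same route as the paper's proof: integrate the ODE~\eqref{eq:evzT}, control $\int_{\bar I^{z,T}(t)}\rho^2\,d\mathcal{H}^1\lesssim r_T^2(t)\,E_{\mathrm{bulk}}[\chi|\bar\chi^{z,T}](t)$ by the slicing/tubular-neighbourhood argument (this is the paper's estimate~\eqref{eq:aux1BoundsShifts}), apply Cauchy--Schwarz on the near-circular curve, and convert the time integral into an $r_T$-integral via $\dot r_T=-(1+\dot{\mathfrak{T}})/r_T$ together with~\eqref{ineq:zTprime}. The only inessential deviation is that you insert the full decay $E(t)\le E[\mathcal{V}_0,\chi_0|\bar\chi_0]\,(r_T(t)/r_0)^\alpha$, whereas the paper makes the $r$-integrals convergent using only the monotonicity $E(t)\le E[\mathcal{V}_0,\chi_0|\bar\chi_0]$ furnished by~\eqref{ineq:aprioristability}; your closing remarks on pinning down the exact prefactor are at the same (brief) level of rigor as the paper's own treatment of that point.
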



\section{Proof of the main theorem}\label{sec:proofmain}
We proceed in two steps. For the whole proof, fix $\alpha \in (1,5)$, 
and choose $C_\zeta \gg_{\alpha} 1$,
$\delta_{\mathrm{asymp}} \ll_{\alpha} \frac{1}{2}$
and $\delta \ll_{\alpha,C_\zeta} \frac{1}{2}$
such that Theorem~\ref{theo:stability} applies.
We then also fix an auxiliary constant $\kappa \in (0,\delta r_0)$.

\textit{Step~1: Post-processed a priori stability estimate.}
Let the conclusion of Theorem~\ref{theo:stability} hold true
for some $\widetilde{t}_\chi \in (0,t_\chi)$. We
then claim that
for a.e.\ $t \in (0,\widetilde{t}_\chi)$
\begin{align}
	\label{eq:auxProofMainResult1}
	E[\mathcal{V}, \chi|\bar\chi^{z,T}](t) \leq \big(E[\mathcal{V}_0,\chi_0|\bar\chi_0] {+} \kappa \big)
	\Big(\frac{r_T(t)}{r_0}\Big)^\alpha =: e(t) \;
	\text{ for any } \kappa>0.
\end{align}

For a proof of~\eqref{eq:auxProofMainResult1}, 
we first note that $(0,\tchi) \ni t \mapsto E[\mathcal{V},\chi|\bar\chi^{z,T}](t) \in (0,\infty)$
is of bounded variation in~$(0,\tchi)$ due to the conditions satisfied 
by~$\chi$ being a varifold-BV~solution for multiphase mean curvature flow
in the sense of Definition~\ref{DefinitionVarSolution}, and the identity
\begin{equation}
\label{eq:decompError}
\begin{aligned}
	E[\mathcal{V},\chi|\bar\chi^{z,T}](t)
	&= E[\mathcal{V},\chi(\cdot,t)] - \sum_{i=1}^P \int_{\mathbb{R}^2} 
	\chi_i(\cdot,t) (\nabla \cdot \xi^{z,T})(\cdot,t) \,dx
	\\&~~~
	+ \sum_{i=1}^{P{-}1} \int_{\mathbb{R}^2} (\chi_i{-}\bar\chi_i^{z,T})(\cdot,t)
	\vartheta_i^{z,T}(\cdot,t) \,dx.
\end{aligned}
\end{equation}
By slight abuse of notation, we denote the associated distributional
derivative by $\frac{d}{dt} E[\mathcal{V},\chi|\bar\chi^{z,T}]$.
It then follows from~\eqref{ineq:aprioristability} that
\begin{align}
\label{eq:estimateDistDerivative}
\frac{d}{dt} E[\mathcal{V},\chi|\bar\chi^{z,T}] \leq 
- \frac{\alpha}{r^2_T} \big(1 {+} \dot{\mathfrak{T}}\big) E[\mathcal{V}, \chi| \bar \chi^{z,T}]
\quad\text{in distributional sense}.
\end{align}
Since $t \mapsto e(t)$ is a smooth function, one may infer from the 
product rule of distributional derivatives that 
$\frac{d}{dt} \frac{E[\mathcal{V},\chi|\bar\chi^{z,T}]}{e}
= \frac{(\frac{d}{dt} E[\mathcal{V},\chi|\bar\chi^{z,T}]) e - E[\mathcal{V},\chi|\bar\chi^{z,T}]
\frac{d}{dt} e}{e^2}$, so that~\eqref{eq:estimateDistDerivative}
and $\frac{d}{dt} e = -\frac{\alpha}{r^2_T} \big(1 {+} \dot{\mathfrak{T}}\big) e$
imply
\begin{align}
\label{eq:estimateQuotient}
\frac{d}{dt} \frac{E[\mathcal{V},\chi|\bar\chi^{z,T}]}{e} \leq 0
\quad\text{in distributional sense}.
\end{align}
Testing~\eqref{eq:estimateQuotient} by standard bump functions,
we obtain for a.e.\ $t \in (0,\widetilde{t}_\chi)$ and a.e.\ $s \in (0,t)$ that
$\frac{E[\mathcal{V},\chi|\bar\chi^{z,T}](t)}{e(t)} 
\leq \frac{E[\mathcal{V},\chi|\bar\chi^{z,T}](s)}{e(s)}$.
Since~\eqref{ineq:aprioristability}
furthermore implies that $[0,\widetilde{t}_\chi) \ni t \mapsto E[\mathcal{V},\chi|\bar\chi^{z,T}](t)$
is non-increasing, we obtain from taking $(0,t) \ni s \downarrow 0$ that~\eqref{eq:auxProofMainResult1}
indeed holds true.

\textit{Step~2: Proof of~\eqref{ineq:decayE} under assumption~\eqref{ineq:initialhp}.}
We define
\begin{align}
	\mathcal{T} := \big\{t \in (0,\tchi)\colon
	E[\mathcal{V},\chi|\bar\chi^{z,T}](t) > e(t)\big\},
\end{align}
and we argue in favor of~\eqref{ineq:decayE} by contradiction.
Hence, we assume $\mathcal{T} \neq \emptyset$ and define
$\widetilde{t}_\chi := \inf \mathcal{T} \in [0,\tchi)$. 
Since $E[\mathcal{V}_0,\chi_0|\bar\chi^{z,T}_0] < e(0)$, it is not hard to show
that $\widetilde{t}_\chi \neq 0$. Then, by construction and hypothesis~\eqref{ineq:initialhp}, 
we observe that assumption~\eqref{hpEt} is in place for all $t \in [0,\widetilde{t}_\chi)$.
In other words, the estimate~\eqref{eq:auxProofMainResult1} applies 
on~$(0,\widetilde{t}_\chi)$. However, on the other side
$E[\mathcal{V},\chi|\bar\chi^{z,T}](\widetilde{t}_\chi) \leq \lim_{t \uparrow \widetilde{t}_\chi}
E[\mathcal{V},\chi|\bar\chi^{z,T}](t)$ by virtue of the energy~$E[\mathcal{V},\chi]$
being non-increasing and the remaining constituents of~$E[\mathcal{V},\chi|\bar\chi^{z,T}]$
from~\eqref{eq:decompError} being absolutely continuous. In other words,
$\frac{E[\mathcal{V},\chi|\bar\chi^{z,T}](\widetilde{t}_\chi)}{e(\widetilde{t}_\chi)} \leq 1$
contradicting our assumption $\mathcal{T} \neq \emptyset$.
Hence, $\mathcal{T} = \emptyset$
and taking the limit $\kappa \downarrow 0$ implies the decay estimate~\eqref{ineq:decayE}.
Finally, the bounds~\eqref{eq:upperBoundTranslation} and~\eqref{eq:upperBoundTimeDilation}
follow from Lemma~\ref{prop:boundzT}.
\qed

\section{Weak-strong stability estimates}

\subsection{Proof of Lemma~\ref{lem:preliminaryStability}: Preliminary stability estimate}
Without the additional non-negative test function in time~$\psi$
(or more precisely, in case $\psi \equiv 1$ on $[s,\tau]$),
the proof of~\eqref{eq:prelimStabilityRelEntropy} and~\eqref{eq:prelimStabilityBulk}
directly follows from the arguments used in the proofs 
of~\cite[Proposition~17]{FischerHenselLauxSimon} and~\cite[Lemma~20]{FischerHenselLauxSimon}, respectively,
and the arguments of~\cite[Section~4.4]{FischerHenselLauxSimon}. The extension to general
$\psi \in C^1_{cpt}([0,t_\chi);[0,\infty))$ in turn follows along the same lines,
with the only additional ingredient being that one has to rely on the general
form of Brakke's inequality (see~\cite[Definition~2.1~(d)]{StuvardTonegawa}) instead
of just the energy dissipation inequality~\eqref{eq:energyDissip}.
\qed

\subsection{Proof of Lemma~\ref{lemma:stabilitybad} 
	and Corollary~\ref{cor:nonRegular}: Stability at non-regular times}\label{proof:stabilitybad}
Before we turn to the proofs of Lemma~\ref{lemma:stabilitybad}
and Corollary~\ref{cor:nonRegular}, respectively,
we start with two useful auxiliary results. The first is concerned
with bounds for our gradient flow calibration.

\begin{lemma}\label{lemma:propgradfllow}
	Consider the gradient flow calibration $((\xi_i)_{i=1,\ldots,P},(\vartheta_i)_{i=1,\ldots,P{-}1},B)$
	from Construction~\ref{DefinitionCalibration} and recall that $\xi_{i,j} := \xi_{i} - \xi_{j}$ for all distinct $i,j\in\{1,\ldots,P\}$. 
	There exists a universal constant $\widetilde{C} \in [1,\infty)$ such that
	for all $t \in [0,\tchi)$ and all $i,j\in\{1,\ldots,P\}$ with $i \neq j$, it holds
	\begin{align} \label{eq:esttimexi}
		\big\|\big(\partial_t \xi_{i,j}^{z,T}\big)(\cdot,t)\big\|_{L^\infty(\mathbb{R}^2)}
		&\leq \frac{\widetilde{C}}{r_T^2(t)},
		\\ \label{eq:estdivxi}
		\big\|\big(\nabla \cdot \xi_{i,j}^{z,T}\big)(\cdot,t)\big\|_{L^\infty(\mathbb{R}^2)}
		&\leq \frac{\widetilde{C}}{r_T(t)},
		\\ \label{eq:esttimevartheta} 
		\big\|\big(\partial_t \vartheta_i^{z,T}\big)(\cdot,t)\big\|_{L^\infty(\mathbb{R}^2)}
		&\leq \frac{\widetilde{C}}{r_T^3(t)}.
	\end{align}
\end{lemma}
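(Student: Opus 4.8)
The plan is to reduce everything to the behaviour of the signed distance function~$\sdist_{\bar I}$, the nearest-point projection~$P_{\bar I}$, and the geometric quantities $\n_{\bar I}, H_{\bar I}, \nabla^{\mathrm{tan}} H_{\bar I}$ on~$\bar I$, all of which are controlled (after rescaling by the appropriate power of~$r$) uniformly in $t \in (0,T_{ext})$ by the quantitative closeness assumption of Definition~\ref{DefinitionShrinkingCircle}. Concretely, \eqref{eq:asymptotically_circular2}--\eqref{eq:asymptotically_circular4} give $|\n_{\bar I}| \le 1$, $|H_{\bar I}| \lesssim r^{-1}$, $|\nabla^{\mathrm{tan}} H_{\bar I}| \lesssim r^{-2}$ on $\bar I(t)$, and the requirement that $\tfrac12 r(t)$ be a tubular-neighbourhood width gives $\|\nabla \sdist_{\bar I}(\cdot,t)\|_{L^\infty} = 1$ together with $\|\nabla^2 \sdist_{\bar I}(\cdot,t)\|_{L^\infty} \lesssim r^{-1}$ and $\|\nabla P_{\bar I}(\cdot,t)\|_{L^\infty} \lesssim 1$ on the tubular neighbourhood $\{\dist(\cdot,\bar I(t)) < r(t)/2\}$ (the second-derivative bound for $\sdist$ follows from the curvature bound since the eigenvalues of $\nabla^2 \sdist$ are $0$ and $-H_{\bar I}/(1 - \sdist\, H_{\bar I})$, and $|\sdist| < r/2$, $|H_{\bar I}| r \le \tfrac12 + \delta_{\mathrm{asymp}}$ keeps the denominator bounded below). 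Finally, since $\bar\chi$ solves MCF, the normal velocity of $\bar I$ is $H_{\bar I}\n_{\bar I}$, and by the same freezing-of-coefficients philosophy the material time derivative of any of these quantities along the flow is controlled by one more power of $r^{-2}$ than the quantity itself (e.g.\ $\partial_t \sdist_{\bar I} = -H_{\bar I}$ on $\bar I$, hence $\|\partial_t \sdist_{\bar I}(\cdot,t)\|_{L^\infty} \lesssim r^{-1}$ on the tubular neighbourhood after using the transport structure of $\sdist$; similarly $|\partial_t \n_{\bar I}|, |\partial_t P_{\bar I}|$ pick up the expected powers). These are exactly the calibration estimates recorded in \cite[Section~3]{FischerHenselLauxSimon} specialised to the present two-phase circular geometry, and I would simply cite and re-derive them in this simplified setting.

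Granting these building blocks, \eqref{eq:estdivxi} is immediate: from the definition \eqref{eq:defxi}, $\xi = \eta(\sdist_{\bar I}/r)\, \n_{\bar I}\!\circ\! P_{\bar I}$, so $\nabla \cdot \xi$ is a sum of a term $\tfrac1r \eta'(\sdist_{\bar I}/r)\nabla\sdist_{\bar I}\cdot (\n_{\bar I}\!\circ\! P_{\bar I})$ of size $\lesssim r^{-1}$ (using $\|\eta'\|_\infty \le 16$) and a term $\eta(\cdot)\, \nabla^{\mathrm{tan}}\!\cdot\!\n_{\bar I}$ evaluated after composition with $P_{\bar I}$, of size $\lesssim \|H_{\bar I}\|_\infty \lesssim r^{-1}$; since $\xi_{i,j} \in \{0, \pm\tfrac12\xi, \pm\xi\}$ and the shift $(\cdot)^{z,T}$ only relabels the space-time point without affecting $L^\infty$ norms (cf.\ \eqref{eq:shiftedSignedDistance}--\eqref{eq:shiftedProjection} and the fact that $z,T$ are merely reparametrisations), \eqref{eq:estdivxi} follows. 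For \eqref{eq:esttimexi}, I differentiate $\xi^{z,T}(x,t) = \xi(x - z(t), T(t))$ in $t$: this produces $(\partial_t\xi)(x-z,T)\,\dot T + (\nabla\xi)(x-z,T)\cdot(-\dot z)$. The first factor: $\partial_t \xi$ is $\lesssim r^{-2}$ on the tubular neighbourhood (one cutoff-derivative-in-time of size $r^{-1}\cdot\partial_t(\sdist/r) \lesssim r^{-2}$ plus $\partial_t(\n_{\bar I}\!\circ\! P_{\bar I})$, again $\lesssim r^{-2}$ after using $\partial_t \n_{\bar I} \lesssim r^{-2}$ and $\partial_t P_{\bar I} \lesssim 1$ composed appropriately), and $|\dot T| = |1 + \dot{\mathfrak T}| \le 2$ by \eqref{ineq:zTprime}; the second: $|\nabla \xi| \lesssim r^{-1}$ (same computation as for $\nabla\cdot\xi$) and $|\dot z| \le \delta_{\mathrm{err}} r_T^{-1} \le r_T^{-1}$ by \eqref{ineq:zTprime}, so this term is $\lesssim r_T^{-2}$ as well. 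Summing and passing the $r$'s to $r_T = r\!\circ\!T$ gives \eqref{eq:esttimexi} with a universal constant. The bound \eqref{eq:esttimevartheta} is entirely analogous starting from \eqref{eq:defWeight}: $\vartheta = \tfrac1r \bar\vartheta(\sdist_{\bar I}/r)$ so $\partial_t\vartheta$ contains a factor $\tfrac1r\,\partial_t(\tfrac1r)\bar\vartheta \sim r^{-3}$, a factor $\tfrac1r \bar\vartheta'\cdot\partial_t(\sdist/r) \lesssim r^{-1}\cdot r^{-2} = r^{-3}$, and again the $-\dot z\cdot\nabla\vartheta$ contribution with $|\nabla\vartheta| \lesssim r^{-2}$ and $|\dot z| \lesssim r_T^{-1}$, i.e.\ $\lesssim r_T^{-3}$; using $\|\bar\vartheta'\|_\infty \le 4$ and $|\dot T| \le 2$ throughout, \eqref{eq:esttimevartheta} follows.

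The main obstacle I anticipate is purely bookkeeping rather than conceptual: one must be careful that the time derivative of a function defined via the nearest-point projection $P_{\bar I}(x,t)$, such as $\n_{\bar I}(P_{\bar I}(x,t),t)$, genuinely enjoys the claimed scaling — this requires knowing that $\partial_t P_{\bar I}$ is bounded (not merely that $\nabla P_{\bar I}$ is), which in turn rests on the evolution equation for the tubular-neighbourhood frame and ultimately on $\partial_t \sdist_{\bar I} = -H_{\bar I}$ on $\bar I$ extended off the interface. Making this rigorous amounts to reproving the relevant parts of \cite[Section~3]{FischerHenselLauxSimon} in the present geometry; since $\bar I$ is here a single simple closed curve $\delta_{\mathrm{asymp}}$-close to a shrinking circle, no branching or triple junctions intervene and all estimates are uniform up to the extinction time after the natural rescaling by powers of $r(t)$, so the argument goes through without the complications present in the multiphase setting of \cite{FischerHenselLauxSimon}. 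I would present this as a short lemma on the calibration geometry and then read off \eqref{eq:esttimexi}--\eqref{eq:esttimevartheta} in a few lines each.
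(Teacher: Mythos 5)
Your proposal is correct and follows essentially the same route as the paper's proof: the paper likewise reduces to the single vector field $\xi$ and the single weight $\vartheta$, writes $\xi^{z,T}=\eta(\sdist_{\bar I^{z,T}}/r_T)\nabla\sdist_{\bar I^{z,T}}$ and $\vartheta^{z,T}=\frac{1}{r_T}\bar\vartheta(\sdist_{\bar I^{z,T}}/r_T)$, and computes $\nabla\cdot\xi^{z,T}$, $\partial_t\xi^{z,T}$, $\partial_t\vartheta^{z,T}$ explicitly via the evolution of the (shifted) signed distance \eqref{eq:PDEshiftedSignedDistance}, concluding with $|\eta'|\leq 16$, $|H_{\bar I}|\leq 2/r$, the bound \eqref{eq:asymptotically_circular4} on $\nabla^{\mathrm{tan}}H_{\bar I}$, and the shift bounds \eqref{ineq:zTprime} — your chain-rule splitting into unshifted derivatives plus $\dot z,\dot{\mathfrak T}$ contributions is just a reorganization of the same computation. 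The only point to tighten is your intermediate claim $|\partial_t P_{\bar I}|\lesssim 1$: to obtain \eqref{eq:esttimexi} with a universal constant you need the sharper $|\partial_t P_{\bar I}|\lesssim r^{-1}$, which indeed follows from the very ingredients you list ($\partial_t\sdist_{\bar I}=-H_{\bar I}\circ P_{\bar I}$, $|H_{\bar I}|\lesssim r^{-1}$, $|\nabla^{\mathrm{tan}}H_{\bar I}|\lesssim r^{-2}$, $|\sdist_{\bar I}|\lesssim r$), so this is a matter of bookkeeping rather than a gap.
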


\begin{proof}[Proof of Lemma~\ref{lemma:propgradfllow}]
	Fix $t \in [0,\tchi)$ and $i,j\in\{1,\ldots,P\}$ with $i \neq j$.
	Recalling Construction~\ref{DefinitionCalibration},
	we observe that $\xi_{i,j} \in \{\pm \xi, \pm \smash{\frac{1}{2}}\xi,0\}$,
	so that it suffices to estimate in terms of the vector field~$\xi$. 
	Recalling its definition~\eqref{eq:defxi} it follows that
	\begin{equation}
		\begin{aligned}
			\label{eq:shiftedXi}
			\xi^{z,T}(\cdot,t) &= 
			\eta\Big(\frac{\sdist_{\bar I^{z,T}}(\cdot,t)}{r_T(t)}\Big)
			\n_{\bar{I}^{z,T}}\big(P_{\bar{I}^{z,T}}(\cdot,t),t\big)
			\\&
			= \eta\Big(\frac{\sdist_{\bar I^{z,T}}(\cdot,t)}{r_T(t)}\Big)
			\big(\nabla \sdist_{\bar I^{z,T}}\big)(\cdot,t),
		\end{aligned}
	\end{equation}
	we directly compute
	\begin{align*}
		\big(\nabla \cdot \xi^{z,T}\big)(\cdot,t) 
		&= \frac{1}{r_T(t)} \eta'\Big(\frac{\sdist_{\bar I^{z,T}}(\cdot,t)}{r_T(t)}\Big)
		\\&~~~
		- \eta\Big(\frac{\sdist_{\bar I^{z,T}}(\cdot,t)}{r_T(t)}\Big)
		\frac{H_{\bar{I}^{z,T}}\big(P_{\bar{I}^{z,T}}(\cdot,t),t\big)}
		{1-H_{\bar{I}^{z,T}}\big(P_{\bar{I}^{z,T}}(\cdot,t),t\big)\sdist_{\bar I^{z,T}}(\cdot,t)},
	\end{align*}
	so that~\eqref{eq:estdivxi} follows from $|\eta'|\leq 16$,
	$|H_{\bar{I}}(\cdot,t)| \leq 2/r(t)$ due to Definition~\ref{DefinitionShrinkingCircle},
	and $|\sdist_{\bar I}(\cdot,t)|
	\leq r(t)/4$ on $\supp\xi(\cdot,t)$, $t \in [0,T_{ext})$.
	
	Furthermore, since
	\begin{align}
		\label{eq:PDEshiftedSignedDistance}
		\partial_t \sdist_{\bar{I}^{z,T}}(\cdot,t)
		= -H_{\bar{I}^{z,T}}\big(P_{\bar{I}^{z,T}}(\cdot,t),t\big)
		(1{+}\dot{\mathfrak{T}}) - \n_{\bar{I}^{z,T}}\big(P_{\bar{I}^{z,T}}(\cdot,t),t\big)\cdot\dot{z},
	\end{align}
	which itself one may either directly read off from the obvious
	generalization of~\eqref{eq:heuristics17} or
	alternatively from~\eqref{eq:shiftedSignedDistance},
	we also get
	\begin{align}
		\nonumber
		\big(\partial_t\xi^{z,T}\big)(\cdot,t)
		&= \eta\Big(\frac{\sdist_{\bar I^{z,T}}(\cdot,t)}{r_T(t)}\Big)
		\big(\nabla \partial_t\sdist_{\bar I^{z,T}}\big)(\cdot,t)
		\\&~~~
		\nonumber
		+ \frac{1}{r_T(t)} \eta'\Big(\frac{\sdist_{\bar I^{z,T}}(\cdot,t)}{r_T(t)}\Big)
		\big(\partial_t\sdist_{\bar I^{z,T}}\big)(\cdot,t)
		\n_{\bar{I}^{z,T}}\big(P_{\bar{I}^{z,T}}(\cdot,t),t\big)
		\\&~~~
		\nonumber
		+ \eta'\Big(\frac{\sdist_{\bar I^{z,T}}(\cdot,t)}{r_T(t)}\Big)
		\frac{\sdist_{\bar I^{z,T}}(\cdot,t)}{r_T^3(t)}(1{+}\dot{\mathfrak{T}})
		\n_{\bar{I}^{z,T}}\big(P_{\bar{I}^{z,T}}(\cdot,t),t\big)
		\\&
		\label{eq:PDEshiftedXi}
		= -\eta\Big(\frac{\sdist_{\bar I^{z,T}}(\cdot,t)}{r_T(t)}\Big)
		\frac{H'_{\bar{I}^{z,T}}\big(P_{\bar{I}^{z,T}}(\cdot,t),t\big) (1{+}\dot{\mathfrak{T}})}
		{1-H_{\bar{I}^{z,T}}\big(P_{\bar{I}^{z,T}}(\cdot,t),t\big)\sdist_{\bar I^{z,T}}(\cdot,t)}
		\\&~~~
		\nonumber
		+ \eta\Big(\frac{\sdist_{\bar I^{z,T}}(\cdot,t)}{r_T(t)}\Big)
		\frac{H_{\bar{I}^{z,T}}\big(P_{\bar{I}^{z,T}}(\cdot,t),t\big)
			\tau_{\bar{I}^{z,T}}\big(P_{\bar{I}^{z,T}}(\cdot,t),t\big)\cdot\dot{z}}
		{1-H_{\bar{I}^{z,T}}\big(P_{\bar{I}^{z,T}}(\cdot,t),t\big)\sdist_{\bar I^{z,T}}(\cdot,t)}
		\\&~~~
		\nonumber
		+ \frac{1}{r_T(t)} \eta'\Big(\frac{\sdist_{\bar I^{z,T}}(\cdot,t)}{r_T(t)}\Big)
		\big(\partial_t\sdist_{\bar I^{z,T}}\big)(\cdot,t)
		\n_{\bar{I}^{z,T}}\big(P_{\bar{I}^{z,T}}(\cdot,t),t\big)
		\\&~~~
		\nonumber
		+ \eta'\Big(\frac{\sdist_{\bar I^{z,T}}(\cdot,t)}{r_T(t)}\Big)
		\frac{\sdist_{\bar I^{z,T}}(\cdot,t)}{r_T^3(t)}(1{+}\dot{\mathfrak{T}})
		\n_{\bar{I}^{z,T}}\big(P_{\bar{I}^{z,T}}(\cdot,t),t\big).
	\end{align}
	Hence, \eqref{eq:esttimexi} follows from~\eqref{eq:PDEshiftedXi},
	\eqref{eq:PDEshiftedSignedDistance}, \eqref{eq:asymptotically_circular4},
	\eqref{ineq:zTprime}, and the estimates used for the derivation of~\eqref{eq:estdivxi}.
	
	Recalling Construction~\ref{DefinitionCalibration},
	we observe that $\vartheta_i \in \{\vartheta, 1\}$,
	so that it suffices to estimate in terms of the function~$\vartheta$.
	Recalling its definition~\eqref{eq:defxi} in the form of
	\begin{align}
		\label{eq:defWeightShifted}
		\vartheta^{z,T}(\cdot,t) = \frac{1}{r_T(t)}\bar\vartheta\Big(\frac{\sdist_{\bar I^{z,T}}(\cdot,t)}{r_T(t)}\Big),
	\end{align}
	we obtain
	\begin{equation}
		\label{eq:PDEWeightShifted}
		\begin{aligned}
			\big(\partial_t \vartheta^{z,T}\big)(\cdot,t) &= 
			\frac{(1{+}\dot{\mathfrak{T}})}{r_T^3(t)}
			\bar\vartheta\Big(\frac{\sdist_{\bar I^{z,T}}(\cdot,t)}{r_T(t)}\Big)
			\\&~~~
			+ \frac{1}{r_T(t)} \bar\vartheta'\Big(\frac{\sdist_{\bar I^{z,T}}(\cdot,t)}{r_T(t)}\Big)
			\frac{\big(\partial_t\sdist_{\bar I^{z,T}}\big)(\cdot,t)}{r_T(t)}
			\\&~~~
			+ \frac{1}{r_T(t)} \bar\vartheta'\Big(\frac{\sdist_{\bar I^{z,T}}(\cdot,t)}{r_T(t)}\Big)
			\frac{\sdist_{\bar I^{z,T}}(\cdot,t)}{r_T^3(t)}(1{+}\dot{\mathfrak{T}}),
		\end{aligned}
	\end{equation}
	so that, based on the previous ingredients, we have $|\bar\vartheta'|\leq 2$
	and $|\sdist_{\bar I}(\cdot,t)| \leq r(t)/4$ on $\supp\bar\vartheta'\big(\sdist_{\bar I}(\cdot,t)/(r(t)/4)\big)$, 
	$t \in [0,T_{ext})$, and we may deduce~\eqref{eq:esttimevartheta}.
\end{proof}

The second result is concerned with a crude upper bound for the mass
of the varifold~$\mathcal{V}_t$ in terms of the length scale~$r$
of the strong solution.

\begin{lemma}
	\label{prop:regInterfaceWeakSol}
	Fix $t \in (0,\tchi)$. The condition 
	$E[\mathcal{V}, \chi | \bar \chi^{z,T}](t)  \leq \delta {r_T(t)}$
	together with the requirements of Definition~\ref{DefinitionShrinkingCircle} implies
	\begin{align}\label{eq:propweaksol1} 		
		\frac12 \sum_{i, j=1, i \neq j}^P \mathcal{H}^{1}({I}_{i,j}(t))
		\leq \mu_t ({\Rd[2]} )
		\leq \widetilde{C } r_T(t),
	\end{align}
	where $\widetilde{C }\in [0,\infty)$ is a universal constant. 
\end{lemma}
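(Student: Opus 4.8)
The plan is to extract the bound on $\mu_t(\mathbb{R}^2)$ directly from the definition of the interface error $E_{\mathrm{int}}$ together with the elementary bound $|\xi^{z,T}_{i,j}|\leq 1$ coming from Construction~\ref{DefinitionCalibration}, and then to recover the first inequality in~\eqref{eq:propweaksol1} from the compatibility estimate~\eqref{eq:varsol3}. The left inequality $\frac12\sum_{i\neq j}\mathcal{H}^1(I_{i,j}(t)) = \frac12\sum_i|\nabla\chi_i(\cdot,t)|(\mathbb{R}^2) \leq \mu_t(\mathbb{R}^2)$ is immediate by integrating~\eqref{eq:varsol3} over $\mathbb{R}^2$, so the work is entirely in the right inequality.

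For the upper bound on the varifold mass, I would start from the definition~\eqref{def:interfaceerror} of $E_{\mathrm{int}}[\mathcal{V},\chi|\bar\chi^{z,T}](t)$, which gives
\[
\mu_t(\mathbb{R}^2) = E_{\mathrm{int}}[\mathcal{V},\chi|\bar\chi^{z,T}](t) + \sum_{i,j=1,i\neq j}^P \frac12 \int_{I_{i,j}(t)} \n_{i,j}(\cdot,t)\cdot\xi_{i,j}^{z,T}(\cdot,t)\,d\mathcal{H}^1.
\]
Since $E_{\mathrm{int}}\leq E = E_{\mathrm{int}}+E_{\mathrm{bulk}}$ (because $E_{\mathrm{bulk}}\geq 0$), the first term is bounded by $\delta r_T(t)$ by hypothesis. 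For the second term, Construction~\ref{DefinitionCalibration} gives $\xi_{i,j}\in\{\pm\xi,\pm\frac12\xi,0\}$ with $|\xi|\leq 1$ pointwise (since $|\eta|\leq 1$ and $|\n_{\bar I}|=1$), hence $|\xi_{i,j}^{z,T}|\leq 1$ and the integrand is $\leq 1$; so this term is bounded by $\frac12\sum_{i\neq j}\mathcal{H}^1(I_{i,j}(t))$. Combining with the already-established left inequality, one gets a circular-looking bound $\mu_t(\mathbb{R}^2)\leq \delta r_T(t) + \mu_t(\mathbb{R}^2)$, which is useless; so instead I would use that $\xi_{i,j}$ is supported in the tubular neighborhood of $\bar I^{z,T}(t)$ of width $\leq r_T(t)/4$, where by a coarea/projection argument $\sum_{i\neq j}\frac12\mathcal{H}^1(I_{i,j}(t)\cap\operatorname{supp}\xi^{z,T})\lesssim \mathcal{H}^1(\bar I^{z,T}(t)) = L_{\bar\gamma^{z,T}}$, plus the fact that $\mathcal{H}^1(\bar I^{z,T}(t)) = L_{\bar\gamma(\cdot,T(t))}\leq 2\pi r_T(t)(1+\delta_{\mathrm{asymp}})\leq 3\pi r_T(t)$ by~\eqref{eq:asymptotically_circular1}. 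The BV interface energy is itself controlled: from $E_{\mathrm{int}}\leq\delta r_T(t)$ and the identity~\eqref{varinterfaceerror} one reads off $E_{\mathrm{int}}[\chi|\bar\chi^{z,T}](t)\leq\delta r_T(t)$, i.e. $\sum_{i\neq j}\frac12\int_{I_{i,j}(t)}1-\n_{i,j}\cdot\xi_{i,j}^{z,T}\,d\mathcal{H}^1\leq\delta r_T(t)$; splitting $I_{i,j}$ into its part inside $\operatorname{supp}\xi^{z,T}$ and its complement (where $\n_{i,j}\cdot\xi_{i,j}^{z,T}=0$ so the integrand equals $1$), this bounds $\frac12\sum_{i\neq j}\mathcal{H}^1(I_{i,j}(t)\setminus\operatorname{supp}\xi^{z,T}(\cdot,t))\leq\delta r_T(t)$ directly. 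Hence $\frac12\sum_{i\neq j}\mathcal{H}^1(I_{i,j}(t)) = \frac12\sum_{i\neq j}\mathcal{H}^1(I_{i,j}(t)\cap\operatorname{supp}\xi^{z,T}) + \frac12\sum_{i\neq j}\mathcal{H}^1(I_{i,j}(t)\setminus\operatorname{supp}\xi^{z,T}) \leq C\,\mathcal{H}^1(\bar I^{z,T}(t)) + \delta r_T(t)\lesssim r_T(t)$, and then $\mu_t(\mathbb{R}^2)\leq\delta r_T(t) + \frac12\sum_{i\neq j}\mathcal{H}^1(I_{i,j}(t))\lesssim r_T(t)$ with a universal constant $\widetilde C$ (using $\delta\leq\frac12$).

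The main obstacle is the bound $\mathcal{H}^1(I_{i,j}(t)\cap\operatorname{supp}\xi^{z,T}(\cdot,t))\lesssim\mathcal{H}^1(\bar I^{z,T}(t))$: one must control the length of the weak interface lying inside the tubular neighborhood of the strong interface by the length of the strong interface itself. The clean way is via the nearest-point projection $P_{\bar I^{z,T}}(\cdot,t)$, which on that tubular neighborhood is Lipschitz with constant $\leq (1-\sup|H_{\bar I}|\,r_T/4)^{-1}\leq 2$ using $|H_{\bar I}|\leq 2/r_T$ from~\eqref{eq:asymptotically_circular3}; since $P_{\bar I^{z,T}}$ maps $I_{i,j}(t)\cap\operatorname{supp}\xi^{z,T}$ onto a subset of $\bar I^{z,T}(t)$, the area formula gives $\mathcal{H}^1(\bar I^{z,T}(t))\geq \int_{I_{i,j}(t)\cap\operatorname{supp}\xi^{z,T}}|J^{\mathrm{tan}}P_{\bar I^{z,T}}|\,d\mathcal{H}^1$ — but this is the wrong direction since the Jacobian can degenerate where the weak interface is nearly parallel to the normal. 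A cleaner route, which I would actually pursue, avoids this entirely: the energy dissipation inequality in Definition~\ref{DefinitionVarSolution}(ii) gives $\mu_t(\mathbb{R}^2)\leq\mu_0(\mathbb{R}^2)$, but since we want a bound in terms of $r_T(t)$ which shrinks, this also is not enough without extra input. The right observation is that we may simply run the argument above at the \emph{level of the first inequality}: we have already shown $\frac12\sum_{i\neq j}\mathcal{H}^1(I_{i,j}(t))\leq\delta r_T(t)+\frac12\sum_{i\neq j}\int_{I_{i,j}(t)}\n_{i,j}\cdot\xi_{i,j}^{z,T}$, and now bound the last integral not by the total length but by Cauchy–Schwarz combined with the already-derived bound $\frac12\sum_{i\neq j}\mathcal{H}^1(I_{i,j}(t)\setminus\operatorname{supp}\xi^{z,T})\leq\delta r_T(t)$ — wait, this still leaves the in-support part. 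I therefore expect the actual proof to invoke a monotonicity-type or first-variation argument specific to the varifold structure, or simply to cite the analogous estimate from~\cite{FischerHenselLauxSimon}; absent that, the projection argument with the observation that $\n_{i,j}\cdot\xi_{i,j}^{z,T}$ being close to $1$ (forced by $E_{\mathrm{int}}$ small) makes $I_{i,j}$ nearly normal-graph-like, hence the Jacobian of $P_{\bar I^{z,T}}$ is bounded below, closes the gap. This near-tangency-forced-by-small-energy step is where the requirements of Definition~\ref{DefinitionShrinkingCircle} (specifically the curvature bound~\eqref{eq:asymptotically_circular3} and the length bound~\eqref{eq:asymptotically_circular1}) enter, and it is the crux of the lemma.
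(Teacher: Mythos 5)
Your reduction of the left inequality to \eqref{eq:varsol3} and your starting identity $\mu_t(\mathbb{R}^2) = E_{\mathrm{int}}[\mathcal{V},\chi|\bar\chi^{z,T}](t) + \sum_{i\neq j}\tfrac12\int_{I_{i,j}(t)}\n_{i,j}\cdot\xi_{i,j}^{z,T}\,d\mathcal{H}^1$ agree with the paper, but the way you try to finish has a genuine gap, which you yourself half-acknowledge. The missing idea is an integration by parts: by the Gauss--Green theorem for the sets of finite perimeter $\{\chi_i=1\}$ (the same identity underlying \eqref{eq:decompError}), the calibration term equals the bulk term $\sum_{i=1}^P\int_{\mathbb{R}^2}\chi_i\,\nabla\cdot\xi_i^{z,T}\,dx$, and this is bounded by $\int_{\{|\sdist_{\bar I^{z,T}}(\cdot,t)|\leq r_T(t)\}}|\nabla\cdot\xi^{z,T}|\,dx \lesssim \tfrac{1}{r_T(t)}\cdot r_T^2(t) = r_T(t)$, using the divergence formula below \eqref{eq:shiftedXi} and the bound \eqref{eq:estdivxi} (which rest on $|H_{\bar I}|\leq 2/r$ from Definition~\ref{DefinitionShrinkingCircle}), the fact that $\nabla\cdot\xi^{z,T}$ is supported in a tubular neighborhood of $\bar I^{z,T}(t)$ of width $\lesssim r_T(t)$, and the length bound \eqref{eq:asymptotically_circular1}, which makes that neighborhood of area $\lesssim r_T^2(t)$. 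This gives $\mu_t(\mathbb{R}^2)\leq \delta r_T(t) + C r_T(t)$ directly; no control on the length of the weak interface is ever needed for the upper bound, and the bound on $\tfrac12\sum_{i\neq j}\mathcal{H}^1(I_{i,j}(t))$ is then read off a posteriori from \eqref{eq:varsol3}, exactly as in your first step.

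The route you sketch instead, namely bounding $\tfrac12\sum_{i\neq j}\mathcal{H}^1(I_{i,j}(t)\cap\supp\xi^{z,T})$ by $\mathcal{H}^1(\bar I^{z,T}(t))$ via the nearest-point projection with the Jacobian kept away from zero because ``small $E_{\mathrm{int}}$ forces $\n_{i,j}\cdot\xi_{i,j}^{z,T}$ close to $1$'', does not close as stated. Smallness of $E_{\mathrm{int}}$ controls $1-\n_{i,j}\cdot\xi_{i,j}^{z,T}$ only in an integrated sense, so no pointwise near-tangency is available; and even on the set where $\n_{i,j}\cdot\xi^{z,T}\geq 1/2$ the interface may consist of many sheets lying over the same arc of $\bar I^{z,T}(t)$, so that $P_{\bar I^{z,T}}$ is many-to-one and a lower bound on its tangential Jacobian cannot by itself yield $\mathcal{H}^1(I_{i,j}\cap\supp\xi^{z,T})\lesssim\mathcal{H}^1(\bar I^{z,T})$. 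What would rescue this route is an orientation-cancellation argument on normal slices: if a slice crosses $\partial^*\{\chi_1=1\}$ more than once, a fixed fraction of the crossings carries a normal misaligned with $\xi^{z,T}$ and is therefore paid for by $E_{\mathrm{int}}$. This is exactly the content of the paper's Lemma~\ref{lemma:error}, which is proved separately and used in the Allard step (Lemma~\ref{lemma:hpAllard}), not here. So your approach could be completed with that slicing lemma plus the coarea formula, but it is considerably heavier than the one-line divergence-theorem argument that the paper uses for this lemma.
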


\begin{proof}
	One can compute
	\begin{align*}
		&\sum_{i, j=1, i \neq j}^P \frac12 \int_{{I}_{i,j}(t)} 1 \, \dH
		\\&~~~
		\leq \mu_t ({\Rd[2]} ) = E_{\mathrm{int}}[\mathcal{V}, \chi| \bar\chi^{z,T}](t) +	
		\sum_{i=1}^P \int_{\Rd[2]}  \chi_i(\cdot,t) \nabla \cdot \xi_i^{z,T}(\cdot,t) \, \dx.
	\end{align*}
	Due to the computation below~\eqref{eq:shiftedXi}, the estimate~\eqref{eq:estdivxi},
	the properties of the cutoff function $\eta$ from Construction~\ref{DefinitionCalibration},
	and Definition~\ref{DefinitionShrinkingCircle}, it follows
	\begin{align*}
	\sum_{i=1}^P \int_{\Rd}  \chi_i(\cdot,t) \nabla \cdot \xi^{z,T}_i(\cdot,t) \, \dx
	\leq \int_{\{|\sdist_{\bar I^{z,T}}(\cdot,t)| \leq r_T(t)\}}
	|\nabla\cdot\xi^{z,T}| \,dx
	\lesssim r_T(t).
	\end{align*}
	Whence, we can deduce \eqref{eq:propweaksol1} from the previous two displays
	and the assumption $E[\mathcal{V}, \chi | \bar \chi^{z,T}](t)  \leq \delta {r_T(t)}$.
\end{proof} 

\begin{proof}[Proof of Lemma~\ref{lemma:stabilitybad}]
	Fix $t \in \mathcal{T}_{\mathrm{non\text{-}reg}}(\Lambda)$
	for yet to be chosen $\Lambda \gg 1$.
	For notational simplicity, let us in the sequel drop the dependence on~$t$ of all quantities.
	Since the definition of the error functionals is independent of
	the actual choice of the vector field~$B$, we may interpret
	the right hand sides of~\eqref{eq:prelimStabilityRelEntropy}
	and~\eqref{eq:prelimStabilityBulk} with $B \equiv 0$
	and therefore obtain for all $i,j\in\{1,\ldots,P\}$ with $i \neq j$
	\begin{equation}
		\begin{aligned}
			\label{eq:step0relineq100}
			&-\mathcal{D}_{i,j}[\chi|\bar\chi^{z,T}]
			+  RHS_{i,j}^{\mathrm{int}}[\chi|\bar\chi^{z,T}]
			\\&~~~
			=  - \int_{I_{i, j}} \left|V_{i, j}\right|^{2} \,d\mathcal{H}^{1}
			-  \int_{I_{i, j}}  V_{i, j}\nabla \cdot \xi^{z, T}_{i, j} \,d\mathcal{H}^{1}
			- \int_{I_{i, j}} \n_{i, j} \cdot \partial_{t} \xi^{z, T}_{i, j} \,d\mathcal{H}^{1},
		\end{aligned}
	\end{equation}
\begin{align}	\label{eq:step0relineqvar}
 RHS^{\operatorname{var-BV}}[\mathcal{V},\chi|\bar\chi^{z,T}](t)  = - \int_{\Rd} |H_\mu|^2 \Big(1- \frac12 \sum_{i=1}^P \rho_i \Big) \, {d}\mu_t 	\leq 0 
	,
\end{align}
	and for all $i\in\{1,\ldots,P{-}1\}$
	\begin{equation}
		\begin{aligned}
			\label{eq:step0relineq2}
			& RHS_{i}^{\mathrm{bulk}}[\chi|\bar\chi^{z,T}]
			=  \int_{\mathbb{R}^2}  (\chi_i - \bar \chi_i^{z,T} ) \partial_t \vartheta^{z,T}_i \,dx
			+  \sum_{j=1, \, i \neq j}^P  \int_{I_{i,j}} V_{i,j} \vartheta^{z,T}_{i}   \,d\mathcal{H}^{1}.
		\end{aligned}
	\end{equation}
Note that, since $\frac12 \sum_{i=1}^P \rho_i  \leq 1$, the right hand side of \eqref{eq:step0relineqvar} is nonpositive.
	Before we start estimating the right hand sides of~\eqref{eq:step0relineq100}
	and~\eqref{eq:step0relineq2}, we fix $\delta,\delta_{\mathrm{asymp}} \ll 1$ such that the conclusion of
	Lemma~\ref{prop:regInterfaceWeakSol} applies for the choice $\delta_{\mathrm{err}}=\frac{1}{2}$.
	
	From H\"{o}lder's inequality, \eqref{eq:propweaksol1} and~\eqref{eq:estdivxi}, we then directly infer
	\begin{align*}
		\bigg|\int_{I_{i, j}}  V_{i, j}\nabla \cdot \xi^{z, T}_{i, j} \,d\mathcal{H}^{1}\bigg|
		\lesssim \frac{1}{\sqrt{r_T}}
		\bigg(\int_{I_{i, j}}  |V_{i, j}|^2 \,d\mathcal{H}^{1}\bigg)^\frac{1}{2}.
	\end{align*}
	Similarly, we may estimate due to~\eqref{eq:esttimexi}
	\begin{align*}
		\bigg|\int_{I_{i, j}}  \n_{i, j} \cdot \partial_{t} \xi^{z, T}_{i, j} \,d\mathcal{H}^{1}\bigg|
		\lesssim \frac{1}{r_T}
	\end{align*}
	and, since $|\vartheta^{z,T}_i| \leq 1/r_T$, also
	\begin{align*}
		\bigg|\int_{I_{i, j}}  V_{i,j} \vartheta^{z,T}_{i} \,d\mathcal{H}^{1}\bigg|
		\lesssim \frac{1}{\sqrt{r_T}}
		\bigg(\int_{I_{i, j}}  |V_{i, j}|^2 \,d\mathcal{H}^{1}\bigg)^\frac{1}{2}.
	\end{align*}
	Finally, the estimates~\eqref{eq:propweaksol1}, \eqref{eq:asymptotically_circular1}
	and~\eqref{eq:esttimevartheta} together with the isoperimetric inequality imply
	\begin{align*}
		\bigg|\int_{\mathbb{R}^2}  (\chi_i - \bar \chi_i^{z,T} ) \partial_t \vartheta^{z,T}_i \,dx\bigg|
		\lesssim \frac{1}{r_T}.
	\end{align*}
	Plugging these estimates back into~\eqref{eq:step0relineq100}
	and~\eqref{eq:step0relineq2}, we may infer
	by an absorption argument
	the claim~\eqref{ineq:stabilitybad}
	from employing the defining condition~\eqref{def:badTimes} of non-regular times
	for $\Lambda \gg 1$.
\end{proof}

\begin{proof}[Proof of Corollary~\ref{cor:nonRegular}]
	Denote by $\widetilde\Lambda$ and $(\widetilde\delta,\widetilde\delta_{\mathrm{asymp}})$ the constants from Lemma~\ref{lemma:stabilitybad}.
	The choices $\Lambda := \max\{\widetilde\Lambda,10\}$ and 
	$(\delta,\delta_{\mathrm{asymp}}):=(\widetilde\delta,\widetilde\delta_{\mathrm{asymp}})$ then imply the claim.
	Indeed, for $t \in \mathcal{T}_{\mathrm{non\text{-}reg}}(\Lambda)$
	satisfying the assumption $E[\mathcal{V},\chi|\bar\chi^{z,T}](t) \leq \delta r_T(t)$, it follows
	from the defining condition~\eqref{def:badTimes}
	\begin{align*}
		\frac{5}{r_T^2(t)} E[\mathcal{V},\chi|\bar\chi^{z,T}](t)
		\leq \frac{5}{r_T(t)} \leq \frac{1}{2}\frac{\Lambda}{r_T(t)}
		\leq \int_{\Rd} |H_\mu(\cdot,t)|^2 \, d\mu_t
		,
	\end{align*}
	so that the validity of~\eqref{ineq:stabilitybad} implies~\eqref{ineq:stabilitybad2}.
\end{proof}

\subsection{Proof of Lemma \ref{lemma:relenineqgraph}: Stability 
	estimate in perturbative setting~I}\label{proof:perreg1}
The asserted bound~\eqref{ineq:graphset} 
follows directly from the estimates~\eqref{eq:aux1}--\eqref{eq:aux14}
established in Lemma~\ref{lem:perturbativeComputations} in Section~\ref{sec:appendix}
and the fact that $RHS^{\operatorname{var-BV}}[\mathcal{V},\chi|\bar\chi^{z,T}](t) = 0$
due to the conclusions of Proposition~\ref{theo:graph}.
\qed

\subsection{Proof of Lemma~\ref{lemma:relenineqpolar}: Stability 
	estimate in perturbative setting~II}\label{proof:perreg2}
For notational simplicity, we again neglect the dependence on~$t$ of all quantities.
Our proof of the estimate~\eqref{ineq:freaze} proceeds in several steps.

\textit{Step~1: Leading order terms involving~$H_{\bar{I}^{z,T}}'$.}
We start by providing a preliminary estimate for the last three
right hand side terms of~$R_{l.o.t.}$ from Lemma~\ref{lemma:relenineqgraph}.
To this end, for each of the three terms we make use of Definition~\ref{DefinitionShrinkingCircle}
in the form of $|H_{\bar{I}^{z,T}}'|\leq \delta_{\mathrm{asymp}}/r_T^2$.
Hence, by Young's inequality and $|H_{\bar{I}^{z,T}}| \leq 2/r_T$ 
\begin{align}
	\label{eq:auxFrozen1}
	\int_{\bar{I}^{z,T}} 2 H_{\bar{I}^{z,T}} H_{\bar{I}^{z,T}}' h h' \,d\mathcal{H}^1
	\lesssim \delta_{\mathrm{asymp}} \int_{\bar{I}^{z,T}}
	\frac{1}{r_T^2} (h')^2 + \frac{1}{r_T^4} h^2 \,d\mathcal{H}^1.
\end{align}
Furthermore, by the defining ODE for the space-time shift
in the form of~\eqref{eq:evzTgraph}, Jensen's inequality and~\eqref{eq:asymptotically_circular1},
we obtain
\begin{align}
	\label{eq:auxFrozen2}
	- \int_{\bar{I}^{z,T}} H_{\bar{I}^{z,T}}'\big(\tau_{\bar{I}^{z,T}}\cdot\dot{z}\big) h \,d\mathcal{H}^1 
	\lesssim \delta_{\mathrm{asymp}}\int_{\bar{I}^{z,T}} \frac{1}{r_T^4} h^2 \,d\mathcal{H}^1 
\end{align}
as well as
\begin{align}
	\label{eq:auxFrozen3}
	- \int_{\bar{I}^{z,T}} H_{\bar{I}^{z,T}}'\dot{\mathfrak{T}} h' \,d\mathcal{H}^1 
	\lesssim \delta_{\mathrm{asymp}}\int_{\bar{I}^{z,T}}
	\frac{1}{r_T^2} (h')^2 + \frac{1}{r_T^4} h^2 \,d\mathcal{H}^1,
\end{align}
where for the latter we also used Young's inequality.

\textit{Step~2: Freezing of coefficients in leading order quadratic terms.}
As a simple consequence of~\eqref{eq:asymptotically_circular3},
$|H_{\bar{I}^{z,T}}|\leq 2/r_T$ and $a^2-b^2=(a-b)(a+b)$, it holds
\begin{equation}
	\label{eq:auxFrozen4}
	\begin{aligned}
		&\int_{\bar{I}^{z,T}} \bigg(\frac{3}{2} H_{\bar{I}^{z,T}}^2 - \frac{1}{r_T^2}\bigg) (h')^2 \,d\mathcal{H}^1
		+ \int_{\bar{I}^{z,T}} \frac{1}{r_T^2}\bigg(\frac{1}{2}
		H_{\bar{I}^{z,T}}^2 + \frac{1}{r_T^2}\bigg)	h^2\,d\mathcal{H}^1
		\\&~~~
		\leq \int_{\bar{I}^{z,T}} \frac{1}{2} \frac{1}{r_T^2} (h')^2 
		+ \frac{3}{2} \frac{1}{r_T^4} h^2 \,d\mathcal{H}^1
		+ \frac{9}{2}\delta_{\mathrm{asymp}}\int_{\bar{I}^{z,T}}
		\frac{1}{r_T^2} (h')^2 + \frac{1}{r_T^4} h^2 \,d\mathcal{H}^1.
	\end{aligned}
\end{equation}

\textit{Step~3: Freezing of coefficients in leading order correction terms.}
By the arguments from the previous two steps, we may estimate
\begin{equation}
	\label{eq:auxFrozen5}
	\begin{aligned}
		&- \int_{\bar{I}^{z,T}} \Big(\frac{1}{r_T^2} + H_{\bar{I}^{z,T}}^2 \Big) h
		\n_{\bar{I}^{z,T}} \cdot \dot{z} \,d\mathcal{H}^1 
		- \int_{\bar{I}^{z,T}} \frac{1}{r^2_T} H_{\bar{I}^{z,T}}
		h \dot{\mathfrak{T}}\,\,d\mathcal{H}^1
		\\&~~~
		\leq - \int_{\bar{I}^{z,T}} \frac{2}{r_T^2} h
		\n_{\bar{I}^{z,T}} \cdot \dot{z} \,d\mathcal{H}^1 
		- \int_{\bar{I}^{z,T}} \frac{1}{r^3_T}
		h \dot{\mathfrak{T}}\,\,d\mathcal{H}^1
		\\&~~~~~~
		+ \widetilde{C} \delta_{\mathrm{asymp}}\int_{\bar{I}^{z,T}}
		\frac{1}{r_T^4} h^2 \,d\mathcal{H}^1,
	\end{aligned}
\end{equation}
where $\widetilde C > 0$ is some universal constant.

\textit{Step~4: Change of variables in quadratic terms.}
Recalling the definition
$[0,2\pi) \ni \theta \mapsto h\big(\bar{\gamma}^{z,T}(\frac{L_{\bar{\gamma}^{z,T}}}{2\pi}\theta)\big)$,
a simple change of variables together with condition~\eqref{eq:asymptotically_circular1} entails
\begin{align}
	\label{eq:auxFrozen6a}
	\frac{1}{(1 {+} \delta_{\mathrm{asymp}})^3} \frac{1}{r_T^3} \int_{0}^{2\pi } (\partial_\theta ^2 \widetilde h)^{2}\,d\theta
	&\leq \int_{\bar{I}^{z,T}} (h'')^2 \,d\mathcal{H}^1,
	\\
	\label{eq:auxFrozen6b}
	\int_{\bar{I}^{z,T}} (h'')^2 \,d\mathcal{H}^1
	&\leq \frac{1}{(1 {-} \delta_{\mathrm{asymp}})^3} \frac{1}{r_T^3} \int_{0}^{2\pi } (\partial_\theta ^2 \widetilde h)^{2}\,d\theta,
	\\
	\label{eq:auxFrozen7}
	\int_{\bar{I}^{z,T}} \frac{1}{r_T^2} (h')^2 \,d\mathcal{H}^1
	&\leq \frac{1}{(1 {-} \delta_{\mathrm{asymp}})} \frac{1}{r_T^3} \int_{0}^{2\pi } (\partial_\theta \widetilde h)^{2}\,d\theta,
	\\
	\label{eq:auxFrozen8}
	\int_{\bar{I}^{z,T}} \frac{1}{r_T^4} h^2 \,d\mathcal{H}^1
	&\leq (1 {+} \delta_{\mathrm{asymp}}) \frac{1}{r_T^3} \int_{0}^{2\pi} \widetilde h^2 \,d\theta. 
\end{align}

\textit{Step~5: Change of variables in correction terms.}
We claim that
\begin{equation}
	\begin{aligned}
		&\bigg|- \int_{\bar{I}^{z,T}} \frac{2}{r_T^2} h
		\n_{\bar{I}^{z,T}} \cdot \dot{z} \,d\mathcal{H}^1
		- \bigg(- 6 \frac{1}{r_T^3(t)} \Big|\frac{1}{\sqrt{\pi}}\int_0^{2\pi} \widetilde h(\cdot,t) e^{i\theta} \,d \theta \Big|^2\bigg)\bigg|
		\\&~~~
		+ \bigg|- \int_{\bar{I}^{z,T}} \frac{1}{r^2_T} H_{\bar{I}^{z,T}}
		h \dot{\mathfrak{T}}\,\,d\mathcal{H}^1
		- \bigg(- 4 \frac{1}{r_T^3(t)} \Big(\frac{1}{\sqrt{2\pi}}\int_0^{2\pi} \widetilde h(\cdot,t)  \,d \theta \Big)^2\bigg)\bigg|
		\\&~~~~~~~~~
		\leq \widetilde{C}\delta_{\mathrm{asymp}}
		\frac{1}{r_T^3} \int_{0}^{2\pi } \widetilde h^2 \,d\theta,
	\end{aligned}
\end{equation}
where $\widetilde{C} > 0$ is some universal constant.
Indeed, this follows similarly to the previous steps,
exploiting in the process the two conditions~\eqref{eq:asymptotically_circular1}
and~\eqref{eq:asymptotically_circular2} as well as the
defining ODE of the space-time shift~\eqref{eq:evzTgraph}.

\textit{Step~6: Conclusion.} Based on the previous steps,
we infer that, for given $\delta_{\mathrm{err}} \in (0,1)$, 
one may choose $\delta_{\mathrm{asymp}} \ll_{\delta_{\mathrm{err}}} 1$ 
such that the leading order contribution $R_{l.o.t.}$
from Lemma~\ref{lemma:relenineqgraph} is 
estimated by $\widetilde{R}_{l.o.t.} + \frac{1}{2}\widetilde{R}_{h.o.t.}$.
Since the higher order contribution $R_{h.o.t.}$
from Lemma~\ref{lemma:relenineqgraph} can be easily estimated
in terms of $\frac{1}{2}\widetilde{R}_{h.o.t.}$ for a suitable choice of
$\delta_{\mathrm{asymp}} \ll_{\delta_{\mathrm{err}}} 1$ by
means of the previous arguments, this concludes the proof 
of Lemma~\ref{lemma:relenineqpolar}.
\qed

\subsection{Proof of Lemma \ref{lemma:stabilitygood}: Final stability estimate in perturbative setting} \label{proof:perreg3}
First, we observe that by Lemma~\ref{lemma:relentfreaze} and the
estimates~\eqref{eq:auxFrozen7}--\eqref{eq:auxFrozen8} from the previous proof
that, for given $\delta_{\mathrm{err}} \in (0,1)$, one may choose
$C,C'\gg_{\delta_{\mathrm{err}}} 1$ and $\delta_{\mathrm{asymp}} \ll_{\delta_{\mathrm{err}}} 1$
such that
\begin{align}
	\label{eq:auxFinalStabilityGraph1}
	E[\chi|\bar\chi^{z,T}] \leq (1 {+} \delta_{\mathrm{err}}) 
	\frac{1}{r_T} \frac{1}{2} \big\|\widetilde h\big\|^2_{H^1(0,2\pi)}
	=: RHS.
\end{align}
Second, thanks to Lemma~\ref{lemma:relenineqpolar}, for given $\delta_{\mathrm{err}} \in (0,1)$, one may choose
$C,C'\gg_{\delta_{\mathrm{err}}} 1$ and $\delta_{\mathrm{asymp}} \ll_{\delta_{\mathrm{err}}} 1$
such that
\begin{align}
	\nonumber
	&\sum_{i,j=1,i\neq j}^{P} \frac{1}{2} 
	\Big( -\mathcal{D}_{i,j}[\chi|\bar\chi^{z,T}]
	+  \frac{1}{2} RHS_{i,j}^{\mathrm{int}}[\chi|\bar\chi^{z,T}] \Big)
	+ \sum_{i=1}^{P-1} RHS_{i}^{\mathrm{bulk}}[\chi|\bar\chi^{z,T}]
	\\&~~~ \label{eq:auxFinalStabilityGraph2}
	\leq - \frac{1}{r^3_T}\int_{0}^{2\pi } (1 {-} \delta_{\mathrm{err}}) (\partial_\theta ^2 \widetilde h)^{2}
	- (1 {+} \delta_{\mathrm{err}}) \frac{1}{2} (\partial_\theta  \widetilde h)^{2}
	- (1 {+} \delta_{\mathrm{err}}) \frac{3}{2} \widetilde h^2 \,d \theta
	\\&~~~~~~ \nonumber
	- 4 \frac{1}{r_T^3} \bigg(\frac{1}{\sqrt{2\pi}}\int_0^{2\pi} \widetilde h \,d \theta \bigg) ^2
	- 6 \frac{1}{r_T^3} \bigg|\frac{1}{\sqrt{\pi}}\int_0^{2\pi} \widetilde h e^{i\theta} \,d \theta \bigg| ^2
	\\&~~~ \nonumber
	=: LHS.
\end{align}
Now, fix $\alpha \in (1,5)$. We claim that there exist
$\delta_{\mathrm{err}} \ll_\alpha 1$ as well as a choice of the
constant $C_\zeta \gg_\alpha 1$ from Construction~\ref{DefinitionErrorHeights}
such that
\begin{align}
	\label{eq:auxFinalStabilityGraph3}
	LHS \leq 
	- \frac{\alpha}{r_T^2} (1{+}\dot{\mathfrak{T}}) RHS,
\end{align}
so that the claim~\eqref{ineq:stabilityregular} follows 
from~\eqref{eq:auxFinalStabilityGraph1}--\eqref{eq:auxFinalStabilityGraph3}.
Fourier decomposing both sides of the asserted inequality~\eqref{eq:auxFinalStabilityGraph3},
we may indeed derive the validity of~\eqref{eq:auxFinalStabilityGraph3} 
for suitably chosen $\delta_{\mathrm{err}} \ll_{\alpha} 1$
and $C_\zeta \gg_\alpha 1$
analogously to our analysis towards the end of Subsection~\ref{subsec:heuristicsDecay}
(cf.\ \eqref{eq:heuristics28}--\eqref{eq:heuristics29b}), exploiting in the process
also the bound~\eqref{ineq:zTprime}. 
\qed

\subsection{Proof of Theorem \ref{theo:stability}: Overall a priori stability estimate}\label{subsec:proofstability}
The stability estimate~\eqref{ineq:aprioristability} follows directly from combining all results 
from Subsections~\ref{subsec:prelimStability}--\ref{subsec:stabilityEstimateGraph}, in particular Lemma~\ref{lem:preliminaryStability}, Corollary~\ref{cor:nonRegular}, and Lemma~\ref{lemma:stabilitygood}. 
\qed

Note in this
context that assumption~\eqref{hpEt} implies
$E[\mathcal{V},\chi|\bar{\chi}^{z,T}](t) \leq \delta r_T(t)$ for all $t \in (0,\tchi)$.

\section{Construction and properties of space-time shifts}

\subsection{Proof of Lemma \ref{lemma:existzT}: Existence of space-time shifts} \label{subsec:existencespacetimeshifts}
Our aim is to prove the existence of a time horizon~$\tchi \in (0,\infty)$, a locally Lipschitz map
$z\colon [0,\tchi) \rightarrow \Rd$ and a strictly increasing Lipschitz map
$T  =: \mathrm{id} {+} \mathfrak{T}\colon[0,\tchi) \rightarrow [0, \infty)$ 
such that $(z(0), T(0))=(0,0)$ and
\begin{align}
	\label{eq:auxExistenceShifts1}
	\tchi &= \sup \Big\{t: T(t) < \frac12 r_0^2 = T_{ext}\Big\},
	\\
	\label{eq:auxExistenceShifts2}
	\begin{bmatrix}
		\dot{z}(t) \\ \dot{\mathfrak{T}}(t)
	\end{bmatrix} 
	&= 	F(z(t),T(t),t), \quad t \in (0,\tchi),
\end{align}
where 
\begin{align}
	\label{eq:auxExistenceShifts3}
	F(z,T,t) := 
	\begin{bmatrix} \frac{6}{r_T^2(t)} \dashint_{\bar I^{z,T}(t)} 
		\rho(\cdot,t;z,T) \n_{\bar{I}^{z,T}}(\cdot,t) \,d\mathcal{H}^1 \\ 
		\frac{4}{r_T(t)} \dashint_{\bar I^{z,T}(t)} \rho(\cdot,t;z,T) \,d\mathcal{H}^1
	\end{bmatrix}.
\end{align}
Note that the asserted Lipschitz bounds~\eqref{ineq:zTprime}
are then immediate consequences of integrating~\eqref{eq:auxExistenceShifts2}
and $|\rho(x,t;z,T)| \leq r_T(t)/(8C_{\zeta})$, cf.\ Construction~\ref{DefinitionErrorHeights}	.

The proof of existence of the solution is obtained by successive approximations and an application of
the Picard--Lindel\"{o}f argument. 
We have to resort to an approximation argument to circumvent blowing up constants
(originating from negative powers of $r_T(t)$ for $t \to t_\chi$) 
preventing the use of the Picard--Lindel\"{o}f argument.
To this end, we introduce an auxiliary version of our problem labeled by integers $k \geq 1$, 
which reads as
\begin{align} \label{eq:truncprob}
	\begin{bmatrix}
		\dot{z}_k(t) \\ \dot{\mathfrak{T}}_k(t)
	\end{bmatrix} 
	= 		F_k(z_k(t),T_k(t),t), 
	\quad (z_k(0),T_k(0)) = (0,0),
\end{align}
where the right hand side $F_k\colon \Rd[2] {\times}
[0,\infty) {\times} [0,\infty) \to \mathbb{R}^3$ 
is defined by truncation:
\begin{align}
	\label{eq:auxExistenceShifts4}
	F_k(z,T,t) = F\big(z,\min \big\{T, \tfrac12 r_0^2 (1 {-} \tfrac1k ) \big\},t\big),
	\quad t \in [0,\infty).
\end{align}

We will show below that the fixed point equation obtained from
integrating~\eqref{eq:truncprob} admits a unique solution
$z_k \in C_b([0,\infty);\mathbb{R}^2)$ and $T_k \in C_b([0,\infty);[0,\infty))$,
where $t \mapsto T_k(t)$ is strictly increasing such that 
\begin{align}
	\label{eq:auxExistenceShifts8}
	\frac{1}{2}t \leq T_k(t) \leq \frac{3}{2}t.
\end{align}
(The latter two properties are consequences of $|\dot{\mathfrak{T}}_k|\leq \frac{1}{2}$
due to the estimate $|\rho(x,t;z_k,\min \{T_k, \tfrac12 r_0^2 (1 {-} \tfrac1k ) \})| \leq 
r_T(t)/(8C_\zeta)$ and $C_\zeta \geq 1$.)

Taking the existence of such a sequence of solutions~$(z_k,T_k)_{k\geq 1}$ for granted for the moment, we
then define $t_0 := 0$ and for $k \geq 1$
\begin{align}
	\label{eq:auxExistenceShifts5}
	t_k := \sup \big\{t\colon T_k(t) < T_{ext} (1 {-} \tfrac1k ) \big\}.
\end{align}
By the properties of~$T_k$, the uniqueness of solutions to~\eqref{eq:truncprob},
as well as the definitions~\eqref{eq:auxExistenceShifts4} and~\eqref{eq:auxExistenceShifts5}, 
the sequence $(t_k)_{k\geq 1}$ is strictly increasing and bounded.
The solution to~\eqref{eq:auxExistenceShifts2} is then constructed by
\begin{align}
	\label{eq:auxExistenceShifts6}
	(z(t),T(t)) &:= 	(z_k(t),T_k(t)), \quad t \in [0, t_k ), 
	\\ \label{eq:auxExistenceShifts7}
	\tchi &:= \sup_{k \geq 1} t_k = \lim_{k\to\infty} t_k < \infty.
\end{align}
Note that~\eqref{eq:auxExistenceShifts6} is indeed well-defined
by uniqueness of solutions to~\eqref{eq:truncprob}, and that
the identities~\eqref{eq:auxExistenceShifts1}--\eqref{eq:auxExistenceShifts2}
hold true by construction. Hence, it remains to verify the existence of
solutions to~\eqref{eq:truncprob} with the asserted properties.

Fix an integer $k \geq 1$.
In order to apply the Picard--Lindel\"{o}f argument, we have to show
that for given $t \in (0,\infty)$, the function $(z,T)\rightarrow F_k(z,T,t)$ 
is globally Lipschitz with Lipschitz constant independent of~$t$. 
For notational convenience, we abbreviate the truncation
by $\widehat{T} := \big\{T, \tfrac12 r_0^2 (1 {-} \tfrac1k )\}$.
First, we compute
\begin{align*}
	\frac{1}{r_{\widehat{T}}^2(t)} \frac{1}{\mathcal{H}^1(\bar{I}^{z,\widehat{T}}(t))}
	= \frac{1}{2\pi}\frac{1}{r_{\widehat{T}}^3(t)} 
	\frac{2\pi r_{\widehat{T}}(t)}{\mathcal{H}^1(\bar{I}^{0,\widehat{T}}(t))},
\end{align*}
so that the normalization factor has the required Lipschitz regularity due to
the action of the truncation and the smoothness of the evolution of~$\bar\chi$. 
Second, since the Jacobian of the tubular neighborhood diffeomorphism
$$x \mapsto (P_{\bar{I}^{z,\widehat{T}}}(x,t),\sdist_{\bar{I}^{z,\widehat{T}}}(x,t))$$ is given 
by $x \mapsto 1/(1 {-} (H_{\bar{I}^{z,\widehat{T}}} \circ P_{\bar{I}^{z,\widehat{T}}})(x,t)
\sdist_{\bar{I}^{z,\widehat{T}}}(x,t))$,
plugging in the definition~\eqref{eq:defrho_plus} together with
a change of variables yields
\begin{align*}
	&\int_{\bar I^{z,\widehat{T}}(t)} 
	\rho_+(\cdot,t;z,\widehat{T}) \n_{\bar{I}^{z,\widehat{T}}}(\cdot,t) \,d\mathcal{H}^1
	\\&
	= \int_{\{0 \leq \sdist_{\bar{I}^{z,\widehat{T}}}(\cdot,t) \leq r_{\widehat{T}}(t)/8\}}
	\big(1 {-} (H_{\bar{I}^{z,\widehat{T}}} \circ P_{\bar{I}^{z,\widehat{T}}})(\cdot,t)
	\sdist_{\bar{I}^{z,\widehat{T}}}(\cdot,t)\big) 
	\\&~~~~~~~~~~~~~~~~~~~~~~~~~~~~~~~~~~~~ 
	\big(\bar{\chi}^{z,\widehat{T}}_1 {-} \chi_1\big)(\cdot,t)
	\zeta\Big(\frac{\sdist_{\bar{I}^{z,\widehat{T}}}(\cdot,t)}{r_{\widehat{T}}(t)}\Big)
	\nabla \sdist_{\bar{I}^{z,\widehat{T}}}(\cdot,t) \,dx.
\end{align*}
Shifting variables in space, we obtain from 
the relations~\eqref{eq:shiftedSignedDistance}--\eqref{eq:shiftedProjection}
\begin{align*}
	&\int_{\bar I^{z,\widehat{T}}(t)} 
	\rho_+(\cdot,t;z,\widehat{T}) \n_{\bar{I}^{z,\widehat{T}}}(\cdot,t) \,d\mathcal{H}^1
	\\&
	= \int_{\{0 \leq \sdist_{\bar{I}^{0,\widehat{T}}}(\cdot,t) \leq r_{\widehat{T}}(t)/8\}}
	\big(1 {-} (H_{\bar{I}^{0,\widehat{T}}} \circ P_{\bar{I}^{0,\widehat{T}}})(\cdot,t)
	s_{\bar{I}^{0,\widehat{T}}}(\cdot,t)\big) 
	\\&~~~~~~~~~~~~~~~~~~~~~~~~~~~~~~~~~~~
	\big(\bar{\chi}^{0,\widehat{T}}_1 {-} \chi^{-z,\mathrm{id}}_1\big)(\cdot,t)
	\zeta\Big(\frac{\sdist_{\bar{I}^{0,\widehat{T}}}(\cdot,t)}{r_{\widehat{T}}(t)}\Big)
	\nabla \sdist_{\bar{I}^{0,\widehat{T}}}(\cdot,t) \,dx,
\end{align*}
and the required estimate follows from this representation
by smoothness of the evolution of~$\bar\chi$, the action of the truncation,
and Lipschitz continuity of translations of volumes.
Since an analogous formula also holds for~$\rho_+$ replaced by~$\rho_-$,
this concludes the proof (by resorting to Banach's fixed point theorem
exactly as in the proof of the Picard-Lindel\"{o}f theorem). \qed

\subsection{Proof of Lemma~\ref{prop:boundzT}: Bounds for space-time shifts}\label{subsec:boundsShifts}
Our goal is to prove~\eqref{eq:boundzT}--\eqref{eq:boundzT2}.
Fix $t \in (0,\tchi)$. Note that from~\eqref{def:bulkError}, 
Construction~\ref{DefinitionCalibration}, a change to
tubular neighborhood coordinates, $|H_{\bar{I}^{z,T}}(\cdot,t)| \leq 2/r_T(t)$,
and~\eqref{eq:defrho}	it follows that
\begin{equation}
	\label{eq:aux1BoundsShifts}
	\begin{aligned}
		&E_{\mathrm{bulk}}[\chi | \bar{\chi}^{z,T}](t)
		\\&
		\geq \int_{\{\dist(\cdot,\bar{I}^{z,T}(t)) < r_T(t)/8\}}
		\big|\chi_1(\cdot,t) {-} \bar\chi_1^{z,T}(\cdot,t)\big| \big|\vartheta_1(\cdot,t)\big|\,  dx 
		\\&
		=  \int_{\bar{I}^{z,T}(t)} \int_{-\frac{r_T(t)}{8}}^{\frac{r_T(t)}{8}}
		\frac{(\chi_1 {-} \bar\chi_1^{z,T})\big(\cdot{+}s\n_{\bar{I}^{z,T}}(\cdot,t),t\big)}
		{1 {-} H_{\bar{I}^{z,T}}(\cdot,t)s} \frac{-s}{r_T^2(t)} \,ds d\mathcal{H}^1
		\\&
		\geq \frac{4}{5} \int_{\bar{I}^{z,T}(t)} \frac{1}{2} \frac{1}{r_T^2(t)} 
		\Big(\rho_+^2(\cdot,t;z,T) + \rho_-^2(\cdot,t;z,T)\Big) \,d\mathcal{H}^1
		\\&
		\geq \frac{1}{10} \int_{\bar{I}^{z,T}(t)} \Big(\frac{\rho(\cdot,t;z,T)}{r_T(t)}\Big)^2
		\,d\mathcal{H}^1.
	\end{aligned}
\end{equation}
Hence, plugging in~\eqref{eq:evzT}, 
recalling~\eqref{eq:asymptotically_circular1},
and using 
Jensen's inequality
\begin{equation}
	\label{eq:aux2BoundsShifts}
	\begin{aligned}
		\frac{1}{r_0} |z(t)| &\leq \int_0^t \frac{1}{r_0} |\dot{z}(s)| \,ds
		\\& 
		\lesssim  (1+ \delta_{\mathrm{asymp}})\frac{1}{{r_0}} \int_0^t  \frac{1}{r_T^2(s)}
		\bigg(\,\dashint_{\bar{I}^{z,T}(s)} \big|\rho(\cdot,s;z,T)\big|^2 \,d\mathcal{H}^1\bigg)^\frac{1}{2} \,ds
		\\&
		\lesssim  (1+ \delta_{\mathrm{asymp}})  \frac{1}{{r_0}} \int_0^t \frac{1}{r_T^{3/2}(s)}
		\bigg(\int_{\bar{I}^{z,T}(s)} \Big(\frac{\rho(\cdot,s;z,T)}{r_T(s)}\Big)^2 
		\,d\mathcal{H}^1\bigg)^\frac{1}{2} \,ds.
	\end{aligned}
\end{equation}
Inserting the estimate~\eqref{eq:aux2BoundsShifts} into~\eqref{eq:aux1BoundsShifts}
and afterwards exploiting the assumption~\eqref{hpEt} together with \eqref{ineq:aprioristability} and  \eqref{ineq:zTprime} further entail
\begin{align*}
	\frac{1}{r_0} |z(t)| &\lesssim \sqrt{\frac{1}{r_0} E[\mathcal{V}_0, \chi_0|\bar\chi_0]}
	\int_0^t  \frac{1}{r_T^2(s)} \Big(\frac{r_T(s)}{r_0}\Big)^\frac{1}{2} \,ds,
\end{align*}
which in turn by~\eqref{ineq:zTprime} upgrades to
\begin{align*}
	\frac{1}{r_0} |z(t)| &\lesssim 
	\sqrt{\frac{1}{r_0} E[\mathcal{V}_0,\chi_0|\bar\chi_0]}
	\int_0^t  \frac{1}{r_T^2(s)} \Big(\frac{r_T(s)}{r_0}\Big)^\frac{1}{2}
	\big(1 {+} \dot{\mathfrak{T}}(s)) \,ds
	\\&
	= - \sqrt{\frac{1}{r_0} E[\mathcal{V}_0, \chi_0|\bar\chi_0]}
	\int_0^t \frac{d}{ds} \Big(\frac{r_T(s)}{r_0}\Big)^\frac{1}{2} \,ds
	\\&
	\lesssim  \sqrt{\frac{1}{r_0} E[\mathcal{V}_0, \chi_0|\bar\chi_0]} 
	\bigg(1 - \Big(\frac{r_T(t)}{r_0}\Big)^\frac{1}{2}\bigg).
\end{align*}
Now, choosing~$\delta_{\mathrm{err}} \ll 1$ such that the implicit
constant in the last estimate gets canceled, we obtain the claim
for the path of translations~$z$.
Analogously, one derives a bound of same type for $\frac{1}{T_{ext}}|\mathfrak{T}(t)|$.
\qed

\section{Reduction to perturbative graph setting}

\subsection{Proof of Proposition~\ref{theo:graph}: Strategy and intermediate results} \label{proof:reductiontograph}
We fix $\Lambda > 0$ and let $t \in \mathcal{T}_{\mathrm{reg}}(\Lambda)$, namely
$t \in (0,\tchi)$ such that \eqref{boundgoodtimes}, i.e.,
\begin{align*} 
	\int_{\Rd} |H_\mu(\cdot,t)|^2 \,d\mu_t 
	< \Lambda\frac{2\pi}{r_T(t)},
\end{align*}
holds. Given $C_\zeta \geq 1$ from Construction~\ref{DefinitionErrorHeights} and given any
$C, C' \geq 1$ 
(representing the constants from~\eqref{eq:smallC0norm}--\eqref{eq:smallC1norm}), 
we aim to find a 
constant $\delta \ll_{\Lambda,C,C',C_\zeta} \frac{1}{2}$  such that 
the assumption~\eqref{hp:regularitygoodtimes}, i.e.,
\begin{align*}
	E[\mathcal{V}, \chi | \bar \chi^{z,T}](t)  \leq \delta {r_T(t)} 
\end{align*}
implies the conclusions of Proposition~\ref{theo:graph}. 
From now on, let us suppress the dependence of all quantities on $t$.

The proof of Proposition~\ref{theo:graph} leverages on two results
from the literature (for the precise statements, see Theorem \ref{theo:Allard} and Theorem \ref{theo:Jordan} in Appendix \ref{appendix}):
\begin{itemize}
	\item Allard's regularity theory \cite[Chapter 5, Theorem 23.1 and Remark 23.2(a)]{SimonLectures}
	\item The decomposition of the reduced boundary of a set of finite
	perimeter in~$\mathbb{R}^2$ into a countable family of rectifiable 
	Jordan-Lipschitz curves \cite[Section 6, Theorem 4]{Ambrosio}.
\end{itemize}

The idea for the proof is then roughly speaking the following.
Our assumptions~\eqref{boundgoodtimes}--\eqref{hp:regularitygoodtimes} 
and Allard's regularity theory first provide us with a scale~$\varrho \ll r_T$
(uniform in $x_0 \in \supp\mu$) such that~$\supp\mu$ admits a local graph representation
on that scale at each $x_0 \in \supp\mu$. In addition, the assumption~\eqref{hp:regularitygoodtimes}
together with the coercivity properties of the error functional~$E[\mathcal{V}, \chi | \bar \chi^{z,T}]$
allow to show that any part of the set $\supp\mu$ not being in accordance with the
asserted graph representation of Proposition~\ref{theo:graph} has to be of sufficiently small mass.
In fact, by a suitable choice of the constant~$\delta$ from assumption~\eqref{hp:regularitygoodtimes}
and exploiting the decomposition result from~\cite{Ambrosio}
(i.e., the Lipschitz parametrization of the curves), one may trap any undesired
behavior of $\supp\mu$ within balls of radius $\varrho /2$. This, however, contradicts
the local graph property of $\supp\mu$ within balls of radius $\varrho$.

Keeping this heuristic in mind, we proceed by stating several
intermediate results which combined entail a proof of Proposition~\ref{theo:graph}.
As a first step, we ensure that our assumptions \eqref{boundgoodtimes}--\eqref{hp:regularitygoodtimes}
imply the applicability of Allard's regularity theory.

\begin{lemma}[Applicability of Allard's regularity theory]\label{lemma:hpAllard}
  Let $(\varepsilon,\gamma,C_{Allard})$ be the constants from Theorem~\ref{theo:Allard}.
	There exist $\tilde \varepsilon \in (0,\varepsilon)$, 
	$\delta_{\mathrm{asymp}} \ll 1$, $\delta \ll_{\tilde \varepsilon, \Lambda}1$
	and $\tilde C \gg 1$ such that, for all $x_0 \in \supp \mu $ and 
	\begin{align*}
		\begin{cases}
			G= \Tan_{P_{\bar I^{z,T} }(x_0)} \bar I^{z,T} & \text{if } x_0 \in \{|\xi^{z,T}|\leq 1/2\},
			\\
			G \text{ arbitrary} & \text{if } x_0 \in \{|\xi^{z,T}|> 1/2\},
		\end{cases}
	\end{align*}
the assumptions of Allard's regularity theorem (see Theorem \ref{theo:Allard} for $p=2$) are fulfilled
at scale $\varrho := \frac{1}{\tilde C} \frac{\tilde\varepsilon^2}{2\pi (\Lambda {+} 1)} r_T$
in the stronger form of
	\begin{align*}
	\frac{\mu(B_{\varrho}(x_0))}{\mathrm{Vol}_1 \varrho}\leq 1+  \tilde  \varepsilon, \quad 
 \Big(\int_{B_{\varrho}(x_0)} |H_\mu|^2 \, \dx\Big)^\frac12  \varrho^\frac12 \leq \tilde \varepsilon , \quad 
  E_{\operatorname{tilt}}[x_0, \varrho, G] \leq \tilde \varepsilon^2.
\end{align*}
Furthermore, one may choose $\tilde \varepsilon \in (0, \varepsilon)$, 
$\delta_{\mathrm{asymp}} \ll 1$ such that the following properties are satisfied:
\begin{itemize}
	\item $\tilde \varepsilon \in (0, \varepsilon)$ is such that
	\begin{align}\label{eq:boundeps}
		\tilde \varepsilon \leq \frac23 \frac{1}{2 C_{Allard} C'}, 
		\quad \tilde \varepsilon \leq \frac14 \frac{1}{2 C_{Allard}} \frac{1}{16 \max\{C, C_\zeta\}},
	\end{align}
 \item for all $x, \tilde x_0 \in \bar I^{z,T}$ such that $|(x- \tilde x_0)\cdot \tau_{\bar I^{z,T}}(\tilde x_0)| \leq \gamma \varrho $, it holds
 \begin{align} \label{eq:boundsp}
 	\frac{|(x-\tilde x_0) \cdot \n_{\bar I^{z,T}}(\tilde x_0 )|}{r_T} \leq \frac14 \frac{1}{16 \max\{C, C_\zeta\}},
 \end{align}
\item defining $\tilde \alpha $ by $2 C_{Allard} \tilde \varepsilon = \tan \tilde \alpha $, for all $x_0 \in \{|\xi^{z,T}| \leq 1/2\}$ and all $x \in \partial B_{\gamma \varrho /2} (x_0)$ satisfying $|(x- x_0)\cdot \tau_{\bar I^{z,T}}(P_{\bar I^{z,T}} (x_0))| \geq  \frac{\gamma \varrho}{2} \cos \tilde \alpha $, it holds 
	\begin{align} \label{eq:boundP}
		|P_{\bar I^{z,T}} (x) - P_{\bar I^{z,T}} (x_0)| \geq \frac12  \frac{\gamma \varrho}{2} \cos \tilde \alpha,
	\end{align}
\end{itemize}
where $C,C'>0$ are from Proposition~\ref{theo:graph}, and $C_\zeta>0$ is from Construction~\ref{DefinitionErrorHeights}.
\end{lemma}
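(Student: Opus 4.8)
The plan is to fix the auxiliary constants in the order $\tilde\varepsilon$, $\tilde C$, $\delta_{\mathrm{asymp}}$, $\delta$, and then to verify the three hypotheses of Allard's theorem (density ratio, $L^2$ curvature, tilt excess) at the scale $\varrho=\tfrac{1}{\tilde C}\tfrac{\tilde\varepsilon^2}{2\pi(\Lambda+1)}r_T$ for every $x_0\in\supp\mu$, together with the auxiliary geometric estimates~\eqref{eq:boundeps}--\eqref{eq:boundP}. I would first choose $\tilde\varepsilon\in(0,\varepsilon)$ small enough that~\eqref{eq:boundeps} holds (a purely arithmetic constraint involving only $C_{Allard}$ and the given $C$, $C'$), and then $\tilde C\gg 1$. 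The $L^2$-curvature hypothesis is then immediate: by~\eqref{boundgoodtimes} (in force since $t\in\mathcal{T}_{\mathrm{reg}}(\Lambda)$) and $\Lambda<\Lambda+1$ one has $\int_{B_\varrho(x_0)}|H_\mu|^2\,d\mu_t<(\Lambda+1)\tfrac{2\pi}{r_T}$, whence $\big(\int_{B_\varrho(x_0)}|H_\mu|^2\,d\mu_t\big)^{1/2}\varrho^{1/2}<\tilde\varepsilon\,\tilde C^{-1/2}\leq\tilde\varepsilon$; this is exactly why $\varrho$ carries the factor $\tfrac{\tilde\varepsilon^2}{2\pi(\Lambda+1)}$. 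Next I would fix $\delta_{\mathrm{asymp}}\ll 1$ so small that~\eqref{eq:boundsp} and~\eqref{eq:boundP} hold: both are elementary consequences of the near-flatness of $\bar I^{z,T}(t)$ at scales $\ll r_T$ (from $|H_{\bar I^{z,T}}|\leq 2/r_T$ in Definition~\ref{DefinitionShrinkingCircle}, a local piece of $\bar I^{z,T}$ over a tangential window of size $O(\gamma\varrho)$ deviates from its tangent line by $O((\gamma\varrho)^2/r_T)$, and within a ball of radius $O(\varrho)$ the projection $P_{\bar I^{z,T}}$ differs from the affine orthogonal projection onto that line by $O(\varrho/r_T)$, a quantity made arbitrarily small by the choice of $\tilde C$).

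With the scale fixed I would extract from $E[\mathcal{V},\chi|\bar\chi^{z,T}](t)\leq\delta r_T$ the structural information needed for the density and tilt hypotheses. Combining the identity~\eqref{varinterfaceerror}, the explicit form of the calibration from Construction~\ref{DefinitionCalibration} (so $|\xi_{1,P}^{z,T}|=\eta(\sdist_{\bar I^{z,T}}/r_T)$ and $|\xi_{i,j}^{z,T}|\leq\tfrac12\eta(\sdist_{\bar I^{z,T}}/r_T)$ for $\{i,j\}\neq\{1,P\}$), and nonnegativity of each constituent of the error functional yields: $\int_{\mathbb{R}^2}\big(1-\tfrac12\sum_i\omega_i\big)\,d\mu_t\leq\delta r_T$; the local-mass bound $\mu_t(\{|\xi^{z,T}|\leq 2/3\})\lesssim\delta r_T$ (since there $1-\n_{i,j}\cdot\xi_{i,j}^{z,T}\geq 1/3$ for every pair, so the interface energy in that region is $\lesssim E_{\mathrm{int}}[\chi|\bar\chi^{z,T}]$, and one adds the multiplicity defect); and, on $\{|\xi^{z,T}|=1\}$, where $\xi^{z,T}=\n_{\bar I^{z,T}}\!\circ P_{\bar I^{z,T}}$ is a unit vector, the $L^2$ tilt bound $\int_{I_{1,P}\cap\{|\xi^{z,T}|=1\}}\big|\n_{1,P}+\n_{\bar I^{z,T}}(P_{\bar I^{z,T}}(\cdot))\big|^2\,d\mathcal{H}^1\lesssim\delta r_T$. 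I would also record from Lemma~\ref{prop:regInterfaceWeakSol} the crude total-mass bound $\mu_t(\mathbb{R}^2)\lesssim r_T$.

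The verification then splits according to the size of $\xi^{z,T}(x_0)$. If $|\xi^{z,T}(x_0)|\leq 1/2$, then since $|\xi^{z,T}|$ is $\tfrac{16}{r_T}$-Lipschitz and $\varrho\ll r_T$ one has $B_\varrho(x_0)\subseteq\{|\xi^{z,T}|\leq 2/3\}$, hence $\mu_t(B_\varrho(x_0))\lesssim\delta r_T=\delta\,\tilde C\,2\pi(\Lambda+1)\,\varrho/\tilde\varepsilon^2$; for $\delta$ small this pushes both the density ratio and — using that the Frobenius distance between orthogonal projections onto any two lines is $\leq\sqrt 2$ — the tilt excess $E_{\operatorname{tilt}}[x_0,\varrho,G]$ far below $1+\tilde\varepsilon$ resp. $\tilde\varepsilon^2$, with $G=\Tan_{P_{\bar I^{z,T}}(x_0)}\bar I^{z,T}$ admissible because $|\sdist_{\bar I^{z,T}}(x_0)|<r_T/4$. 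If $|\xi^{z,T}(x_0)|>1/2$, the tilt excess (for $G$ the tangent line of $\bar I^{z,T}$ at $P_{\bar I^{z,T}}(x_0)$, the natural choice) is controlled by splitting $\mathcal{V}_t\llcorner B_\varrho(x_0)$ into the part carried by the multiplicity-one interface $I_{1,P}$ and the remainder (of mass $\lesssim\delta r_T$ by the multiplicity-defect and $L^2$ tilt bounds above): on the former the deviation of $\n_{1,P}$ from $\n_{\bar I^{z,T}}(P_{\bar I^{z,T}}(\cdot))$ is $L^2$-small, and $\n_{\bar I^{z,T}}$ varies by $\lesssim\varrho/r_T$ over $B_\varrho(x_0)$ since $|\nabla^\mathrm{tan}\n_{\bar I^{z,T}}|=|H_{\bar I^{z,T}}|\leq 2/r_T$; altogether $E_{\operatorname{tilt}}[x_0,\varrho,G]\lesssim\delta r_T/\varrho+(\varrho/r_T)^2$, which is $\leq\tilde\varepsilon^2$ once $\delta$ is small and $\tilde C$ large.

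The one genuinely hard point — and the step I expect to be the main obstacle — is the density ratio bound $\mu_t(B_\varrho(x_0))\leq(1+\tilde\varepsilon)\,\mathrm{Vol}_1\,\varrho$ in the case $|\xi^{z,T}(x_0)|>1/2$. The crude mass bound and the monotonicity formula by themselves yield only a bounded density ratio at scale $\varrho$ — they cannot exclude higher multiplicity — so here the smallness of the relative energy must be used essentially. The plan is to combine the monotonicity formula for the integral varifold $\mathcal{V}_t$ — whose generalized mean curvature satisfies $\int|H_\mu|^2\,d\mu_t<(\Lambda+1)\tfrac{2\pi}{r_T}$, so that the monotonicity correction across an interval of scales $[\varrho,\rho_*]$ with $\varrho\ll\rho_*\ll r_T$ is controlled by quantities of order $\varrho/r_T$, $\rho_*/r_T$ and $\delta r_T/\varrho$ — with the coercivity of $E[\mathcal{V},\chi|\bar\chi^{z,T}]$ together with Definition~\ref{DefinitionShrinkingCircle}, which force $\mathcal{V}_t$ to be essentially a multiplicity-one varifold concentrated in a thin tubular neighbourhood of the nearly circular curve $\bar I^{z,T}$, so that at a suitably chosen intermediate scale $\rho_*$ — small enough that $\bar I^{z,T}\cap B_{\rho_*}(x_0)$ is an almost straight arc of length close to $2\rho_*$ — the density ratio is already at most $1+\tilde\varepsilon/2$; propagating this from $\rho_*$ down to $\varrho$ through the monotonicity formula then loses at most a further $\tilde\varepsilon/2$ once $\delta$ is chosen small (and $\tilde C$ large) relative to $\tilde\varepsilon$, $\Lambda$, $C$, $C'$, $C_\zeta$. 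Establishing the intermediate-scale density bound purely from the quantitative closeness encoded in the relative energy, while keeping all monotonicity errors absorbable, is the delicate part; everything else is either immediate or a soft consequence of the coercivity extracted above.
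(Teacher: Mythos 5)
Most of your proposal tracks the paper's actual argument: the $L^2$-curvature hypothesis from \eqref{boundgoodtimes} and the choice of $\varrho$, the purely arithmetic choice of $\tilde\varepsilon$ for \eqref{eq:boundeps} and the smoothness-of-the-circle argument for \eqref{eq:boundsp}--\eqref{eq:boundP}, and the case $|\xi^{z,T}(x_0)|\leq 1/2$, where the whole mass in $B_\varrho(x_0)$ is of order $E_{\mathrm{int}}\lesssim\delta r_T$ and both the density ratio and the tilt excess become small after choosing $\delta\ll_{\tilde\varepsilon,\Lambda,\tilde C}1$, are exactly as in the paper. The gap is precisely at the point you flag yourself: the density ratio bound $\mu(B_\varrho(x_0))\leq(1{+}\tilde\varepsilon)\,\mathrm{Vol}_1\varrho$ when $x_0\in\{|\xi^{z,T}|>1/2\}$. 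Your proposed mechanism -- monotonicity formula from an intermediate scale $\rho_*$ plus the assertion that the coercivity of $E$ forces $\mathcal{V}_t$ to be ``essentially a multiplicity-one varifold concentrated in a thin tubular neighbourhood of $\bar I^{z,T}$'' -- does not close it. Concentration of a unit-density varifold in a thin tube around a nearly flat arc does \emph{not} imply a density ratio close to $1$: a single multiplicity-one curve oscillating transversally inside the tube has arbitrarily large length per ball while staying within the tube. Moreover, at the intermediate scale $\rho_*$ you face exactly the same obstruction as at scale $\varrho$ (and starting monotonicity from a scale comparable to $r_T$ only yields a bounded, not $1{+}\tilde\varepsilon$, ratio), so the detour through the monotonicity formula buys nothing; indeed the paper works directly at scale $\varrho$, with no monotonicity formula, because $\delta r_T/\varrho=\delta\,\tilde C\,2\pi(\Lambda{+}1)/\tilde\varepsilon^2$ is already made small by $\delta\ll_{\tilde\varepsilon,\Lambda}1$.

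What actually rules out the oscillation/folding scenario -- and what is missing from your argument -- is the slicing structure of the two-phase interface, encoded in the paper's Lemma~\ref{lemma:error}: along each line in the direction $\n_{\bar I^{z,T}}(P_{\bar I^{z,T}}(x_0))$, boundary points of the slice of $\Omega_1$ alternate orientation, so every extra well-aligned crossing is paired with a badly aligned one, which pays relative entropy; hence the portion of $\partial^\ast\Omega_1\cap B_\varrho(x_0)$ whose slices have more than one boundary point has measure $\leq C_{\mathrm{err}}E_{\mathrm{int}}[\chi|\bar\chi^{z,T}]$. The paper then splits $I_{1,P}\cap B_\varrho(x_0)$ into the badly aligned part (mass $\lesssim\tilde\varepsilon^{-1}E_{\mathrm{int}}$), the well-aligned multi-crossing part (mass $\lesssim E_{\mathrm{int}}$ by Lemma~\ref{lemma:error}), and the well-aligned single-crossing part, which by the coarea formula projects essentially injectively onto the tangent line $G_{x_0}$ and therefore has length at most $(1{+}\tfrac{\tilde\varepsilon}8)\mathrm{Vol}_1\varrho$; adding the multiplicity defect $\int(1-\tfrac12\sum_i\omega_i)\,d\mu_t\lesssim\delta r_T$ gives the density bound, and the same decomposition (your tilt estimate is otherwise fine, but note it presupposes the density bound when you use $\mu(B_\varrho)\lesssim\varrho$) handles the tilt excess. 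Without this folding-control step your ``delicate part'' remains unproved, so as written the proof is incomplete.
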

 We remark that the first bound in \eqref{eq:boundeps} will be needed to prove \eqref{eq:smallC1norm},
	the second bound in \eqref{eq:boundeps} together with \eqref{eq:boundsp} 
	will be needed to prove \eqref{eq:smallC0norm},
	whereas \eqref{eq:boundP} will be needed to prove Lemma \ref{lemma:candidategraph} below. 

As a second step, we show that the geometry 
of the varifold-BV solution~$(\mathcal{V},\chi)$ reduces
to the geometry of a two-phase BV solution.

\begin{lemma}[No other phases, hidden boundaries, and higher-multiplicity interfaces]\label{lemma:noothers}
	It holds
	\begin{align}\label{eq:noothers}
		\chi_2=...=\chi_{P-1}= 0, 
	\end{align}
and
\begin{align} \label{eq:unitdensity}
	\mathcal{V}= (\mathcal{H}^1\llcorner \supp |\nabla \chi_1|) 
	\otimes (\delta_{\Tan_x \supp |\nabla \chi_1| })_{x \in \supp |\nabla \chi_1|}.
\end{align}
\end{lemma}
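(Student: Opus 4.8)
The plan is to read off the geometric structure of $(\mathcal{V},\chi)$ from the smallness hypothesis~\eqref{hp:regularitygoodtimes} by combining the coercivity of the error functional with Allard's regularity theorem, whose applicability at scale $\varrho\sim r_T(t)$ is supplied by Lemma~\ref{lemma:hpAllard}. First I would extract from $E[\mathcal{V},\chi|\bar\chi^{z,T}](t)\le\delta r_T(t)$ three quantitative defect bounds with universal implicit constants. Using the decomposition~\eqref{varinterfaceerror}, the non-negativity of the multiplicity defect $\int_{\mathbb{R}^2}(1-\frac12\sum_i\omega_i)\,\mathrm{d}\mu_t$ (by~\eqref{eq:varsol3}) and of $E_{\mathrm{int}}[\chi|\bar\chi^{z,T}]$, one obtains: (a) $\mu_t(\{\frac12\sum_i\omega_i<1\})\le 2\delta r_T$, since on that set $\frac12\sum_i\omega_i\le\frac12$ because $\mathcal{V}_t$ is integral; (b) since $|\xi^{z,T}_{i,j}|\le\frac12$ pointwise on every interface $I_{i,j}(t)$ with $\{i,j\}\ne\{1,P\}$ by Construction~\ref{DefinitionCalibration}, the total ``non-$I_{1,P}$'' interface mass $\sum_{\{i,j\}\ne\{1,P\}}\mathcal{H}^1(I_{i,j}(t))\lesssim\delta r_T$; (c) from $E_{\mathrm{bulk}}$, $\mathcal{H}^1(\partial^*\{\chi_k(\cdot,t)=1\})\lesssim\delta r_T$ and $\mathrm{vol}(\{\chi_k(\cdot,t)=1\})\lesssim\delta r_T$ for each $k\in\{2,\dots,P-1\}$. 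The assertions~\eqref{eq:noothers}--\eqref{eq:unitdensity} are then equivalent to saying that the ``bad set'' $\mathcal{B}:=\{\frac12\sum_i\omega_i<1\}\cup\bigcup_{\{i,j\}\ne\{1,P\}}I_{i,j}(t)$ meets $\supp\mu_t$ in an $\mathcal{H}^1$-null set, and $\chi_k\equiv 0$ for $k\ge2$.

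Second, I would invoke Lemma~\ref{lemma:hpAllard} together with Theorem~\ref{theo:Allard}: at every $x_0\in\supp\mu_t$ the varifold is, inside $B_{\gamma\varrho}(x_0)$ with $\varrho\sim r_T(t)$, a single $C^{1,\gamma}$ graph over a disk in a plane with density ratio $\le 1+\tilde\varepsilon$; since $\mathcal{V}_t$ is integral its density is $\ge1$ everywhere on $\supp\mu_t$, so the multiplicity of that graph is exactly one. Patching these local descriptions yields that $\supp\mu_t$ is a $C^{1,\gamma}$ embedded $1$-manifold, that $\mathbb{R}^2\setminus\supp\mu_t$ has \emph{exactly two} connected components inside each $B_{\gamma\varrho}(x_0)$, and that $\mathcal{V}_t=(\mathcal{H}^1\llcorner\supp\mu_t)\otimes(\delta_{\Tan_x\supp\mu_t})_x$. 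Thus the unit-density claim~\eqref{eq:unitdensity} is proved modulo the identification $\supp\mu_t=\supp|\nabla\chi_1(\cdot,t)|$, and what remains is purely topological: rule out $\mathcal{B}$ and the extra phases.

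Third, I would argue by contradiction at the Allard scale. Since $\supp|\nabla\chi_i(\cdot,t)|\subseteq\supp\mu_t$ for all $i$ and each complementary component of the curve is connected, every $\chi_i$ is locally constant off $\supp\mu_t$. If $x_0\in\partial^*\{\chi_k(\cdot,t)=1\}$ for some $k\in\{2,\dots,P-1\}$, then in $B_{\gamma\varrho}(x_0)$ the curve $\Gamma:=\supp\mu_t$ is a $C^{1,\gamma}$ graph through $x_0$, so $\{\chi_k=1\}$ fills exactly one of the two local components and $\partial^*\{\chi_k=1\}\cap B_{\gamma\varrho}(x_0)=\Gamma\cap B_{\gamma\varrho}(x_0)$, of $\mathcal{H}^1$-measure $\gtrsim\gamma\varrho\sim r_T$; this contradicts~(c). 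If instead $x_0\in\supp\mu_t$ lies in $\mathcal{B}$, or in $\supp\mu_t\setminus\supp|\nabla\chi_1(\cdot,t)|$, then again $\Gamma\cap B_{\gamma\varrho}(x_0)$ is an arc of length $\gtrsim r_T$ carrying no point of $I_{1,P}(t)$ (since $\chi_1$ is constant near $x_0$), hence lying $\mathcal{H}^1$-a.e.\ in $\mathcal{B}$; this contradicts~(a)+(b). In every case a set or arc of measure $\gtrsim r_T$ collides with a bound $\lesssim\delta r_T$; since the constants $\tilde\varepsilon,\gamma,\tilde C$ and the ratio $\varrho/r_T$ are fixed (through $\Lambda$ and $C,C',C_\zeta$) \emph{before} $\delta$, taking $\delta\ll_{\Lambda,C,C',C_\zeta}\frac12$ closes the argument. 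The Ambrosio decomposition of planar reduced boundaries into countably many rectifiable Jordan curves (Theorem~\ref{theo:Jordan}) is used to make precise the step ``$\partial^*\{\chi_k=1\}$ meets $\Gamma$ $\Rightarrow$ $\chi_k\equiv1$ on a whole local side'', and to conclude, once $\mathcal{B}$ is null, that $\chi_k$ has no reduced boundary, so $\mathrm{vol}(\{\chi_k=1\})\in\{0,\infty\}$; the volume bound in~(c) then forces $\chi_k\equiv0$, proving~\eqref{eq:noothers}, while nullity of $\mathcal{B}$ gives $\supp\mu_t=\supp|\nabla\chi_1(\cdot,t)|$ and hence~\eqref{eq:unitdensity}.

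The main obstacle is this topological propagation: upgrading a single ``defect point'' into a \emph{macroscopic}, size-$\gtrsim r_T$ violation. This needs that the $C^{1,\gamma}$ graph genuinely splits $B_{\gamma\varrho}(x_0)$ into two macroscopic sides and does not fold back on itself (here the smallness of the tilt excess and the geometric estimates~\eqref{eq:boundsp}--\eqref{eq:boundP} from Lemma~\ref{lemma:hpAllard} enter), and that the phase identity is constant along each local arc. A secondary point is the near-extinction scaling: the geometric lower bounds are of order $\varrho\sim r_T$ while the coercivity bounds are of order $\delta r_T$, so the comparison is uniform in $t$ precisely because one argues with the length/perimeter parts of the error rather than with volumes alone; one should also record that $\supp\mu_t$ is bounded — either from $\mu_t(\mathbb{R}^2)\le\widetilde C r_T(t)$ (Lemma~\ref{prop:regInterfaceWeakSol}) together with $E_{\mathrm{int}}$ trapping the mass near $\bar I^{z,T}(t)$, or by localising the whole argument.
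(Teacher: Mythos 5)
Your proposal is correct in substance and rests on the same two pillars as the paper's proof --- the coercivity of $E_{\mathrm{int}}$ via $|\xi^{z,T}_{i,j}|\leq\frac12$ for $\{i,j\}\neq\{1,P\}$ (and the nonnegativity of the multiplicity defect in \eqref{varinterfaceerror}), and the uniform-scale graph representation of $\supp\mu_t$ supplied by Lemma~\ref{lemma:hpAllard} and Theorem~\ref{theo:Allard} --- but the contradiction is realized by a genuinely different mechanism. The paper picks $x_0\in\supp|\nabla\chi_i|$ for a middle phase, uses Theorem~\ref{theo:Jordan} to extract a Jordan--Lipschitz curve $J\subseteq\supp|\nabla\chi_i|$ through $x_0$ of length $\leq 2E_{\mathrm{int}}\lesssim\delta r_T$, and traps this closed curve inside $B_{\gamma\varrho/2}(x_0)$, contradicting the graph property of $\supp\mu$ in $B_{\gamma\varrho}(x_0)$; for \eqref{eq:unitdensity} it then invokes unit density plus the compatibility relations of Section~3 to identify $\supp\mu=\supp|\nabla\chi_1|$. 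You instead exploit that the Allard graph splits $B_{\gamma\varrho}(x_0)$ into two components on which every $\chi_k$ is a.e.\ constant (by \eqref{eq:varsol3}), so a reduced-boundary point of a middle phase forces $\partial^*\{\chi_k=1\}$ to contain the full arc of length $\gtrsim\gamma\varrho$, i.e.\ a fixed multiple of $r_T$ determined by $(\Lambda,\tilde\varepsilon,\gamma,\tilde C)$ before $\delta$ is chosen, against the $\lesssim\delta r_T$ bound; the same separation argument yields $\supp\mu\subseteq\supp|\nabla\chi_1|$, which in fact makes the paper's terse step ``unit density hence $x_0\in\supp|\nabla\chi_1|$'' explicit (note your ``hence lying $\mathcal{H}^1$-a.e.\ in $\mathcal{B}$'' silently uses the identity $\mu_t\llcorner\{\tfrac12\sum_i\omega_i=1\}=\mathcal{H}^1\llcorner(\{\tfrac12\sum_i\omega_i=1\}\cap\bigcup_{i\neq j}I_{i,j})$, which should be cited). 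Your route buys a purely local argument in which Theorem~\ref{theo:Jordan} is actually inessential, and it records the closing step $\partial^*\{\chi_k=1\}=\emptyset$ together with finite volume $\Rightarrow\chi_k\equiv0$, which the paper leaves implicit; the paper's route is shorter because the Jordan decomposition packages the topological propagation. One small caveat: your parenthetical ``since $\chi_1$ is constant near $x_0$'' is only justified when $x_0\in\supp\mu\setminus\supp|\nabla\chi_1|$ --- which is the only case needed for \eqref{eq:unitdensity} --- so the additional case $x_0\in\mathcal{B}\cap\supp|\nabla\chi_1|$ appearing in your reformulation should either be dropped (it is not required by the lemma) or argued separately; this does not affect the validity of the proof of the stated conclusions.
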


Next, we guarantee that the (remaining) interface of the weak solution
is not located too far away from where we expect it to be (i.e., close
to the interface of the strong solution).

\begin{lemma}[No interface far away from $\bar I^{z,T}$]\label{lemma:noaway}
	It holds
	\begin{align}\label{eq:xiinclusion}
		\supp |\nabla \chi_1| \subseteq \{|\xi^{z,T}| > 1/2\} \subseteq \{\dist(\cdot, \bar I^{z,T}) \leq r_T/4\}.
	\end{align}
\end{lemma}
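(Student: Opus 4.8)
The plan is to prove the two inclusions in~\eqref{eq:xiinclusion} separately. The second inclusion, $\{|\xi^{z,T}| > 1/2\} \subseteq \{\dist(\cdot,\bar I^{z,T}) \leq r_T/4\}$, is immediate from the construction of the calibration: by~\eqref{eq:defxi} and~\eqref{eq:shiftedXi} one has $\xi^{z,T}(\cdot,t) = \eta\big(\sdist_{\bar I^{z,T}}(\cdot,t)/r_T(t)\big)\n_{\bar I^{z,T}}(P_{\bar I^{z,T}}(\cdot,t),t)$, and since $\eta$ is supported in $\{|s|\leq 1/4\}$, we get $|\xi^{z,T}|>1/2$ only where $|\sdist_{\bar I^{z,T}}| \leq r_T/4$, i.e.\ where $\dist(\cdot,\bar I^{z,T}) \leq r_T/4$. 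So the real content is the first inclusion $\supp|\nabla\chi_1| \subseteq \{|\xi^{z,T}| > 1/2\}$.

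For the first inclusion I would argue by contradiction: suppose there is a point $x_0 \in \supp|\nabla\chi_1|$ with $|\xi^{z,T}(x_0)| \leq 1/2$. By Lemma~\ref{lemma:noothers} we already know $\mathcal{V} = (\mathcal{H}^1\llcorner\supp|\nabla\chi_1|)\otimes(\delta_{\Tan})$, so $x_0 \in \supp\mu$ and we may invoke Lemma~\ref{lemma:hpAllard}: at scale $\varrho = \frac{1}{\tilde C}\frac{\tilde\varepsilon^2}{2\pi(\Lambda+1)}r_T$ the hypotheses of Allard's theorem hold at $x_0$ with reference plane $G = \Tan_{P_{\bar I^{z,T}}(x_0)}\bar I^{z,T}$ (this is the case $x_0 \in \{|\xi^{z,T}|\leq 1/2\}$). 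Allard's theorem (Theorem~\ref{theo:Allard}) then yields that $\supp\mu \cap B_{\gamma\varrho}(x_0)$ is a $C^{1,\beta}$ graph of small Lipschitz constant (controlled by $2C_{Allard}\tilde\varepsilon = \tan\tilde\alpha$) over $G$ through $x_0$. The strategy is now to use the coercivity of the error functional to force a \emph{local volume mismatch}: near $x_0$ the graph of $\supp|\nabla\chi_1|$ separates $\{\chi_1=1\}$ from $\{\chi_1=0\}$, but since $x_0$ is at signed distance with $|\sdist_{\bar I^{z,T}}(x_0)| \geq r_T/8$ from $\bar I^{z,T}$ (because $|\xi^{z,T}(x_0)|\leq 1/2$ forces $\eta(\sdist/r_T)\leq 1/2$, hence $|\sdist_{\bar I^{z,T}}(x_0)|\geq r_T/8$ by the properties of $\eta$), a full ball $B_{c\varrho}(x_0)$ with $c\varrho \ll r_T/8$ lies strictly on one side of $\bar I^{z,T}$, say inside $\{\bar\chi_1^{z,T}=1\}$. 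The Allard graph then forces a set of mass $\gtrsim \varrho$ of the region $B_{c\varrho}(x_0)$ on which $\chi_1 \neq \bar\chi_1^{z,T}$, and multiplying by $|\vartheta_1^{z,T}| \gtrsim 1/r_T$ there (using the properties of $\bar\vartheta$ and that we are at distance $\geq r_T/8$ from the interface, away from the zero set of $\bar\vartheta$) gives $E_{\mathrm{bulk}}[\chi|\bar\chi^{z,T}](t) \gtrsim \varrho/r_T \gtrsim \tilde\varepsilon^2/(\Lambda+1)$. Choosing $\delta \ll_{\Lambda,\tilde\varepsilon,\ldots} 1$ in~\eqref{hp:regularitygoodtimes} smaller than this threshold contradicts $E[\mathcal{V},\chi|\bar\chi^{z,T}](t)\leq \delta r_T(t)$.

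The main obstacle is making the volume-mismatch lower bound quantitative and geometrically robust: one needs to ensure that the $C^{1,\beta}$ graph produced by Allard genuinely partitions a fixed fraction of $B_{c\varrho}(x_0)$ into the two phases (i.e.\ that $\supp|\nabla\chi_1|$ does not merely ``touch'' the ball but actually crosses it, which follows from $x_0 \in \supp|\nabla\chi_1|$ together with the graph structure and the constancy of $\chi_1$ off the interface), and to check that the cutoff-induced zeros of $\bar\vartheta$ do not interfere — which is precisely why the argument is run at a point with $|\sdist_{\bar I^{z,T}}(x_0)| \geq r_T/8$, safely inside the region where $|\bar\vartheta(s)| = |{-}s| \gtrsim 1$ and hence $|\vartheta_1^{z,T}| \gtrsim 1/r_T$. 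Once the constants are tracked (the dependence of $\varrho$ on $\Lambda$ and $\tilde\varepsilon$ is explicit from Lemma~\ref{lemma:hpAllard}), the contradiction is clean, and combining the two inclusions yields~\eqref{eq:xiinclusion}.
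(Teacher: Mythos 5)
Your argument is correct, but it closes the contradiction by a genuinely different mechanism than the paper. Both proofs coincide up to the midpoint: the second inclusion is read off from the cutoff $\eta$ in Construction~\ref{DefinitionCalibration}, and for the first one assumes $x_0\in\supp|\nabla\chi_1|\cap\{|\xi^{z,T}|\le 1/2\}$ and combines Lemma~\ref{lemma:noothers}, Lemma~\ref{lemma:hpAllard} and Theorem~\ref{theo:Allard} to represent $\supp\mu=\supp|\nabla\chi_1|$ inside $B_{\gamma\varrho}(x_0)$ as a graph of small slope. From there the paper stays purely interfacial: by Theorem~\ref{theo:Jordan} there is a Jordan--Lipschitz curve $J\subseteq\supp|\nabla\chi_1|$ through $x_0$, and since $|\xi^{z,T}|<3/4$ on $B_{\gamma\varrho}(x_0)$ one has $1-\n_{1,P}\cdot\xi^{z,T}\ge 1/4$ there, so $\mathcal{H}^1(J\cap B_{\gamma\varrho}(x_0))\lesssim E_{\mathrm{int}}[\chi|\bar\chi^{z,T}]\lesssim\delta r_T$; for $\delta\ll_{\tilde\varepsilon,\Lambda}1$ the closed curve $J$ is then trapped in $B_{\gamma\varrho/2}(x_0)$, contradicting the graph property. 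You instead use the bulk part of the error: $|\xi^{z,T}(x_0)|\le 1/2$ forces $|\sdist_{\bar I^{z,T}}(x_0)|\ge r_T/8$, so on a small ball $B_{c\varrho}(x_0)$ the weight satisfies $|\vartheta_1^{z,T}|\gtrsim 1/r_T$ and $\bar\chi_1^{z,T}$ is constant, while the Allard graph through $x_0$ splits the ball into two components on each of which $\chi_1$ is a.e.\ constant with \emph{different} values (otherwise $\nabla\chi_1$ would vanish near $x_0$, contradicting $x_0\in\supp|\nabla\chi_1|$); hence $\chi_1\neq\bar\chi_1^{z,T}$ on a fixed fraction of the ball and $E_{\mathrm{bulk}}$ is bounded below by a quantity of order $r_T$ (with constant depending on $\tilde\varepsilon,\Lambda,\tilde C$), contradicting \eqref{hp:regularitygoodtimes} for $\delta$ small. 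This is a valid alternative: it trades Theorem~\ref{theo:Jordan} and the tilt-excess/length control for the coercivity of $E_{\mathrm{bulk}}$ together with the phase-constancy argument, and is arguably more elementary here; the paper's route has the advantage of being the same Jordan-curve/graph-property scheme reused in Lemmas~\ref{lemma:noothers} and~\ref{lemma:weakgraph}.

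Two small corrections, neither of which harms the proof. First, the mismatch set is two-dimensional, so its measure scales like $\varrho^2$, not $\varrho$; your displayed bound $E_{\mathrm{bulk}}\gtrsim\varrho/r_T$ is dimensionally off, and the correct lower bound $\varrho^2/r_T\sim \tilde\varepsilon^4(\Lambda+1)^{-2}\tilde C^{-2}\,r_T$ still exceeds $\delta r_T$ once $\delta\ll_{\tilde\varepsilon,\Lambda,\tilde C}1$, which is an admissible dependence. Second, if $x_0$ lies outside the tubular neighbourhood of $\bar I^{z,T}$ the projection $P_{\bar I^{z,T}}(x_0)$ need not be defined, so in the regime $|\xi^{z,T}|\le 1/2$ one should take an arbitrary reference line (as in case (a) of the proof of Lemma~\ref{lemma:hpAllard}, and as the paper's proof does); this is immaterial for your argument, which only needs some small-slope graph through $x_0$ crossing the ball.
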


So far, we only argued that certain features of the weak solution
(contradicting the conclusions of Proposition~\ref{theo:graph}) are not present.
We now turn to the part of the argument guaranteeing in turn the existence of
a subset of the interface satisfying the required graph representation.

\begin{lemma}[Construction of a graph candidate]\label{lemma:candidategraph}
	There exists a Jordan-Lipschitz curve $J \subseteq \supp |\nabla \chi_1| \subseteq  \{|\xi^{z,T}| > 1/2\}$ such that $J$ can be considered as a graph over $\bar I^{z,T}$.
	In particular, 	there exists a height function $
	h\colon \bar{I}^{z,T} \rightarrow [- r_T/4, r_T/4]$
	such that 
	\begin{align} \label{eq:Jgraph}
		J= \{x \in \bar{I}^{z,T} : x + h(x) \n_{\bar{I}^{z,T}}(x)\}. 
	\end{align}
\end{lemma}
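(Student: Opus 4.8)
The plan is to realise~$J$ and the height function~$h$ by viewing a suitable connected piece of the reduced boundary $\partial^*\{\chi_1(\cdot,t){=}1\}$ as a small-gradient graph over $\bar I^{z,T}$ through the nearest-point projection $P_{\bar I^{z,T}}$, with~$h$ read off from $\sdist_{\bar I^{z,T}}$. Throughout one works with $\mu_t = \mathcal{H}^1\llcorner\supp|\nabla\chi_1(\cdot,t)|$ of unit density (Lemma~\ref{lemma:noothers}) and uses that $\supp|\nabla\chi_1|$ lies in the tubular neighbourhood $\{\dist(\cdot,\bar I^{z,T}) < r_T/4\}$ (Lemma~\ref{lemma:noaway}), where $P_{\bar I^{z,T}}$ and $\sdist_{\bar I^{z,T}}$ are smooth and every point is written uniquely as $x + \ell\,\n_{\bar I^{z,T}}(x)$. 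First I would fix $x_0\in\supp|\nabla\chi_1|$ and feed the estimates of Lemma~\ref{lemma:hpAllard} into Allard's theorem (Theorem~\ref{theo:Allard}): $\supp|\nabla\chi_1|\cap B_{\gamma\varrho}(x_0)$ is a connected $C^{1,\gamma}$-graph over $\Tan_{x_0}\supp|\nabla\chi_1|$ whose unit normal makes angle at most $\tilde\alpha$ (with $\tan\tilde\alpha=2C_{Allard}\tilde\varepsilon$) with $\n_{\bar I^{z,T}}(P_{\bar I^{z,T}}(x_0))$. Since this ball sits inside the tubular neighbourhood, the normal fibres of $P_{\bar I^{z,T}}$ are transversal to that graph, and invoking~\eqref{eq:boundsp}--\eqref{eq:boundP} one obtains that $P_{\bar I^{z,T}}$ is injective on $\supp|\nabla\chi_1|\cap B_{\gamma\varrho}(x_0)$ with image an open arc of $\bar I^{z,T}$ of length $\geq\tfrac12\gamma\varrho\cos\tilde\alpha$; that is, $\supp|\nabla\chi_1|$ is everywhere locally a graph over $\bar I^{z,T}$ with small gradient.

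Next I would globalise. By the decomposition theorem for planar reduced boundaries (Theorem~\ref{theo:Jordan}), $\partial^*\{\chi_1{=}1\}$ is, up to an $\mathcal{H}^1$-null set, a countable disjoint union of rectifiable Jordan--Lipschitz curves; together with the local $C^{1,\gamma}$ graph structure this upgrades to: $\supp|\nabla\chi_1|$ is a finite disjoint union of $C^{1,\gamma}$ Jordan curves, finiteness coming from compactness and the uniform lower bound $\tfrac12\gamma\varrho\cos\tilde\alpha$ on the length of each local arc. Let $J$ be the curve through $x_0$. By the first step $P_{\bar I^{z,T}}|_J\colon J\to\bar I^{z,T}$ is a local homeomorphism, hence (as $J$ is compact and $\bar I^{z,T}$ connected) a covering map of finite degree $n\geq 1$, i.e.\ the winding number of the circle $J$ over the circle $\bar I^{z,T}$.

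The crux, and the step I expect to be the main obstacle, is to rule out $n\geq 2$. If $n\geq 2$, then over every $p\in\bar I^{z,T}$ the normal fibre meets $J$ in at least two points; any two points of $\supp|\nabla\chi_1|$ on a common fibre are at distance $\gtrsim\varrho$, for otherwise they lie in a common ball $B_\varrho(\cdot)$ on which, by the first step, $\supp|\nabla\chi_1|$ is a graph over $\bar I^{z,T}$ and so cannot carry two points of the same fibre. Consequently, along the fibre over every $p$, the functions $\chi_1$ and $\bar\chi_1^{z,T}$ disagree on an interval of length $\gtrsim\varrho$ (here one also uses the smallness of $E_{\mathrm{bulk}}$ to pin $\chi_1$ down away from the interface, where $\vartheta_1^{z,T}$ has modulus $\tfrac1{r_T}$). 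Passing to tubular coordinates and recalling $|\vartheta_1^{z,T}|\sim|\sdist_{\bar I^{z,T}}|/r_T^2$ near $\bar I^{z,T}$, this forces $E_{\mathrm{bulk}}[\chi|\bar\chi^{z,T}](t)\gtrsim\varrho^2/r_T$, which exceeds $c\,r_T$ for a constant $c=c(\Lambda,C,C',C_\zeta)>0$ (since $\varrho$ is a fixed fraction of $r_T$ depending on these constants); this contradicts $E[\mathcal{V},\chi|\bar\chi^{z,T}](t)\leq\delta r_T$ as soon as $\delta<c$, which is compatible with the quantifier structure of Proposition~\ref{theo:graph}. Hence $n=1$.

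Finally, with $n=1$ the map $P_{\bar I^{z,T}}|_J$ is a homeomorphism onto $\bar I^{z,T}$, and I would define $h(x):=\sdist_{\bar I^{z,T}}\big((P_{\bar I^{z,T}}|_J)^{-1}(x)\big)$ for $x\in\bar I^{z,T}$: by Lemma~\ref{lemma:noaway} it takes values in $[-r_T/4,r_T/4]$, by the first step it is locally Lipschitz, and by the uniqueness of the representation $y = P_{\bar I^{z,T}}(y)+\sdist_{\bar I^{z,T}}(y)\,\n_{\bar I^{z,T}}(P_{\bar I^{z,T}}(y))$ its graph is exactly $J$, so that $J=\{x+h(x)\n_{\bar I^{z,T}}(x):x\in\bar I^{z,T}\}\subseteq\supp|\nabla\chi_1|\subseteq\{|\xi^{z,T}|>1/2\}$. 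Apart from the degree argument, the other delicate points are the passage from the Allard graph over $\Tan_{x_0}\supp|\nabla\chi_1|$ to a genuine, non-folding graph over $\bar I^{z,T}$ — which is exactly what~\eqref{eq:boundP} is tailored for — and reconciling the $\mathcal{H}^1$-a.e.\ Jordan decomposition with the everywhere-valid $C^{1,\gamma}$ structure so that~$J$ is honestly a closed curve rather than a curve modulo a null set.
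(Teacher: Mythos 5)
Your construction follows the paper's proof in most of its essentials: Allard regularity fed by Lemma~\ref{lemma:hpAllard}, the decomposition of Theorem~\ref{theo:Jordan} to select a Jordan--Lipschitz curve $J$ through a point of $\supp|\nabla\chi_1|$, the transversality bounds \eqref{eq:boundsp}--\eqref{eq:boundP} to convert graphs over tangent lines into a local graph over $\bar I^{z,T}$, and the nearest-point projection together with $\sdist_{\bar I^{z,T}}$ to define $h$. Where you genuinely deviate is the globalization: you turn $P_{\bar I^{z,T}}|_J$ into a covering map of the circle and exclude degree $n\geq 2$ by a quantitative argument (doubly covered fibers force a disagreement interval of length $\gtrsim\varrho$ on a.e.\ normal fiber, hence $E_{\mathrm{bulk}}[\chi|\bar\chi^{z,T}]\gtrsim \varrho^2/r_T\sim_{\Lambda,C,C',C_\zeta} r_T$, contradicting $E\leq\delta r_T$), whereas the paper iterates the local graph representation finitely many times around $\bar I^{z,T}$ using \eqref{eq:boundP1} (the ``wriggling'' argument) and then obtains $h$ from compactness of $\bar I^{z,T}$ and the inverse function theorem applied to the projection \eqref{eq:upsilon}. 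Your degree/covering route is a legitimate, arguably more transparent way to pin down single-valuedness, and its error-based exclusion of multiple sheets is in the same spirit as the folding control of Lemma~\ref{lemma:error} that the paper uses elsewhere.

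There is, however, one genuine gap: you start by fixing $x_0\in\supp|\nabla\chi_1|$ without ever showing that $\supp|\nabla\chi_1|$ is nonempty. Since the lemma asserts the \emph{existence} of a curve $J\subseteq\supp|\nabla\chi_1|$ that is a graph over all of $\bar I^{z,T}$, nonemptiness of the weak interface is part of what has to be proved: a priori the phase $\{\chi_1=1\}$ could already be (essentially) empty at time $t$, in which case there is no $x_0$, no $J$, and your covering-map machinery never gets off the ground. The paper devotes Step~1 of its proof precisely to this point: assuming $\mathcal{H}^1(\supp|\nabla\chi_1|)<\tfrac12\mathcal{H}^1(\bar I^{z,T})$, the isoperimetric inequality bounds the volume of $\{\chi_1=1\}$, whence $\|\chi_1-\bar\chi_1^{z,T}\|_{L^1}\gtrsim r_T^2$, while the slicing estimate $\frac{1}{r_T^3}\|\chi_1-\bar\chi_1^{z,T}\|_{L^1}^2\lesssim E_{\mathrm{bulk}}[\chi|\bar\chi^{z,T}]$ (see \eqref{eq:contradict}) contradicts the smallness assumption \eqref{hp:regularitygoodtimes} for $\delta\ll 1$; this yields the perimeter lower bound and in particular the starting point $x_0$. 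The fix is entirely within the tools you already invoke for the degree exclusion (it is the same type of $L^1$--versus--$E_{\mathrm{bulk}}$ estimate), but as written your argument lacks this step and hence does not establish the existence claim of the lemma.
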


We then show that the previously found candidate for the graph representation
in fact saturates the whole interface of the weak solution.

\begin{lemma}[Interface is a graph over $\bar I^{z,T}$]\label{lemma:weakgraph}
	It holds 
	\begin{align}
		\supp |\nabla \chi_1| = J, 
	\end{align}
	where $J$ is the Jordan-Lipschitz curve from Lemma \ref{lemma:candidategraph}. 
\end{lemma}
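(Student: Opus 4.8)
The plan is to show that the Jordan-Lipschitz curve $J$ from Lemma~\ref{lemma:candidategraph} already exhausts all of $\supp|\nabla\chi_1|$, arguing by contradiction. Suppose there exists a point $x_0 \in \supp|\nabla\chi_1| \setminus J$. By Lemma~\ref{lemma:noaway} we have $x_0 \in \{|\xi^{z,T}| > 1/2\} \subset \{\dist(\cdot,\bar I^{z,T}) \leq r_T/4\}$, so $x_0$ lies well inside the tubular neighborhood of $\bar I^{z,T}$ and has a well-defined nearest-point projection $\bar x_0 := P_{\bar I^{z,T}}(x_0)$. Moreover, by Lemma~\ref{lemma:hpAllard}, Allard's regularity theorem applies at $x_0$ at scale $\varrho$ with the tilt-excess bound relative to $G = \Tan_{\bar x_0}\bar I^{z,T}$ (note $x_0 \in \{|\xi^{z,T}| > 1/2\} \subset \{|\xi^{z,T}| \leq 1/2\}^c$ is not the relevant case here — rather, since $|\xi^{z,T}(x_0)| > 1/2$ means $|\sdist_{\bar I^{z,T}}(x_0)|/r_T$ is small, one is in the regime where the tangent plane of $\bar I^{z,T}$ at $\bar x_0$ is a legitimate comparison plane). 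Allard's theorem then gives that $\supp\mu$ near $x_0$ is a $C^{1,\gamma}$-graph over $G$ with small gradient, hence a connected curve passing through $x_0$ contained in $B_{\gamma\varrho/2}(x_0)$, and by the choice of constants in Lemma~\ref{lemma:hpAllard} (in particular~\eqref{eq:boundP}) this local piece is itself a graph over $\bar I^{z,T}$ near $\bar x_0$, with $C^0$-height bounded by $\frac14 \frac{r_T}{16\max\{C,C_\zeta\}}$ using~\eqref{eq:boundeps} and~\eqref{eq:boundsp}.

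The key tension is then the following. The curve $J$ already provides a full graph representation of \emph{some} piece of $\supp|\nabla\chi_1|$ over \emph{all} of $\bar I^{z,T}$; in particular there is a point $x_1 \in J$ with $P_{\bar I^{z,T}}(x_1) = \bar x_0$, namely $x_1 = \bar x_0 + h(\bar x_0)\n_{\bar I^{z,T}}(\bar x_0)$. Since $x_0 \notin J$ but $P_{\bar I^{z,T}}(x_0) = \bar x_0 = P_{\bar I^{z,T}}(x_1)$, we have $h(\bar x_0) \neq \sdist_{\bar I^{z,T}}(x_0)$, so $x_0$ and $x_1$ are two \emph{distinct} sheets of $\supp\mu$ lying over the same base point. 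But then the local graph from Allard's theorem at $x_0$ is disjoint from $J$ in a neighborhood of $x_0$ (the heights differ by at least some positive amount controlled by $|h(\bar x_0) - \sdist_{\bar I^{z,T}}(x_0)|$, and both graphs have small gradient), and combined with $J$ this would force $\mu(B_\varrho(\bar x_0 + s\,\n_{\bar I^{z,T}}(\bar x_0)))$ to contain at least \emph{two} graph sheets, contradicting the unit-density bound $\mu(B_\varrho(x_0)) \leq (1+\tilde\varepsilon)\mathrm{Vol}_1\varrho$ from Lemma~\ref{lemma:hpAllard} (which only allows room for a single sheet). Alternatively, and perhaps more cleanly: the extra sheet through $x_0$ must, being a closed curve or joining up with itself (by the Jordan-curve decomposition of $\partial^*\{\chi_1 = 1\}$ from~\cite{Ambrosio}), contribute additional interfacial length; using the coercivity of the error functional $E[\mathcal{V},\chi|\bar\chi^{z,T}]$ together with the smallness assumption~\eqref{hp:regularitygoodtimes}, one shows this additional length is incompatible with $E[\mathcal{V},\chi|\bar\chi^{z,T}](t) \leq \delta r_T(t)$ once $\delta$ is chosen small enough depending on $\Lambda, C, C', C_\zeta$.

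Concretely, the steps I would carry out in order are: (i) Assume $\supp|\nabla\chi_1| \supsetneq J$ and pick $x_0$ in the difference; reduce (via Lemma~\ref{lemma:noaway}) to $x_0$ being in the tubular neighborhood. (ii) Apply Allard at $x_0$ to obtain a local $C^{1,\gamma}$-graph $\Gamma$ through $x_0$ inside $B_{\gamma\varrho/2}(x_0)$ with small gradient; use~\eqref{eq:boundP} to conclude $\Gamma$ is a graph over $\bar I^{z,T}$, and~\eqref{eq:boundsp}, \eqref{eq:boundeps} to bound its height. (iii) Observe $\Gamma$ and $J$ are disjoint near $x_0$ (distinct sheets over $\bar x_0$). (iv) Run a continuation/connectedness argument: following $\Gamma$ and using the Jordan-curve structure of the reduced boundary, the connected component of $\supp|\nabla\chi_1|$ containing $x_0$ is a closed Lipschitz curve disjoint from $J$, hence contributes length $\gtrsim r_T$ (being a closed curve around, or within, the tube). (v) Conclude: this forces $\mu_t(\mathbb{R}^2) \geq \mathcal{H}^1(J) + c\,r_T$, but $E_{\mathrm{int}}[\mathcal{V},\chi|\bar\chi^{z,T}]$ controls $\mu_t(\mathbb{R}^2) - \mathcal{H}^1(\bar I^{z,T})$ from below by the excess mass, and the closeness of $J$ to $\bar I^{z,T}$ (height $\ll r_T$) plus~\eqref{hp:regularitygoodtimes} with $\delta$ small gives a contradiction. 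Hence $\supp|\nabla\chi_1| = J$.

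The main obstacle I anticipate is step (iv): carefully extracting from the $C^{1,\gamma}$ graph structure at \emph{each} point together with the Jordan-Lipschitz decomposition of~\cite{Ambrosio} that the ``extra'' part of $\supp|\nabla\chi_1|$ genuinely forms a closed curve (or otherwise a set of quantitatively non-negligible $\mathcal{H}^1$-measure within the tube) rather than, say, a tiny arc that could be absorbed. This is where the precise choice of scale $\varrho \sim \tilde\varepsilon^2 r_T/(\Lambda+1)$ and the matching of the local-graph balls along the curve matter, and it is essentially the quantitative version of the heuristic described after Lemma~\ref{lemma:hpAllard} about trapping undesired behavior of $\supp\mu$ in balls of radius $\varrho/2$ and deriving a contradiction with the graph property at radius $\varrho$.
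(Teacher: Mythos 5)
Your setup (contradiction point $x_0$, Lemma~\ref{lemma:noaway}, Allard via Lemma~\ref{lemma:hpAllard}, Jordan decomposition) matches the paper, but the core of your argument has a genuine gap, and it is exactly the one you flagged in step (iv). You claim the extra connected component of $\supp|\nabla\chi_1|$ through $x_0$ is a closed curve and therefore ``contributes length $\gtrsim r_T$ (being a closed curve around, or within, the tube)''. This is not justified and is false as a geometric statement: a closed Jordan--Lipschitz curve $J'$ inside the tubular neighborhood can a priori have diameter of order, say, $10\varrho \ll r_T$ and correspondingly small length, which is perfectly compatible with your mass/coercivity estimate in step (v), and also escapes the local Allard contradiction since a moderately large loop is locally a graph in every ball of radius $\gamma\varrho$. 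Your fallback argument (two sheets over the same base point violating the unit-density bound $\mu(B_\varrho(x_0)) \leq (1+\tilde\varepsilon)\mathrm{Vol}_1\varrho$) also fails in general: the sheet of $J$ at height $h(\bar x_0)$ and the extra sheet at height $\sdist_{\bar I^{z,T}}(x_0)$ may be separated by up to $\sim r_T/2 \gg \varrho$, so no single ball of radius $\varrho$ sees both, and the density bound at scale $\varrho$ is not contradicted.

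What closes this gap in the paper is the ingredient you never invoke: Lemma~\ref{lemma:error} (error control of folding). Since $J$ is already a graph over \emph{all} of $\bar I^{z,T}$ (Lemma~\ref{lemma:candidategraph}), every point of $B_\varrho(x_0)\cap J'$ lies on a slice $\{x:\mathcal{H}^0(\partial^*(\Omega_1)_{P_{G_{x_0}}(x)})>1\}$, so Lemma~\ref{lemma:error} gives $\mathcal{H}^1(B_\varrho(x_0)\cap J') \lesssim E_{\mathrm{int}}[\chi|\bar\chi^{z,T}] \lesssim \delta r_T$. Choosing $\delta \ll_{\tilde\varepsilon,\Lambda}1$ makes this much smaller than $\gamma\varrho/2$, so the closed curve $J'$ through $x_0$ cannot exit $B_{\gamma\varrho/2}(x_0)$ and is trapped there; this trapped closed curve then contradicts the Allard graph representation of $\supp\mu=\supp|\nabla\chi_1|$ within $B_{\gamma\varrho}(x_0)$ (a Jordan curve cannot be contained in a graph over a line). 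Without this quantitative folding control, neither a length lower bound $\gtrsim r_T$ nor a density argument rules out small-but-not-tiny extra loops, so as written the proposal does not prove the lemma.
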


Finally, we show that the associated height funcion $h$ over $\bar I^{z,T}$ 
satisfies the bounds \eqref{eq:smallC0norm}--\eqref{eq:smallC1norm} from the 
conclusions of Proposition \ref{theo:graph}. 

\begin{lemma}[Height function estimates]\label{lemma:heightestimates}
	The height function 
	$	h: \bar{I}^{z,T} \mapsto [- r_T/4, r_T/4]$
satisfies the regularity \eqref{eq:graphReg}, i.e.,
$$ 
h\in H^2(\bar{I}^{z,T}) $$ and 
the bounds \emph{\eqref{eq:smallC0norm}--\eqref{eq:smallC1norm}}, namely
	\begin{align}
		\label{eq:smallC0norm2}
		\|h\|_{L^\infty(\bar{I}^{z,T})} &\leq \frac{r_T}{16 \max\{C,C_\zeta\}},
		\\
		\label{eq:smallC1norm2}
		\|h'\|_{L^\infty(\bar{I}^{z,T}(t))} &\leq \frac{1}{C'}.
	\end{align}
\end{lemma}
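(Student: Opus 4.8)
The plan is to read off the three assertions from the structural facts already assembled in the proof of Proposition~\ref{theo:graph}: by Lemmas~\ref{lemma:noothers}, \ref{lemma:noaway}, \ref{lemma:candidategraph} and~\ref{lemma:weakgraph} the only interface is $\supp|\nabla\chi_1|=J$, a Jordan curve which is the graph of $h$ over the smooth simple closed curve $\bar{I}^{z,T}$ with $|h|\le r_T/4$; the associated varifold is the unit-density varifold $(\mathcal{H}^1\llcorner J)\otimes(\delta_{\Tan_x J})_{x\in J}$; and Allard's regularity theory applies at every point of $J$ at the scale $\varrho\ll r_T$ in the quantitative form of Lemma~\ref{lemma:hpAllard}. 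The recurring device is this: for $x_0=\bar{x}_0+h(\bar{x}_0)\n_{\bar{I}^{z,T}}(\bar{x}_0)\in J$, the bound $|h|\le r_T/4$ lies below the tubular-neighbourhood width $r_T/2$, so $P_{\bar{I}^{z,T}}(x_0)=\bar{x}_0$ and the affine line $G:=\Tan_{\bar{x}_0}\bar{I}^{z,T}$ (through $\bar{x}_0$, with unit normal $\n_{\bar{I}^{z,T}}(\bar{x}_0)$) is an admissible reference plane for Allard's theorem at $x_0$ by Lemma~\ref{lemma:hpAllard}; Allard then produces a $C^{1,\alpha}$ graph function $u_{x_0}$ over $G$ with $J\cap B_{\gamma\varrho/2}(x_0)$ contained in its graph, $\sup|u_{x_0}|\le C_{Allard}\tilde{\varepsilon}\,\varrho$ and $\sup|\nabla u_{x_0}|\le 2C_{Allard}\tilde{\varepsilon}=\tan\tilde{\alpha}$.

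For the $L^\infty$-bound~\eqref{eq:smallC0norm2}: since $x_0\in J$ lies on the graph of $u_{x_0}$ over $G$ and $\dist(x_0,G)=|(x_0-\bar{x}_0)\cdot\n_{\bar{I}^{z,T}}(\bar{x}_0)|=|h(\bar{x}_0)|$, one gets $|h(\bar{x}_0)|\le\sup|u_{x_0}|\le C_{Allard}\tilde{\varepsilon}\,\varrho\le C_{Allard}\tilde{\varepsilon}\,r_T$; the second bound in~\eqref{eq:boundeps} then yields $C_{Allard}\tilde{\varepsilon}\le\tfrac18\cdot\tfrac{1}{16\max\{C,C_\zeta\}}$, and since $\bar{x}_0$ ranges over all of $\bar{I}^{z,T}$ as $x_0$ ranges over $J$, this gives~\eqref{eq:smallC0norm2} (the estimate~\eqref{eq:boundsp} enters here to guarantee that the graph-over-$G$ representation supplied by Allard is consistent with the graph-over-$\bar{I}^{z,T}$ representation of Lemma~\ref{lemma:weakgraph} over a tangential window of size $\gamma\varrho$, so that the pointwise estimates patch). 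For the $L^\infty$-bound~\eqref{eq:smallC1norm2}: differentiating the parametrisation $\ell\mapsto\bar\gamma(\ell)+h(\ell)\n_{\bar{I}^{z,T}}(\ell)$ of $J$ along arc length of $\bar{I}^{z,T}$ and using $\nabla^{\mathrm{tan}}\n_{\bar{I}^{z,T}}=-H_{\bar{I}^{z,T}}\tau_{\bar{I}^{z,T}}\otimes\tau_{\bar{I}^{z,T}}$ gives $\n_J\parallel(1-hH_{\bar{I}^{z,T}})\n_{\bar{I}^{z,T}}-h'\tau_{\bar{I}^{z,T}}$ at $x_0$, hence $\tan\angle\big(\n_J(x_0),\n_{\bar{I}^{z,T}}(\bar{x}_0)\big)=|h'(\bar{x}_0)|/|1-h(\bar{x}_0)H_{\bar{I}^{z,T}}(\bar{x}_0)|$. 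By~\eqref{eq:asymptotically_circular3} one has $|H_{\bar{I}^{z,T}}|\le 2/r_T$, so $|hH_{\bar{I}^{z,T}}|\le\tfrac12$ and $|1-hH_{\bar{I}^{z,T}}|\le\tfrac32$; since Allard's gradient bound gives $\angle(\n_J(x_0),\n_{\bar{I}^{z,T}}(\bar{x}_0))\le\tilde{\alpha}$ with $\tan\tilde{\alpha}=2C_{Allard}\tilde{\varepsilon}$, the first bound in~\eqref{eq:boundeps} ($\tilde{\varepsilon}\le\tfrac23\cdot\tfrac{1}{2C_{Allard}C'}$) yields $|h'(\bar{x}_0)|\le\tfrac32\cdot 2C_{Allard}\tilde{\varepsilon}\le\tfrac1{C'}$.

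For the $H^2$-regularity~\eqref{eq:graphReg}: Allard already delivers $h\in C^{1,\alpha}(\bar{I}^{z,T})$, and since the varifold is the unit-density varifold carried by $J$, its generalised mean curvature $H_\mu\in L^2(\mu_t)=L^2(\mathcal{H}^1\llcorner J)$ agrees $\mathcal{H}^1$-a.e.\ on $J$ with the distributional curvature of the $C^1$ curve $J$. Expressing the curvature of the graph $J=\{\bar{x}+h(\bar{x})\n_{\bar{I}^{z,T}}(\bar{x})\colon\bar{x}\in\bar{I}^{z,T}\}$ by the explicit formula (the analogue of~\eqref{eq:heuristics6} with the circle replaced by $\bar{I}^{z,T}$), $H_J$ is an affine function of $h''$ with leading coefficient $\big((1-hH_{\bar{I}^{z,T}})^2+(h')^2\big)^{-3/2}$, bounded away from $0$ by the previous step and $|H_{\bar{I}^{z,T}}|\le 2/r_T$, plus a remainder which is a smooth function of $h$, $h'$, $H_{\bar{I}^{z,T}}$, $H_{\bar{I}^{z,T}}'$ and hence bounded in $L^\infty(\bar{I}^{z,T})$ by Definition~\ref{DefinitionShrinkingCircle}. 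Since $H_J\in L^2(\bar{I}^{z,T})$ and $\bar{I}^{z,T}$ is compact, solving for $h''$ gives $h''\in L^2(\bar{I}^{z,T})$; together with $h,h'\in L^\infty(\bar{I}^{z,T})\subset L^2(\bar{I}^{z,T})$ this is~\eqref{eq:graphReg}.

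The main difficulty is bookkeeping rather than conceptual: one has to reconcile Allard's conclusion --- a graph over a \emph{straight} reference line $G$ inside a small Euclidean ball --- with the desired conclusion --- a single height function $h$ over the \emph{curved} reference $\bar{I}^{z,T}$ on the whole tubular neighbourhood --- controlling the discrepancy due to the bending of $\bar{I}^{z,T}$ over a window of tangential size $\gamma\varrho$ (this is precisely what~\eqref{eq:boundsp} and the scale $\varrho\ll r_T$ buy) and patching the local graphs consistently, all while tracking that the smallness of $\tilde{\varepsilon}$ stipulated in~\eqref{eq:boundeps} survives these changes of coordinates so that the final constants $\tfrac1{16\max\{C,C_\zeta\}}$ and $\tfrac1{C'}$ come out as stated. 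The only genuine analytic input beyond this is the standard fact, used for~\eqref{eq:graphReg}, that a $C^1$ curve whose first variation is represented by an $L^2$ vector field belongs to $W^{2,2}$.
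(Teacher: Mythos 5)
Your treatments of the gradient bound \eqref{eq:smallC1norm2} (comparing the Allard slope $|u'|$ with $|h'|/|1-hH_{\bar I^{z,T}}|$ via the common normal and invoking the first bound in \eqref{eq:boundeps}) and of the $H^2$ regularity (unit density, first variation in $L^2$, inverting the curvature formula for the graph) are sound and essentially match the paper, which carries out the $H^2$ step by testing \eqref{eq:Hvarifold} with $B=g(P_{\bar I^{z,T}})\,\n_{\bar I^{z,T}}(P_{\bar I^{z,T}}(x_0))$ to get $u'/\sqrt{1+(u')^2}\in H^1$ locally and then transfers to $h$ through the chart $\iota$.

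However, your proof of the sup bound \eqref{eq:smallC0norm2} has a genuine gap. Allard's theorem, in the form verified by Lemma~\ref{lemma:hpAllard}, produces a graph over the affine line $x_0+G$ \emph{through the point $x_0\in\supp\mu$}, with $u(x_0)=0$; the estimate $\sup|u|\lesssim \tilde\varepsilon\varrho$ therefore controls the oscillation of $J$ around that line, not the distance of $x_0$ to $\bar I^{z,T}$. Your chain $|h(\bar x_0)|=\dist(x_0,G)\leq\sup|u_{x_0}|$ silently moves the reference line to pass through $\bar x_0=P_{\bar I^{z,T}}(x_0)$, which Allard does not provide: for a configuration in which $J$ is a circle concentric with $\bar I^{z,T}$ but offset in the normal direction by, say, $r_T/8$, one has $u\equiv 0$ over the tangent line through $x_0$ and all Allard hypotheses hold, yet $|h|\equiv r_T/8$, so the claimed inequality fails. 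In fact \eqref{eq:smallC0norm2} cannot follow from Allard plus Definition~\ref{DefinitionShrinkingCircle} alone; the smallness assumption \eqref{hp:regularitygoodtimes} must enter. This is exactly what the paper's argument supplies: assuming $\|h\|_{L^\infty}>\frac{r_T}{16\max\{C,C_\zeta\}}$, the Allard graph over a tangential window of size $\gamma\varrho$ together with \eqref{eq:boundsp} traps a strip of the symmetric difference of the phases of width $\gtrsim \frac{r_T}{16\max\{C,C_\zeta\}}$ and length $\gtrsim\gamma\varrho\cos\tilde\alpha$, so $\|\chi_1-\bar\chi^{z,T}\|_{L^1}\gtrsim \gamma\varrho\,\frac{r_T}{16\max\{C,C_\zeta\}}$, which contradicts the coercivity estimate $\frac{1}{r_T^3}\|\chi_1-\bar\chi^{z,T}\|_{L^1}^2\lesssim E_{\mathrm{bulk}}[\chi|\bar\chi^{z,T}]\leq\delta r_T$ once $\delta\ll_{\tilde\varepsilon,\Lambda,C,C_\zeta}1$. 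Your proposal never uses $E_{\mathrm{bulk}}$ (or any $L^1$ control) for this step, so the missing ingredient is precisely this contradiction argument via the bulk error.
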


Recalling the claims of Proposition~\ref{theo:graph}, it is immediate
that its proof simply follows now from a combination of the previous lemmas,
so that at this stage it remains to provide a proof for all the intermediate results
of this subsection (which is done in the next subsection).

As a technical ingredient for some of the previous auxiliary results, 
a specific coercivity property of the error functional is exploited.
More precisely, we will use the fact that our interface 
error $E_\mathrm{int}$ (cf. \eqref{varinterfaceerror}) controls the folding of the interface
in the following sense.

\begin{lemma}[Error control]\label{lemma:error}
	Let $\Omega_1\subseteq \mathbb{R}^2$ be a set of finite perimeter such that $\partial^\ast \Omega_1 = \supp |\nabla \chi_1|$.
	Fix $x_0 \in \{\dist(\cdot, \bar I^{z,T}) \leq r_T/4\}$ and consider $\varrho \ll r_T/4$ such that, for all $x \in B_{\varrho}(x_0)$, it holds $|\xi^{z,T}(x)- \n_{\bar I^{z,T}} (P_{\bar I^{z,T}} (x_0))| \leq 1/4$. 
	Define $G_{x_0}:=x_0 +  \Tan_{P_{\bar I^{z,T}} (x_0)} \bar I^{z,T} $ and denote by $P_{G_{x_0}}$ the nearest point projection onto~$G_{x_0}$.
	For $x \in \mathbb{R}^2$, we denote by  $(\Omega_1)_{P_{G_{x_0}} (x)}$  the one-dimensional slice 
	\[
	\Omega_1 \cap \{ P_{G_{x_0}} (x) + y\n_{\bar I^{z,T}}(P_{\bar I^{z,T}} (x_0))  : |y | \leq r_T/2\}.
	\]
	Then, there exists a constant $C_{\mathrm{err}}>0$ such that 
	\begin{align}
		\mathcal{H}^1\left( 
		B_{\varrho}(x_0) \cap
		\partial^\ast \Omega_1
		\cap 
		\{x: \mathcal{H}^0(\partial^\ast(\Omega_1)_{P_{G_{x_0}} (x)} ) >1\}
		\right)
		\leq C_{\mathrm{err}} E_\mathrm{int}[\chi|\bar \chi^{z,T}].
	\end{align}
\end{lemma}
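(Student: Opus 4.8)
The plan is to decompose the interface error $E_\mathrm{int}[\chi|\bar\chi^{z,T}]$ into its two nonnegative constituents (the interface-tilt term measuring $1 - \n_{i,j}\cdot\xi^{z,T}_{i,j}$ and the varifold-multiplicity term from \eqref{varinterfaceerror}) and to extract from the first one a pointwise lower bound on the "wrong-orientation" part of $\supp|\nabla\chi_1|$. Concretely, since we have localized to a ball $B_\varrho(x_0)$ on which $\xi^{z,T}$ is nearly constant and nearly equal to the fixed unit vector $\nu_0 := \n_{\bar I^{z,T}}(P_{\bar I^{z,T}}(x_0))$, on $B_\varrho(x_0)\cap\partial^*\Omega_1$ the integrand $1 - \n_{1,P}\cdot\xi^{z,T}_{1,P}$ is comparable to $1 - \n_{\Omega_1}\cdot\nu_0$ (up to the factor $|\xi^{z,T}_{1,P}|$, which is $\geq$ some positive constant on $\{|\xi^{z,T}|>1/2\}$; off that set we instead use $|\xi^{z,T}_{1,P}|$ small together with the multiplicity term, but Lemma~\ref{lemma:noaway} lets us restrict attention to the relevant region). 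Thus $E_\mathrm{int}[\chi|\bar\chi^{z,T}]$ controls $\int_{B_\varrho(x_0)\cap\partial^*\Omega_1}(1-\n_{\Omega_1}\cdot\nu_0)\,d\mathcal H^1$ from below, up to a universal constant.

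The key geometric observation is that a point $x$ of $\partial^*\Omega_1$ at which the one-dimensional slice $(\Omega_1)_{P_{G_{x_0}}(x)}$ has $\mathcal H^0(\partial^*(\cdot))>1$ forces, on that slice, the vertical coordinate (in the $\nu_0$-direction) of $\chi_1$ to be non-monotone; hence along that slice there must be at least one point of $\partial^*\Omega_1$ whose measure-theoretic normal points "the wrong way," i.e.\ with $\n_{\Omega_1}\cdot\nu_0 \leq 0$, and there at least one point with $\n_{\Omega_1}\cdot\nu_0 \geq 0$. At such wrong-way points $1 - \n_{\Omega_1}\cdot\nu_0 \geq 1$. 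First I would make this precise via the slicing theory for sets of finite perimeter (Vol'pert / Ambrosio-Fusco-Pallara): for $\mathcal H^1$-a.e.\ line $\ell$ in the $\nu_0$-direction through $B_\varrho(x_0)$, the slice $\Omega_1\cap\ell$ is a one-dimensional set of finite perimeter, $\partial^*(\Omega_1\cap\ell) = (\partial^*\Omega_1)\cap\ell$ up to $\mathcal H^0$-null sets, and the sign of $\n_{\Omega_1}\cdot\nu_0$ at a boundary point of the slice records whether $\Omega_1$ is entered or exited; if the slice has $\geq 2$ boundary points, at least one of them is an "exit going against $\nu_0$" point with $\n_{\Omega_1}\cdot\nu_0 \leq 0$. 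The set in the statement is (up to $\mathcal H^1$-null sets) contained in the coarea preimage of such lines, and by the coarea formula applied to the orthogonal projection $P_{G_{x_0}}$ restricted to $\partial^*\Omega_1$ (whose Jacobian equals $|\n_{\Omega_1}\cdot\tau|$ where $\tau$ is the unit tangent to $G_{x_0}$, which is bounded) one bounds its $\mathcal H^1$-measure by a constant times the $\mathcal H^1$-measure of the wrong-way boundary points, which in turn is $\leq \int_{B_\varrho(x_0)\cap\partial^*\Omega_1}(1-\n_{\Omega_1}\cdot\nu_0)\,d\mathcal H^1 \leq C\,E_\mathrm{int}[\chi|\bar\chi^{z,T}]$.

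More carefully, I would organize the argument in three steps. \textbf{Step 1:} Fix the frozen normal $\nu_0$ and tangent $\tau_0 := J^{-1}\nu_0$, and let $\pi_{\tau_0}$ denote orthogonal projection onto the line $G_{x_0}$. For a point $p\in G_{x_0}\cap(\text{relevant range})$ let $S_p := \Omega_1 \cap \{p + y\nu_0 : |y|\le r_T/2\}$ be the slice. By Vol'pert's theorem, for $\mathcal H^1$-a.e.\ $p$, $S_p$ has finite perimeter in the line and its reduced boundary agrees with $\partial^*\Omega_1 \cap (\text{the line through }p)$; moreover at each such boundary point the one-dimensional inner normal $\pm 1$ equals $\sign(\n_{\Omega_1}\cdot\nu_0)$. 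If $\#\partial^*S_p \ge 2$, then going from $y=-r_T/2$ to $y=+r_T/2$ the indicator $\chi_1$ changes value at least twice, so at least one of those changes is from $1$ to $0$ as $y$ increases, which happens exactly at a point $x$ with $\n_{\Omega_1}(x)\cdot\nu_0 \le 0$ (here $\n_{\Omega_1}$ is the measure-theoretic outer normal; adjust sign conventions consistently with the paper's choice that $\n_{i,j}$ points from phase $i$ to phase $j$). \textbf{Step 2:} Therefore the "bad projection set" $B := B_\varrho(x_0)\cap\partial^*\Omega_1\cap\{x : \#\partial^*(\Omega_1)_{\pi_{\tau_0}(x)} > 1\}$ satisfies $\pi_{\tau_0}(B) \subseteq \pi_{\tau_0}(W)$ where $W := B_{\varrho}(x_0)\cap\partial^*\Omega_1\cap\{\n_{\Omega_1}\cdot\nu_0 \le 0\}$. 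Applying the area formula (coarea for the $1$-Lipschitz map $\pi_{\tau_0}|_{\partial^*\Omega_1}$, whose tangential Jacobian is $|\n_{\Omega_1}\cdot\nu_0^{\perp}| = |\tau\cdot\tau_0|$): since the undesired-orientation points have $\n_{\Omega_1}$ nearly anti-parallel to $\nu_0$ when also $|\xi^{z,T}|>1/2$... — here one must be a touch careful because a priori a bad point could have $\n_{\Omega_1}\cdot\nu_0 = 0$ exactly, where the projection is not injective; the standard fix is to bound $\mathcal H^1(B)$ directly rather than through $\pi_{\tau_0}$, using that every point of $B$ lies on a line containing at least one point of $W$, and that the lines are disjoint, giving $\mathcal H^1(B) \le \mathcal H^1(\{x : \text{line of }x\text{ meets }W\} \cap \partial^*\Omega_1)$, which by another coarea/Fubini bound is $\lesssim \varrho \cdot \mathcal H^0$-content of $W$ per line summed — cleanest is: $\mathcal H^1(B) \le C\,\mathcal H^1(W)$ via the graph/Lipschitz structure of $\partial^*\Omega_1$ from \cite{Ambrosio}. \textbf{Step 3:} On $W$ we have $1 - \n_{\Omega_1}\cdot\nu_0 \ge 1$, and on $B_\varrho(x_0)$ the closeness of $\xi^{z,T}$ to $\nu_0$ (hypothesis $|\xi^{z,T}(x)-\nu_0|\le 1/4$) gives $1 - \n_{1,P}\cdot\xi^{z,T}_{1,P} \ge c\,(1 - \n_{\Omega_1}\cdot\nu_0) - C|\xi^{z,T} - \nu_0|$ pointwise; integrating and absorbing yields $\mathcal H^1(W) \le C\int_{\partial^*\Omega_1}(1 - \n_{1,P}\cdot\xi^{z,T}_{1,P})\,d\mathcal H^1 \le C\,E_\mathrm{int}[\chi|\bar\chi^{z,T}]$, using that this integrand is exactly (twice) the BV interface error and is dominated by the full varifold-BV interface error by \eqref{varinterfaceerror}. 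Combining Steps 2 and 3 gives the claim with $C_\mathrm{err} := C$.

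The main obstacle I anticipate is \textbf{Step 2}: turning the "at least one wrong-orientation boundary point per bad slice" statement into a clean $\mathcal H^1(B) \le C\,\mathcal H^1(W)$ bound. The subtlety is that the orthogonal projection $\pi_{\tau_0}$ restricted to $\partial^*\Omega_1$ is not injective and can degenerate exactly where $\n_{\Omega_1}\perp\tau_0$, which is precisely near the wrong-orientation set; so a naive Jacobian bound fails. The robust fix is to use the decomposition of $\partial^*\Omega_1$ into countably many rectifiable Jordan–Lipschitz curves (Theorem~\ref{theo:Jordan}, from \cite{Ambrosio}): within $B_\varrho(x_0)$, at the chosen small scale $\varrho \ll r_T$ one may parametrize the relevant arcs, bound the number of arcs intersecting $B_\varrho(x_0)$ using the mass bound $\mu(B_\varrho(x_0)) \lesssim \varrho$ from Allard, and then compare arc-length of bad sub-arcs to arc-length of wrong-orientation sub-arcs using only the one-dimensional topology of each Jordan curve — on a Jordan curve, between any two consecutive "entering" crossings of a horizontal line there is an "exiting" crossing, and vice versa, so the bad set and the wrong-orientation set are interleaved along the curve and have comparable one-dimensional measure once one controls the total variation of the tangent angle, which is again handled by the tilt-excess bound $E_{\operatorname{tilt}} \le \tilde\varepsilon^2$. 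This interleaving-on-a-Jordan-curve argument, combined with the tilt-excess smallness, is the real content; everything else is bookkeeping with slicing and the coarea formula.
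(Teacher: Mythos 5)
Your overall skeleton (slice in the direction $\n_{x_0}:=\n_{\bar I^{z,T}}(P_{\bar I^{z,T}}(x_0))$, observe that every bad slice must contain a ``wrong-way'' boundary point with $\n_{\partial^*\Omega_1}\cdot\n_{x_0}\leq 0$, and control such points by $1-\n\cdot\xi^{z,T}\geq$ const using $|\xi^{z,T}-\n_{x_0}|\leq 1/4$) matches the paper. But your Step~2 contains a genuine gap: the intermediate claim $\mathcal{H}^1(B)\leq C\,\mathcal{H}^1(W)$, with $W$ the wrong-orientation set, is false in general. If the interface contains a long, nearly $\n_{x_0}$-parallel stretch inside $B_\varrho(x_0)$ (so $\n_{\partial^*\Omega_1}\cdot\n_{x_0}$ small but positive there), lying over a tiny projected interval on $G_{x_0}$, then every point of that stretch can belong to $B$ while each of the (few) slices through it carries only one wrong-way point; $\mathcal{H}^1(W)$ is then of the order of the projected width, which can be arbitrarily smaller than $\mathcal{H}^1(B)$. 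You sensed the problem (degeneration of the projection Jacobian where $\n\perp\tau_0$), but the fix you propose -- interleaving along the Jordan--Lipschitz curves combined with the Allard mass bound $\mu(B_\varrho(x_0))\lesssim\varrho$ and the tilt-excess bound $E_{\operatorname{tilt}}\leq\tilde\varepsilon^2$ -- is both too vague (interleaving of crossings is a per-slice counting statement and does not by itself yield comparability of $\mathcal{H}^1$-measures) and circular: those Allard hypotheses are exactly what Lemma~\ref{lemma:hpAllard} establishes \emph{using} Lemma~\ref{lemma:error}, so they are not available here (the same applies to your appeal to Lemma~\ref{lemma:noaway}).

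The missing idea is an elementary decomposition of the bad set by the size of $\n_{\partial^*\Omega_1}\cdot\n_{x_0}$ rather than by its sign: split $A_{x_0}=A^{(1)}_{x_0}\cup A^{(2)}_{x_0}$ according to $\n_{\partial^*\Omega_1}\cdot\n_{x_0}\geq 1/2$ or $<1/2$. On $A^{(2)}_{x_0}$ one has $\n_{\partial^*\Omega_1}\cdot\xi^{z,T}\leq 3/4$, hence $1-\n_{\partial^*\Omega_1}\cdot\xi^{z,T}\geq 1/4$, so $\mathcal{H}^1(A^{(2)}_{x_0})\lesssim E_\mathrm{int}[\chi|\bar\chi^{z,T}]$ directly (including the nearly $\n_{x_0}$-parallel pieces that break your comparison with $W$, and the interfaces $I_{1,j}$, $j\notin\{1,P\}$, via $|\xi^{z,T}_{1,j}|\leq 1/2$ -- no appeal to Lemma~\ref{lemma:noaway} needed). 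On $A^{(1)}_{x_0}$ the projection onto $G_{x_0}$ has tangential Jacobian $\geq 1/2$, so the coarea formula bounds $\mathcal{H}^1(A^{(1)}_{x_0})$ from above by twice the integral over $G_{x_0}$ of the number of $A^{(1)}_{x_0}$-points per slice; your Step~1 observation then pairs each such point with a wrong-way point on the same slice, which automatically lies in $A^{(2)}_{x_0}$ (since $\n\cdot\n_{x_0}\leq 0<1/2$), and a reverse coarea estimate (Jacobian $\leq 1$) converts the slice counts of $A^{(2)}_{x_0}$ back into $\mathcal{H}^1(A^{(2)}_{x_0})$. This closes the argument with purely elementary tools, which is essential because the lemma must be available \emph{before} any regularity of $\supp\mu$ is known.
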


\subsection{Proofs of the intermediate Lemmas}
We provide the proofs of the several intermediate results from
the previous subsection.

\begin{proof}[Proof of Lemma~\ref{lemma:hpAllard} (Applicability of Allard's regularity theory)]
Let $(\varepsilon,\gamma,C_{Allard})$ be the constants from Theorem~\ref{theo:Allard}.
	One may choose $\tilde \varepsilon \in (0, \varepsilon)$ so that the bounds in \eqref{eq:boundeps} are satisfied. 
	In addition, the bounds \eqref{eq:boundsp}--\eqref{eq:boundP} can be guaranteed by choosing $\tilde \varepsilon \ll \varepsilon$ and $\delta_{\mathrm{asymp}} \ll 1$ due to the uniform smoothness of the ball. 
	
	For the proof of the remaining assertions we distnguish between two cases:
	\begin{itemize}
		\item[(a)] $x_0 \in \supp \mu \cap \{|\xi^{z,T}| \leq 1/2\}$, i.e., points 
		located sufficiently far away from the interface~$\bar{I}^{z,T}$,
		 	\item[(b)] $x_0 \in \supp \mu \cap \{|\xi^{z,T}| > 1/2\}$, i.e., points 
		located sufficiently close to~$\bar{I}^{z,T}$.
	\end{itemize}

	\textit{Case (a):} Let $\tilde C>1$ and let $\varrho = \frac{1}{\tilde C}\frac{\tilde \varepsilon^2}{2 \pi (\Lambda +1) } r_T$. From the definition \eqref{boundgoodtimes} it follows that 
	\begin{align*} 
	\Big(	\int_{B_{\varrho}(x_0)} |H_\mu|^2 \,d\mu \Big) \varrho 
		< 	 \Lambda\frac{2\pi}{r_T} \varrho <  {\tilde \varepsilon}^2.
	\end{align*}
	We may further assume that $\tilde \varepsilon \in (0, \varepsilon)$ is small enough such that for $x_0 \in \{|\xi^{z,T}|\leq 1/2\}$ it holds $B_{\varrho}(x_0) \subseteq \{|\xi^{z,T}|\leq 3/4\}$. Hence, recalling \eqref{eq:multiplicity}, we have
	\begin{align*}
		\mu(B_{\varrho}(x_0)) &\leq \mu(B_{\varrho}(x_0) \cap \{\omega \leq 1/2 \})  + \mu(B_{\varrho}(x_0) \cap \{\omega =1 \}) \\
&	\leq 2 \int_{\mathbb{R}^2\cap  \{\omega \leq 1/2 \}} 1- \omega \, d \mu + 4 \int_{I_{1,P}} 1- \n_{P,1} \cdot \xi^{z,T}  \, d \mathcal{H}^1 \\
&\quad + 2 \sum_{i,j \notin \{1,P\} } \frac12 \int_{{I}_{i,j}} 1- \n_{i,j} \cdot \xi^{z,T}_{i,j} \, d \mathcal{H}^1\\
&\leq 4 E_\mathrm{int}[\mathcal{V}, \chi| \chi^{z,T}], 
	\end{align*}
	where we used the fact that $|\xi_{i,j}^{z,T}|\leq 1/2$ for $\{i,j\} \notin \{1,P\}$. It follows that
	\begin{align*}
	\frac{\mu(B_{\varrho}(x_0)) }{\mathrm{Vol}_1 \varrho } \leq 	\frac{8 \pi (\Lambda +1) \tilde C }{\mathrm{Vol}_1 \tilde \varepsilon^2 r_T } E_\mathrm{int}[\mathcal{V}, \chi| \chi^{z,T}]
	\end{align*}
	and that there exists $\delta \ll_{\tilde \varepsilon, \Lambda } 1 $ such that the assumption \eqref{hp:regularitygoodtimes} implies 
		\begin{align*}
		\frac{\mu(B_{\varrho}(x_0)) }{\mathrm{Vol}_1 \varrho } \leq 	1+ \tilde \varepsilon .
	\end{align*}
Similarly, one can prove that there exists $\delta \ll_{\tilde \varepsilon, \Lambda } 1 $ such that the assumption \eqref{hp:regularitygoodtimes} implies 
\begin{align*}
	E_{\operatorname{tilt}}[x_0, \varrho, \mathbb{R}\times \{0\}] \leq  C_\mathrm{tilt} \varrho^{-1} \mu (B_{\varrho}(x_0)) \leq \tilde \varepsilon^2
\end{align*}
for some constant $C_\mathrm{tilt}>0$.

	\textit{Case (b):} The estimate for the curvature term works as in case (a) as the argument does not rely on the assumption on $x_0 \in \supp \mu$. 
	
	By uniform smoothness of the ball, one may choose $\tilde \varepsilon \ll \varepsilon$, $\delta_{\mathrm{asymp}}\ll 1$, and $\tilde C \gg 1$ such that for all $ x_0 \in \{|\xi^{z,T}|>1/2\}$ and 
	for all $x \in B_{\varrho }(x_0)$ it holds 
	\begin{align} 	\label{eq:proofb0}
		|\xi^{z,T}(x)- \n_{\bar{I}^{z,T}}(P_{\bar I^{z,T}} (x_0))| \leq 
		\frac{\tilde \varepsilon/16}{1+ \tilde \varepsilon/8}.
	\end{align}
	
	We have that 
		\begin{align*}
		\mu(B_{\varrho }(x_0)) &\leq  2 E_\mathrm{int}[\mathcal{V}, \chi| \chi^{z,T}] + \int_{I_{1,P}\cap B_{\varrho } (x_0)} 1 \, d \mathcal{H}^1 , 
	\end{align*}
  where the second term has to be estimated. Hence, we further decompose
  \begin{align*}
  	B_{\varrho }(x_0) \cap I_{1,P} & = \Big(B_{\varrho }(x_0) \cap \Big\{ x \in I_{1,P} : \n_{P,1}(x) \cdot \xi^{z,T} (x) \geq \frac{\tilde \varepsilon/16}{1+ \tilde \varepsilon/8} \Big\}\Big) \\
  	&\quad \cup \Big(B_{\varrho }(x_0) \cap \Big\{ x \in I_{1,P} : \n_{P,1}(x) \cdot \xi^{z,T} (x) < \frac{\tilde \varepsilon/16}{1+ \tilde \varepsilon/8} \Big\}\Big)\\
  	& \subseteq M_{x_0}^{(1)} \cup M_{x_0}^{(2)}, 
  \end{align*}
where
\begin{align*}
	M_{x_0}^{(1)} & :=B_{\varrho }(x_0) \cap \Big\{ x \in \supp |\nabla \chi_1| : \frac{\nabla \chi_1}{|\nabla \chi_1|}(x) \cdot \xi^{z,T}(x) \geq \frac{\tilde \varepsilon/16}{1+ \tilde \varepsilon/8} \Big\},
	\\
	M_{x_0}^{(2)} & := B_{\varrho }(x_0) \cap \Big\{ x \in I_{1,P} : \n_{P,1}(x) \cdot \xi^{z,T}(x) < \frac{\tilde \varepsilon/16}{1+ \tilde \varepsilon/8} \Big\}.
\end{align*}
Using Lemma \ref{lemma:error} and the notation there adopted, we estimate
\begin{align*}
	\mathcal{H}^1(M_{x_0}^{(1)} )
	&= 	\mathcal{H}^1(M_{x_0}^{(1)} \cap 	\{x: \mathcal{H}^0(\partial^\ast(\Omega_1)_{P_{G_{x_0}} (x)} ) >1\})
	\\
	&\quad  + \mathcal{H}^1(M_{x_0}^{(1)} \cap 	\{x: \mathcal{H}^0(\partial^\ast(\Omega_1)_{P_{G_{x_0}} (x)} ) =1\})
\\	& \leq  C_{\mathrm{err}} E_\mathrm{int}[\chi|\bar \chi^{z,T}]  + \mathcal{H}^1(M_{x_0}^{(1)} \cap 	\{x: \mathcal{H}^0(\partial^\ast(\Omega_1)_{P_{G_{x_0}} (x)} ) =1\}).
\end{align*}
Note that by \eqref{eq:proofb0} and by the definition of $M_{x_0}^{(1)} $ it holds
$\n_{\partial \Omega_1}(x) \cdot \n_{\bar{I}^{z,T}}(P_{\bar I^{z,T}} (x_0)) \geq \frac{1}{1+ \tilde \varepsilon/8}$ for all $x \in M_{x_0}^{(1)} $. As a consequence, the coarea formula gives
\[
\mathcal{H}^1(M_{x_0}^{(1)} \cap 	\{x: \mathcal{H}^0(\partial^\ast(\Omega_1)_{P_{G_{x_0}} (x)} ) =1\})
\leq (1 {+} \tfrac{\tilde\varepsilon}8 ) \mathcal{H}^1(B_{\varrho } (P_{\bar I^{z,T}} (x_0))) \leq (1 {+} \tfrac{\tilde\varepsilon}8 ) \mathrm{Vol}_1 {\varrho } . 
\]
Therefore, we obtain
\begin{align*}
	\mathcal{H}^1(M_{x_0}^{(1)} ) \leq  C_{\mathrm{err}} E_\mathrm{int}[\chi|\bar \chi^{z,T}] +  (1+ \tfrac{\tilde\varepsilon}8 ) \mathrm{Vol}_1 {\varrho } . 
\end{align*}
Furthermore, one may estimate
\begin{align*}
		\mathcal{H}^1(M_{x_0}^{(2)} ) \leq \frac{16}{\tilde \varepsilon } \int_{I_{1,P}} 1- \n_{P,1} \cdot \xi^{z,T}\, d \mathcal{H}^1 \leq \frac{16}{\tilde \varepsilon } E_\mathrm{int}[\chi|\bar \chi^{z,T}].
\end{align*}
Collecting the estimates above, we have
	\begin{align*}
	\mu(B_{\varrho}(x_0)) &\leq  \Big(2 + C_{\mathrm{err}} + \frac{16}{\tilde \varepsilon } \Big)E_\mathrm{int}[\mathcal{V}, \chi| \bar \chi^{z,T}] +   \Big(1+ \frac{\tilde\varepsilon}8 \Big) \mathrm{Vol}_1 {\varrho }, 
\end{align*}
whence we can conclude that there esists $\delta \ll_{\tilde \varepsilon, \Lambda } 1 $ such that the assumption \eqref{hp:regularitygoodtimes} implies 
	\begin{align*}
	\frac{\mu(B_{\varrho}(x_0)) }{\mathrm{Vol}_1 {\varrho }} \leq  1+ {\tilde\varepsilon}  .
\end{align*}

It remains to prove the estimate for the tilt excess $E_{\operatorname{tilt}}[x_0, {\varrho }, \Tan_{P_{\bar I^{z,T}} (x_0)} \bar I^{z,T} ]$. First, recalling \eqref{eq:multiplicity}, we notice that
\begin{align*}
	&E_{\operatorname{tilt}}[x_0, {\varrho }, \Tan_{P_{\bar I^{z,T}} (x_0)} \bar I^{z,T} ] \\
	&\lesssim  {\varrho }^{-1} \mu(B_{\varrho }(x_0)\cap \{\omega \leq 1/2\}) + 
	{\varrho }^{-1} \sum_{\{i,j\}\neq \{1,P\}} \mathcal{H}^{1} ( I_{i,j})\\
&\quad +  {\varrho }^{-1} \int_{B_{\varrho }(x_0) \cap I_{1,P}} | \n_{P,1}(x) 
- \n_{\bar I^{z,T}} (P_{\bar I^{z,T}} (x_0)) |^2\, d \mathcal{H}^1
	\\
&	\lesssim   {\varrho }^{-1}  E_\mathrm{int}[\mathcal{V}, \chi| \bar \chi^{z,T}]  +  
{\varrho }^{-1} R_{tilt},
\end{align*}
where
\[
R_{tilt} :=  \int_{B_{\varrho }(x_0) \cap I_{1,P}} 1- 
\n_{P,1} (x)\cdot \n_{\bar I^{z,T}} (P_{\bar I^{z,T}} (x_0)) \, d \mathcal{H}^1.
\]
In order to estimate $R_{tilt}$, we decompose
\begin{align*}
B_{\varrho }(x_0) \cap I_{1,P} \subseteq 	N_{x_0}^{(1)} \cup N_{x_0}^{(2)} \cup  N_{x_0}^{(3)},
\end{align*}
where
\begin{align*}
	N_{x_0}^{(1)} &:= B_{\varrho }(x_0) \cap \Big\{ x \in \partial^\ast \Omega_1 : \frac{\nabla \chi_1}{|\nabla \chi_1|} (x) \cdot \xi^{z,T}(x) \geq \frac12 , \; \mathcal{H}^0(\partial^\ast(\Omega_1)_{P_{G_{x_0}} (x)} ) =1\Big\}, 
	\\
	N_{x_0}^{(2)} &:= B_{\varrho }(x_0) \cap \Big\{ x \in \partial^\ast \Omega_1 : \frac{\nabla \chi_1}{|\nabla \chi_1|} (x) \cdot \xi^{z,T}(x) \geq \frac12 , \; \mathcal{H}^0(\partial^\ast(\Omega_1)_{P_{G_{x_0}} (x)} ) > 1\Big\}, 
		\\
	N_{x_0}^{(3)} &:=  B_{\varrho }(x_0) \cap \{ x \in I_{1,P} : \n_{P,1} (x) \cdot \xi^{z,T}(x) < 1/2\} . 
\end{align*}
By previous arguments (in particular the one using Lemma \ref{lemma:error}), one may infer
\begin{align*}
	R_{tilt} &\leq \int_{N_{x_0}^{(1)} } 
	1 - \frac{\nabla \chi_1}{|\nabla \chi_1|} (x) \cdot \n_{\bar I^{z,T}} (P_{\bar I^{z,T}} (x_0)) \, d \mathcal{H}^1
	+ 2 \mathcal{H}^1 (N_{x_0}^{(2)} \cup N_{x_0}^{(3)}) \\
	&\leq R_{tilt}' + C^{\prime \prime} E_\mathrm{int}[\mathcal{V}, \chi| \bar\chi^{z,T}],
\end{align*}
for some constant $C^{\prime \prime}>0$, where
\begin{align*}
R_{tilt}' &:= \int_{N_{x_0}^{(1)} } 1 - \frac{\nabla \chi_1}{|\nabla \chi_1|} (x) \cdot \n_{\bar I^{z,T}} (P_{\bar I^{z,T}} (x_0)) \, d \mathcal{H}^1 \\
&\leq \int_{N_{x_0}^{(1)} } 1 - \frac{\nabla \chi_1}{|\nabla \chi_1|}(x)  \cdot \xi^{z,T} (x) \, d \mathcal{H}^1
+ \frac{\tilde \varepsilon}{16} \mathcal{H}^1(N_{x_0}^{(1)} ) \\
& \leq C^{\prime \prime \prime } E_\mathrm{int}[\chi| \chi^{z,T}]
+ \frac{\tilde \varepsilon}{16} \mathcal{H}^1(N_{x_0}^{(1)} )
\end{align*}
for some constant $C^{\prime \prime \prime }>0$, where the last inequality can be justified arguing as above.
Furthermore, note that for any $x \in N_{x_0}^{(1)} $ we have
\begin{align*}
	\frac{\nabla \chi_1}{|\nabla \chi_1|} (x) \cdot \n_{\bar I^{z,T}} (P_{\bar I^{z,T}} (x_0)) &\geq 
	\frac{\nabla \chi_1}{|\nabla \chi_1|} (x) \cdot \xi^{z,T}(x) - |\xi^{z,T}(x) - \n_{\bar I^{z,T}} (P_{\bar I^{z,T}} (x_0)) |\\
&	\geq \frac12 - \frac{\tilde \varepsilon/16}{1+ \tilde \varepsilon/8} \geq \frac14 
\end{align*}
whence we can deduce $\mathcal{H}^1(N_{x_0}^{(1)} ) \leq 8 \pi \varrho$, due to the coarea formula. 
Collecting the estimates above, we can conclude that there exists $\delta \ll_{\tilde \varepsilon, \Lambda } 1 $ such that the assumption \eqref{hp:regularitygoodtimes} implies 
\begin{align*}
E_{\operatorname{tilt}}[x_0, \varrho, \Tan_{P_{\bar I^{z,T}} x_0} \bar I^{z,T} ]	\leq  {\tilde\varepsilon}  .
\end{align*}
This shows Lemma~\ref{lemma:hpAllard}.
\end{proof}

\begin{proof}[Proof of Lemma~\ref{lemma:noothers}
(No other phases, hidden boundaries, and higher-multiplicity interfaces).]
	In order to prove \eqref{eq:noothers}, we argue by contradiction. Assume there exists $i \in \{2,...,P-1\}$ such that $\mathcal{H}^1(\supp |\nabla \chi_i|) >0$. Fix $x_0 \in \supp |\nabla \chi_i|$. By Lemma~\ref{lemma:hpAllard} and Theorem~\ref{theo:Allard}, $\supp \mu$ is a graph within $B_{\gamma \varrho} (x_0)$ for $\varrho 
	= \frac{1}{\tilde{C}}\frac{\tilde \varepsilon^2}{2 \pi (\Lambda +1)} r_T$. In addition, by Theorem~\ref{theo:Jordan}, there exists a Jordan-Lipschitz curve $J \subseteq \supp |\nabla \chi_i|$ such that $x_0 \in J$. By choosing $\delta \ll_{\tilde \varepsilon, \Lambda } 1$, one may ensure that $\mathrm{int} (J) \subseteq B_{\gamma \varrho/2}(x_0)$ (see Figure \ref{Fig:noothers} for an example): 
	\begin{figure}
		\includegraphics{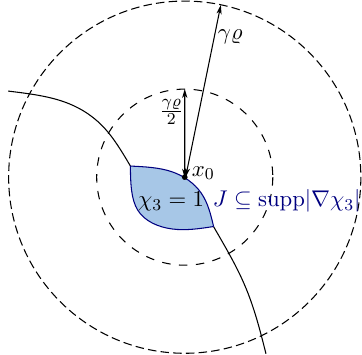}
		\caption{If $\mathcal{H}^1(\supp |\nabla \chi_3|) >0$, then $\mathrm{int} (J) \subseteq B_{\gamma \varrho/2}(x_0)$ for the Jordan-Lipschitz curve $J \subseteq \supp |\nabla \chi_3|$, contradicting the graph property of $\supp \mu $.}
		\label{Fig:noothers}
	\end{figure}
	indeed, we recall that 
	\[
	\mathcal{H}^1(\supp |\nabla\chi_i|) = \sum_{j \neq i} \int_{{I}_{i,j}} 1 \, d \mathcal{H}^1 \leq 
	2 \sum_{j \neq i} \int_{{I}_{i,j}} 1- \n_{i,j} \cdot \xi^{z,T}_{i,j} \, d \mathcal{H}^1 
	\leq 2 E_\mathrm{int}[\chi| \bar \chi^{z,T}], 
	\]
	where we used that $|\xi_{i,j}^{z,T}| \leq 1/2$ for $\{i,j\} \neq \{1,P\}$. However, $\mathrm{int} (J) \subseteq B_{\gamma \varrho/2}(x_0)$ is a contradiction to the graph property of $\supp \mu $ within $B_{\gamma \varrho} (x_0)$.
	
	It remains to prove \eqref{eq:unitdensity}. By Lemma~\ref{lemma:hpAllard} and \cite[Remark 23.2(2)]{SimonLectures}, every $x_0 \in \supp \mu$ is a point of unit density. Hence, $x_0 \in \frac12 \sum_{i=1}^{P}\supp |\nabla \chi_i| = \supp |\nabla \chi_1|$ due to \eqref{eq:noothers}. It follows that $\mu = \mathcal{H}^1 \llcorner \supp |\nabla \chi_1|$, whence we deduce \eqref{eq:unitdensity} by the rectifiability of $\mathcal{V}$.
\end{proof}

\begin{proof}[Proof of Lemma~\ref{lemma:noaway} (No interface far away from $\bar I^{z,T}$).]
	We assume by contradiction that there exists $x_0 \in \supp |\nabla \chi_1|$ such that $x_0 \in \{|\xi^{z,T} | \leq 1/2\}$. 
	From Theorem~\ref{theo:Allard}, Lemma~\ref{lemma:hpAllard}, and Lemma~\ref{lemma:noothers}, it follows that $\supp \mu = \supp |\nabla \chi_1|$ is locally a graph around $x_0$
	on a scale $\gamma \varrho$ over, say, $G_{x_0} = x_0 + (\mathbb{R} \times \{0\})$,
	where $\varrho 
	= \frac{1}{\tilde{C}}\frac{\tilde \varepsilon^2}{2 \pi (\Lambda +1)} r_T$. On the other hand, by Theorem~\ref{theo:Jordan}, there exists a Jordan-Lipschitz curve $J \subseteq \supp |\nabla \chi_1|$ such that $x_0 \in J$. 
	Without loss of generality, we may assume that $\tilde \varepsilon \ll 1$ such that $x_0 \in \{|\xi^{z,T}| < 1/2\}$ implies $B_{\gamma \varrho}(x_0) \subseteq \{|\xi^{z,T}| < 3/4\}$. In particular, we have \[\mathcal{H}^1(J \cap B_{\gamma \varrho}(x_0) ) \leq 4 \int_{J \cap B_{\gamma \varrho}(x_0)} 1- \frac{\nabla \chi_1}{|\nabla \chi_1|} \cdot \xi^{z,T}\, d \mathcal{H}^1 \lesssim E_\mathrm{int}[\chi| \bar \chi^{z,T}] \lesssim \delta r_T.
	\]
	As a consequence, for suitably small $\delta \ll_{\tilde \varepsilon, \Lambda} 1$, it follows from the continuity of~$J$ that $\mathrm{int}(J) \subseteq B_{\gamma\tilde  \rho/2}(x_0)$, which is a contradiction to the graph property of $\supp |\nabla \chi_1|$ within $B_{\gamma \varrho}(x_0)$.	
	The last inclusion in \eqref{eq:xiinclusion} follows from Construction~\ref{DefinitionCalibration}.
\end{proof}

\begin{proof}[Proof of Lemma~\ref{lemma:candidategraph} (Construction of a graph candidate).]
We proceed in two steps.

\textit{Step 1:} We claim that there exist $\delta_{\mathrm{asymp}} \ll 1$ and $\delta \ll 1$ such that 
\begin{align} \label{eq:step1}
	\mathcal{H}^1(\supp |\nabla \chi_1|) \geq \tfrac12  \mathcal{H}^1(\bar{I}^{z,T}).
\end{align}
In order to prove \eqref{eq:step1}, we argue by contradiction, namely we assume that $$	\mathcal{H}^1(\supp |\nabla \chi_1|) < \tfrac12  \mathcal{H}^1(\bar{I}^{z,T}).$$
By the isoperimetric inequality, we have 
\begin{align*}
	\int_{\mathbb{R}^2} \chi_1 \, dx \leq \tfrac{1}{4 \pi} \Big(\tfrac12  \mathcal{H}^1(\bar{I}^{z,T})\Big)^2 \leq  \big( \tfrac{9}{16} \big)^2 \pi r_T^2,
\end{align*}
where we used the fact that $ \mathcal{H}^1(\bar{I}^{z,T}) \leq \frac98 2 \pi r_T$ for a suitably small $\delta_{\mathrm{asymp}} \ll 1$.
Furthermore, for a suitably small $\delta_{\mathrm{asymp}} \ll 1$, we have 
\begin{align*}
		\int_{\mathbb{R}^2} \bar\chi^{z,T} \, dx \geq \big( \tfrac{3}{4} \big)^2 \pi r_T^2.
\end{align*}
By the triangle inequality, it follows that
\begin{align*}
	\| \chi_1 - \bar\chi^{z,T} \|_{L^1} \geq 	\int_{\mathbb{R}^2} \bar\chi^{z,T} \, dx - \int_{\mathbb{R}^2} \chi_1 \, dx  \geq \Big(\big( \tfrac{3}{4} \big)^2 - \big( \tfrac{9}{16} \big)^2 \Big)\pi r_T^2. 
\end{align*}
On the other side, we claim that
\begin{align} \label{eq:contradict}
		\frac{1}{r_T^3}\| \chi_1 - \bar\chi^{z,T} \|_{L^1}^2\lesssim E_{\mathrm{bulk}}[\chi | \bar \chi^{z,T}].
\end{align}
Indeed, by change of variables and Lemma~\ref{lemma:noaway}, we have
\begin{align*}
	&\frac{1}{r_T^3}\| \chi_1 - \bar\chi^{z,T} \|_{L^1}^2 \\
	&= 	\frac{1}{r_T^3} \Big( \int_{\bar{I}^{z,T}} \int_{-\frac{r_T}{4}}^{\frac{r_T}{4}}
	\frac{1}{1 - H_{\bar{I}^{z,T}}(x) s } |\chi_1 - \bar\chi^{z,T} |(x + s \n_{\bar{I}^{z,T}}(x)) \, d s dx	\Big)^2 \\
	&\lesssim 
	\frac{1}{r_T}
	\fint_{\bar{I}^{z,T}}\Big( \int_{-\frac{r_T}{4}}^{\frac{r_T}{4}}
	\frac{1}{1 - H_{\bar{I}^{z,T}}(x) s } |\chi_1 - \bar\chi^{z,T} |(x + s \n_{\bar{I}^{z,T}}(x)) \, d s \Big)^2 dx	\\
	&\lesssim 
	\int_{\bar{I}^{z,T}} \int_{-\frac{r_T}{4}}^{\frac{r_T}{4}}
	\frac{1}{1 - H_{\bar{I}^{z,T}}(x) s } (|\chi_1 - \bar\chi^{z,T} ||\vartheta|)(x + s \n_{\bar{I}^{z,T}}(x)) \, d s dx
	\lesssim E_{\mathrm{bulk}}[\chi | \bar \chi^{z,T}].
\end{align*}
where in the last line we used Fubini's theorem by bisecting $[-\frac{r_T}{4}, \frac{r_T}{4}]^2$ into two triangles
(cf.\ the argument in~\cite[Proof of Theorem~1]{FischerLauxSimon}). 
Hence, by choosing $\delta \ll 1$ in assumption \eqref{hp:regularitygoodtimes}, we obtain 
\begin{align*}
	\| \chi_1 - \bar\chi^{z,T} \|_{L^1} < \Big(\big( \tfrac{3}{4} \big)^2 - \big( \tfrac{9}{16} \big)^2 \Big)\pi r_T^2,
\end{align*}
whence the contradiction follows. 

\textit{Step 2:} From the previous step, there exists $x_0 \in \supp |\nabla \chi_1|$ and, by Theorem~\ref{theo:Jordan}, there exists a Jordan-Lipschitz curve $J \subseteq \supp |\nabla \chi_1|$ such that $x_0 \in J$.
By Theorem~\ref{theo:Allard}, Lemma~\ref{lemma:hpAllard}, and Lemma~\ref{lemma:noothers}, we know that $\supp \mu = \supp |\nabla \chi_1|$ can be represented within $B_{\gamma \varrho}(x_0)$ as a graph over $x_0 + \Tan_{P_{\bar I^{z,T}}(x_0)} \bar{I}^{z,T}$ with height function $u$ such that $\supp |\nabla u| \leq 2 \tilde C \tilde \varepsilon =: \tan \tilde  \alpha$, $\tilde \alpha \in (0, \pi/2)$. 
Hence, for $x_1^\pm \in x_0 + \Tan_{P_{\bar I^{z,T}} (x_0)}\bar I^{z,T}$ such that $\sign((x_1^\pm - x_0 )\cdot \tau_{\bar I^{z,T}}(P_{\bar I^{z,T}} (x_0)) = \pm 1$
and $x_1^\pm  + u(x_1^\pm ) \n_{\bar I^{z,T}}(P_{\bar I^{z,T}} (x_0))  \in \partial B_{\gamma\varrho/2}(x_0)$, it follows  $|(x_1^\pm- x_0)\cdot \tau_{\bar I^{z,T}}(P_{\bar I^{z,T}} (x_0))| \geq  \frac{\gamma \tilde  \rho}{2} \cos \tilde \alpha $. Consequently, from Lemma~\ref{lemma:hpAllard} we deduce
\begin{align} \label{eq:boundP1}
	|P_{\bar I^{z,T}} (x_1^\pm) - P_{\bar I^{z,T}} (x_0)| \geq \frac12  \frac{\gamma \varrho}{2} \cos \tilde \alpha.
\end{align}
Since $\supp |\nabla \chi_1| \cap B_{\gamma \varrho}(x_0) = J \cap B_{\gamma \varrho}(x_0) $, one may continue with $x_1 = x_1^+$ and iterate the above reasoning to conclude that
$J$ wriggles around~$\bar{I}^{z,T}$ in the sense of
\begin{align*}
	\left(\{\bar{\chi}^{z,T}=1\} \setminus \{\dist(\cdot, \bar{I}^{z,T}) \leq {\delta r_T}/4 \}\right)&\subseteq \mathrm{int} (J) ,\\
	\left(\{\bar{\chi}^{z,T}=0\} \setminus \{\dist(\cdot, \bar{I}^{z,T}) \leq \delta r_T/{4}\}\right) &\subseteq \mathrm{ext} (J).
\end{align*}
We notice that the iteration stops after finitely many steps due to \eqref{eq:boundP1}. 
At last, we argue that there exists a height function $h: \bar{I}^{z,T} \rightarrow [-r_T/4, r_T/4]$ such that \eqref{eq:Jgraph} holds. This directly follows from the compactness of ${\bar{I}^{z,T}}$ together with the fact that, for any $x_0 \in J$, there exists an open neighborhood $\mathcal{W}_{x_0} \ni x_0$ such that
\begin{align}\label{eq:upsilon}
	 J \cap \mathcal{W}_{x_0} &\rightarrow  \bar{I}^{z,T} \notag \\
	x &\mapsto P_{\bar{I}^{z,T}} (x) = x - s(x) \n_{\bar{I}^{z,T}}(P_{\bar{I}^{z,T}} (x))
\end{align}
	is a manifold diffeomorphism onto its image, due to an application of the inverse function theorem.
\end{proof}

\begin{proof}[Proof of Lemma~\ref{lemma:weakgraph} (Interface is a graph over $\bar I^{z,T}$).]
	Recall from Lemma~\ref{lemma:noaway} that $\supp |\nabla \chi_1| \subseteq \{|\xi^{z,T}| > 1/2\}
	\subseteq \{\dist(\cdot, \bar I^{z,T}) \leq r_T/4\}$.
	Assume by contradiction that there exists a nontrivial Jordan-Lipschitz curve $J'$ from the decomposition of Theorem \ref{theo:Jordan} such that $J' \subseteq \supp |\nabla \chi_1|$ and $J' \neq J$, i.e., either $\mathrm{int}(J') \cap \mathrm{int}(J) = \emptyset $, or $\mathrm{int}(J') \subset \mathrm{int}(J) $ , or $\mathrm{int}(J) \subset\mathrm{int}(J') $. 
	 Fix $x_0 \in J'$. Since $J$ is a graph over $\bar{I}^{z,T}$
	by Lemma~\ref{lemma:candidategraph}, 
	using the notation from Lemma~\ref{lemma:error}, it follows that
	 $$B_{\varrho}(x_0) \cap J' \subseteq 	\{x: \mathcal{H}^0(\partial^\ast(\Omega_1)_{P_{G_{x_0}} (x)} ) >1\}. $$
	 Recall from Lemma~\ref{lemma:hpAllard} that $\varrho$ is chosen such that the hypothesis of Lemma~\ref{lemma:error} applies, hence 
	 $\mathcal{H}^1(B_{\varrho}(x_0) \cap J' ) \lesssim E_\mathrm{int}[\chi| \bar \chi^{z,T}]$. 
	 By the continuity of $J'$, for suitably small $\delta \ll_{\tilde \varepsilon, \Lambda} 1$, one may infer that $\mathrm{int}(J') \subseteq B_{\gamma \varrho/2}(x_0)$ (see Figure \ref{Fig:interfacegraph}).
	 \begin{figure}
	 	\includegraphics{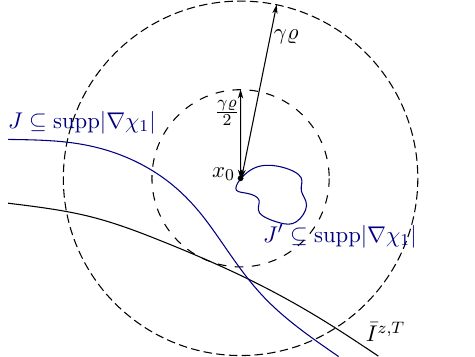}
	 	\caption{Interface is a graph over $\bar I^{z,T}$}
	 	\label{Fig:interfacegraph}
	 \end{figure}
	 On the other side, by Theorem~\ref{theo:Allard}, Lemma~\ref{lemma:hpAllard}, and Lemma~\ref{lemma:noaway}, we have that $\supp \mu = \supp |\nabla \chi_1|$ is a graph within $B_{\gamma \varrho}(x_0)$, which provides the contradiction.
\end{proof}

\begin{proof}[Proof of Lemma~\ref{lemma:heightestimates} (Height function estimates).]
	\textit{Step 1: Regularity 	\eqref{eq:graphReg}.}
	Fix $x_0 \in \supp |\nabla \chi_1|$. By Theorem~\ref{theo:Allard}, Lemma~\ref{lemma:hpAllard}, Lemma~\ref{lemma:noothers}, and Lemma~\ref{lemma:weakgraph} we know that $\supp \mu = \supp |\nabla \chi_1|=J$ can be represented within $B_{\gamma \varrho}(x_0)$ as a graph over $G_{x_0} := x_0 + \Tan_{P_{\bar I^{z,T}}(x_0)} \bar{I}^{z,T}$ with height function $u$. 
	Furthermore, we know by Lemma~\ref{lemma:candidategraph} that there exists a height function $h\colon \bar{I}^{z,T} \rightarrow [-r_T/4, r_T/4]$ such that \eqref{eq:Jgraph} holds. 

Define $\mathcal{U}_{x_0} := P_{x_0}(G_{x_0} \cap B_{\gamma \varrho}(x_0))$, and we first claim that
	\begin{align} \label{eq:stepu}
	u \in H^2(\mathcal{U}_{x_0}  ).
	\end{align}
	In order to prove \eqref{eq:stepu}, we fix $g \in C^\infty_{cpt}(\mathcal{U}_{x_0})$ and then test \eqref{eq:Hvarifold} with $B(x)= g(P_{\bar{I}^{z,T}} (x) ) \n_{\bar{I}^{z,T}}(P_{\bar{I}^{z,T}} (x_0)) $. Using $H_\mu= (H_\mu\cdot \n_{\supp \mu })\n_{\supp \mu }$, the coarea formula, and the coordinates induced by the height function $u$, we obtain
		\begin{align*}
		&- \int_{\mathcal{U}_{x_0}  } (H_\mu\cdot \n_{\supp \mu } )(x+ u(x) \n_{\bar{I}^{z,T}}(P_{\bar{I}^{z,T}} (x_0)))  g( x ) \, d \mathcal{H}^1 \\
		&= \int_{\mathcal{U}_{x_0} }\frac{u'}{\sqrt{1+(u')^2}} g' \, d \mathcal{H}^1,
	\end{align*}
whence we deduce $\frac{u'}{\sqrt{1+(u')^2}} \in H^1(\mathcal{U}_{x_0})$ due to
the assumption \eqref{boundgoodtimes} of controlled dissipation. 
The regularity \eqref{eq:stepu} then follows from Lemma~\ref{lemma:hpAllard} and Theorem~\ref{theo:Allard}, in particular from the estimate \eqref{eq:Allard}
(recall in this context also the regularity $u \in C^{1,\frac{1}{2}}$).

In a second step, we argue that one may capitalize on~\eqref{eq:stepu}
    to show that there is an open 
		neighborhood $\bar{\mathcal{U}}_{x_0}  $ of $P_{\bar{I}^{z,T}} (x_0)$ in ${\bar{I}^{z,T}}$ such that 
    \begin{align}\label{eq:steph}
    h \in H^2(\bar{\mathcal{U}}_{x_0}   ),
    	\end{align}
whence one may deduce \eqref{eq:graphReg} by compactness of ${\bar{I}^{z,T}}$.
    Indeed, define $\bar{\mathcal{U}}_{x_0} :=  \iota(\mathcal{U}_{x_0} )$, where $\iota: \mathcal{U}_{x_0} \rightarrow \bar{I}^{z,T}$ is given by $x \mapsto P_{\bar{I}^{z,T}}(x+ u(x)\n_{\bar{I}^{z,T}}(P_{\bar{I}^{z,T}} (x_0)))$.
		Since the map~$\iota$ is a chart for~$\bar{I}^{z,T}$, we conclude~\eqref{eq:steph}
		from the formula $h (\iota(x)) = s(x +  u(x) \n_{\bar{I}^{z,T}}(P_{\bar{I}^{z,T}} (x)))$ 
		for any $x \in \mathcal{U}_{x_0} $, the regularity of the signed distance function,
		and~\eqref{eq:stepu}.

\textit{Step 2: Estimate \eqref{eq:smallC0norm2} for $\sup |h|$.}
The idea here is to exploit that if~\eqref{eq:smallC0norm2}
would not be satisfied, then one accumulates too much $L^1$-error
between the two phases $\chi_1$ and $\bar\chi_1$
in contradiction with the smallness of the overall error~\eqref{hp:regularitygoodtimes}.

Hence, we assume by contradiction that there exists $x_0 \in \supp |\nabla \chi_1|$ such that 
$$
	\|h\|_{L^\infty(\bar{I}^{z,T})} > \frac{r_T}{16 \max\{C,C_\zeta\}}. 
$$
	Recall that by Theorem~\ref{theo:Allard}, Lemma~\ref{lemma:hpAllard}, and Lemma~\ref{lemma:weakgraph}, we know that $\supp \mu \cap B_{\gamma \varrho}(x_0)$ can be represented as a graph over $(x_0 + \Tan_{P_{\bar I^{z,T}}(x_0)} \bar{I}^{z,T} ) \cap  B_{\gamma \varrho}(x_0)   $ with height function $u$,
	where $\varrho 
	= \frac{1}{\tilde{C}}\frac{\tilde \varepsilon^2}{2 \pi (\Lambda +1)} r_T$. 
	From Theorem~\ref{theo:Allard} and Lemma~\ref{lemma:hpAllard}, more precisely from \eqref{eq:hpAllard}, \eqref{eq:Allard} and \eqref{eq:boundeps}, it follows that
		\begin{align} \label{eq:estimatesu}
	\sup |u| \leq 2 \varrho \tilde C \tilde \varepsilon \leq \frac14 \frac{r_T}{16 \max\{C,C_\zeta\}}, \quad 
	\sup |u'|  \leq 2  \tilde C \tilde \varepsilon =: \tan \tilde \alpha, \; \tilde \alpha \in (0, \pi/2).
\end{align}
	In particular, we have (see Figure \ref{Fig:estimatesh})
	\begin{align*} 
		\partial B_{\gamma \varrho}(x_0) \cap 
		\{
		y + u(y) &\n_{\bar{I}^{z,T}} (P_{\bar{I}^{z,T}} (x_0)) : y \in (x_0 + \Tan_{P_{\bar I^{z,T}}(x_0)} \bar{I}^{z,T} ) \cap  B_{\gamma \varrho}(x_0)
		\}\\
		\subseteq \bigg\{
		x \in \mathbb{R}^2 :\, 
		&|(x- x_0)\cdot \n_{\bar I^{z,T}}(P_{\bar I^{z,T}} (x_0))| \leq  \frac14 \frac{r_T}{16 \max\{C,C_\zeta\}}, \\
		&  |(x- x_0)\cdot \tau_{\bar I^{z,T}}(P_{\bar I^{z,T}} (x_0))| \geq  {\gamma \varrho} \cos  \tilde \alpha
		\bigg\}.
	\end{align*}
	Furthermore, by \eqref{eq:boundsp} from Lemma~\ref{lemma:hpAllard}, we deduce that 
    for all $x\in \bar I^{z,T}$ such that $|(x- P_{\bar I^{z,T}} (x_0))\cdot \tau_{\bar I^{z,T}}(P_{\bar I^{z,T}} (x_0))| \leq \gamma \varrho $, it holds
	\begin{align*} 
		{|(x- P_{\bar I^{z,T}} (x_0)) \cdot \n_{\bar I^{z,T}}(P_{\bar I^{z,T}} (x_0) )|} \leq \frac{1}4 \frac{r_T}{16 \max\{C, C_\zeta\}}.
	\end{align*}
\begin{figure}
	\includegraphics{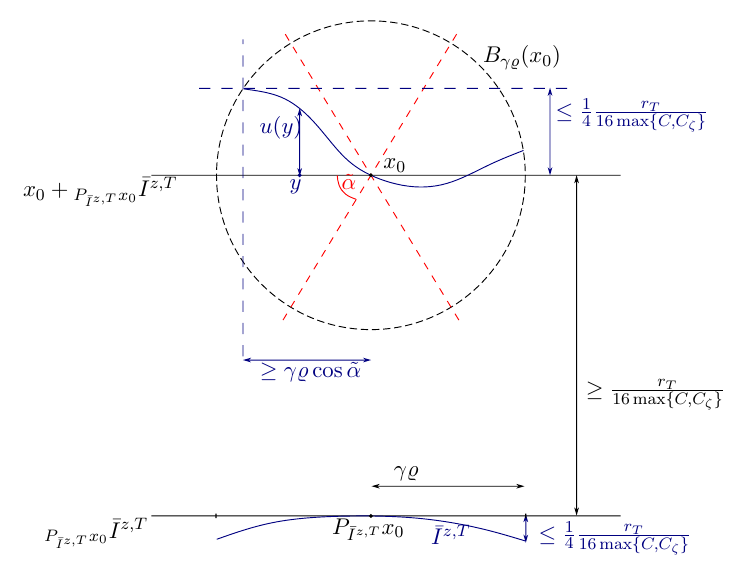}
	\caption{Height function estimates}
	\label{Fig:estimatesh}
\end{figure}
    Hence, we have 
    $$
    \| \chi_1 - \bar \chi^{z,T}\|_{L^1} \geq {\gamma \varrho} \frac1{2} \cos \tilde \alpha \frac{r_T}{16 \max\{C, C_\zeta\}}.
    $$
    The contradiction follows from \eqref{eq:contradict} and by choosing $\delta \ll_{\tilde \varepsilon, \Lambda, C, C_\zeta} 1$ suitably small in assumption \eqref{hp:regularitygoodtimes}. 

\textit{Step 3: Estimate \eqref{eq:smallC1norm2} for $\sup |h'|$.}
Let $\bar \gamma $ be an arc-lenght parametrization of~$\bar I^{z,T}$ so that $\supp |\nabla \chi_1|$ admits the parametrization $\gamma_h := (\Id + h \n_{\bar{I}^{z,T}}) \circ \bar \gamma $. Fix $x_0 \in \supp |\nabla \chi_1| $ and denote by $u$ the associated height function given by Theorem~\ref{theo:Allard} and Lemma~\ref{lemma:hpAllard} on scale $\gamma \varrho$. In other words, locally around $x_0$, we have
a second parametrization $\gamma_u := (\Id + u \n_{\bar{I}^{z,T}}(P_{\bar I^{z,T}} (x_0) )) \circ \bar \gamma_{x_0}$, where $\bar \gamma_{x_0}$ is an arc-length parametrization of $(x_0 + \Tan_{P_{\bar I^{z,T}}(x_0)} \bar{I}^{z,T} ) \cap  B_{\gamma \varrho}(x_0)   $.
One may compute
$$
\frac{1}{\sqrt{1+ \Big(\frac{h'(P_{\bar I^{z,T}}(x_0))}{1- (H_{\bar{I}^{z,T}}h)(P_{\bar I^{z,T}}(x_0))}\Big)^2}} = \frac{1}{\sqrt{1+(u'(x_0))^2}}
$$
as both terms equals $\frac{\nabla \chi_1 }{|\nabla \chi_1|}(x_0)\cdot \n_{\bar{I}^{z,T}}(P_{\bar I^{z,T}} (x_0) )$.
Note that this gives us a relation expressing $|h'(P_{\bar I^{z,T}}(x_0))|$
in terms of $|u'(x_0)|$.
In particular, by choosing $\delta_{\mathrm{asymp}} \ll 1$ suitably small and using $|h(P_{\bar I^{z,T}}(x_0))| \leq \frac14 r_T$, we obtain $|h'(P_{\bar I^{z,T}}(x_0))| \leq \frac32 |u'(x_0)|$ for all $x_0 \in  \supp |\nabla \chi_1|$. 
From \eqref{eq:estimatesu} by means of \eqref{eq:boundeps} one may infer that $\sup |h' | \leq \frac32 	\sup |u' | \leq 1/C'$.
\end{proof}

\begin{proof}[Proof of Lemma~\ref{lemma:error} (Error control).]
	Define $\n_{x_0}:=\n_{\bar I^{z,T}}(P_{\bar I^{z,T}} (x_0)) $ and the set 
	\[
	A_{x_0} := B_{\varrho}(x_0) \cap
	\partial^\ast \Omega_1
	\cap 
	\{x: \mathcal{H}^0(\partial^\ast(\Omega_1)_{P_{G_{x_0}} (x)} ) >1\},
	\]
	which we then decompose as
	\[
	A_{x_0} = \left( A_{x_0} \cap \{ \n_{\partial^\ast \Omega_1} \cdot \n_{x_0} \geq 1/2\}\right) \cup \left( A_{x_0} \cap \{ \n_{\partial^\ast \Omega_1} \cdot \n_{x_0} < 1/2 \}\right) =: A^{(1)}_{x_0}  \cup A^{(2)}_{x_0} .
	\]
	Since $\n_{\partial^\ast \Omega_1} (x)\cdot \xi^{z,T}(x) \leq |\xi^{z,T}(x) - \n_{x_0}(x) | +  \n_{\partial^\ast \Omega_1} (x)\cdot \n_{x_0} \leq 3/4$ for any $x \in A^{(2)}_{x_0} $, then
	we obtain $1- \n_{\partial^\ast \Omega_1} (x)\cdot \xi^{z,T}(x)  \geq 1/4$ for any $x \in A^{(2)}_{x_0} $.
	Hence, we have 
	\begin{align*}
		\mathcal{H}^1( A^{(2)}_{x_0} ) &\leq 4 \int_{\partial^\ast \Omega_1}  1- \n_{\partial^\ast \Omega_1} \cdot \xi^{z,T} \, {d}	\mathcal{H}^1
		\\
		&\leq 4 \int_{I_{1,P}} 1- \n_{P,1} \cdot \xi^{z,T} \, {d}	\mathcal{H}^1 + 8 \sum_{j \notin \{1,P\} } \int_{I_{1,j}} 1  \, {d}	\mathcal{H}^1
		\\
		& \leq 4 \int_{I_{1,P}} 1- \n_{P,1} \cdot \xi^{z,T} \, {d}	\mathcal{H}^1 + 16 \sum_{i,j \notin \{1,P\} } \int_{I_{i,j}} 1  - \n_{i,j} \cdot \xi_{i,j}^{z,T}\, {d}	\mathcal{H}^1\\
		& \leq 16  E_\mathrm{int}[\chi|\bar \chi^{z,T}],
	\end{align*}
	where we use the fact that $|\xi_{i,j}^{z,T}| \leq 1/2$ for $\{i,j\} \neq \{1,P\}$.
	Moreover, using the coarea formula, we obtain
	\begin{align*}
		\mathcal{H}^1( A^{(1)}_{x_0} ) &\leq 2 \int_{B_{\varrho }(P_{\bar I^{z,T}} x_0 ) \cap G_{x_0}} \Big( \sum_{\substack{y: \,  x + y \n_{x_0} \in A^{(1)}_{x_0}  }} 1 \Big) \, {d}	\mathcal{H}^1\\
		&\leq 4 \int_{B_{\varrho } (P_{\bar I^{z,T}} x_0 ) \cap G_{x_0}} \Big( \sum_{\substack{y: \,  x + y \n_{x_0} \in A^{(2)}_{x_0}  }} 1 \Big) \, {d}	\mathcal{H}^1\\
		&\leq 4	\mathcal{H}^1( A^{(2)}_{x_0} ) ,
	\end{align*}
    where the second inequality follows from the fact that, for any $x^+_0 \in A^{(1)}_{x_0}$, one may associate $x_0^-$ such that $\n_{\partial^\ast \Omega _1 }(x_0^-) \cdot \n_{x_0} \leq 0$ (see Figure \ref{Fig:Erelcontrol}). 
    \begin{figure}
    	\includegraphics{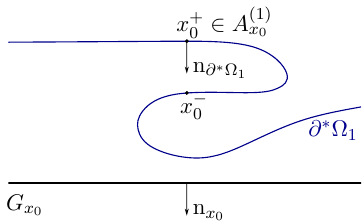}
    	\caption{Error control by $E_\mathrm{int}[\chi|\bar \chi^{z,T}]$}
    	\label{Fig:Erelcontrol}
    \end{figure}
    The claim follows by combining the two estimates above. 
\end{proof}

\subsection{Proof of Lemma~\ref{lemma:relentfreaze}: Error functionals in perturbative regime}\label{proof:errorfuncperturbative}
The estimates~\eqref{eq:bulkPerturbativeRegime} follow directly from the change of variables~\eqref{eq:aux31}
to tubular neighborhood coordinates,
the definition \eqref{def:bulkError} of $E_{\mathrm{bulk}}$, the identity~\eqref{eq:aux17}
for the weight~$\vartheta$ in the vicinity of the interface as well as
the height function estimate~\eqref{eq:smallC0norm},
whereas the estimates~\eqref{eq:relEntropyPerturbativeRegime} are
immediate consequences of the coarea formula~\eqref{eq:aux30}, the definition~\eqref{def:interfaceerror}
of $E_{\mathrm{int}}$, the representation \eqref{eq:aux15} for the vector field~$\xi$
near the interface, the linearization \eqref{eq:aux25} for the normal of the weak solution as well as
the height function estimates~\eqref{eq:smallC0norm}--\eqref{eq:smallC1norm}.
\qed


\section{Auxiliary computations in perturbative regime}
\label{sec:appendix}
Let $(\xi,\vartheta,B)$
be the maps from Construction~\ref{DefinitionCalibration},
and let $(\tchi,z,T)$ be the space-time shifts from Lemma~\ref{lemma:existzT}.
Fix $t \in (0,\tchi)$ and assume the existence of an height function $h(\cdot, t)$ satisfying the properties as in the conclusions of Proposition~\ref{theo:graph}. For ease of notation, we will drop in the following any
dependence on the time~$t$. Furthermore, we will abbreviate
in the tubular neighborhood $\{\dist(\cdot,\bar{I}^{z,T}) < r_T/2\}$
\begin{align*}
	s_{\bar{I}^{z,T}} &:= \sdist_{\bar{I}^{z,T}},
	\\
	\bar{\n}_{\bar{I}^{z,T}} &:= \n_{\bar{I}^{z,T}}\circ P_{\bar{I}^{z,T}},
	\quad \bar{\tau}_{\bar{I}^{z,T}} := \tau_{\bar{I}^{z,T}}\circ P_{\bar{I}^{z,T}},
	\\
	\bar{H}_{\bar{I}^{z,T}} &:= H_{\bar{I}^{z,T}}\circ P_{\bar{I}^{z,T}}.
\end{align*}
Finally, define $I:=I_{1,P}$, which is by assumption subject to the graph 
representation~\eqref{eq:graphReg}--\eqref{eq:smallC1norm}, and denote
$V:=V_{1}$, $\n := \n_{P,1}$ as well as, by slight abuse of notation, $\chi:=\chi_1$ and $\bar\chi := \bar\chi_1$.

\begin{lemma}
\label{lem:perturbativeComputations}
For given $\delta_{\mathrm{err}} \in (0,1)$, one may choose the constants 
$C,C' \gg_{\delta_{\mathrm{err}}} 1$ from~\emph{\eqref{eq:smallC0norm}--\eqref{eq:smallC1norm}} such that
the individual terms of the stability estimates~\eqref{eq:prelimStabilityRelEntropy} 
and~\eqref{eq:prelimStabilityBulk} are estimated as follows:
\begin{align}
	\nonumber
	&-\int_{I} \big(\partial_t\xi^{z,T} {+} (B^{z,T}\cdot\nabla)\xi^{z,T} {+} (\nabla B^{z,T})^\mathsf{T}\xi^{z,T}\big)
	\cdot (\n{-}\xi^{z,T}) \,d\mathcal{H}^1
	\\&~~~~~~~~~~~~~~~
	\label{eq:aux1}
	\leq - \int_{\bar{I}^{z,T}} H_{\bar{I}^{z,T}}^2 h (\n_{\bar{I}^{z,T}}\cdot\dot{z}) \,d\mathcal{H}^1
	\\&~~~~~~~~~~~~~~~~~~ \nonumber
	- \int_{\bar{I}^{z,T}} H_{\bar{I}^{z,T}}' (\tau_{\bar{I}^{z,T}}\cdot\dot{z}) h \,d\mathcal{H}^1 
	- \int_{\bar{I}^{z,T}} H_{\bar{I}^{z,T}}' \dot{\mathfrak{T}} h' \,d\mathcal{H}^1 
	\\&~~~~~~~~~~~~~~~~~~ \nonumber
	+ \int_{\bar{I}^{z,T}} \delta_{\mathrm{err}} \Big(\frac{1}{r_T}\big|(\tau_{\bar{I}^{z,T}}\cdot\dot{z}) h'\big|
	{+} \big|H_{\bar{I}^{z,T}}' \dot{\mathfrak{T}} h'\big|\Big) \,d\mathcal{H}^1,
	\\
	\label{eq:aux2}
	&-\int_{I} \big(\partial_t\xi^{z,T} {+} (B^{z,T}\cdot\nabla)\xi^{z,T}\big) \cdot \xi^{z,T} \,d\mathcal{H}^1 = 0,
	\\
	\nonumber
	&~~~\int_{I}  \frac{1}{2} \big|\nabla\cdot\xi^{z,T} {+} B^{z,T}\cdot\xi^{z,T}\big|^2 \,d\mathcal{H}^1
	\\&~~~~~~~~~~~~~~~
	\label{eq:aux3}
	\leq \int_{\bar{I}^{z,T}} \frac{1}{2} H_{\bar{I}^{z,T}}^4 h^2 + \delta_{\mathrm{err}}\frac{1}{r_T^4} h^2 \,d\mathcal{H}^1,
	\\
	\label{eq:aux4}
	&-\int_{I} \frac{1}{2} \big|B^{z,T}\cdot\xi^{z,T}\big|
	\big(1 - |\xi^{z,T}|^2\big) \,d\mathcal{H}^1 = 0,
	\\
	\nonumber
	&-\int_{I} (1 - \n\cdot\xi^{z,T}) \nabla\cdot\xi^{z,T}
	(B^{z,T}\cdot\xi^{z,T}) \,d\mathcal{H}^1
	\\&~~~~~~~~~~~~~~~
	\label{eq:aux5}
	\leq \int_{\bar{I}^{z,T}} \frac{1}{2} H_{\bar{I}^{z,T}}^2 (h')^2 + \delta_{\mathrm{err}}\frac{1}{r_T^2} (h')^2 \,d\mathcal{H}^1,
	\\
	&~~~\int_{I} \big((\mathrm{Id} {-} \xi^{z,T}\otimes \xi^{z,T})B^{z,T}\big) \cdot 
	(V {+} \nabla\cdot\xi^{z,T})\n \,d\mathcal{H}^1
	\label{eq:aux6}
	= 0,
	\\
	\nonumber
	&~~~\int_{I} (1 - \n\cdot\xi^{z,T}) \nabla\cdot B^{z,T} \,d\mathcal{H}^1
	\\&~~~~~~~~~~~~~~~
	\label{eq:aux7}
	\leq - \int_{\bar{I}^{z,T}} \frac{1}{2} H_{\bar{I}^{z,T}}^2 (h')^2 \,d\mathcal{H}^1
	+ \int_{\bar{I}^{z,T}} \delta_{\mathrm{err}} \frac{1}{r_T^2} (h')^2 \,d\mathcal{H}^1,
	\\
	\nonumber
	&-\int_{I} (\n {-} \xi^{z,T}) \otimes (\n {-} \xi^{z,T}) : \nabla B^{z,T} \,d\mathcal{H}^1
	\\&~~~~~~~~~~~~~~~
	\label{eq:aux8}
	\leq \int_{\bar{I}^{z,T}} H_{\bar{I}^{z,T}}^2 (h')^2 \,d\mathcal{H}^1
	+ \int_{\bar{I}^{z,T}} \delta_{\mathrm{err}} \Big(\frac{1}{r_T^2} {+} \big|H_{\bar{I}^{z,T}}'\big| \Big) (h')^2 \,d\mathcal{H}^1,
	\\
	\nonumber
	&-\int_{I} \frac{1}{2} \big|V{+}\nabla\cdot\xi^{z,T}\big|^2 \,d\mathcal{H}^1
	\\&~~~~~~~~~~~~~~~
	\label{eq:aux9}
	\leq - \int_{\bar{I}^{z,T}} \frac{1}{2}(h'')^2 \,d\mathcal{H}^1
	\\&~~~~~~~~~~~~~~~~~~ \nonumber
	+ \int_{\bar{I}^{z,T}} \delta_{\mathrm{err}} \Big((h'')^2 + \frac{1}{r_T^2}(h')^2
	+ \Big((H_{\bar{I}^{z,T}}')^2 {+} \frac{1}{r_T^4}\Big) h^2\Big) \,d\mathcal{H}^1
	\\
	\nonumber
	&-\int_{I} \frac{1}{2} \big|V\n{-}(B^{z,T}\cdot\xi^{z,T})\xi^{z,T}\big|^2 \,d\mathcal{H}^1
	\\&~~~~~~~~~~~~~~~
	\label{eq:aux10}
	\leq - \int_{\bar{I}^{z,T}}  \frac{1}{2}
	\Big((h'')^2 + H_{\bar{I}^{z,T}}^4 h^2 - H_{\bar{I}^{z,T}}^2 (h')^2\Big) \,d\mathcal{H}^1
	\\&~~~~~~~~~~~~~~~~~~ \nonumber
	+ \int_{\bar{I}^{z,T}} 2 H_{\bar{I}^{z,T}} H_{\bar{I}^{z,T}}' h h' \,d\mathcal{H}^1
	\\&~~~~~~~~~~~~~~~~~~ \nonumber
	+ \int_{\bar{I}^{z,T}} \delta_{\mathrm{err}} \Big((h'')^2 + \frac{1}{r_T^2}(h')^2
	+ \Big((H_{\bar{I}^{z,T}}')^2 {+} \frac{1}{r_T^4}\Big) h^2\Big) \,d\mathcal{H}^1,
	\\
	\nonumber
	&~~~\int_{I} \vartheta^{z,T} (B^{z,T}\cdot\xi^{z,T} {-} V) \,d\mathcal{H}^1
	\\&~~~~~~~~~~~~~~~
	\label{eq:aux11}
	\leq \int_{\bar{I}^{z,T}} \frac{H_{\bar{I}^{z,T}}^2}{r_T^2} h^2 \,d\mathcal{H}^1
	- \int_{\bar{I}^{z,T}} \frac{1}{r_T^2} (h')^2 \,d\mathcal{H}^1
	\\&~~~~~~~~~~~~~~~~~~ \nonumber
	+ \int_{\bar{I}^{z,T}} \delta_{\mathrm{err}} \Big((h'')^2 + \frac{1}{r_T^2}(h')^2
	+ \Big((H_{\bar{I}^{z,T}}')^2 {+} \frac{1}{r_T^4}\Big) h^2\Big) \,d\mathcal{H}^1,
	\\
	\nonumber
	&~~~\int_{I} \vartheta^{z,T} B^{z,T}\cdot (\n - \xi^{z,T}) \,d\mathcal{H}^1
	\\&~~~~~~~~~~~~~~~
	\label{eq:aux12}
	\leq \int_{\bar{I}^{z,T}} \delta_{\mathrm{err}} \Big(\frac{1}{r_T^4}h^2 + \frac{1}{r_T^2}(h')^2\Big) \,d\mathcal{H}^1,
	\\
	\nonumber
	&~~~\int_{\mathbb{R}^2} (\chi {-} \bar\chi^{z,T}) \vartheta^{z,T} \nabla\cdot B^{z,T} \,dx
	\\&~~~~~~~~~~~~~~~
	\label{eq:aux13}
	\leq - \int_{\bar{I}^{z,T}} \frac{1}{2}\frac{H_{\bar{I}^{z,T}}^2}{r_T^2} h^2 \,d\mathcal{H}^1
	+ \int_{\bar{I}^{z,T}} \delta_{\mathrm{err}} \frac{1}{r_T^4} h^2 \,d\mathcal{H}^1,
	\\
	\nonumber
	&~~~\int_{\mathbb{R}^2} (\chi {-} \bar\chi^{z,T})\big(\partial_t\vartheta^{z,T} {+} (B^{z,T}\cdot\nabla)\vartheta^{z,T}\big) \,dx
	\\&~~~~~~~~~~~~~~~
	\label{eq:aux14}
	\leq \int_{\bar{I}^{z,T}} \frac{1}{r_T^4} h^2 \,d\mathcal{H}^1
	\\&~~~~~~~~~~~~~~~~~~ \nonumber
	- \int_{\bar{I}^{z,T}} \frac{1}{r_T^2}H_{\bar{I}^{z,T}} h \dot{\mathfrak{T}} \,d\mathcal{H}^1
	- \int_{\bar{I}^{z,T}} \frac{1}{r_T^2} h (\n_{\bar{I}^{z,T}} \cdot \dot{z}) \,d\mathcal{H}^1
	\\&~~~~~~~~~~~~~~~~~~ \nonumber
	+ \int_{\bar{I}^{z,T}} \delta_{\mathrm{err}} \bigg( \frac{1}{r_T^3}\big|\dot{\mathfrak{T}}h\big| {+} 
	\frac{1}{r_T^2}\big|(\n_{\bar{I}^{z,T}} \cdot \dot{z})h\big|
	{+} \frac{1}{r_T^4} h^2 \bigg)  \,d\mathcal{H}^1.
\end{align}
\end{lemma}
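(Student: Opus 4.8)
The plan is to carry out a systematic term-by-term computation in the tubular-neighborhood coordinates induced by $\bar I^{z,T}$, in which the weak interface $I = I_{1,P}$ is the graph $\{x + h(x)\n_{\bar I^{z,T}}(x) : x \in \bar I^{z,T}\}$ with $\|h\|_{L^\infty} \leq r_T/(16\max\{C,C_\zeta\})$ and $\|h'\|_{L^\infty} \leq 1/C'$. First I would record the basic geometric dictionary: the parametrization $\gamma_h = (\Id + h\,\n_{\bar I^{z,T}})\circ\bar\gamma$, the formulas for $\n = \n_{P,1}$ and the scalar curvature/normal velocity $V = V_1$ of $I$ obtained exactly as in the heuristic computations \eqref{eq:heuristics5}--\eqref{eq:heuristics6} (but with variable coefficients $H_{\bar I^{z,T}}$, $H_{\bar I^{z,T}}'$ in place of $1/r$), together with the change-of-variables Jacobian $1 - H_{\bar I^{z,T}}(x)\,s$ that converts $\int_I \cdots\,d\mathcal H^1$ into $\int_{\bar I^{z,T}}\cdots\,d\mathcal H^1$ to leading order. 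On $I$ the height is $s = h$, so the Jacobian is $1 - H_{\bar I^{z,T}}h = 1 + O(\delta_{\mathrm{err}})$. I would also expand $\xi^{z,T}$, $B^{z,T}$, $\vartheta^{z,T}$, $\nabla\cdot\xi^{z,T}$, $\partial_t\xi^{z,T}$, $\nabla B^{z,T}$, etc., from their defining formulas \eqref{eq:defxi}, \eqref{eq:defB}, \eqref{eq:defWeight} and the shifted versions \eqref{eq:shiftedXi}, \eqref{eq:PDEshiftedXi}, \eqref{eq:PDEshiftedSignedDistance}, \eqref{eq:PDEWeightShifted}, evaluated at a point at signed distance $s = h$ from $\bar I^{z,T}$; since $|h| \ll r_T$ all cutoffs $\eta$, $\bar\vartheta$ are in their "linear" regime ($\eta = 1$, $\bar\vartheta(s/r_T) = -s/r_T$) so these reduce to clean expressions in $H_{\bar I^{z,T}}$, $H_{\bar I^{z,T}}'$, $r_T$, $\dot z$, $\dot{\mathfrak T}$ at the projected point.

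Next I would treat the identities \eqref{eq:aux2}, \eqref{eq:aux4}, \eqref{eq:aux6} first, since these should vanish exactly: \eqref{eq:aux4} because $|\xi^{z,T}| = 1$ on $\supp\xi$ restricted to $\{|s| \leq r_T/8\}$ (where $\eta \equiv 1$) and $I$ lies in this region by \eqref{eq:smallC0norm}; \eqref{eq:aux6} because $(\Id - \xi^{z,T}\otimes\xi^{z,T})\n$ is tangential while $(V + \nabla\cdot\xi^{z,T})\n$ is normal, so their pairing with $B^{z,T}$ kills the term after using that $B^{z,T}$ is a multiple of $\bar\n_{\bar I^{z,T}}$ near $I$ — wait, more carefully one uses $(\Id - \xi\otimes\xi)\xi = 0$ and that $B^{z,T} \parallel \xi^{z,T}$ where $\eta \equiv 1$; and \eqref{eq:aux2} by a similar orthogonality/exactness argument, using $\partial_t\xi^{z,T}\cdot\xi^{z,T} = \frac12\partial_t|\xi^{z,T}|^2 = 0$ and $(B^{z,T}\cdot\nabla)\xi^{z,T}\cdot\xi^{z,T} = \frac12 B^{z,T}\cdot\nabla|\xi^{z,T}|^2 = 0$ on the region where $|\xi^{z,T}| \equiv 1$. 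Then I would grind through the remaining ten estimates one at a time. Each has the same flavour: substitute the expansions, use $\n - \xi^{z,T} = -h' \bar\tau_{\bar I^{z,T}} + O(|h'|^2 + |h|/r_T)\bar\n_{\bar I^{z,T}}$ (the precise linearization \eqref{eq:aux25} referenced in the proof of Lemma~\ref{lemma:relentfreaze}), expand $V + \nabla\cdot\xi^{z,T}$ and $V\n - (B^{z,T}\cdot\xi^{z,T})\xi^{z,T}$ to the order needed, keep the quadratic-in-$(h,h',h'')$ leading terms with their geometric coefficients, and bound every cubic-or-higher term — as well as every term carrying an extra factor $|h|/r_T$ or $|h'|$ — by $\delta_{\mathrm{err}}$ times one of the admissible quadratic densities, after choosing $C, C' \gg_{\delta_{\mathrm{err}}} 1$ so that $\|h\|_{L^\infty}/r_T$ and $\|h'\|_{L^\infty}$ are $\leq$ a small multiple of $\delta_{\mathrm{err}}$. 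For \eqref{eq:aux9} and \eqref{eq:aux10} the leading computation is the variable-coefficient analogue of \eqref{eq:heuristics6}: $V = -H_{\gamma_h} = -(H_{\bar I^{z,T}} + h'' + (\text{terms in }H_{\bar I^{z,T}}^2 h, H_{\bar I^{z,T}}'h', \ldots)) + O(\text{cubic})$, and $\nabla\cdot\xi^{z,T}|_I = -H_{\bar I^{z,T}} + O(\delta_{\mathrm{err}})$-type corrections from the Jacobian in the divergence formula below \eqref{eq:shiftedXi}, so $V + \nabla\cdot\xi^{z,T} = -h'' + (\text{lower-order geometric}) + O(\text{cubic})$, giving $-\frac12|V+\nabla\cdot\xi^{z,T}|^2 = -\frac12(h'')^2 + \ldots$ For \eqref{eq:aux1} and \eqref{eq:aux14} the shift terms $\dot z$, $\dot{\mathfrak T}$ enter linearly through $\partial_t\xi^{z,T}$ and $\partial_t\vartheta^{z,T}$ via \eqref{eq:PDEshiftedSignedDistance}; I would isolate exactly the claimed $H_{\bar I^{z,T}}^2 h\,(\n\cdot\dot z)$, $H_{\bar I^{z,T}}'(\tau\cdot\dot z)h$, $H_{\bar I^{z,T}}'\dot{\mathfrak T}h'$, $\frac{1}{r_T^2}H_{\bar I^{z,T}}h\dot{\mathfrak T}$, $\frac{1}{r_T^2}h(\n\cdot\dot z)$ terms and use $|\dot z| \leq \delta_{\mathrm{err}}/r_T$, $|\dot{\mathfrak T}| \leq \delta_{\mathrm{err}}$ from \eqref{ineq:zTprime} on everything else.

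The main obstacle will be bookkeeping: each term in $RHS^{\mathrm{int}}_{i,j}$ and $RHS^{\mathrm{bulk}}_i$ expands into many pieces, and one must carefully track which quadratic combinations survive at leading order (these are forced to match the claimed right-hand sides of \eqref{eq:aux1}--\eqref{eq:aux14}, which in turn must sum to precisely the $R_{l.o.t.}$ of Lemma~\ref{lemma:relenineqgraph}) versus which are genuinely absorbable into $\delta_{\mathrm{err}}\times(\text{admissible density})$. The delicate points are: (i) the sign of $\int_I (1-\n\cdot\xi^{z,T})\nabla\cdot B^{z,T}$ in \eqref{eq:aux7} — here one must integrate by parts on $\bar I^{z,T}$ after writing $1-\n\cdot\xi^{z,T} = \frac12|h'|^2 + O(\ldots)$ and $\nabla\cdot B^{z,T}|_I = -H_{\bar I^{z,T}}^2 + O(\ldots)$, being careful that $\nabla\cdot B^{z,T}$ is evaluated off $\bar I^{z,T}$ at distance $h$, contributing an $H_{\bar I^{z,T}}'h$-type correction; (ii) matching the cross term $2H_{\bar I^{z,T}}H_{\bar I^{z,T}}'hh'$ in \eqref{eq:aux10}, which comes from the variable-coefficient correction to the $H_{\gamma_h}$ expansion and must be kept exactly (not absorbed) since it reappears in $R_{l.o.t.}$; and (iii) ensuring the Jacobian factors $1 - H_{\bar I^{z,T}}h$ appearing in every change of variables are consistently expanded to the order that keeps the leading quadratic terms exact while dumping the rest into $\delta_{\mathrm{err}}$-controlled remainders. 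Once the expansions are set up with enough care, each of \eqref{eq:aux1}--\eqref{eq:aux14} is a finite, if lengthy, Taylor-expansion-and-Young's-inequality exercise; the argument is routine in structure but demands precision in which terms are leading order.
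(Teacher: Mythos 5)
Your plan coincides with the paper's actual proof: it works in tubular-neighborhood coordinates with the graph parametrization $\gamma_h=(\mathrm{id}+h\,\n_{\bar I^{z,T}})\circ\bar\gamma^{z,T}$, records the explicit formulas for $\xi^{z,T},B^{z,T},\vartheta^{z,T}$ and their derivatives on $\{\dist(\cdot,\bar I^{z,T})<r_T/8\}$ (where the cutoffs are trivial), gets \eqref{eq:aux2}, \eqref{eq:aux4}, \eqref{eq:aux6} exactly from $|\xi^{z,T}|\equiv1$, $B^{z,T}\parallel\xi^{z,T}$ and the tangential structure of $\partial_t\xi^{z,T}$, $\nabla\xi^{z,T}$, and then obtains the remaining estimates by Taylor expansion of $\n$ and $V$ in $(H_{\bar I^{z,T}}h,h',h'')$, the coarea/area change of variables with Jacobian $1-H_{\bar I^{z,T}}h$, integration by parts, and Young's inequality, absorbing all higher-order pieces into $\delta_{\mathrm{err}}$-terms via \eqref{eq:smallC0norm}--\eqref{eq:smallC1norm} and \eqref{ineq:zTprime}. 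The only caveat is the sign convention: with the paper's definitions one has $V=H_{\gamma_h}$ (not $-H_{\gamma_h}$), which matters for the linear-in-$V$ terms such as \eqref{eq:aux11} but is immaterial to the structure of your argument.
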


\begin{proof}
	We proceed in several steps.
	
	\textit{Step~1: Properties of gradient flow calibration.} 
	Thanks to the height function estimate~\eqref{eq:smallC0norm}, the definitions~\eqref{eq:defxi}--\eqref{eq:defWeight}
	of the gradient flow calibration,
	and the identities~\eqref{eq:shiftedSignedDistance}--\eqref{eq:shiftedProjection} 
	for the shifted geometry,
	it holds on~$I \subset \{\dist(\cdot,\bar{I}^{z,T})<r_T/8\}$ 
	\begin{align}
		\label{eq:aux15}
		\xi^{z,T} &= \bar{\n}_{\bar{I}^{z,T}} = \nabla s_{\bar{I}^{z,T}},
		\\
		\label{eq:aux16}
		B^{z,T} &= \bar{H}_{\bar{I}^{z,T}}\bar{\n}_{\bar{I}^{z,T}},
		\\
		\label{eq:aux17}
		\vartheta^{z,T} &= -\frac{s_{\bar{I}^{z,T}}}{r_T^2}.
	\end{align}
	In particular, because of
	\begin{align*}
		\nabla P_{\bar{I}^{z,T}} = \Id - \bar{\n}_{\bar{I}^{z,T}} \otimes \bar{\n}_{\bar{I}^{z,T}} - s_{\bar{I}^{z,T}}\nabla \bar{\n}_{\bar{I}^{z,T}},
	\end{align*}
	we obtain by direct computation throughout $\{\dist(\cdot,\bar{I}^{z,T})<r_T/8\}$
	\begin{align}
		\label{eq:aux18}
		\nabla \xi^{z, T} &= - \frac{\bar{H}_{\bar{I}^{z,T}}}{1 - \bar{H}_{\bar{I}^{z,T}} s_{\bar{I}^{z,T}}}
		\bar{\tau}_{\bar{I}^{z,T}} \otimes \bar{\tau}_{\bar{I}^{z,T}},
		\\
		\label{eq:aux19}
		\nabla \cdot \xi^{z, T} &= - \frac{\bar{H}_{\bar{I}^{z,T}}}{1 - \bar{H}_{\bar{I}^{z,T}} s_{\bar{I}^{z,T}}},
		\\
		\label{eq:aux20}
		\nabla B^{z, T}  &=  - \frac{\bar{H}_{\bar{I}^{z,T}}^2}{1 - \bar{H}_{\bar{I}^{z,T}} s_{\bar{I}^{z,T}}} 
		\bar{\tau}_{\bar{I}^{z,T}} \otimes \bar{\tau}_{\bar{I}^{z,T}}
		+ \frac{\bar{H}'_{\bar{I}^{z,T}}}{1 - \bar{H}_{\bar{I}^{z,T}} s_{\bar{I}^{z,T}}} 
		\bar{\n}_{\bar{I}^{z,T}} \otimes \bar{\tau}_{\bar{I}^{z,T}},
		\\
		\label{eq:aux21}
		\nabla \cdot B^{z, T}  &=  - \frac{\bar{H}_{\bar{I}^{z,T}}^2}{1 - \bar{H}_{\bar{I}^{z,T}} s_{\bar{I}^{z,T}}},
		\\
		\label{eq:aux22}
		\nabla \vartheta^{z, T}  &=  - \frac{1}{r_T^2}\bar{\n}_{\bar{I}^{z,T}}.
	\end{align}
	Note that these computations are justified thanks to
	$|1{-}\bar{H}_{\bar{I}^{z,T}} s_{\bar{I}^{z,T}}| \geq 1/2$ being valid throughout $\{\dist(\cdot,\bar{I}^{z,T})<r_T/8\}$, which in turn follows from 
	$\bar{H}_{\bar{I}^{z,T}} \leq 2/r_T$ since, by assumption, $r_T/2$ is an admissible tubular neighborhood width for~$\bar{I}^{z,T}$
	(cf.\ Definition~\ref{DefinitionShrinkingCircle}). 
	Within~$\{\dist(\cdot,\bar{I}^{z,T})<r_T/8\}$, we also record the following
	simplifications of~\eqref{eq:PDEshiftedXi} and~\eqref{eq:PDEWeightShifted}:
	\begin{align}
		\label{eq:aux23}
		\partial_t\xi^{z,T} &= - \big(1{+}\dot{\mathfrak{T}}\big) \frac{H'_{\bar{I}^{z,T}} \circ P_{\bar{I}^{z,T}}}
		{1 - \bar{H}_{\bar{I}^{z,T}} s_{\bar{I}^{z,T}}} \bar{\tau}_{\bar{I}^{z,T}}
		+ \big(\bar{\tau}_{\bar{I}^{z,T}} \cdot \dot{z}\big)
		\frac{\bar{H}_{\bar{I}^{z,T}}}{1 - \bar{H}_{\bar{I}^{z,T}} s_{\bar{I}^{z,T}}} \bar{\tau}_{\bar{I}^{z,T}},
		\\
		\label{eq:aux24}
		\partial_t\vartheta^{z,T} &= -\frac{1{+}\dot{\mathfrak{T}}}{r_T^2}
		\Big(2\frac{s_{\bar{I}^{z,T}}}{r_T^2} - \bar{H}_{\bar{I}^{z,T}}\Big)
		+ \frac{\bar{\n}_{\bar{I}^{z,T}} \cdot \dot{z}}{r_T^2}. 
	\end{align}
	
	\textit{Step~2: Identities for geometric quantities of perturbed interface.}
	First, we define $(\mathfrak{h},\mathfrak{h}',\mathfrak{h}''):=(h,h',h'')\circ P_{\bar{I}^{z,T}}$. 
	Then, denoting by $o(1)$ any quantity 
	$f(\bar{H}_{\bar{I}^{z,T}} \mathfrak{h}, \mathfrak{h}')$ such that
	$f\colon\mathbb{R}\times\mathbb{R}\to\mathbb{R}$ is a continuous function satisfying
	$f(x_1,x_2) \rightarrow 0$  whenever $|(x_1,x_2)| \rightarrow 0$,
	we claim that along~$I$
	\begin{align}
		\label{eq:aux25}
		\n &= \bigg( 1 + \Big( - \frac12 + o(1) \Big) 
		(\mathfrak{h}')^2\bigg) \bar{\n}_{\bar{I}^{z,T}}
		- \big(1 {+} o(1)\big) \mathfrak{h}'\bar{\tau}_{\bar{I}^{z,T}},
		\\
		\label{eq:aux26}
		V &= \frac{\bar{H}_{\bar{I}^{z,T}}}{1 -\bar{H}_{\bar{I}^{z,T}}\mathfrak{h}} + \mathfrak{h}'' + 
		o(1) \mathfrak{h}'' + o(1) \mathfrak{h}'\bar{H}_{\bar{I}^{z,T}} + o(1) \mathfrak{h}\bar{H}_{\bar{I}^{z,T}}'.
	\end{align}

	For a proof of~\eqref{eq:aux25}--\eqref{eq:aux26}, it is computationally convenient to represent the interface~$I$
	as the image of the curve $\gamma_h := (\mathrm{id}+h\bar{\n}_{\bar{I}^{z,T}})\circ\bar{\gamma}^{z,T}$,
	where $\bar{\gamma}$ is an arc-length parametrization of $\bar{I}(T^{-1}(\cdot))$ such that
	$\bar{\tau}_{\bar{I}^{z,T}}\circ\bar{\gamma}^{z,T} = (\bar{\gamma}^{z,T})'$. Then
	\begin{align}
		\label{eq:aux27}
		\gamma_h' = \bigg(\big(1 {-} \bar{H}_{\bar{I}^{z,T}}h\big)
		\bar{\tau}_{\bar{I}^{z,T}} + h'\bar{\n}_{\bar{I}^{z,T}}\bigg)\circ\bar{\gamma}^{z,T},
	\end{align}
	hence (recall that $J \in \mathbb{R}^{2\times 2}$ denotes counter-clockwise rotation by $90^\circ$)
	\begin{align}
		\label{eq:aux28}
		\n \circ {\gamma_h} = J\frac{\gamma_h'}{|\gamma_h'|} = 
		\bigg(\frac{\big(1 {-} \bar{H}_{\bar{I}^{z,T}}h\big)
			\bar{\n}_{\bar{I}^{z,T}} - h'\bar{\tau}_{\bar{I}^{z,T}}}{\sqrt{\big(1 {-} \bar{H}_{\bar{I}^{z,T}}h\big)^2+(h')^2}}\bigg) \circ \bar{\gamma}^{z,T},
	\end{align}
	so that~\eqref{eq:aux25} follows from Taylor expansion
	with respect to the variables $\bar{H}_{\bar{I}^{z,T}}h$ and $h'$.
	By virtue of $H^2$~regularity of the height function~$h$
	and~$V$ being the distributional curvature of~$I$ due to~\cite[Definition~13, item~iii)]{FischerHenselLauxSimon}
	and~$\chi_i\equiv 0$ for all $i \notin\{1,P\}$, we deduce $V=H_{\gamma_h}$. In other words,
	\begin{equation}
		\label{eq:aux29}
		\begin{aligned}
			V \circ \gamma_h &= \frac{\gamma_h''\cdot J\gamma_h'}{|\gamma_h'|^3}
			\\&
			= \bigg(\frac{h'\big(\bar{H}_{\bar{I}^{z,T}}'h+2\bar{H}_{\bar{I}^{z,T}}h'\big)+ 
				(1 {-} \bar{H}_{\bar{I}^{z,T}}h)\big(h''+\bar{H}_{\bar{I}^{z,T}}(1{-}\bar{H}_{\bar{I}^{z,T}}h)\big)}
			{\sqrt{\big(1 {-} \bar{H}_{\bar{I}^{z,T}}h\big)^2+(h')^2}^3}\bigg) \circ \bar{\gamma}^{z,T},
		\end{aligned}
	\end{equation}
	so that~\eqref{eq:aux26} again follows from Taylor expansion
	with respect to the variables $\bar{H}_{\bar{I}^{z,T}}h$, $h'$ and $h''$.
	
	\textit{Step~3: Change of variables formula.}
	Let $g\colon I\to\mathbb{R}$ be integrable. Then, by the coarea formula
	\begin{align}
		\label{eq:aux30}
		\int_{I} g \,d\mathcal{H}^1 = \int_{\bar{I}^{z,T}} g \circ (\mathrm{id} {+}h\n_{\bar{I}^{z,T}})
		\sqrt{\big(1 {-} H_{\bar{I}^{z,T}}h\big)^2+(h')^2} \,d\mathcal{H}^1.
	\end{align}
	Furthermore, since the Jacobian of the tubular neighborhood diffeomorphism
	$x \mapsto (P_{\bar{I}^{z,T}}(x),s_{\bar{I}^{z,T}})$ is given by $1/(1 {-} \bar{H}_{\bar{I}^{z,T}} s_{\bar{I}^{z,T}})$,
	we also obtain for any integrable $G\colon\mathbb{R}^2\to\mathbb{R}$ with $\supp G\subset \{\dist(\cdot,\bar{I}^{z,T})<r_T/4\}$ 
	by the area formula
 and the assumed conclusions of Proposition~\ref{theo:graph}
	\begin{align}
		\label{eq:aux31}
		\int_{\mathbb{R}^2} (\chi - \bar\chi^{z,T}) G \,dx = - \int_{\bar{I}^{z,T}} \int_{0}^{h} 
		\frac{G\big(x{+}s\bar{\n}_{\bar{I}^{z,T}}(x)\big)}{1 {-} \bar{H}_{\bar{I}^{z,T}}(x)s} \,dsd\mathcal{H}^1(x).
	\end{align}
	
	\textit{Step~4: Collecting further auxiliary identities.}
	We obtain from~\eqref{eq:aux25}
	and~\eqref{eq:aux15} that, along~$I$,
	\begin{align}
		\label{eq:aux32}
		\n - \xi^{z,T}
		&= 
		o(1) \mathfrak{h}' \bar{\n}_{\bar{I}^{z,T}}
		- \big(1 {+} o(1)\big) \mathfrak{h}'\bar{\tau}_{\bar{I}^{z,T}},
		\\
		\label{eq:aux33}
		1 - \n\cdot\xi^{z,T}
		&= \Big(\frac12 + o(1) \Big) (\mathfrak{h}')^2.
	\end{align}
	In addition, we infer from~\eqref{eq:aux26} and~\eqref{eq:aux19}
	that, along~$I$,
	\begin{align}
		\label{eq:aux33a}
		V + \nabla\cdot\xi^{z,T} = \mathfrak{h}'' + 
		o(1) \mathfrak{h}'' + o(1) \mathfrak{h}'\bar{H}_{\bar{I}^{z,T}} + o(1) \mathfrak{h}\bar{H}_{\bar{I}^{z,T}}',
	\end{align}
	as well as from~\eqref{eq:aux25}, \eqref{eq:aux26} and~\eqref{eq:aux15}--\eqref{eq:aux16} 
	\begin{equation}
		\begin{aligned}
			\label{eq:aux33b} 
			&V\n -(B^{z,T}\cdot\xi^{z,T})\xi^{z,T} 
			\\&~~~
			= V\n - B^{z,T}		
			\\&~~~
			= \big(\bar{H}^2_{\bar{I}^{z,T}}\mathfrak{h}{+}\mathfrak{h}''\big)\bar{\n}_{\bar{I}^{z,T}}
			- 	 \bar{H}_{\bar{I}^{z,T}}\mathfrak{h}'\bar{\tau}_{\bar{I}^{z,T}}
			\\&~~~~~~
			+ \big(o(1)\mathfrak{h}'' + o(1)\bar{H}_{\bar{I}^{z,T}}\mathfrak{h}'
			+o(1)(\bar{H}'_{\bar{I}^{z,T}}\circ P_{\bar{I}^{z,T}})\mathfrak{h}
			+o(1) \bar{H}_{\bar{I}^{z,T}}^2\mathfrak{h}\big) \bar{\n}_{\bar{I}^{z,T}}
			\\&~~~~~~
			+ \big(o(1)\mathfrak{h}'' + o(1)\bar{H}_{\bar{I}^{z,T}}\mathfrak{h}'
			+o(1)(\bar{H}'_{\bar{I}^{z,T}}\circ P_{\bar{I}^{z,T}})\mathfrak{h}
			+o(1) \bar{H}_{\bar{I}^{z,T}}^2\mathfrak{h}\big) \bar{\tau}_{\bar{I}^{z,T}}
		\end{aligned}
	\end{equation}
	and
	\begin{align}
		\label{eq:aux33c}
		B^{z,T}\cdot\xi^{z,T} - V &= - \bar{H}_{\bar{I}^{z,T}}^2 \mathfrak{h} - \mathfrak{h}'' +
		o(1) \mathfrak{h}'' + o(1) \mathfrak{h}'\bar{H}_{\bar{I}^{z,T}} + o(1) \mathfrak{h}\bar{H}_{\bar{I}^{z,T}}'
		+ o(1) \mathfrak{h} \bar{H}_{\bar{I}^{z,T}}^2.
	\end{align}
	
	Next, we exploit the information gathered so far to express
	the terms originating from the stability estimate of~$E_{\mathrm{int}}$
	in terms of the height function~$h$ (and its derivatives).
	First, we get from combining~\eqref{eq:aux30}, \eqref{eq:aux23}, \eqref{eq:aux15}--\eqref{eq:aux16},
	\eqref{eq:aux18}, \eqref{eq:aux20} and \eqref{eq:aux32},
	\begin{equation}
		\label{eq:aux34}
		\begin{aligned}
			&- \int_{I} \big(\partial_t\xi^{z,T} {+} (B^{z,T}\cdot\nabla)\xi^{z,T} {+} (\nabla B^{z,T})^\mathsf{T}\xi^{z,T}\big)
			\cdot (\n{-}\xi^{z,T}) \,d\mathcal{H}^1
			\\&~~~
			\leq \int_{\bar{I}^{z,T}} H_{\bar{I}^{z,T}} h' (\tau_{\bar{I}^{z,T}}\cdot\dot{z})  \,d\mathcal{H}^1
			- \int_{\bar{I}^{z,T}} H_{\bar{I}^{z,T}}' \dot{\mathfrak{T}} h' \,d\mathcal{H}^1 
			\\&~~~~~~
			+ \int_{\bar{I}^{z,T}} |o(1)| \Big(\big|H_{\bar{I}^{z,T}} (\tau_{\bar{I}^{z,T}}\cdot\dot{z}) h'\big|
			{+} \big|H_{\bar{I}^{z,T}}' \dot{\mathfrak{T}} h'\big|\Big) \,d\mathcal{H}^1
			\\&~~~
			= - \int_{\bar{I}^{z,T}} H_{\bar{I}^{z,T}}^2 h (\n_{\bar{I}^{z,T}}\cdot\dot{z}) \,d\mathcal{H}^1
			\\&~~~~~~
			- \int_{\bar{I}^{z,T}} H_{\bar{I}^{z,T}}' (\tau_{\bar{I}^{z,T}}\cdot\dot{z}) h \,d\mathcal{H}^1 
			- \int_{\bar{I}^{z,T}} H_{\bar{I}^{z,T}}' \dot{\mathfrak{T}} h' \,d\mathcal{H}^1 
			\\&~~~~~~
			+ \int_{\bar{I}^{z,T}} |o(1)| \Big(\big|H_{\bar{I}^{z,T}} (\tau_{\bar{I}^{z,T}}\cdot\dot{z}) h'\big|
			{+} \big|H_{\bar{I}^{z,T}}' \dot{\mathfrak{T}} h'\big|\Big) \,d\mathcal{H}^1,
		\end{aligned}
	\end{equation}
	where in the last step we also integrated by parts.
	Next, it directly follows from~\eqref{eq:aux30}, \eqref{eq:aux15}--\eqref{eq:aux16} and~\eqref{eq:aux19}
	\begin{align}
		\label{eq:aux35}
		\int_{I} \frac{1}{2} \big|\nabla\cdot\xi^{z,T} {+} B^{z,T}\cdot\xi^{z,T}\big|^2 \,d\mathcal{H}^1
		\leq \int_{\bar{I}^{z,T}} \big(1 {+} |o(1)|\big) \frac{1}{2} H_{\bar{I}^{z,T}}^4 h^2 \,d\mathcal{H}^1,
	\end{align}
	and exploiting in addition~\eqref{eq:aux33}
	\begin{equation}
		\label{eq:aux36}
		\begin{aligned}
			&-\int_{I} (1 - \n\cdot\xi^{z,T}) \nabla\cdot\xi^{z,T}
			(B^{z,T}\cdot\xi^{z,T}) \,d\mathcal{H}^1
			\\&~~~
			\leq \int_{\bar{I}^{z,T}} \big(1 {+} |o(1)|\big) \frac{1}{2} H_{\bar{I}^{z,T}}^2 (h')^2 \,d\mathcal{H}^1.
		\end{aligned}
	\end{equation}
	Analogously, recalling also~\eqref{eq:aux20} and~\eqref{eq:aux21},
	\begin{align}
		\label{eq:aux37}
		\int_{I} (1 - \n\cdot\xi^{z,T}) \nabla\cdot B^{z,T} \,d\mathcal{H}^1
		\leq - \int_{\bar{I}^{z,T}} \big(1 {-} |o(1)|\big) \frac{1}{2} H_{\bar{I}^{z,T}}^2 (h')^2 \,d\mathcal{H}^1
	\end{align}
	as well as
	\begin{equation}
		\begin{aligned}
			\label{eq:aux38}
			&-\int_{I} (\n {-} \xi^{z,T}) \otimes (\n {-} \xi^{z,T}) : \nabla B^{z,T} \,d\mathcal{H}^1
			\\&~~~
			\leq \int_{\bar{I}^{z,T}} \big(1 {+} |o(1)|\big) H_{\bar{I}^{z,T}}^2 (h')^2 \,d\mathcal{H}^1
			+ \int_{\bar{I}^{z,T}} |o(1)| \big|H_{\bar{I}^{z,T}}'\big| (h')^2 \,d\mathcal{H}^1.
		\end{aligned}
	\end{equation}
	Just plugging in~\eqref{eq:aux33a} and estimating by Young's inequality yields
	\begin{equation}
		\label{eq:aux39}
		\begin{aligned}
			&-\int_{I} \frac{1}{2} \big|V{+}\nabla\cdot\xi^{z,T}\big|^2 \,d\mathcal{H}^1
			\\&~~~
			\leq - \int_{\bar{I}^{z,T}} \big(1 {-} |o(1)|\big) \frac{1}{2}(h'')^2 \,d\mathcal{H}^1
			\\&~~~~~~
			+ \int_{\bar{I}^{z,T}} |o(1)| \Big(H_{\bar{I}^{z,T}}^2(h')^2
			+ \big((H_{\bar{I}^{z,T}}')^2 {+} H_{\bar{I}^{z,T}}^4\big) h^2\Big) \,d\mathcal{H}^1,
		\end{aligned}
	\end{equation}
	and analogously based on~\eqref{eq:aux33b}
	\begin{equation}
		\label{eq:aux40}
		\begin{aligned}
			&-\int_{I} \frac{1}{2} \big|V{-}(B^{z,T}\cdot\xi^{z,T})\xi^{z,T}\big|^2 \,d\mathcal{H}^1
			\\&~~~
			\leq - \int_{\bar{I}^{z,T}} \big(1 {-} |o(1)|\big) \frac{1}{2}
			\Big((h'')^2 + H_{\bar{I}^{z,T}}^2 (h')^2 + H_{\bar{I}^{z,T}}^4 h^2\Big) \,d\mathcal{H}^1
			\\&~~~~~~
			- \int_{\bar{I}^{z,T}} H_{\bar{I}^{z,T}}^2 h h'' \,d\mathcal{H}^1
			\\&~~~~~~
			+ \int_{\bar{I}^{z,T}} |o(1)| (H_{\bar{I}^{z,T}}')^2h^2\,d\mathcal{H}^1
			\\&~~~
			\leq - \int_{\bar{I}^{z,T}} \big(1 {-} |o(1)|\big) \frac{1}{2}
			\Big((h'')^2 + H_{\bar{I}^{z,T}}^4 h^2\Big) \,d\mathcal{H}^1
			\\&~~~~~~
			+ \int_{\bar{I}^{z,T}} \frac{1}{2} H_{\bar{I}^{z,T}}^2 (h')^2 \,d\mathcal{H}^1
			\\&~~~~~~
			+ \int_{\bar{I}^{z,T}} 2 H_{\bar{I}^{z,T}} H_{\bar{I}^{z,T}}' h h' \,d\mathcal{H}^1
			\\&~~~~~~
			+ \int_{\bar{I}^{z,T}} |o(1)| \Big(H_{\bar{I}^{z,T}}^2 (h')^2 + (H_{\bar{I}^{z,T}}')^2 h^2 \Big)\,d\mathcal{H}^1,
		\end{aligned}
	\end{equation}
	where in the last step we also performed an integration by parts
	and estimated by Young's inequality.
	
	We continue with the terms originating from the stability estimate of $E_{\mathrm{bulk}}$.
	First, by means of~\eqref{eq:aux30}, \eqref{eq:aux17}, \eqref{eq:aux33c}
	and integration by parts we obtain
	\begin{equation}
		\label{eq:aux41}
		\begin{aligned}
			&\int_{I} \vartheta^{z,T} (B^{z,T}\cdot\xi^{z,T} {-} V) \,d\mathcal{H}^1
			\\&~~~
			\leq \int_{\bar{I}^{z,T}} \frac{H_{\bar{I}^{z,T}}^2}{r_T^2} h^2 \,d\mathcal{H}^1
			- \int_{\bar{I}^{z,T}} \frac{1}{r_T^2} (h')^2 \,d\mathcal{H}^1
			\\&~~~~~~
			+ \int_{\bar{I}^{z,T}} |o(1)| \bigg( \Big(\frac{H_{\bar{I}^{z,T}}^2}{r_T^2}
			{+} \frac{1}{r_T^4} {+} (H_{\bar{I}^{z,T}}')^2\Big) h^2
			+  H_{\bar{I}^{z,T}}^2 (h')^2 + (h'')^2 \bigg) \,d\mathcal{H}^1.
		\end{aligned}
	\end{equation}
	Next, just plugging in~\eqref{eq:aux16}--\eqref{eq:aux17} and~\eqref{eq:aux32}
	into~\eqref{eq:aux30} and applying Young's inequality entails
	\begin{align}
		\label{eq:aux42}
		&\int_{I} \vartheta^{z,T} B^{z,T}\cdot (\n - \xi^{z,T}) \,d\mathcal{H}^1
		\leq \int_{\bar{I}^{z,T}} |o(1)| \Big(\frac{1}{r_T^4}h^2 + H_{\bar{I}^{z,T}}^2(h')^2\Big) \,d\mathcal{H}^1.
	\end{align}
	In addition, based on~\eqref{eq:aux31}, \eqref{eq:aux17}
	and~\eqref{eq:aux21}, we may infer
	\begin{align}
		\label{eq:aux43}
		\int_{\mathbb{R}^2} (\chi {-} \bar\chi^{z,T}) \vartheta^{z,T} \nabla\cdot B^{z,T} \,dx
		\leq - \int_{\bar{I}^{z,T}} \big(1 {-} |o(1)|\big) \frac{1}{2}\frac{H_{\bar{I}^{z,T}}^2}{r_T^2} h^2 \,d\mathcal{H}^1,
	\end{align}
	whereas it finally follows from~\eqref{eq:aux31}, \eqref{eq:aux24}, \eqref{eq:aux16}
	and~\eqref{eq:aux22}
	\begin{equation}
		\label{eq:aux44}
		\begin{aligned}
			&\int_{\mathbb{R}^2} (\chi {-} \bar\chi^{z,T})\big(\partial_t\vartheta^{z,T} {+} (B^{z,T}\cdot\nabla)\vartheta^{z,T}\big) \,dx
			\\&~~~
			\leq \int_{\bar{I}^{z,T}} \frac{1}{r_T^4} h^2 \,d\mathcal{H}^1
			- \int_{\bar{I}^{z,T}} \frac{H_{\bar{I}^{z,T}}}{r_T^2} h \dot{\mathfrak{T}} \,d\mathcal{H}^1
			- \int_{\bar{I}^{z,T}} \frac{1}{r_T^2} h (\n_{\bar{I}^{z,T}} \cdot \dot{z}) \,d\mathcal{H}^1
			\\&~~~~~~
			+ \int_{\bar{I}^{z,T}} |o(1)| \bigg( \frac{1}{r_T^3}\big|\dot{\mathfrak{T}}h\big| {+} 
			\frac{1}{r_T^2}\big|H_{\bar{I}^{z,T}}\dot{\mathfrak{T}}h\big| {+} \frac{1}{r_T^2}\big|(\n_{\bar{I}^{z,T}} \cdot \dot{z})h\big|
			{+} \frac{1}{r_T^4} h^2 \bigg)  \,d\mathcal{H}^1.
		\end{aligned}
	\end{equation}
	
	\textit{Step 5: Conclusion.} Due to~\eqref{eq:aux15} and~\eqref{eq:aux16},
	the identities~\eqref{eq:aux2}, \eqref{eq:aux4} and~\eqref{eq:aux6}
	hold true for trivial reasons. The remaining estimates
	follow from~\eqref{eq:aux34}--\eqref{eq:aux44} and $|H_{\bar{I}^{z,T}}| \leq 2/r_T$.
\end{proof}

\appendix

\section{Auxiliary results from geometric measure theory} \label{appendix}
In this appendix, we recall the two main ingredients from geometric measure theory which we use to prove Proposition~\ref{theo:graph}, namely the Allard's regularity theory for integer rectifiable varifolds \cite[Chapter 5, Theorem 23.1 and Remark 23.2(a)]{SimonLectures} and the decoposition of a reduced boundary of a set of finite
perimeter in $\mathbb{R}^2$ into rectifiable Jordan-Lipschitz curves \cite[Section 6, Theorem 4]{Ambrosio}.

Using the notation from Definition \ref{DefinitionVarSolution} and omitting the dependence in time, we recall the usual definition of tilt-excess $E_{\operatorname{tilt}}$ for an integer rectifiable 1-varifold $\mathcal{V}$ with associated mass measure $\mu$, namely
\[
E_{\operatorname{tilt}}[x_0, \varrho, G] = \varrho^{-1} \int_{B_{\varrho}(x_o)} |P_{\operatorname{Tan}_{x_0} (\supp \mu )} - P_{G}|^2 \,  \mathrm{d} \mu \,, 
\]
where $x_0 \in \supp \mu $, $\varrho >0$, and $G$ is a one dimensional subspace of $\mathbb{R}^2$, whreas $P$ denotes the projection onto the approximate tangent space ${\operatorname{Tan}_{x_0} (\supp \mu )}$ and onto the given subspace $G$, respectively.
In our setting, Allard's regularity theorem \cite[Chapter 5, Theorem 23.1 and Remark 23.2(a)]{SimonLectures} 
reads as follows.

\begin{theorem}[Allard's regularity theory]\label{theo:Allard}
	Fix $\varrho>0$, $p>1$, $x_0 \in \supp \mu $, and 
	a one-dimensional subspace $G$ of $\mathbb{R}^2$. 
	There exist $\varepsilon = \varepsilon (p), \, \gamma=\gamma(p) \in (0,1)$ 
	and $C_{Allard} = C_{Allard}(p)>0$ such that:
	if 
	\begin{align}
		\frac{\mu(B_{\varrho}(x_0))}{\mathrm{Vol}_1  \varrho}\leq 1+\varepsilon,
	\end{align} 
where $\mathrm{Vol}_1$ denotes the one-dimensional volume of the unit ball,
     and
     	\begin{align}\label{eq:hpAllard}
     	\max\bigg\{ E_{\operatorname{tilt}}[x_0, \varrho, G], \varepsilon^{-1} \Big(\int_{B_{\varrho}(x_0)} |H_\mu|^p \, \dx\Big)^{\frac2p} \varrho^{2(1-\frac1p)}\bigg\} \leq \varepsilon , 
     \end{align}
      then there exists a $C^{1, 1-\frac1p}$ function $u\colon (x_0 + G) \cap B_{\gamma \varrho}(x_0) \rightarrow \mathbb{R}$ satisfying: 
      \begin{itemize}
      	\item[i)] $u(x_0)=0$;
      	      	\item[ii)] 
      	      	$	\supp \mu \cap B_{\gamma \varrho }(x_0) = B_{\gamma \varrho }(x_0) \cap \big\{ y + u(y) \n_{G}(y) : y \in (x_0 + G ) \cap B_{\gamma \varrho}(x_0)\big\},
      	      $ 
            	where $\n_{G}$ is the normal vector field to the affine space $x_0 + G$;
      	      	      	\item[iii)] it holds 
      	      	      	\begin{align}\notag 
      	      	      	&	\varrho^{-1} \sup_{x  }|u(x)| + \sup_x |\nabla u(x)| + \varrho^{1-\frac1p} \sup_{x \neq y} \big\{|x-y|^{-(1-\frac1p)} |\nabla u(x)- \nabla u (y)|\big\}\\ 
      	      	      		&\leq C_{Allard} \biggl[ E^\frac12 _{\operatorname{tilt}}[x_0, \varrho, G]
      	      	      		+ \Big(\int_{B_{\varrho}(x_0)} |H_\mu|^p \, \dx\Big)^{\frac1p} \varrho^{1-\frac1p} \biggr]. \label{eq:Allard}
      	      	      	\end{align}
      \end{itemize}
\end{theorem}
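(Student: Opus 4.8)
The plan is to deduce Theorem~\ref{theo:Allard} directly from the general Allard regularity theorem in the form stated in \cite[Chapter~5, Theorem~23.1 and Remark~23.2(a)]{SimonLectures}, specialized to the case of integral $1$-varifolds in $\mathbb{R}^2$ (i.e.\ $d=2$, codimension one). The only real content is to check that the hypotheses in the present dimensionless formulation are the scale-normalized versions of Simon's hypotheses and that the conclusion matches his after undoing the rescaling $x_0=0$, $\varrho=1$: the mass-ratio bound $\mu(B_\varrho(x_0))/(\mathrm{Vol}_1\varrho)\le 1+\varepsilon$ is the density-ratio smallness; the term $\varepsilon^{-1}\big(\int_{B_\varrho(x_0)}|H_\mu|^p\big)^{2/p}\varrho^{2(1-1/p)}\le\varepsilon$ encodes the smallness of the scale-invariant $L^p$ norm of the generalized mean curvature, which makes sense since $\mathcal{V}$ carries $H_\mu\in L^2$ by Definition~\ref{DefinitionVarSolution}~i) (and hence $H_\mu\in L^p_{\mathrm{loc}}$ for $p\le 2$ on sets of finite mass); and the tilt-excess smallness $E_{\operatorname{tilt}}[x_0,\varrho,G]\le\varepsilon$ plays the role of Simon's height-excess smallness — the two being comparable in codimension one, up to the mean-curvature contribution, by the Caccioppoli inequality for varifolds with $L^p$ mean curvature. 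The output graph $u\in C^{1,1-1/p}$ over $(x_0+G)\cap B_{\gamma\varrho}(x_0)$ with $u(x_0)=0$, the graph identity ii), and the scale-invariant estimate~\eqref{eq:Allard} are then verbatim Simon's conclusion. In particular, one records that $\varepsilon$, $\gamma$ and $C_{Allard}$ depend only on $p$ (and the fixed ambient dimension $2$), which is immediate from the scaling invariance of every quantity appearing in~\eqref{eq:hpAllard}--\eqref{eq:Allard}.

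For completeness one may recall the structure of the classical argument without reproving it: (i) the monotonicity formula for varifolds with $L^p$ mean curvature gives two-sided density bounds and propagates the density-$1$ assumption to a full neighborhood; (ii) a Lipschitz approximation step shows that, away from a set of $\mu$-measure controlled by $E_{\operatorname{tilt}}$ plus the curvature term, $\supp\mu$ coincides with a Lipschitz graph over $x_0+G$ of small slope; (iii) a tilt-excess (equivalently, height-excess) decay estimate is obtained by comparison with a harmonic competitor and dyadic iteration, upgrading the Lipschitz graph to a $C^{1,\alpha}$ graph with $\alpha=1-1/p$; (iv) a Campanato/Morrey embedding converts the decay rate into the pointwise Hölder bound~\eqref{eq:Allard}. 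All four steps are standard and are exactly what is packaged in the cited references, so nothing here needs to be carried out explicitly.

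The only genuinely delicate point is the bookkeeping in the comparison between the tilt-excess functional $E_{\operatorname{tilt}}[x_0,\varrho,G]$ used throughout Section~\ref{proof:reductiontograph} and the precise excess quantity appearing in \cite{SimonLectures}; I would record the (standard) one-line estimate via the Caccioppoli inequality showing that smallness of the former, together with the scaled smallness of $\big(\int|H_\mu|^p\big)^{1/p}\varrho^{1-1/p}$, implies smallness of the latter, and conversely that~\eqref{eq:Allard} as stated follows from Simon's conclusion. Since Theorem~\ref{theo:Allard} is quoted essentially verbatim from the literature, I expect no obstacle beyond this normalization-and-notation translation.
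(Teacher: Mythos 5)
Your proposal is correct and matches the paper's treatment: the paper does not prove this statement but simply recalls it in the appendix as a quotation of Allard's regularity theorem from \cite[Chapter~5, Theorem~23.1 and Remark~23.2(a)]{SimonLectures}, exactly as you do, with the only work being the scaling/normalization and excess-bookkeeping remarks you describe. No further argument is expected or given in the paper.
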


\begin{figure}
	\includegraphics{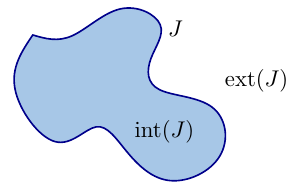}
	\caption{Jordan curve $J$}
		\label{Fig:Jordan}
\end{figure} 
The second result makes use of the notion of Jordan-Lipschitz curves, for which
we refer the reader to~\cite[Section 8]{Ambrosio}, and states a decomposition
for the reduced boundary of a planar set of finite perimeter
(see Figures~\ref{Fig:Jordan} and~\ref{Fig:decomposition} for an example) as given
by~\cite[Section 8, Corollary~1]{Ambrosio}.
\begin{figure}
	\includegraphics{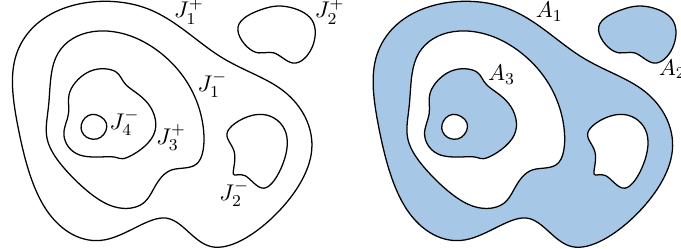}
	\caption{Decomposition of the reduced boundary of a planar set of finite perimeter into Jordan-Lipschitz curves $\{J^+_i, J^-_k: i,k \in \mathbb{N}\}$.}
	\label{Fig:decomposition}
\end{figure}

\begin{theorem}[Decomposition result for planar sets of finite perimeters]\label{theo:Jordan}
	Let $\chi \in BV(\mathbb{R}^2; \{0,1\})$ be the indicator function associated to a set of finite perimeter.
	Then, $\supp |\nabla \chi|$ can be uniquely decomposed into a countable family of Jordan-Lipschitz curves $\{J^+_i, J^-_k: i,k \in \mathbb{N}\}$ such that the following properties hold:
	\begin{itemize}
		\item[i)]Let $i\neq k$, then either $\operatorname{int}(J_i^+)\cap\operatorname{int}(J_k^+)= \emptyset$ or $\operatorname{int}(J_i^+) \subseteq \operatorname{int}(J^+_k)$ and analogously for $\{J_i^-: i \in \mathbb{N}\}$. Furthermore, for each $i \in \mathbb{N}$ there exists $k= k(i)\in \mathbb{N}$ such that $\operatorname{int}(J_i^-) \subseteq \operatorname{int}(J^+_k)$.
		\item[ii)] $\mathcal{H}^1(\supp |\nabla \chi|) = \sum_i  \mathcal{H}^1 (J_i^+)+ \sum_k \mathcal{H}^1 (J_k^-)$.
		\item[iii)] If $\operatorname{int}(J_i^+) \subseteq \operatorname{int}(J_k^+)$, there exists $j \in \mathbb{N}$ such that $\operatorname{int}(J_i^+) \subseteq \operatorname{int}(J_j^-) \subseteq \operatorname{int}(J_k^+)$, and analogously for the roles of $\{J_i^+: i \in \mathbb{N}\}$ and $\{J_k^-: i \in \mathbb{N}\}$ switched.
		\item[iv)] Setting $L_k := \{i \in \mathbb{N} : \operatorname{int}(J_i^-)\subseteq \operatorname{int}(J_k^+) \}$, then the sets $A_k := \operatorname{int}(J_k^+) \setminus \cup_{i \in L_k} \operatorname{int} (J_i^-)$ are piecewise disjoint, indecomposable, and $\chi = \sum_k \chi_{A_k }$.
	\end{itemize}
\end{theorem}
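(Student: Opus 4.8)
My plan is to observe that the statement is precisely \cite[Section~8, Corollary~1]{Ambrosio} (resting on the decomposition of a set of finite perimeter into its indecomposable components due to Ambrosio--Caselles--Masnou--Morel), so at the level of the present paper the proof is simply to invoke that reference. For completeness let me indicate how the argument would run if one had to reconstruct it.

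First I would decompose $\Omega := \{\chi = 1\}$, modulo Lebesgue-null sets, into its at most countably many \emph{indecomposable components} $(\Omega_k)_k$: pairwise essentially disjoint finite-perimeter sets with $\chi = \sum_k \chi_{\Omega_k}$ and, crucially, $P(\Omega) = \sum_k P(\Omega_k)$. To each $\Omega_k$ I would associate its \emph{saturation} $\operatorname{sat}(\Omega_k)$, obtained by filling in all bounded connected components (``holes'') of $\mathbb{R}^2 \setminus \Omega_k$; the saturation is a \emph{simple} set (indecomposable, with indecomposable complement connected at infinity), and one has $P(\operatorname{sat}(\Omega_k)) \le P(\Omega_k)$, the deficit being exactly the sum of the perimeters of the (again at most countably many) holes. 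Iterating this on each hole — itself a finite-perimeter set with simple saturation — and alternating between ``filled'' and ``hole'' regions produces the two families $\{J_i^+\}$ and $\{J_k^-\}$.

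The second ingredient I would use is the planar structure theorem for simple sets: the essential boundary $\partial^\ast S$ of a simple set $S\subset\mathbb{R}^2$ of finite perimeter agrees, up to an $\mathcal{H}^1$-null set, with the image of a rectifiable Jordan-Lipschitz curve. Applying this to the $\operatorname{sat}(\Omega_k)$ gives the outer curves $J_i^+$, and applying it to the saturations of the holes gives the inner curves $J_k^-$. Property (ii) is then the chain $\mathcal{H}^1(\supp|\nabla\chi|) = P(\Omega) = \sum_k P(\Omega_k) = \sum_i \mathcal{H}^1(J_i^+) + \sum_k \mathcal{H}^1(J_k^-)$ assembled from the additivity facts above. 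Properties (i) and (iii) are topological consequences of the Jordan curve theorem: two distinct indecomposable components are essentially disjoint, so their saturations are either nested or have disjoint interiors; and if $\operatorname{int}(J_i^+)\subseteq\operatorname{int}(J_k^+)$, then $\Omega_i$ lies in $\operatorname{sat}(\Omega_k)\setminus\Omega_k$, hence inside one hole $Y$ of $\Omega_k$, forcing $\operatorname{int}(J_i^+)\subseteq\operatorname{int}(J_j^-)\subseteq\operatorname{int}(J_k^+)$ with $J_j^- = \partial\operatorname{sat}(Y)$. Property (iv) is just the observation that $A_k = \operatorname{int}(J_k^+)\setminus\bigcup_{i\in L_k}\operatorname{int}(J_i^-)$ recovers the indecomposable component $\Omega_k$ up to a null set, which also yields uniqueness.

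The hard part — and the reason this is a genuine theorem rather than a routine manipulation — is the planar structure theorem for simple sets: producing a \emph{single} Lipschitz Jordan loop parametrizing $\partial^\ast S$ (rather than a countable union of loops) requires using both indecomposability and the $1$-rectifiability of the reduced boundary, via a careful ``boundary-following'' construction, and the uniqueness must be tracked through the nesting hierarchy. In the present paper none of this has to be reproduced; it is quoted from \cite{Ambrosio}.
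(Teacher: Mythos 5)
Your proposal matches the paper exactly: the statement is quoted as \cite[Section~8, Corollary~1]{Ambrosio} and no proof is reproduced in the paper, which is precisely what you do. Your supplementary sketch (indecomposable components, saturations and holes, the structure theorem for simple sets, and the additivity of perimeters giving property ii)) is a faithful outline of the argument in that reference, so there is nothing to correct.
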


\section*{Acknowledgements}
This project has received funding from the
European Research Council (ERC) under the European Union's Horizon 2020
research and innovation programme (grant agreement No 948819)
\smash{
\begin{tabular}{@{}c@{}}\includegraphics[width=6ex]{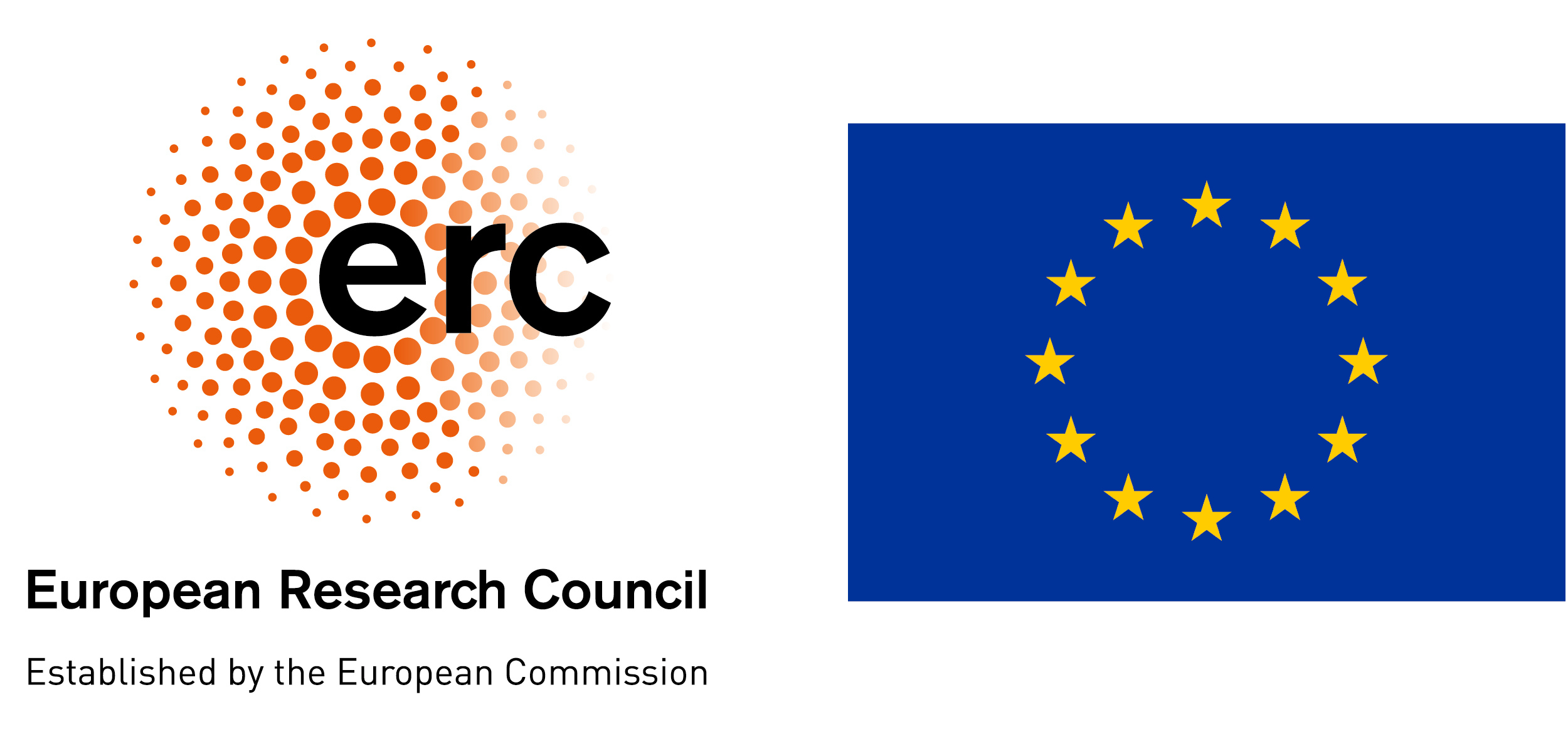}\end{tabular}
},
and from the Deutsche Forschungsgemeinschaft (DFG, German Research Foundation) 
under Germany's Excellence Strategy -- EXC-2047/1 -- 390685813.

Parts of this work were written during the visit of A.M. to the Hausdorff Research Institute for Mathematics (HIM), University of Bonn, in the framework of the trimester program ``Mathematics for complex materials''. The support and the hospitality of HIM are gratefully acknowledged.

\bibliographystyle{abbrv}
\bibliography{MCF_stability_circle}

\end{document}